\newtheorem{theorem}{Theorem}
\newtheorem{claim}[theorem]{Claim}
\newtheorem{lemma}[theorem]{Lemma}
\theoremstyle{remark}
\newtheorem{obs}[theorem]{Observation}
\newcommand{\calR}{\mathcal{R}}
\newcommand{\LL}{\mathcal{L}}
\newcommand{\MM}{\operatorname{MM}}
\newcommand{\set}[1]{\{#1\}}
\newcommand{\norm}[1]{{|#1|}}
\newcommand{\sigtree}{\Psi}
\newcommand{\q}[1]{
\def\result{??}
\def\questionnode{#1}
\def\leftorright{sign:left-or-right}
\ifx \questionnode \leftorright
\def\result{1}
\fi
\def\asmallerthanx{sign:a<x}
\ifx \questionnode \asmallerthanx
\def\result{2}
\fi
\def\signagoesbelowu{sign:a-goes-below-u}
\ifx \questionnode \signagoesbelowu
\def\result{4}
\fi
\def\signHcoloring{sign:H-coloring}
\ifx \questionnode \signHcoloring
\def\result{5}
\fi
\def\signnoxyedge{sign:no-xy-edge}
\ifx \questionnode \signnoxyedge
\def\result{6}
\fi
\def\signcolorsspecialgraph{sign:colors_special_graph}
\ifx \questionnode \signcolorsspecialgraph
\def\result{8}
\fi
\def\signcolorsK{sign:colors_K}
\ifx \questionnode \signcolorsK
\def\result{9}
\fi
\def\signHcoloringsecond{sign:H-coloring-second}
\ifx \questionnode \signHcoloringsecond
\def\result{11}
\fi
\def\signcomparabilitystatusofzanda{sign:comparability-status}
\ifx \questionnode \signcomparabilitystatusofzanda
\def\result{12}
\fi
\def\signcoloringtwocycles{sign:coloring-2-cycles}
\ifx \questionnode \signcoloringtwocycles
\def\result{13}
\fi
\def\signcolorsKsecond{sign:colors_K_second}
\ifx \questionnode \signcolorsKsecond
\def\result{14}
\fi
\alpha_{\result}
}
\DeclareMathOperator\Inc{Inc}
\DeclareMathOperator\cover{cover}
\DeclareMathOperator\tw{tw}
\DeclareMathOperator\p{p}
\DeclareMathOperator\mm{m}
\DeclareMathOperator\nn{n}
\DeclareMathOperator\inc{\|}
\newcommand{\side}[1]{\mm(#1)}
\newcommand{\up}[1]{\nn(#1)}
\newcommand{\down}[1]{\p(#1)}
\newcommand{\yes}{\textrm{yes}}
\newcommand{\no}{\textrm{no}}
\let\le\leqslant
\let\leq\leqslant
\let\geq\geqslant
\let\nleq\nleqslant
\let\ngeq\ngeqslant
\let\subset\subseteq
\let\subsetneq\varsubsetneq
\let\epsilon\varepsilon
\renewenvironment{enumerate}{\begin{enumorig}[label=\textup{(\roman*)}, noitemsep, topsep=2pt plus 2pt, labelindent=.2em, leftmargin=*, widest=iii]}{\end{enumorig}}
\renewenvironment{itemize}{\begin{itemorig}[label=\textbullet, noitemsep, topsep=2pt plus 2pt, labelindent=.5em, labelsep=.5em, leftmargin=*]}{\end{itemorig}}
\begin{document}
\title{On the dimension of posets with cover graphs of treewidth $2$}

\author[G.~Joret]{Gwena\"{e}l Joret}

\address[G.~Joret]{Computer Science Department \\
  Universit\'e Libre de Bruxelles\\
  Brussels\\
  Belgium}

\email{gjoret@ulb.ac.be}

\author[P.~Micek]{Piotr Micek}

\address[P.~Micek]{Theoretical Computer Science Department\\
  Faculty of Mathematics and Computer Science\\
  Jagiellonian University\\
  Krak\'ow\\
  Poland}

\email{micek@tcs.uj.edu.pl}

\author[W.~T.~Trotter]{William T. Trotter}

\author[R.~Wang]{Ruidong Wang}

\address[W.~T.~Trotter, R.~Wang]{School of Mathematics\\
  Georgia Institute of Technology\\
  Atlanta, Georgia 30332\\
  U.S.A.}

\email{trotter@math.gatech.edu, rwang49@math.gatech.edu}

\author[V.~Wiechert]{Veit Wiechert}

\address[V.~Wiechert]{Institut f\"ur Mathematik\\
  Technische Universit\"at Berlin\\
  Berlin \\
  Germany}

\email{wiechert@math.tu-berlin.de}

\thanks{G.\ Joret was supported by a
DECRA Fellowship from the Australian Research Council.}

\thanks{P.\ Micek is supported by the Mobility Plus program from The Polish Ministry of Science and higher Education.}

\thanks{V.\ Wiechert is supported by the Deutsche Forschungsgemeinschaft within the research training group `Methods for Discrete Structures' (GRK 1408).}

\date{\today}

\subjclass[2010]{06A07, 05C35}

\keywords{Poset, dimension, treewidth}

\begin{abstract}
In 1977, Trotter and Moore proved that a poset has dimension at most $3$ whenever its cover graph is a forest, or equivalently, has treewidth at most $1$. On the other hand, a well-known construction of Kelly shows that there are posets of arbitrarily large dimension whose cover graphs have treewidth $3$. In this paper we focus on the boundary case of treewidth $2$. It was recently shown that the dimension is bounded if the cover graph is outerplanar (Felsner, Trotter, and Wiechert) or if it has pathwidth $2$ (Bir\'o, Keller, and Young). This can be interpreted as evidence that the dimension should be bounded more generally when the cover graph has treewidth $2$. We show that it is indeed the case: Every such poset has dimension at most $1276$.
\end{abstract}

\maketitle

\section{Introduction}

The purpose of this paper is to show the following:

\begin{theorem}\label{thm:main}
Every poset whose cover graph has treewidth at most $2$ has dimension at most $1276$.
\end{theorem}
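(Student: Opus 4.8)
Recall that $\dim(P)$ is the least $d$ such that $\Inc(P)$, the set of ordered incomparable pairs of $P$, can be covered by $d$ \emph{reversible} sets, and that a set $R\subseteq\Inc(P)$ is reversible iff it contains no alternating cycle $(a_1,b_1),\dots,(a_k,b_k)$ with $a_i\le b_{i+1}$ (indices read cyclically). Since every alternating cycle contains a strict one (one with $a_i\not\le b_j$ whenever $j\ne i{+}1$), it suffices to colour $\Inc(P)$ with at most $1276$ colours so that no colour class contains a strict alternating cycle. The plan is: (i) reduce to a clean connected, rooted situation; (ii) split $\Inc(P)$ into a bounded number of parts according to how a pair sits inside a width-$2$ tree decomposition of the cover graph; (iii) attach to each incomparable pair in a part a bounded-length \emph{signature}, obtained by answering a fixed list of questions — each with a bounded number of answers — as one inspects the relevant portion of the decomposition; (iv) show that two pairs with equal signatures cannot be consecutive on a strict alternating cycle, so that signature classes are reversible; (v) count signatures, which is where $1276$ comes from.

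\textbf{Reductions.} The cover graph of a disjoint sum of posets is the disjoint union of the cover graphs, and dimension is governed by the summands, so we may assume $G:=\cover(P)$ is connected. As graphs of treewidth at most $2$ are exactly those with no $K_4$-minor, every block of $G$ is series--parallel; fix a width-$2$ tree decomposition $(T,(B_t)_{t\in V(T)})$ of $G$, root $T$, and fix a BFS layering $V_0,V_1,\dots$ of $G$ from a reference vertex. Note that the usual trick of adding a global minimum to $P$ is unavailable here, as it would in general push the treewidth of the cover graph up to $3$, so the layering plays the role of the "height scale" instead. Rooting $T$ gives each vertex a home node and each tree edge a separator of size at most $3$; comparabilities of $P$ cross layers monotonically, so every relation $x<y$ is witnessed by a directed path in $G$, and it is the interaction of these witnessing paths with the branching structure of $T$ that must be controlled. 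We then partition $\Inc(P)$ by a parameter measuring the "depth" of a canonical witness of the pair; pairs whose witness reaches far down are dispatched by a shorter, largely self-contained argument (a Dilworth-type colouring of elements grouped by layer and home node, combined with the series--parallel structure), leaving only pairs whose entire action is confined to a bounded window of the decomposition.

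\textbf{The signature (main step).} For such a confined pair $(x,y)$, the tree $T$ determines a canonical "meeting region" — the minimal subtree whose bags cover some $x$–$y$ separator — exposing at most three designated separator vertices. The signature then records, in a fixed order: combinatorial position data (a left-or-right bit for $x$ versus $y$ across the region; whether a designated vertex lies below $x$; whether a designated vertex goes below another; the comparability status of a designated pair; whether $xy$ is an edge of $G$), followed by colouring data (the colours, in proper colourings with boundedly many colours chosen once and for all, of the at most three vertices of each bag touched, of an auxiliary bounded-degeneracy "special graph" on the separator vertices, of a clique $K$ recorded at a branch bag, and of an auxiliary "$2$-cycles" graph encoding how pairs of witnessing paths can interleave in the order). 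The point of these choices is that the answers together pin the routing of $(x,y)$ down so tightly that, were $(x,y)$ and $(x',y')$ two pairs with identical signature and consecutive on a strict alternating cycle — say with $x\le y'$ — the witnessing path for $x\le y'$ together with the structure around the common meeting region would be forced to certify incomparability through one and the same bag in two opposite directions, a contradiction. Carrying this out for each part of $\Inc(P)$ and against every way a strict alternating cycle can thread through a \emph{branching} decomposition is the technical heart of the argument.

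\textbf{The main obstacle, and the constant.} The real difficulty, absent in the pathwidth-$2$ case settled by Bir\'o, Keller and Young, is precisely the branching of $T$: a pair may meet at a branch node from which several "sides" emanate, and an alternating cycle can weave among those sides, so a single left/right coordinate no longer determines the routing. Overcoming this is what forces in the extra colouring coordinates, and also requires a structural lemma that only a bounded neighbourhood of the meeting region is ever relevant — longer detours being excluded because they would force a $K_4$-minor, contradicting treewidth $2$. Granting all of this, the number of signatures — roughly $\prod_i\alpha_i$ over the questions actually used, summed over the few parts of $\Inc(P)$ — works out to $1276$; I expect the precise value to reflect only the number and arity of the questions and to be far from optimal.
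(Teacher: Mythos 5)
Your outline captures the overall flavour (a signature attached to each pair, then showing signature classes are reversible), but there is a concrete error at the foundation. You assert that ``the usual trick of adding a global minimum to $P$ is unavailable here, as it would in general push the treewidth of the cover graph up to $3$,'' and propose a BFS layering with a Dilworth-type colouring as a substitute. This is wrong: the paper does use exactly that trick, and it works. One does not literally insert a new global minimum into $P$ (which indeed could raise treewidth); instead one applies the Streib--Trotter unfolding of the minimal and maximal elements from a fixed $a_0\in\min(P)$, picks out the level pair $Q_\ell^{\ell+1}$ (or $Q_\ell^{\ell}$) attaining the maximum $\dim^*$, and replaces the portion of the poset below that window by a single contracted element $q$. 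Contraction is a minor operation, so treewidth at most $2$ is preserved, and the resulting poset has a minimal element below all maximal elements at the cost of a factor of~$2$ in $\dim^*$. This element $a_0$ is then used pervasively: a large fraction of the separator-hitting arguments in the paper hinge on the ``free'' relation $a_0\le b$, which supplies an extra comparability that must pass through the relevant bag. Your proposed BFS-layering substitute is not developed, does not play the same role, and there is no indication that the rest of your plan could be made to go through without the $a_0$ crutch.

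Two further points in the outline do not survive contact with the problem. First, you claim that only a bounded neighbourhood of the meeting region is ever relevant, with longer detours excluded ``because they would force a $K_4$-minor.'' Nothing of the kind holds: strict alternating cycles can thread through arbitrarily long paths of $T$, and the paper never bounds the window; instead it controls long cycles by proving that certain auxiliary graphs on $\MM(P)$ (the graphs $S_\Sigma$, $K_\Sigma$, $J_\Sigma$, $\hat K_\Sigma$) are bipartite or $4$-colourable, which is a global statement about those graphs, not a locality statement about $T$. Second, you say that two pairs with identical signature cannot be consecutive on a strict alternating cycle, so signature classes are reversible. That is not the mechanism: within a signature class of the paper, consecutive pairs on a strict alternating cycle may well exist; what the signature guarantees is that the \emph{root} of any strict alternating cycle and its successor lie in different classes, because the final coordinates of the signature are exactly proper colourings of directed ``root-to-second-pair'' graphs and of the $2$-cycle graph. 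Establishing bipartiteness (or $4$-colourability) of those graphs is the technical heart of the argument, and your sketch contains no version of it. In short, the reduction you reject is both available and essential, your replacement is not fleshed out, and the claimed structural shortcut is unjustified, so the proposal as written does not give a proof.
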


Let us provide some context for our theorem.
Already in 1977, Trotter and Moore~\cite{TM77} showed that if the cover graph of a poset $P$ is a forest
then $\dim(P)\leq 3$ and this is best possible, where  $\dim(P)$ denotes the dimension of $P$.
Recalling that forests are exactly the graphs of treewidth at most $1$, it is natural to
ask how big can the dimension be for larger treewidths.
Motivated by this question, we proceed with a brief survey of relevant results
about the dimension of posets and properties of their cover graphs.

One such result, due to  Felsner, Trotter and Wiechert~\cite{FTW13},
states that if the cover graph of a poset $P$ is outerplanar then $\dim(P)\leq 4$. Again, the bound is best possible.
Note that outerplanar graphs have treewidth at most $2$.
Note also that one cannot hope for a similar bound on the dimension of posets with a planar cover graph.
Indeed, already in 1981 Kelly~\cite{Kel81} presented a family of posets $\set{Q_n}_{n\geq 2}$ with planar cover graphs and $\dim(Q_n)=n$ (see Figure~\ref{fig:Kelly}).
One interesting feature of Kelly's construction for our purposes
is that the cover graphs also have treewidth at most $3$ (with equality for $n \geq 5$),
as is easily verified.
In fact, they even have pathwidth at most $3$ (with equality for $n \geq 4$).

\begin{figure}
\begin{center}
\includegraphics[scale=.6]{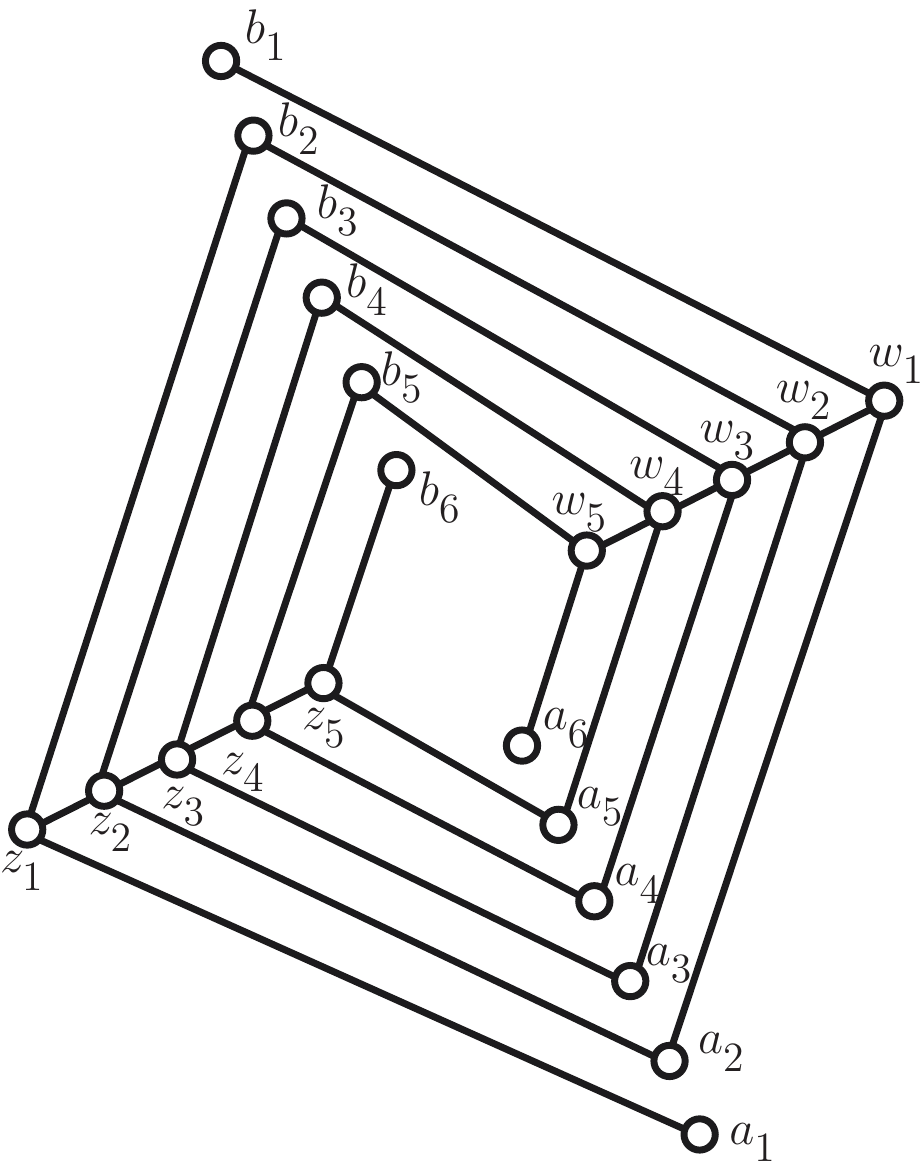}
\end{center}
\caption{\label{fig:Kelly}Kelly's construction of a poset $Q_n$ with a planar cover graph containing the standard example $S_n$
as a subposet, for $n=6$.
(Let us recall that the {\em standard example $S_n$} is the poset on $2n$ elements consisting
of $n$ minimal elements $a_1, \dots, a_n$ and $n$ maximal elements $b_1, \dots, b_n$ which is such that $a_i < b_j$ in $S_n$ if and only if
$i \neq j$.)
The subposet induced by the $a_i$'s and $b_i$'s form $S_{6}$, which has dimension $6$.
The general definition of $Q_n$ for any $n \geq 2$ is easily inferred from the figure.
Since the standard example $S_n$ has dimension $n$, this shows that posets with planar cover graphs have unbounded dimension.
}
\end{figure}

Very recently, Bir\'o, Keller and Young~\cite{Biro} showed that if the cover graph of a poset $P$ has pathwidth at most $2$,
then its dimension is bounded: it is at most $17$.
Furthermore, they proved that
the treewidth of the cover graph of any poset containing the standard example $S_n$ with $n \geq 5$ is at least $3$,
thus showing in particular that Kelly's construction cannot be modified to have treewidth $2$.

To summarize, while the dimension of posets with cover graphs of treewidth $3$ is unbounded, no such
property is known to hold for the case of treewidth $2$, and we cannot hope to obtain it by constructing
posets containing large standard examples.
Moreover, as mentioned above, the dimension is bounded
for two important classes of graphs of treewidth at most $2$, outerplanar graphs and graphs of pathwidth at most $2$.
All this can be interpreted as strong evidence that the dimension should be bounded more generally when the cover graph
has treewidth at most $2$, which is exactly what we prove in this paper.

We note that the bound on the dimension we obtain is large ($1276$), and is most likely far from the truth. Furthermore,
while we strove to make our arguments as simple as possible---and as a result did not try to optimize the bound---the proofs are lengthy and technical.
We believe that there is still room for improvements, and it could very well be that
a different approach would give a better bound and/or more insight into these problems.

We conclude this introduction by briefly mentioning a related line of research.
Recently, new bounds for the dimension were found for certain posets \emph{of bounded height}.
Streib and Trotter~\cite{ST14} proved that for every positive integer $h$, there is a constant $c$ such that
if a poset $P$ has height at most $h$ and its cover graph is planar, then $\dim(P)\leq c$.
Joret, Micek, Milans, Trotter, Walczak, and Wang~\cite{JMMTWW} showed that for every positive integers $h$ and $t$, there is a constant $c$ so that if $P$ has height at most $h$ and the treewidth of its cover graph is at most $t$, then $\dim(P)\leq c$.

These two results are closely related. In particular, one can deduce the result for planar cover graphs from the result for bounded treewidth cover graphs using a `trick'
introduced in~\cite{ST14} that reduces the problem to the special case where there is a special minimal element $a_0$ in the poset
that is smaller than all the maximal elements.
This implies that the diameter of the cover graph is
bounded from above by a  function of the height of the poset, and it is well-known
that planar graphs with bounded diameter have bounded treewidth (see for instance~\cite{Epp00}).
This trick of having a  special minimal element $a_0$ below all maximal elements turned out to be very useful in the context of this paper as well (though for different reasons),
see Observation~\ref{obs:a0} in Section~\ref{sec:preliminaries}.

Finally, we mention that several new results on bounding the dimension of certain posets in terms of their height have recently been obtained~\cite{minors, MW, sparsity},
the interested reader is referred to~\cite{sparsity} for a detailed overview of that area.

The paper is organized as follows.
In Section~\ref{sec:preliminaries} we give the necessary definitions and present a number of reductions,
culminating in a more technical version of our theorem, Theorem~\ref{thm:technical}.
Then, in Section~\ref{sec:proof}, we prove the result.

\section{Definitions and Preliminaries}\label{sec:preliminaries}

Let $P=(X, \leq)$ be a finite poset.
The \emph{cover graph} of $P$, denoted $\cover(P)$, is the graph on the elements of $P$ where two distinct elements $x$, $y$ are adjacent if and only if they are in a cover relation in $P$; that is, either $x < y$ or $x >y$ in $P$, and this relation cannot be deduced from transitivity.
Informally, the cover graph of $P$ can be thought of as its order diagram seen as an undirected graph.
The \emph{dimension} of $P$, denoted $\dim(P)$, is the least positive integer $d$ for which there are $d$ linear extensions $L_1,\ldots,L_d$ of $P$ so that $x\leq y$ in $P$ if and only if $x\leq y$ in $L_i$ for each $i \in \{1,\ldots,d\}$.
We mention that an introduction to the theory of posets and their dimension
can be found in the monograph~\cite{Tro-book} and in the survey article~\cite{Tro-handbook}.

When $x$ and $y$ are distinct elements in $P$, we write $x\inc y$ to denote that $x$
and $y$ are incomparable.
Also, we let $\Inc(P)=\{(x,y) \mid x, y\in X \textrm{ and } x\inc y \textrm{ in } P\}$
denote the set of ordered pairs of incomparable elements in $P$.
We denote by $\min(P)$ the set of minimal elements in $P$ and by $\max(P)$ the set of maximal elements in $P$.
The \emph{downset} of a set $S \subseteq X$ of elements is defined as $D(S)=\set{x\in X\mid \exists s\in S \text{ such that } x\leq s \text{ in }P}$, and similarly we define the \emph{upset} of $S$ to be $U(S)=\set{x\in X\mid \exists s\in S\text{ such that }s\leq x\text{ in }P}$.

A set $I \subseteq \Inc(P)$ of incomparable pairs is \emph{reversible} if there is a linear extension
$L$ of $P$ with $x>y$ in $L$ for every $(x,y)\in I$. It is easily seen that if $P$ is not a chain, then
$\dim(P)$ is the least positive
integer $d$ for which there exists a partition of $\Inc(P)$ into $d$ reversible sets.

A subset $\{(x_i,y_i)\}_{i=1}^k$ of  $\Inc(P)$ with $k \geq 2$ is said to be
an \emph{alternating cycle} if $x_i\le y_{i+1}$ in $P$ for each $i\in \{1, 2, \dots, k\}$, where indices are taken cyclically
(thus $x_k\le y_1$ in $P$ is required).
For example, in the poset $Q_6$ of Figure~\ref{fig:Kelly} the pairs $(a_i,b_i)$, $(a_j,b_j)$ form an alternating cycle of length $2$ for all $i,j\in\{1,\ldots,6\}$ such that $i\neq j$.
An alternating cycle  $\{(x_i,y_i)\}_{i=1}^k$ is
\emph{strict} if, for each $i, j \in \{1,2, \dots, k\}$, we have $x_i\le y_{j}$ in $P$ if and only if $j=i+1$ (cyclically).
Note that in that case
$x_1, x_2, \dots, x_k$ are all distinct, and $y_1, y_2, \dots, y_k$ are all distinct.
Notice also that every non-strict alternating cycle can be made strict by discarding some of its incomparable pairs.

Observe that if $I=\{(x_i,y_i)\}_{i=1}^k$ is an alternating cycle in $\Inc(P)$ then
$I$  cannot be reversed by a linear extension $L$ of $P$. Indeed, otherwise we would
have $y_i < x_i \leq y_{i+1}$ in $L$ for each $i \in \{1,2, \dots, k\}$, which cannot hold cyclically.
Hence, alternating cycles are not reversible. It is easily checked---and this was originally observed
by Trotter and Moore~\cite{TM77}---that every non-reversible subset $I \subseteq \Inc(P)$ contains
an alternating cycle, and thus a strict alternating cycle:
\begin{obs}\label{obs:reversible}
A set $I$ of incomparable pairs of a poset $P$ is reversible if and only if $I$ contains no strict alternating cycle.
\end{obs}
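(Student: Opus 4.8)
The plan is to handle the two implications separately. One of them is essentially already in the text: if $I$ contains a strict alternating cycle $I'$, then $I'$ is in particular an alternating cycle, hence not reversible, and since any linear extension reversing $I$ would also reverse $I'\subseteq I$, $I$ cannot be reversible either. So the real work is the converse, which I would prove in contrapositive form: if $I$ is not reversible, then $I$ contains a strict alternating cycle. By the fact noted earlier---every non-strict alternating cycle can be trimmed to a strict one by discarding incomparable pairs---it suffices to produce an alternating cycle inside $I$, not necessarily strict.

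To find such an alternating cycle, I would pass to an auxiliary digraph $D$ on vertex set $X$: put an arc $a\to b$ whenever $a<b$ in $P$, and an arc $y\to x$ for every pair $(x,y)\in I$. There are no loops, since $(x,y)\in I$ forces $x\ne y$. The point of $D$ is the equivalence: \emph{$D$ is acyclic if and only if $I$ is reversible.} Indeed, a linear extension $L$ of $P$ reversing $I$ orients every arc of $D$ from its $L$-smaller to its $L$-larger endpoint, so $D$ is acyclic; conversely, any topological ordering of an acyclic $D$ is a total order on $X$ that contains all relations of $P$ (these are arcs of $D$) and satisfies $y<x$ for each $(x,y)\in I$, i.e.\ it is a linear extension of $P$ reversing $I$. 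Since $I$ is assumed non-reversible, $D$ contains a directed cycle.

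Now choose a directed cycle $C$ of $D$ with the fewest arcs; then $C$ is simple. The sub-digraph of $D$ formed by the $P$-arcs is acyclic (a directed cycle there would violate transitivity of $P$), so $C$ uses at least one arc of the form $y\to x$ with $(x,y)\in I$; list these as $y_{i_1}\to x_{i_1},\dots,y_{i_k}\to x_{i_k}$ in the order they occur along $C$. Walking along $C$ from the head $x_{i_j}$ of one such arc to the tail $y_{i_{j+1}}$ of the next uses only $P$-arcs, so $x_{i_j}\le y_{i_{j+1}}$ in $P$, indices cyclic. If $k=1$ this gives $x_{i_1}\le y_{i_1}$ in $P$, contradicting $(x_{i_1},y_{i_1})\in\Inc(P)$; hence $k\ge 2$. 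Since $C$ is simple, the pairs $(x_{i_1},y_{i_1}),\dots,(x_{i_k},y_{i_k})$ are distinct, and they satisfy exactly the defining condition of an alternating cycle contained in $I$. This is the desired contradiction.

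I do not anticipate a serious obstacle here, as this is a classical argument. The two points needing care are the verification that a topological ordering of $D$ is genuinely a linear extension of $P$, and the bookkeeping when reading off the relations $x_{i_j}\le y_{i_{j+1}}$ from a shortest directed cycle of $D$---in particular, ruling out $k=1$ via incomparability and confirming that the extracted pairs are distinct.
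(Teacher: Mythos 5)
Your proof is correct, and it supplies something the paper itself does not: the paper only proves the easy direction (an alternating cycle cannot be reversed) and then cites Trotter and Moore for the converse. Your auxiliary-digraph argument is the standard classical route for that converse. The key equivalence ``$D$ acyclic $\iff$ $I$ reversible'' is verified cleanly in both directions, and the extraction of an alternating cycle from a shortest directed cycle $C$ is handled correctly: the $P$-arc subdigraph is acyclic so $C$ must contain $I$-arcs; simplicity of $C$ gives distinct tails for those $I$-arcs and hence distinct pairs; the case $k=1$ is ruled out by incomparability of the single pair; and the paper's earlier remark about trimming a non-strict alternating cycle to a strict one finishes the job. One small point you glossed over but which is harmless: between two consecutive $I$-arcs of $C$ there may be no $P$-arcs at all, in which case $x_{i_j}=y_{i_{j+1}}$ and the required $x_{i_j}\le y_{i_{j+1}}$ holds with equality, as the definition of alternating cycle permits.
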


An incomparable pair $(x,y)$ of a poset $P$
is said to be a \emph{min-max pair} if $x$ is minimal in $P$ and $y$ is maximal in $P$.
The set of all min-max pairs in $P$ is denoted by $\MM(P)$.
Define $\dim^*(P)$ as the least positive integer $t$ such that $\MM(P)$ can be partitioned into $t$ reversible subsets
if $\MM(P)\neq \emptyset$, and as being equal to $1$ otherwise.
For our purposes, when bounding the dimension we will be able to focus on reversing only those
incomparable pairs that are min-max pairs.
This is the content of Observation~\ref{obs:min-max-pairs_general} below.
In order to state this observation formally we first need to recall some standard definitions from graph theory.

By `graph' we will always mean an undirected finite simple graph in this paper.
The \emph{treewidth} of a graph $G=(V,E)$ is the least positive integer $t$ such that there exist a tree $T$
and non-empty subtrees $T_x$ of $T$ for each $x\in V$ such that
\begin{enumerate}
\item $V(T_x)\cap V(T_y)\neq\emptyset$ for each edge $xy\in E$, and
\item $|\set{x\in V\mid u\in V(T_x)}|\leq t+1$ for each node $u$ of the tree $T$.
\end{enumerate}
The \emph{pathwidth} of $G$ is defined as treewidth, except that the tree $T$ is required to be a path.
A graph $H$ is a \emph{minor} of a graph $G$ if $H$ can be obtained from a subgraph of $G$ by contracting edges.
(We note that since we only consider simple graphs, loops and parallel edges resulting from edge contractions are deleted.)
Recall that the class of graphs of treewidth at most $k$ ($k \geq 0$) is closed under taking minors, thus
$\tw(H) \leq \tw(G)$ for every graph $G$ and minor $H$ of $G$.

Given a class $\mathcal{F}$ of graphs, we let $\widehat{\mathcal{F}}$ denote the class of graphs that can be obtained
from a graph $G \in \mathcal{F}$ by adding independently for each vertex $v$ of $G$ zero,
one, or two new pendant vertices adjacent to $v$.
We will use the easy observation that $\widehat{\mathcal{F}} = \mathcal{F}$
when $\mathcal{F}$ is the class of graphs of treewidth at most $k$ (provided $k \geq 1$).
We note that $\widehat{\mathcal{F}} = \mathcal{F}$ holds for other classes $\mathcal{F}$ of interest, such as planar graphs.

The next elementary observation is due to Streib and Trotter \cite{ST14}, who were interested in the case of planar cover graphs. We provide a proof for the sake of completeness.

\begin{obs}\label{obs:min-max-pairs_general}
Let $\mathcal{F}$ be a class of graphs.
If $P$ is a poset with $\cover(P) \in \mathcal{F}$
then there exists a poset $Q$ such that
\begin{enumerate}
\item $\cover(Q) \in \widehat{\mathcal{F}}$, and
\item $\dim(P)\leq\dim^*(Q)$.
\end{enumerate}
\end{obs}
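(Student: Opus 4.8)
The plan is to build $Q$ from $P$ by attaching to each element a private new minimal or maximal element that "witnesses" it, so that the original incomparable pairs of $P$ get encoded as min-max pairs of $Q$. Concretely, for each element $x$ of $P$ I would introduce a new element $x'$, declare $x' < x$ in $Q$ if $x$ is not already minimal in $P$, and $x' > x$ if $x$ is not already maximal in $P$ (if $x$ is both minimal and maximal in $P$, i.e. isolated, we can attach one element on each side, or handle it trivially). Take $Q$ to be the transitive closure of $P$ together with these relations. Then in $Q$ every original element $x$ that is non-minimal in $P$ has the fresh pendant $x'$ strictly below it and nothing else below $x'$, so $x'$ is minimal in $Q$; symmetrically for the maximal side. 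The new elements are exactly pendant vertices in $\cover(Q)$: the only cover relation involving $x'$ is $x'$ with $x$, so $\cover(Q)$ is obtained from $\cover(P)$ by adding at most two pendant vertices at each original vertex, hence $\cover(Q)\in\widehat{\mathcal F}$, which is item (i).

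For item (ii) I would show $\dim(P)\le\dim^*(Q)$ by exhibiting, for each incomparable pair $(x,y)\in\Inc(P)$, a min-max pair of $Q$ that "represents" it, in such a way that a partition of $\MM(Q)$ into $t$ reversible sets pulls back to a partition of $\Inc(P)$ into $t$ reversible sets. Given $(x,y)\in\Inc(P)$, pick a minimal element $x^-$ of $Q$ below (or equal to) $x$ and a maximal element $y^+$ of $Q$ above (or equal to) $y$: if $x$ is minimal in $P$ take $x^-=x$, otherwise $x^-=x'$; dually for $y^+$. One checks $x^-\inc y^+$ in $Q$: if $x^-\le y^+$ in $Q$ then tracing through the construction (the new elements have no comparabilities among themselves and only the trivial one to their host) forces $x\le y$ in $P$, a contradiction; and $y^+\le x^-$ is impossible since $x^-$ is minimal and $y^+$ is maximal in $Q$ and they are distinct. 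So $(x^-,y^+)\in\MM(Q)$. Now, given a linear extension $L$ of $Q$ reversing a set of min-max pairs, its restriction to the elements of $P$ is a linear extension of $P$, and if $L$ puts $y^+<x^-$ then, since $x^-\le x$ and $y\le y^+$ in $Q$, it also puts $y<x$; hence each reversible subset of $\MM(Q)$ used in an optimal partition yields a reversible subset of $\Inc(P)$ once we map $(x,y)\mapsto(x^-,y^+)$ and take preimages. Thus $\Inc(P)$ is covered by $\dim^*(Q)$ reversible sets (when $\MM(Q)=\emptyset$ one notes $P$ must be a chain, so $\dim(P)=1\le\dim^*(Q)$), giving $\dim(P)\le\dim^*(Q)$.

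The one point demanding care — the main obstacle — is verifying that the added relations really do not create spurious comparabilities: one must confirm that in the transitive closure $Q$ the new element $x'$ is comparable only to elements of $U_P(x)\cup D_P(x)\cup\{x\}$ (so that $x'$ is genuinely minimal or maximal in $Q$, and so that $x^-\le y^+$ in $Q$ genuinely entails $x\le y$ in $P$), and that $\cover(Q)$ adds no edges other than the pendant ones — in particular that the edge from $x$ to its old neighbours in $\cover(P)$ is not destroyed and that $x'x$ is a cover relation. This is a routine but slightly fiddly case analysis on whether $x$ was already minimal/maximal in $P$; once it is in place, items (i) and (ii) follow as above.
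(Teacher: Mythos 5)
Your proof is correct and takes essentially the same route as the paper's: attach pendant minimal/maximal witnesses to each element, send each $(x,y)\in\Inc(P)$ to the min-max pair $(x^-,y^+)$ of $Q$, and observe that a linear extension of $Q$ reversing $(x^-,y^+)$ restricts to a linear extension of $P$ reversing $(x,y)$. The only thing to tidy is your opening description, which reuses the single symbol $x'$ for both the pendant below $x$ and the pendant above $x$ (impossible when $x$ is neither minimal nor maximal); using two distinct new elements per vertex, as the paper does with $x'$ and $x''$, resolves this and is consistent with your later notation $x^-$, $y^+$.
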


\begin{proof}

If $P$ is a chain then we set $Q=P$ and the statement can be easily verified.

Otherwise, let $Q$ be the poset constructed from $P$ as follows:
For each non-minimal element $x$ of $P$, add a new element $x'$ below $x$ (and its upset) such that $x'<x$ is the only cover relation involving $x'$ in $Q$.
Also, for each non-maximal element $y$ of $P$, add a new element $y''$ above $y$ (and its downset) such that $y<y''$ is the only cover relation involving $y''$ in $Q$.
Now, the cover graph of $Q$ is the same as the cover graph of $P$
except that we attached up to two new pendant vertices to each vertex.

For convenience, we also define an element $x'$ for each minimal element $x$ of $P$, simply
by setting $x'=x$. Similarly, we let $y''=y$, for each maximal element $y$ of $P$.

Observe that if a set $\LL$ of linear extensions of $Q$ reverses
all min-max pairs of $Q$ then it must reverse all incomparable pairs
of $P$.
Indeed, for each pair $(x,y)\in \Inc(P)$ consider the min-max pair
$(x',y'')$ in $Q$.
 There is some linear extension $L\in \LL$ reversing $(x',y'')$.
 Given that $x' \leq x$ and $y \leq y''$ in $Q$, it follows that $y
\leq y''<x' \leq x$ in $L$.
Hence, restricting the linear orders in $\LL$ to the elements of $P$
we deduce that $L$ reverses all pairs in $\Inc(P)$ so $\dim(P) \leq
|\LL|$ (as $P$ is not a chain).
Therefore, $\dim(P)\leq\dim^*(Q)$.
\end{proof}

As a corollary, for treewidth we obtain:
\begin{obs}\label{obs:min-max-pairs}
For every poset $P$ there exists a poset $Q$ such that
\begin{enumerate}
\item $\tw(\cover(P))=\tw(\cover(Q))$, and
\item\label{item:dimP=dim*Q} $\dim(P)\leq\dim^*(Q)$.
\end{enumerate}
\end{obs}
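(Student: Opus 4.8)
The plan is to derive this statement immediately from Observation~\ref{obs:min-max-pairs_general}. Set $t := \tw(\cover(P))$ and let $\mathcal{F}$ be the class of all graphs of treewidth at most $t$, so that trivially $\cover(P) \in \mathcal{F}$. Since treewidth is a positive integer in the convention adopted here, $t \geq 1$, and therefore $\widehat{\mathcal{F}} = \mathcal{F}$ by the remark preceding Observation~\ref{obs:min-max-pairs_general}. Applying that observation to $P$ with this choice of $\mathcal{F}$ produces a poset $Q$ with $\cover(Q) \in \widehat{\mathcal{F}} = \mathcal{F}$ --- that is, $\tw(\cover(Q)) \leq t = \tw(\cover(P))$ --- and with $\dim(P) \leq \dim^*(Q)$, which is exactly item~\ref{item:dimP=dim*Q}.

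It then remains to check the reverse inequality $\tw(\cover(P)) \leq \tw(\cover(Q))$, which is where the (very light) work lies. I would inspect the specific poset $Q$ constructed in the proof of Observation~\ref{obs:min-max-pairs_general}: it is obtained from $P$ by adjoining, for certain non-minimal elements $x$, a fresh element $x'$ below $x$ and its upset, and for certain non-maximal elements $y$, a fresh element $y''$ above $y$ and its downset. Each adjoined element is minimal (respectively maximal) in $Q$ and participates in exactly one cover relation of $Q$, so its addition neither creates nor destroys any cover relation among the original elements of $P$. Hence $\cover(P)$ is an induced subgraph of $\cover(Q)$; equivalently, $\cover(P)$ is a minor of $\cover(Q)$, obtained by deleting the newly added pendant vertices. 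Since treewidth does not increase under taking subgraphs (or minors), this gives $\tw(\cover(P)) \leq \tw(\cover(Q))$, and combining with the previous paragraph yields item~(i).

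I do not expect any genuine obstacle here: essentially all the content sits in Observation~\ref{obs:min-max-pairs_general}, and the only point that deserves a moment's attention is the already-quoted fact that $\widehat{\mathcal{F}} = \mathcal{F}$ for $\mathcal{F}$ the class of graphs of treewidth at most $t$ with $t \geq 1$, i.e.\ that attaching at most two pendant vertices to each vertex of such a graph keeps the treewidth at most $t$ (a pendant vertex is absorbed by extending a tree-decomposition with one extra bag). The boundary case $t = 1$ (forests) is equally unproblematic, since attaching pendant vertices to a forest leaves a forest.
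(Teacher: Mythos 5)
Your proof takes the same route as the paper (specialize Observation~\ref{obs:min-max-pairs_general} to the class of graphs of treewidth at most $t$), and the extra care you take with the reverse inequality $\tw(\cover(P)) \leq \tw(\cover(Q))$ is correct and, if anything, more explicit than the paper. The one point to flag concerns the borderline case: the paper's own proof distinguishes $\tw(\cover(P)) = 0$ (that is, $P$ is an antichain) and sets $Q = P$ there. You dismissed this possibility by reading the definition of treewidth literally as the least \emph{positive} integer, which the paper does indeed say; but the paper's own treatment shows the authors implicitly allow $\tw = 0$ for edgeless graphs, and under that (standard) convention your invocation of $\widehat{\mathcal{F}} = \mathcal{F}$ would be invalid for $t=0$, since attaching pendant vertices to an edgeless graph raises the treewidth to $1$. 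The fix is exactly what the paper does: observe that in the antichain case $Q = P$ works directly (the construction of Observation~\ref{obs:min-max-pairs_general} adds nothing when $\min(P)=\max(P)=X$), so the gap is harmless but worth noting.
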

\begin{proof}
This follows from Observation~\ref{obs:min-max-pairs_general} if $\tw(\cover(P)) \geq 1$.
If, on the other hand, $\tw(\cover(P)) =0$, then $P$ is an antichain and we can simply take $Q=P$.
\end{proof}

In the next observation we consider posets with disconnected cover graphs.
As expected, we define the \emph{components} of a poset $P$ as the subposets of $P$ induced by the
components of its cover graph.

\begin{obs}\label{obs:disconnected-cover-graph}
If $P$ is a poset with $k \geq 2$ components $C_1,\ldots,C_k$ then either
\begin{enumerate}
 \item $P$ is a disjoint union of chains and we have $\dim(P)=\dim^*(P)=2$, or
 \item $\dim(P)=\max\set{\dim(C_i)\mid i=1,\ldots,k}$ and\\ $\dim^*(P)=\max\set{\dim^*(C_i)\mid i=1,\ldots,k}$.
\end{enumerate}
\end{obs}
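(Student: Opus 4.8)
The plan is to separate $\Inc(P)$ into the pairs lying inside a single component and the \emph{crossing pairs}---those whose two elements lie in distinct components---and to handle the two parts independently. The structural facts that make this work are elementary: since there is no relation at all between elements of distinct components, any two such elements are incomparable, so $\min(P)=\bigcup_i\min(C_i)$, $\max(P)=\bigcup_i\max(C_i)$, the set $\Inc(P)$ is the disjoint union of the sets $\Inc(C_i)$ together with the crossing pairs, and $\MM(P)$ is the disjoint union of the sets $\MM(C_i)$ together with the crossing min--max pairs. The tool I would use repeatedly is that, given linear extensions $M_1,\dots,M_k$ of $C_1,\dots,C_k$ and any linear order of $\{1,\dots,k\}$, concatenating the $M_i$ in that order is a linear extension of $P$; in particular, using the order $1<\dots<k$ and its reverse produces two linear extensions of $P$ that together reverse every crossing pair in both directions (for $x\in C_i$, $y\in C_j$ with $i<j$ the reversed order puts $y$ before $x$, and if $i>j$ the forward order does).

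For the lower bounds I would use that each $C_i$ is an \emph{induced} subposet of $P$, so that restricting to $C_i$ a family of linear extensions realizing $P$, respectively reversing $\MM(P)$, yields $\dim(P)\ge\dim(C_i)$, respectively $\dim^*(P)\ge\dim^*(C_i)$, for every $i$. Moreover $P$ is not a chain, since with $k\ge2$ non-empty components it has incomparable elements, so $\dim(P)\ge2$; and $\MM(P)\neq\emptyset$, since a minimal element of $C_1$ and a maximal element of $C_2$ form a min--max pair. Finally, picking in $C_1$ a minimal element $a_1$ below a maximal element $b_1$, and likewise $a_2\le b_2$ in $C_2$, the crossing min--max pairs $(a_1,b_2)$ and $(a_2,b_1)$ cannot both be reversed by a single linear extension $L$, for that would force $a_1>_L b_2\ge_L a_2>_L b_1\ge_L a_1$; hence $\dim^*(P)\ge2$.

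For the upper bounds: in case (i) every $C_i$ is a chain, so $\Inc(C_i)=\MM(C_i)=\emptyset$ and the two concatenation extensions above already realize $P$ and reverse all of $\MM(P)$, whence $\dim(P)=\dim^*(P)=2$. In case (ii), set $d=\max_i\dim(C_i)$; some component is not a chain, so $d\ge2$. For each $i$ take a realizing family of $C_i$ of size $\dim(C_i)$, pad it to length $d$ by repetition, and form $L_1,\dots,L_d$ by concatenating these lists componentwise, using the forward order of $\{1,\dots,k\}$ in $L_1$ and in $L_3,\dots,L_d$ and its reverse in $L_2$. Restricted to each $C_i$ this family contains a realizer of $C_i$, while $L_1$ and $L_2$ reverse all crossing pairs, so $\{L_1,\dots,L_d\}$ realizes $P$ and $\dim(P)\le d$; combined with the lower bound, $\dim(P)=\max_i\dim(C_i)$. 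The same construction, started from size-$\dim^*(C_i)$ reversible partitions of the sets $\MM(C_i)$ and folding the crossing min--max pairs into the two classes attached to $L_1$ and $L_2$, bounds $\dim^*(P)$ from above.

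The step I expect to require the most care is the count for $\dim^*$ in case (ii): the crossing min--max pairs force two reversible classes while the sets $\MM(C_i)$ force $\max_i\dim^*(C_i)$ of them, so the construction naturally yields $\dim^*(P)=\max\{2,\max_i\dim^*(C_i)\}$; to reach the stated form one must argue that in case (ii) some component already satisfies $\dim^*(C_i)\ge2$, and this is exactly where the hypothesis separating the two cases has to be used with care, since a connected non-chain poset can have $\MM(C_i)=\emptyset$ and hence $\dim^*(C_i)=1$.
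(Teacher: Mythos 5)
Your proof follows the paper's route essentially verbatim: concatenate linear extensions of the $C_i$ to obtain linear extensions of $P$ (padding shorter families by repetition), reverse the concatenation order in one of them to handle cross-component incomparable pairs, and use the fact that each $C_i$ is an induced subposet of $P$ to get the lower bounds.

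The closing worry you voice about $\dim^*$ in case~(ii) is, however, not a point to be resolved with care---it is an actual error in the observation as stated. Take $C_1=\{e\}$ a singleton and $C_2$ the four-element diamond $a<b,c<d$ with $b\inc c$. Then $C_2$ is not a chain, so case~(ii) applies, yet $\MM(C_1)=\MM(C_2)=\emptyset$ and hence $\max_i\dim^*(C_i)=1$; meanwhile $(a,e)$ and $(e,d)$ lie in $\MM(P)$ and form a strict alternating cycle (since $a\leq d$ and $e\leq e$), so $\dim^*(P)\geq 2$. The correct conclusion is $\dim^*(P)=\max\set{2,\max_i\dim^*(C_i)}$ (which also subsumes case~(i)), and that is exactly what your construction proves; it agrees with the paper's formula only when some component has $\dim^*(C_i)\geq 2$, which need not happen. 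The paper's one-line ``the proof for $\dim^*(P)$ goes along the same lines'' glosses over precisely this: the $\dim$ argument relies on $\max_i\dim(C_i)\geq 2$, which does follow from some $C_i$ not being a chain, but the analogous inequality for $\dim^*$ does not. Fortunately the paper's only use of this observation, the disconnected-cover-graph reduction in Observation~\ref{obs:a0}, is unaffected: if $\dim^*(P)=2$ one may take $Q$ derived from any component since $2\dim^*(Q)\geq 2$ trivially, and if $\dim^*(P)>2$ the corrected formula still supplies a component $C$ with $\dim^*(C)=\dim^*(P)$.
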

\begin{proof}
If for each $i\in \{1,\ldots,k\}$ the subposet $C_i$ of $P$ is a chain then it is easy to see that $\dim(P)=\dim^*(P)=2$.
Thus we may assume that this is not the case, that is, $\dim(C_i)\geq2$
for some  $i\in\set{1,\ldots,k}$.

For each $i\in\set{1,\ldots,k}$
let $\calR_i$ be a family of $\dim(C_i)$ linear extensions of $C_i$ witnessing the dimension of $C_i$.
We construct a family $\calR$ of linear extensions of $P$ in the following way.
First, let $\calR := \emptyset$. Then,
as long as there is a set $\calR_i$ which is not empty,
\begin{enumerate}
 \item choose a linear extension $L_i\in \calR_i$ for each $i\in \{1,\ldots,k\}$ such that $\calR_i$ is not empty;
 \item choose any linear extension $L_i$ of $C_i$ for each $i\in \{1,\ldots,k\}$ such that $\calR_i$ is empty;
 \item add to $\calR$ the linear extension $L$ of $P$ defined by $L:=L_1 < L_2 < \ldots < L_k$, and
 \item remove $L_i$ from $\calR_i$ for each $i\in \{1,\ldots,k\}$.
\end{enumerate}
Clearly, $|\calR|=\max\set{\dim(C_i)\mid i\in \{1,\ldots,k\}}$.
Now consider one arbitrarily chosen linear extension $L\in\calR$; say we had
$L=L_1<\ldots<L_k$ when it was defined above,
and replace $L$ by $L':=L_k<\ldots<L_1$ in $\calR$.
It is easy to verify that the resulting family $\calR$ reverses all incomparable pairs in $P$.
In particular, all incomparable pairs of $P$ with elements from distinct components are reversed by $L'$ and any other linear extension in $\calR$ (note there is at least one more as $\dim(C_i)\geq2$ for some $i$). This shows that $\dim(P)=\max\set{\dim(C_i)\mid i\in \{1,\ldots,k\}}$.

The proof for $\dim^*(P)$ goes along the same lines and is thus omitted.
\end{proof}

To prove the next observation we partition the minimal and maximal elements of a poset by `unfolding' the poset from an arbitrary minimal element, and contract some part of the poset into a single element.
This proof idea is due to Streib and Trotter~\cite{ST14}, and is very useful for our purposes.
In~\cite{ST14} it was used in the context of planar cover graphs but it works equally well
for any minor-closed class of graphs.

\begin{obs}\label{obs:a0}
For every poset $P$ there exists a poset $Q$ such that
\begin{enumerate}
\item $\cover(Q)$ is a minor of $\cover(P)$ (and thus in particular $\tw(\cover(Q))\leq\tw(\cover(P))$);
\item\label{item:a0} there is an element $q_0\in\min(Q)$ with $q_0<q$ in $Q$ for all $q\in\max(Q)$, and
\item $\dim^*(P)\leq 2\dim^*(Q)$.
\end{enumerate}
\end{obs}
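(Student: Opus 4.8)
The plan is as follows. By Observation~\ref{obs:disconnected-cover-graph} (together with a direct check for chains and antichains) we may assume $P$ is connected. Fix a minimal element $a_0$ of $P$ and consider the bipartite graph $B$ on $\min(P)\cup\max(P)$ in which $a$ and $b$ are adjacent exactly when they are comparable in $P$. Since $\cover(P)$ is connected, so is $B$: a walk in $\cover(P)$ between two elements can be traced in $B$ by replacing each cover edge $x\lessdot y$ with the path through a minimal element below $x$ and a maximal element above $y$. Let $\lambda(v)$ be the distance in $B$ from $a_0$ to $v$; as $B$ is bipartite with $a_0$ on the minimal side, minimal elements land on even levels and maximal ones on odd levels, and whenever $a\in\min(P)$, $b\in\max(P)$ with $a<b$ in $P$ we have $|\lambda(a)-\lambda(b)|=1$, since adjacent vertices lie on consecutive BFS levels. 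This is the ``unfolding'' of $P$ from $a_0$.

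Next I would split $\MM(P)=\MM^{<}\sqcup\MM^{>}$ according to whether $\lambda(x)<\lambda(y)$ or $\lambda(x)>\lambda(y)$ for the pair $(x,y)$ (the two levels differ, having opposite parity); this is where the factor $2$ will come from, since I will reverse each part using at most $\dim^*(Q)$ linear extensions for a single poset $Q$. The crux is a confinement lemma: any alternating cycle $\{(x_i,y_i)\}_{i=1}^k$ contained in $\MM^{<}$ satisfies $\lambda(y_{i+1})\le\lambda(x_i)+1\le\lambda(y_i)$ for all $i$ (using $x_i<y_{i+1}$, the implication above, and $\lambda(x_i)\le\lambda(y_i)-1$), so cyclically all $\lambda(y_i)$ equal a common odd $\ell$ and all $\lambda(x_i)=\ell-1$; symmetrically for $\MM^{>}$ (common level $\ell$ for the $y_i$, level $\ell+1$ for the $x_i$). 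In particular, a pair of $\MM^{<}$ whose two levels differ by at least $3$ — more generally, any $(x,y)\in\MM^{<}$ with $x$ below no maximal element at level $\lambda(x)+1$ — lies in no alternating cycle of $\MM^{<}$. Collecting these pairs in a set $R^{<}$ (reversible by Observation~\ref{obs:reversible}) and letting $N^{<}_\ell$ be the remaining pairs with $\lambda(y)=\ell$, one sees that no alternating cycle of $\MM^{<}$ touches two different $N^{<}_\ell$ or touches $R^{<}$; hence band-by-band reversible partitions of the $N^{<}_\ell$ glue together (absorbing $R^{<}$ into one class) into a reversible partition of $\MM^{<}$ of size $\max\bigl(1,\max_\ell(\text{cost of }N^{<}_\ell)\bigr)$, and likewise for $\MM^{>}$.

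For a single band ($\ell$ odd) I would build a poset $Q^{<}_\ell$ with $\cover(Q^{<}_\ell)$ a minor of $\cover(P)$, a minimal element $q_0$ below all of its maximal elements, and $N^{<}_\ell\subseteq\MM(Q^{<}_\ell)$ with the relevant comparabilities and incomparabilities inherited from $P$ (so that reversing $N^{<}_\ell$ in $P$ costs at most $\dim^*(Q^{<}_\ell)$). The recipe is to keep a convex region of $P$ around the band and contract its ``$a_0$ side'' into the single element $q_0$: for the $\MM^{>}$ side one takes the union of the downset $W_\ell$ generated by the level-$\ell$ maximal elements and the downset $V_{\le\ell-2}$ generated by all maximal elements of level $\le\ell-2$ — this is a downset of $P$, so its cover graph is an induced subgraph of $\cover(P)$, and $V_{\le\ell-2}$ is connected (each such maximal element joins to $a_0$ inside $V_{\le\ell-2}$ along a $B$-path realized by cover paths below the maximal elements involved) — and contracts $V_{\le\ell-2}$ to $q_0$; one then checks that the maximal elements of the result are precisely the level-$\ell$ maximal elements, that $q_0$ lies below each of them (each is above a level-$(\ell-1)$ minimal element, which belongs to $V_{\le\ell-2}$), that the level-$(\ell+1)$ minimal elements of the band survive as minimal elements, and that incomparabilities are preserved. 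Finally I would let $Q$ be whichever of the $Q^{<}_\ell,Q^{>}_\ell$ maximizes $\dim^*$ (or a one-element poset if all the relevant $N_\ell$ vanish); since $\dim^*$ of a disconnected poset is the maximum over its components by Observation~\ref{obs:disconnected-cover-graph} and $\cover(Q)$ is a minor of $\cover(P)$, this gives $\dim^*(P)\le 2\dim^*(Q)$ with all the required properties.

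The hard part is building the band-posets in \emph{both} directions. The $\MM^{>}$ case is the clean one; in the $\MM^{<}$ case the relevant minimal elements sit at the \emph{lower} level of the band, so contracting the $a_0$-side either swallows exactly those minimal elements or pushes $q_0$ below them, and it is not obvious how to produce a single $q_0$ below \emph{all} maximal elements while keeping the pairs alive. Reconciling the asymmetric requirement in~(ii) — a minimal element below all maximal ones, not its dual — with that case is where the real work lies, and it presumably forces passing to the dual poset together with a more careful choice of the contracted region. A secondary nuisance is that these convex regions, and the parts being contracted, may be disconnected, so one must argue component by component (again via Observation~\ref{obs:disconnected-cover-graph}) for ``contract into a single element'' to make sense.
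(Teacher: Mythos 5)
Your approach matches the paper's in substance: the BFS levels $\lambda$ in the bipartite min/max-comparability graph are precisely the paper's unfolding $A_0,B_1,A_1,\dots$; the split into $\MM^{<}$ and $\MM^{>}$ is the paper's dichotomy $j\ge i+1$ versus $j\le i$ for $(a,b)$ with $a\in A_i$, $b\in B_j$; the band posets are the paper's $Q_i^{i+1}$ and $Q_i^{i}$ with a contracted apex; and the factor $2$ enters identically. Your confinement lemma is a clean repackaging of the band-by-band step (the paper instead stacks linear extensions of the bands directly, which handles your $R^{<}$ in the same stroke), and it is correct.

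The gap you flag for $\MM^{<}$ is genuine, and your guess about how to close it — dualize — is exactly what the paper does; no cleverer choice of region is needed. In an $\MM^{<}$ band $\ell$ (minimals at level $\ell-1$, maximals at level $\ell$), contract the $a_0$-side just as in your $\MM^{>}$ construction. Because every band minimal $a$ at level $\ell-1$ lies \emph{below} its BFS parent, which is a maximal at level $\ell-2$ and therefore gets contracted, the contracted vertex $q$ ends up \emph{above} every minimal of the band poset $Q$; that is, $q$ is a maximal element above all minimals, and $\cover(Q)$ is still a minor of $\cover(P)$. Now pass to the dual $Q^{*}$: it has $q$ as a minimal element below every maximal, $\cover(Q^{*})=\cover(Q)$, and $\dim^{*}(Q^{*})=\dim^{*}(Q)$ since $(x,y)\mapsto(y,x)$ is a bijection from $\MM(Q)$ to $\MM(Q^{*})$ carrying reversible sets to reversible sets. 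So the asymmetry in condition~(ii) is harmless. (As it happens, the paper writes out exactly the case you leave open and hands the one you treated to the reader.) Without this final dualization, however, your argument does not yet produce a poset $Q$ satisfying~(ii) when the maximum is attained on the $\MM^{<}$ side.
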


\begin{proof}
First of all, we note that it is enough to prove the statement in the case where
$\cover(P)$ is connected. Indeed, if $\cover(P)$ is disconnected then by Observation \ref{obs:disconnected-cover-graph} either $P$ is a disjoint union of chains and $\dim(P)=\dim^*(P)=2$, in which case the observation is trivial, or $\dim^*(P)=\max\set{\dim^*(C)\mid C \text{  component of $P$}}$ and we can simply consider a component $C$ of $P$ with $\dim^*(P)=\dim^*(C)$.

From now on we suppose that $\cover(P)$ is connected.
We are going to build a small set of linear extensions of $P$ reversing all min-max pairs of $P$.
Partition the minimal and maximal elements of $P$ as follows.
Choose an arbitrary element $a_0\in \min(P)$, let $A_0=\set{a_0}$, and
for $i = 1,2, 3, \dots$ let
\begin{align*}
 B_i&=\set{b\in\max(P)-\bigcup_{1 \leq j<i} B_j \mid \text{there exists } a\in A_{i-1} \text{ with } a<b \text{ in } P},\\
 A_i&=\set{a\in\min(P)-\bigcup_{0 \leq j<i} A_j \mid \text{there exists } b\in B_{i} \text{ with } a<b \text{ in } P}.
\end{align*}
Let $k$ be the least index such that $A_k$ is empty.
See Figure~\ref{fig:contraction1} for an illustration.
The fact that each minimal and maximal element of $P$ is included in one of the sets defined above follows from the connectivity of $\cover(P)$.
If $k=1$ then $a_0$ is below all maximal elements of $P$ and hence $P$ itself satisfies conditions (i)-(iii).
So we may assume $k\geq 2$ from now on.

\begin{figure}[t]
 \centering
 \includegraphics[scale=0.5]{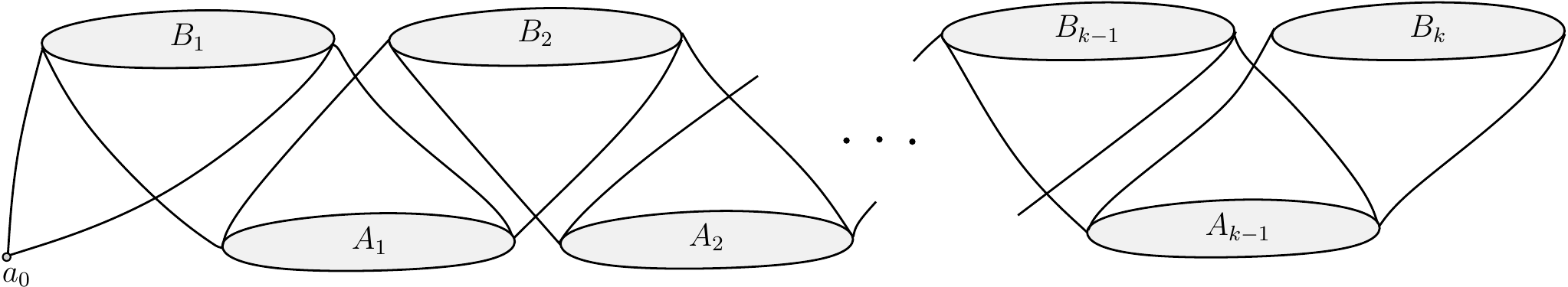}
 \caption{Schematic drawing of $P$ and the sets $A_0,A_1,\ldots,A_{k-1}$ and $B_1,\ldots,B_k$.}
 \label{fig:contraction1}
\end{figure}

Let $Q_i^{i+1}$ be the poset on the set of elements $X_i^{i+1}=A_i\cup B_{i+1}\cup \big(U(A_i)\cap D(B_{i+1})\big)$ with order relation inherited from $P$.
Figure~\ref{fig:contraction2} illustrates this definition.
Let
\[
 t=\max\set{\dim^*(Q_i^{i+1})\mid i=0,\ldots,k-1}.
\]
For each $i\in\set{0,\ldots,k-1}$ consider $t$ linear extensions $L_1^i,\ldots,L_t^i$ of $Q_i^{i+1}$ that reverse all pairs from the set $\MM(Q_i^{i+1})$.
Combining these we define $t$ linear extensions of $P$.
For $j\in\set{1,\ldots,t}$ let $L_j$ be a linear extension of $P$ that contains the linear order
\[
 L_j^{k-1}<\cdots<L_j^1<L_j^0.
\]
Then, $L_1,\ldots,L_t$ reverse all pairs $(a,b)\in\MM(P)$ with $a\in A_i$ and $b\in B_j$ where $j\geq i+1$.
In a similar way we are able to reverse the pairs where $j\leq i$.

Let $Q_i^i$ be the poset on the set of elements $X_i^i=A_i\cup B_i\cup \big(U(A_i)\cap D(B_i) \big)$ being ordered as in $P$.
We set $t'=\max\set{\dim^*(Q_i^i)\mid i\in\set{1,\ldots,k-1}}$ and for each $i\in\set{1,\ldots,k-1}$ we fix $t'$ linear extensions $L_1^i,\ldots,L_{t'}^i$ of $Q_i^i$ reversing all pairs from $\MM(Q_i^i)$.
Again, we combine these to obtain linear extensions of $P$.
For $j\in\set{1,\ldots,t'}$ let $L_j'$ be a linear extension of $P$ that contains the linear order
\[
 L_j^1<L_j^2<\cdots<L_j^{k-1}.
\]
Clearly, $L_1'\ldots,L_{t'}'$ reverse all pairs $(a,b)\in\MM(P)$ with $a\in A_i$ and $b\in B_j$ where $j\leq i$.
It follows that $L_1,\ldots,L_t,L_1'\ldots,L_{t'}'$ reverse the set $\MM(P)$ and hence $\dim^*(P)\leq t+t'$.

Now suppose first $t> t'$, so in particular $t>1$.
Then let $\ell\in\set{0,\ldots,k-1}$ such that $t=\dim^*(Q_\ell^{\ell+1})$.
Note that we must have $\ell\geq 1$ since $\dim^*(Q_0^1)=1<t$.
We define $Q$ to be the poset that is obtained from $Q_\ell^{\ell+1}$ by adding an extra element $q$ which is such that $q>x$ for all $x\in X_\ell^{\ell+1}\cap D(B_\ell)$, and incomparable to all other elements of $Q_\ell^{\ell+1}$
(here we need $\ell\geq 1$ so that $B_\ell$ exists).
In particular, $q>a$ for all $a\in A_\ell$.
Observe that the cover graph of $Q$ is an induced subgraph of $\cover(P)$ with an extra vertex $q$ linked to some of the other vertices.
Here, $q$ can be seen as the result of the contraction of the connected set $\bigcup_{1\leq j\leq \ell} D(B_j) - X_\ell^{\ell+1}$ plus the deletion of some
of the edges incident to the contracted vertex (see Figure~\ref{fig:contraction2} with $i=\ell$, dashed edges indicate deletions).
The deletion step is necessary, as after the contraction it might be that some edges incident to $q$ do not correspond to cover relations anymore.
It follows that $\cover(Q)$ is a minor of $\cover(P)$.
Furthermore, it holds that
\[
\dim^*(P)\leq 2t= 2\dim^*(Q_\ell^{\ell+1})\leq 2\dim^*(Q).
\]
Therefore, the dual of $Q$ satisfies conditions (i)-(iii).

The case $t'\geq t$ goes along similar lines as in the first case (with the slight difference that we do not need to exclude the subcase $t'=1$).
We leave the details to the reader.
\end{proof}

\begin{figure}[t]
 \centering
 \includegraphics[scale=0.75]{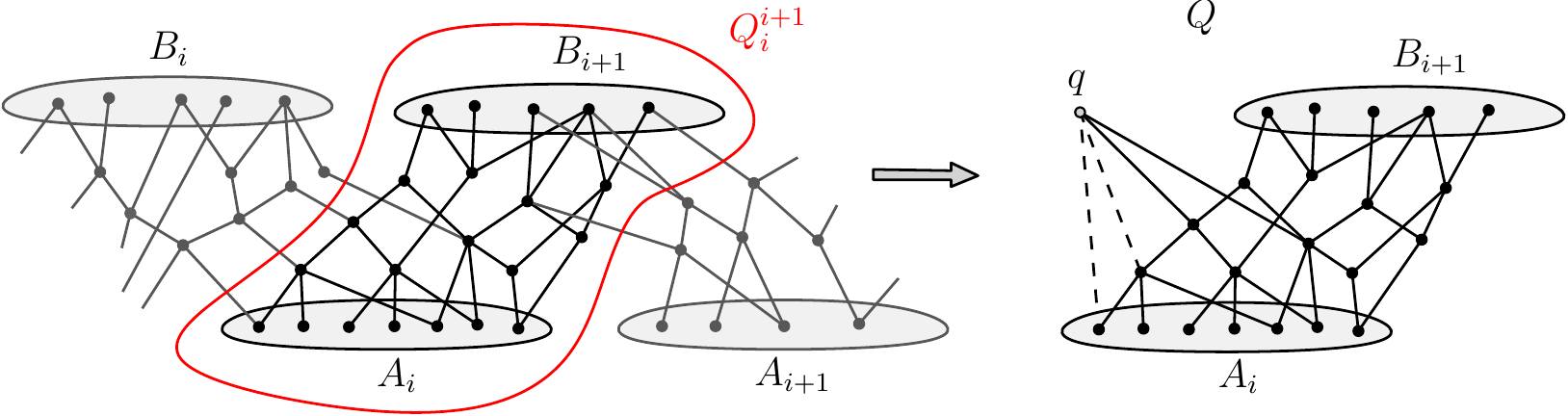}
 \caption{Definition of $Q_i^{i+1}$ and construction of $Q$ with its cover graph.}
 \label{fig:contraction2}
\end{figure}

Applying these observations we move from Theorem~\ref{thm:main} to a more technical statement.

\begin{theorem}\label{thm:technical}
Let $P$ be a poset with
\begin{enumerate}
\item a cover graph of treewidth at most $2$, and
\item a minimal element $a_0\in\min(P)$ such that $a_0 < b$ for all $b\in\max(P)$.
\end{enumerate}
Then the set $\MM(P)$ can be partitioned into $638$ reversible sets.
\end{theorem}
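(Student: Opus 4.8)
The plan is to fix, once and for all, a tree decomposition $(T,\{B_u\}_{u\in V(T)})$ of $\cover(P)$ of width at most $2$, so every bag $B_u$ has at most three elements, rooted at a node whose bag contains $a_0$; for an element $x$ we write $t(x)$ for the node of the subtree $T_x$ closest to the root. Recall that a graph has treewidth at most $2$ precisely when it has no $K_4$ minor, equivalently when each of its blocks is series--parallel, so that inside a block any two vertices are separated by a nested family of $2$-element cuts; this is the structural backbone of the argument. The hypothesis $a_0<b$ for all $b\in\max(P)$ is used to orient everything away from $a_0$: for each maximal $b$ I would fix a saturated chain from $a_0$ up to $b$ in $P$ (a path in $\cover(P)$ issuing from $a_0$), and similarly a saturated chain going up from each minimal $a$. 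Together with the tree decomposition these data attach to every min--max pair $(a,b)$ a \emph{signature}: a bounded packet recording the relative position in $T$ of $t(a)$, $t(b)$ and the $2$-cut that (in the relevant block) separates $a$ from $b$, on which side of that cut each of $a$ and $b$ lies, the comparabilities of $a$ and $b$ with the one or two elements of the cut, and which \emph{parallel component} --- a maximal family of internally disjoint paths through a fixed $2$-cut --- the pair is trapped in, if any.

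First I would partition $\MM(P)$ according to the coarse part of the signature: whether $t(a)$ is an ancestor of $t(b)$ in $T$, a descendant, or neither, and in the last case into which subtrees below their meet $a$ and $b$ fall. Fixing such a class, reversibility reduces by Observation~\ref{obs:reversible} to the absence of a strict alternating cycle $\{(x_i,y_i)\}_{i=1}^k$ inside it. Such a cycle forces the cyclic comparabilities $x_i\le y_{i+1}$, witnessed by saturated chains in $P$; I would argue that once the pairs share a signature these witnessing chains, together with the chosen chains from $a_0$, would have to be routed around a closed configuration in $\cover(P)$ that a $K_4$-minor-free graph cannot contain, so that tracing the chains through the tree decomposition and through the separating $2$-cuts forces two of the $x_i$ (or two of the $y_i$) to coincide, contradicting strictness, or else produces a short comparability collapsing the cycle. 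This is the analogue at treewidth $2$ of the Trotter--Moore argument ruling out alternating cycles when $\cover(P)$ is a forest.

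The cases where this clean argument breaks down are exactly those governed by a parallel component: a maximal family $\Pi$ of internally disjoint $a_0$-to-$b$ paths through a $2$-cut $\{u,v\}$. The part of $P$ living between $u$ and $v$ then behaves like a small poset in its own right, and the min--max pairs trapped inside $\Pi$ cannot be separated by the global structure. I would handle these by recursion: either the instance between $u$ and $v$ is strictly smaller, allowing an induction, or one strips off a level of nesting and applies one of several auxiliary colorings --- a coloring of the tree of $2$-cuts, a coloring of the comparability pattern between the trapped elements and $\{u,v\}$, and a coloring reflecting how the distinguished chains enter the component --- each using only a fixed number of colors. Running through the finitely many shapes a signature can take and the colorings needed for each, one assigns each shape a constant $\alpha_i$, checks that the pairs with a fixed signature split into at most $\alpha_i$ reversible sets, and finds that the sum of the $\alpha_i$ is $638$.

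The main obstacle, and the reason for both the length of the proof and the size of the constant, is precisely the control of parallel components and their nesting. At treewidth $3$ Kelly's posets $Q_n$ show this nesting can be genuinely unbounded, so the argument must exploit that treewidth $2$ permits only $2$-cuts, which forces the parallel components into a tree-like rather than grid-like arrangement and hence makes them colorable with boundedly many colors; designing the signature so that it takes only finitely many values, so that each value's class visibly splits into boundedly many reversible sets, and so that the recursion on trapped pairs terminates, is where essentially all of the work lies.
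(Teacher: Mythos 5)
Your skeleton matches the paper's at the coarsest level: fix a width-$2$ tree decomposition rooted so that $a_0$ lies in the root bag, assign each min--max pair a bounded ``signature,'' and show (via Observation~\ref{obs:reversible}) that each signature class is reversible. But almost all of the load-bearing content is missing or would fail. Most seriously, the recursion you propose on nested parallel components does not work: the nesting depth of $2$-cuts in a treewidth-$2$ graph is unbounded (arbitrarily long nested series--parallel compositions), so an argument of the form ``either the instance between $u$ and $v$ is strictly smaller, allowing an induction, or we strip one level of nesting and spend a constant number of colors'' produces a color budget that grows with the depth of nesting, not a constant. The paper avoids recursion entirely: it never reasons by induction on $P$ or on the tree decomposition. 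Similarly, the claim that a strict alternating cycle whose pairs share a signature would route the witnessing chains into a forbidden $K_4$ minor is asserted but not argued; the paper's reversibility arguments never invoke $K_4$-minor-freeness directly. Instead they use Observation~\ref{obs:hitting-vertex-or-edge} (a relation $x\le y$ hits every bag, and every $2$-element edge-intersection of bags, along the $T$-path between bags of $x$ and $y$), plus pigeonhole on those $2$-element sets, to force a shortcut $a_i\le b_j$ with $j\neq i+1$, contradicting strictness.

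The specific ingredients the paper uses, and which your proposal does not contain, are: (1)~a planar embedding of $T$ to define a left/right order on incomparable nodes, giving a canonical ``root'' pair of any alternating cycle; (2)~a proper $3$-coloring $\phi$ of the chordal intersection graph of the subtrees $T_x$, used so that equality of color forces equality of element inside a fixed bag; (3)~carefully chosen labelings $\{x_{ab},y_{ab},z_{ab}\}$ of $B(u_{ab})$ together with comparability questions ($a\le z_{ab}$?, $z_{ab}\le b$?, $a_0\le x_{ab}$?); and (4)~several auxiliary graphs on min--max pairs ($S_\Sigma$, $K_\Sigma$, $J_\Sigma$, $\hat K_\Sigma$) proved bipartite or $4$-colorable, whose proper colorings kill the remaining strict alternating cycles of length $2$ and of length $\ge 3$ separately. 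Your proposal also fixes one saturated chain from $a_0$ to each maximal element; the paper does not need this and works with all relations hitting bags, which is essential for the pigeonhole steps. Finally, the figure $638$ is not derived in your proposal; in the paper it is the explicit sum $2 + 2\cdot 6 + 2\cdot 6\cdot 2\cdot 2 + 2\cdot 6\cdot 6\cdot 4\cdot 2 = 638$ read off the signature tree, so without the concrete signature design there is no way to reproduce it.
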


In order to deduce Theorem~\ref{thm:main} from Theorem~\ref{thm:technical} consider any poset $P$ with cover graph of treewidth at most $2$.
By Observation~\ref{obs:min-max-pairs} there is a poset $Q$ with $\tw(\cover(Q))\leq2$ and $\dim(P)\leq\dim^*(Q)$.
Now by Observation~\ref{obs:a0} and applying Theorem \ref{thm:technical} there is a poset $R$ with $\tw(\cover(R))\leq2$, a minimal element $a_0\in\min(R)$ such that $a_0<b$ for all $b\in\max(R)$, and
\[
\dim(P)\leq \dim^*(Q)\leq 2\dim^*(R)\leq 2\cdot638 = 1276,
\]
as desired.

From now on we focus on the proof of Theorem~\ref{thm:technical}.
Let $P=(X,\leq)$ be a poset fulfilling the conditions of Theorem~\ref{thm:technical}.
Consider a tree decomposition of width at most $2$ of $\cover(P)$, consisting of a tree $T$
and subtrees $T_x$ for each $x\in X$.
We may assume that the width of the decomposition is exactly $2$, since otherwise
$\dim(P) \leq 3$ by the result of Trotter and Moore~\cite{TM77}, and the theorem follows trivially.

For each node $u$ of $T$ let $B(u)$ denote its \emph{bag}, namely, the set $\set{x\in X\mid u\in V(T_x)}$.
Since the tree decomposition has width $2$, every bag has size at most $3$, and at least one bag has size exactly $3$.
Modifying the tree decomposition if necessary, we may suppose that every bag has size $3$.
Indeed, say $uv$ is an edge of $T$ with $|B(u)|=3$ and $|B(v)|\leq 2$. Then choose
arbitrarily $3 - |B(v)|$ elements from $B(u) \setminus B(v)$ and add them to $B(v)$.
Repeating this process as many times as necessary, we eventually  ensure that every bag has size $3$.
Note that the subtrees $T_x$ ($x\in X$) of the tree decomposition are uniquely determined by
the bags, and vice versa; thus, it is enough to specify how $T$ and the bags are modified.
The above modification repeatedly adds leaves to some of the subtrees $T_x$ ($x\in X$), which clearly
keeps the fact that $T$ and the subtrees $T_x$ ($x\in X$) form a tree decomposition of $\cover(P)$.

Recall that, by the assumptions of Theorem \ref{thm:technical}, the poset $P$ has a minimal element $a_0$ with $a_0<b$ for all $b\in\max(P)$. This implies that the cover graph of $P$ is connected.
Using this, we may suppose without loss of generality that $|B(u) \cap B(v)| \geq 1$
for each edge $uv$ of $T$. For if this does not hold, then the bags of one of the two components of $T -uv$
are all empty (as is easily checked),
and thus the nodes of that component can be removed from $T$ without affecting the tree decomposition.

In fact, we may even assume that $|B(u) \cap B(v)|=2$  holds for every edge $uv$ of $T$.
To see this, consider the following iterative modification of the tree decomposition:
Suppose that  $uv$ is an edge of $T$ such that $t:= |B(u) \cap B(v)| \neq 2$.
If $t=3$ then simply identify $u$ and $v$, and contract the edge $uv$ in $T$.
If $t =1$ then subdivide the edge $uv$ in $T$ with a new node $w$, and
let the bag $B(w)$ of $w$ be the set $(B(u) \cap B(v)) \cup \{x,y\}$, where $x$ and $y$ are
arbitrarily chosen elements in $B(u) \setminus B(v)$ and $B(v) \setminus B(u)$, respectively.
These modifications are valid,
in the sense that the bags still define a tree decomposition of $\cover(P)$ of width $2$, and in order to ensure the desired property
it suffices to apply them iteratively until there is no problematic edge left.

To summarize, in the tree decomposition we have $|B(u)| = 3$ for every node $u$ of $T$,
and $|B(u) \cap B(v)|=2$  for every edge $uv$ of $T$. We will need to further refine our tree decomposition so as
to ensure a few extra properties. These changes will be explained one by one below.
Let us mention that we will keep the fact that $|B(u) \cap B(v)|=2$  for every edge $uv$ of $T$, and
that $|B(u)| = 3$ for every {\em internal} node $u$ of $T$. However, we will add
new leaves to $T$ having bags of size $2$ only.

Choose an arbitrary node $r' \in V(T)$ with $a_0\in B(r')$. Add a new node $r$ to $T$ and make it adjacent to $r'$.
The bag $B(r)$ of $r$ is defined as the union of $a_0$ and one arbitrarily chosen element from $B(r') - \{a_0\}$.
(Observe that the size of $B(r)$ is only $2$; on the other hand, we do have $|B(r) \cap B(r')|=2$.)
We call $r$ the \emph{root} of $T$, and thus see $T$ as being rooted at $r$.
(For a technical reason we need the root to be a leaf of $T$, which explains why we set it up this way.)
Every non-root node $u$ in $T$ has a \emph{parent} $\p(u)$ in $T$, namely, the neighbor
of $u$ on the path from $u$ to $r$ in $T$.
Now we have an order relation on the nodes of $T$, namely $u\leq v$ in $T$ if $u$ is on the path from $r$ to $v$ in $T$.
The following observation will be useful later.

\begin{obs}\label{obs:comp-nodes}
If $v_1,\ldots,v_n$ is a sequence of nodes of $T$ such that consecutive nodes are comparable in $T$
(that is $v_i\leq v_{i+1}$ or $v_{i+1}\leq v_i$ in $T$ for each $i \in \set{1,\ldots,n-1}$), then there is an index $j\in\set{1,\ldots,n}$ such that $v_j\leq v_i$ in $T$ for each $i \in \set{1,\ldots,n}$.
\end{obs}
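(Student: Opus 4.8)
The plan is to argue by induction on $n$, using essentially the only structural fact about the rooted tree $T$ that is needed: the set of ancestors of a fixed node forms a \emph{chain} in the tree order. Concretely, if $u_1\leq v$ and $u_2\leq v$ in $T$, then both $u_1$ and $u_2$ lie on the unique path from $r$ to $v$, so one of them lies on the subpath from $r$ to the other, and hence $u_1$ and $u_2$ are comparable in $T$. I would record this as a one-line preliminary remark before starting the induction.

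For the induction itself, the base case $n=1$ is immediate: take $j=1$. Assume the statement holds for all such sequences of length $n-1$, and let $j'\in\set{1,\ldots,n-1}$ be an index with $v_{j'}\leq v_i$ in $T$ for every $i\in\set{1,\ldots,n-1}$. By hypothesis $v_{n-1}$ and $v_n$ are comparable. If $v_{n-1}\leq v_n$, then $v_{j'}\leq v_{n-1}\leq v_n$, so $v_{j'}\leq v_i$ for all $i\in\set{1,\ldots,n}$ and we may keep $j=j'$. If instead $v_n\leq v_{n-1}$, then $v_{j'}$ and $v_n$ are both ancestors of $v_{n-1}$, hence comparable by the preliminary remark. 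In the subcase $v_{j'}\leq v_n$ the index $j'$ still works; in the subcase $v_n\leq v_{j'}$ we get $v_n\leq v_{j'}\leq v_i$ for every $i\in\set{1,\ldots,n-1}$ and also $v_n\leq v_n$, so $j=n$ works. This exhausts the cases and completes the induction.

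I do not expect a genuine obstacle here; the statement is elementary once one works in the right generality. The only subtlety worth flagging is that one should \emph{not} try to take $v_j$ to be a node of minimum depth (distance from $r$), since such a node need not be an ancestor of all the others — the comparabilities are only between \emph{consecutive} terms, and it is the chain property of ancestor sets, combined with the inductive bookkeeping above, that propagates a single common lower bound along the whole sequence.
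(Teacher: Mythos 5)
Your proof is correct and takes essentially the same inductive approach as the paper's own argument: induct on $n$, apply the hypothesis to $v_1,\ldots,v_{n-1}$, and handle the two cases according to whether $v_{n-1}\leq v_n$ or $v_n\leq v_{n-1}$. Your version is slightly more explicit in isolating the ``ancestors form a chain'' fact as a preliminary remark and in spelling out the two subcases, but the substance is identical.
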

\begin{proof}
 We prove this by induction on $n$.
For $n=1$ it is immediate.
So suppose that $n>1$.
Then we can apply the induction hypothesis on the sequence $v_1,\ldots,v_{n-1}$ and get $j\in\set{1,\ldots,n-1}$ such that $v_j\leq v_i$ for each $i\in \set{1,\ldots,n-1}$.
As $v_{n-1}$ and $v_n$ are comparable in $T$, we have $v_{n-1}\leq v_n$ or $v_n\leq v_{n-1}$ in $T$.
In the first case we conclude $v_j\leq v_{n-1}\leq v_n$ in $T$ and we are done.
In the second case we have $\set{v_j,v_n}\leq v_{n-1}$ in $T$, which makes $v_j$ and $v_n$ comparable in $T$.
But clearly, from this it follows that $v_j\leq v_i$ in $T$ for each $i\in \set{1,\ldots,n}$ or $v_n\leq v_i$ in $T$ for each $i\in \set{1,\ldots,n}$.
\end{proof}

Fix a planar drawing of the tree $T$ with the root $r$ at the bottom.
Suppose that $v$ and $v'$ are two nodes of $T$ that are incomparable in $T$.
Take the maximum node $u$ (with respect to the order in $T$) such that $u\leq v$ and $u\leq v'$ in $T$.
We denote this node by $v\land v'$.
Observe that $u$ has degree at least $2$ in $T$, and hence is distinct from the root $r$.
(Ensuring this is the reason why we made sure that the root $r$ is a leaf.)
Consider the edge $p$ from $u$ to $\p(u)$, the edge $e$ from $u$ towards $v$ and the edge $e'$ from $u$ towards $v'$.
All these edges are distinct. If the clockwise order around $u$ in the drawing is $p,e,e'$ for these three edges,
then we say that $v$ is \emph{to the left} of $v'$ in $T$, otherwise
the clockwise order around $u$ is $p,e',e$ and we say that $v$ is \emph{to the right} of $v'$ in $T$.
Observe that the relations  ``is left of in $T$'' and ``is right of in $T$'' both induce a linear order
on any set of nodes which are pairwise incomparable in $T$.

\begin{obs}\label{obs:left-right}
 Let $v$ and $v'$ be incomparable nodes in $T$ with $v$ left of $v'$ in $T$, and let $u:=v\land v'$.
If $w$ and $w'$ are the neighbors of $u$ on the paths towards $v$ and $v'$ in $T$, respectively, then for each node $c$ in $T$ we have that
\end{obs}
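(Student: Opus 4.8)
The plan is to prove the statement by a case analysis on the position of $c$ relative to the node $u=v\land v'$ in the tree order of $T$, reading off in each case the left/right status of $c$ with respect to $v$ and $v'$ straight from the definitions and the fixed planar drawing. Recall that $w$ and $w'$ are the neighbours of $u$ on the paths towards $v$ and $v'$, that the edge $p$ from $u$ to its parent is distinct from the edges $uw$ and $uw'$, and that ``$v$ is left of $v'$'' means exactly that the clockwise order of these three edges around $u$ is $p$, $uw$, $uw'$.

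I would distinguish three cases for $c$. \textbf{Case 1: $c\leq u$ in $T$.} Then $c\leq u\leq v$ and $c\leq u\leq v'$, so $c$ lies below both $v$ and $v'$ and no left/right comparison arises. \textbf{Case 2: $c$ is incomparable to $u$ in $T$.} Put $a:=c\land u$; the paths in $T$ from $a$ to $v$ and from $a$ to $v'$ both run through $u$, so (using that the relevant meets are comparable to $u$, which is essentially Observation~\ref{obs:comp-nodes}) one gets $a=c\land v=c\land v'$, and the neighbour of $a$ on the path towards $v$ coincides with the one towards $v'$, both being the neighbour of $a$ towards $u$. Hence the clockwise test at $a$ deciding ``$c$ left of $v$'' is literally the same test deciding ``$c$ left of $v'$'', so $c$ is left of $v$ if and only if it is left of $v'$ (and similarly with ``right of''). \textbf{Case 3: $u<c$ in $T$.} The path from $r$ to $c$ leaves $u$ along a unique edge $f\neq p$. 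If $f=uw$, i.e.\ $w\leq c$, then $v'$ leaves $u$ along $uw'\neq f$, so $c\land v'=u$, and the clockwise order around $u$ of the parent edge, the edge towards $c$ (namely $uw$) and the edge towards $v'$ (namely $uw'$) is $p$, $uw$, $uw'$, which by definition means $c$ is left of $v'$. Symmetrically, $f=uw'$ (i.e.\ $w'\leq c$) gives $c$ right of $v$. If $f$ is a third edge at $u$, then $c\land v=c\land v'=u$, and the clockwise position of $f$ among $p$, $uw$, $uw'$ tells whether $c$ is left of both $v$ and $v'$, strictly between $v$ and $v'$, or right of both. Collecting these outcomes yields the stated alternatives.

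The only step beyond bookkeeping is Case 3, where I must be sure that $c$ sitting in the subtree of $T$ hanging at $w$ genuinely forces $c$ to the left of $v'$: this is exactly where the clockwise-order convention around $u$ is used, and where Observation~\ref{obs:comp-nodes}, applied to short comparable sequences of nodes, is convenient for pinning down the meets $c\land u$, $c\land v$, $c\land v'$ and the corresponding neighbours of those meets. Everything else is a routine unwinding of the definitions of $\land$ and of ``left of''/``right of'' in $T$, together with the already-noted fact that each of these two relations linearly orders any set of pairwise incomparable nodes; I do not anticipate any real obstacle beyond keeping the orientation conventions consistent across the sub-cases.
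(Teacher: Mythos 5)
Your proof is correct, and the substantive part (your Case~3 with $f=uw'$ and $f=uw$) is exactly the paper's argument: $c\land v=u$ (resp.\ $c\land v'=u$) and the first edge from $u$ towards $c$ coincides with the edge towards $v'$ (resp.\ $v$), so the same clockwise test applies. Cases~1 and~2 and the ``third edge'' subcase are harmless but unnecessary, since the observation only asserts something under the hypotheses $w'\leq c$ or $w\leq c$.
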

\begin{enumerate}
 \item  $v$ is left of $c$ in $T$ if $w'\leq c$ in $T$\label{item:a-is-left}, and
 \item $c$ is left of $v'$ in $T$ if $w\leq c$ in $T$\label{item:b-is-right}.
\end{enumerate}
\begin{proof}
 If $w'\leq c$ in $T$, then we also have $u =v \land c$, and the first edge on the path from $u$ to $c$ in $T$ is the same as that of the path from $u$ to $v'$ in $T$. Since $v$ is left of $v'$ in $T$, it follows that $v$ is left of $c$ as well.
The proof for the second item is analogous.
\end{proof}

Next we modify once more the tree decomposition.
For each element $a\in \min(X)$ such that $(a,b)\in\MM(P)$ for some $b\in \max(X)$, choose arbitrarily a node
$w_a$ of $T$ such that  $a\in B(w_a)$.
Similarly, for each element $b\in \max(X)$ such that $(a,b)\in\MM(P)$ for some $a\in \min(X)$,
choose arbitrarily a node $w_b$ of $T$ such that  $b\in B(w_b)$.
(Note that the same node of $T$ could possibly be chosen more than once.)
Now that all these choices are made,
for each minimal element $a$ of $P$ considered above,
add a new leaf $a^T$ to $T$ adjacent to $w_a$ with bag
$B(a^T):=\set{a,x}$, where $x$ is an arbitrarily chosen element from $B(w_a) \setminus \{a\}$.
Similarly, for each maximal element $b$ of $P$ considered above,
add a new leaf $b^T$ to $T$ adjacent to $w_b$ with bag
$B(b^T):=\set{b,x}$, where $x$ is an arbitrarily chosen element from $B(w_b) \setminus \{b\}$.

This concludes our modifications of the tree decomposition. Notice that we made sure that
$|B(u)|=3$ for every internal node $u$ of $T$, and that $|B(u) \cap B(v)|=2$  for every edge $uv$ of $T$.
Observe also that for every pair $(a, b) \in \MM(P)$, the two nodes $a^T$ and $b^T$ are incomparable in $T$,
and thus one is to the left of the other in $T$.
Figure~\ref{fig:tree-decomposition} provides an illustration.
(We also note that while the tree $T$ has been modified since stating Observations~\ref{obs:comp-nodes}
and~\ref{obs:left-right}, they obviously still apply to the new tree $T$.)

\begin{figure}[t]
 \centering
 \includegraphics[scale=1]{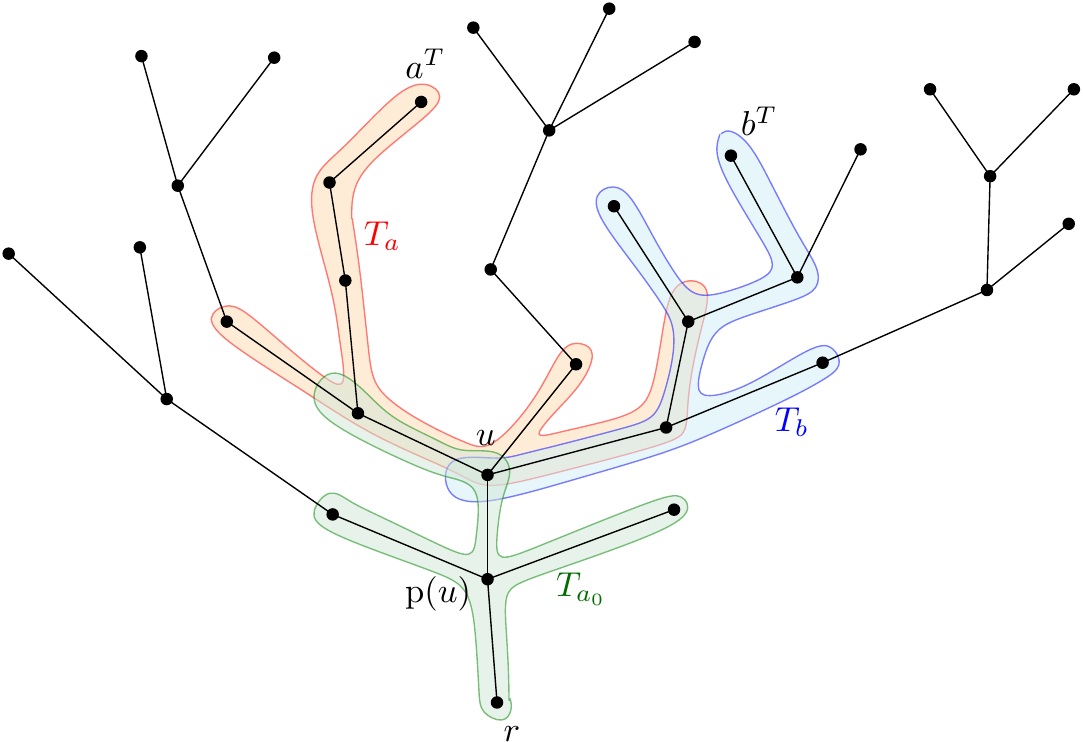}
 \caption{\label{fig:tree-decomposition}We have the following properties in this example:
 $(a, b) \in \MM(P)$ with
 $a^T$ left of $b^T$ in $T$, and $u = a^T \land b^T$ (and hence $u<a^T$ and $u<b^T$ in $T$). We also have $B(u)=\set{a_0,a,b}$ here.}
\end{figure}

Let $G$ be the intersection graph of the subtrees $T_x$ ($x\in X$) of $T$.
Thus two distinct elements $x, y \in X$ are adjacent in $G$ if and only if $V(T_x) \cap V(T_y) \neq \emptyset$.
The graph $G$ is chordal and the maximum clique size in $G$ is $3$.
Hence the vertices of $G$ can be (properly) colored with three colors.
We fix a $3$-coloring $\phi$ of $X$ which is such that $x,y\in X$ receive distinct colors whenever $V(T_x)\cap V(T_y)\neq\emptyset$.
In particular, if $x$ and $y$ are two distinct elements of $P$ such that
$x,y\in B(u)$ for some $u\in V(T)$ then $x$ and $y$ receive different colors.

We end this section with a fundamental observation which is going to be used repeatedly in a number of forthcoming arguments.
We say that a relation $x\leq y$ in $P$ \emph{hits} a set $Z\subset X$ if there exists $z\in Z$ with $x\leq z \leq y$ in $P$.

\begin{obs}\label{obs:hitting-vertex-or-edge}
Let $x\leq y$ in $P$ and let $u,v \in V(T)$ be such that $x\in B(u)$, $y\in B(v)$.
\begin{enumerate}
\item\label{item:vertex} If $w\in V(T)$ lies on the path from $u$ to $v$ in $T$ then $x\leq y$ hits $B(w)$.
\item\label{item:edge} If $e=w_1w_2 \in E(T)$ lies on the path from $u$ to $v$ in $T$ then $x\leq y$ hits $B(w_1)\cap B(w_2)$.
\item\label{item:2vertices}
If $w_{1}, \dots, w_{t}\in V(T)$ are $t$ nodes on the path from $u$ to $v$ in $T$ appearing in this order,
then there exist $z_{i} \in B(w_{i})$ for each $i\in \set{1, \dots, t}$
such that $x \leq z_{1} \leq \cdots \leq z_t \leq y$ in $P$.
\end{enumerate}
\end{obs}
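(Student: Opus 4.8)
The plan is to derive all three items from a single fact about the tree decomposition. Since $P$ is finite and $x\leq y$ in $P$, I would start by fixing a saturated chain $x=x_0\lessdot x_1\lessdot\cdots\lessdot x_m=y$ in $P$, i.e.\ one in which each consecutive pair $x_{s-1},x_s$ is in a cover relation. Then $x_{s-1}x_s$ is an edge of $\cover(P)$, so the subtrees $T_{x_{s-1}}$ and $T_{x_s}$ share a node of $T$. A routine induction on $s$ shows that $W:=\bigcup_{s=0}^{m}V(T_{x_s})$ induces a subtree of $T$, because adjoining $T_{x_s}$ to $\bigcup_{s'<s}V(T_{x_{s'}})$ preserves connectivity (it already meets $T_{x_{s-1}}$, which lies inside). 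Since $u\in V(T_{x_0})$ and $v\in V(T_{x_m})$, this connected set $W$ contains the unique $u$--$v$ path of $T$. Everything else is bookkeeping on top of this.

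Item (i) is then immediate: if a node $w$ lies on the $u$--$v$ path, then $w\in W$, so $w\in V(T_{x_s})$ for some $s$, whence $x_s\in B(w)$ and $x=x_0\leq x_s\leq x_m=y$; thus $x\leq y$ hits $B(w)$. For item (ii), let $C_u$ and $C_v$ be the two components of $T-e$ containing $u$ and $v$ respectively; since the $u$--$v$ path crosses $e$ at this single edge, one of $w_1,w_2$ lies in $C_u$ and the other in $C_v$, say $w_1\in C_u$ and $w_2\in C_v$. Let $s^{*}$ be the least index such that $V(T_{x_{s^{*}}})$ meets $C_v$ (well defined, since $v\in V(T_{x_m})$). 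I would then check that $T_{x_{s^{*}}}$ meets $C_u$ as well: if $s^{*}=0$ this holds because $T_{x_0}$ contains $u$, and if $s^{*}\geq 1$ then $T_{x_{s^{*}-1}}$ lies entirely in $C_u$ (it avoids $C_v$ by minimality of $s^{*}$) while still meeting $T_{x_{s^{*}}}$. Being connected and touching both sides of $T-e$, the subtree $T_{x_{s^{*}}}$ must contain the edge $e$; therefore $x_{s^{*}}\in B(w_1)\cap B(w_2)$ and $x\leq x_{s^{*}}\leq y$, so $x\leq y$ hits $B(w_1)\cap B(w_2)$.

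Finally, item (iii) follows from item (i) by induction on $t$, the base case $t=1$ being exactly (i). For $t\geq 2$, apply (i) to $w_1$ to get $z_1\in B(w_1)$ with $x\leq z_1\leq y$; the nodes $w_2,\dots,w_t$ occur in this order on the $w_1$--$v$ path of $T$, which is a sub-path of the $u$--$v$ path, so applying the statement with parameter $t-1$ to the relation $z_1\leq y$ (using $z_1\in B(w_1)$ and $y\in B(v)$) produces $z_i\in B(w_i)$ for $i=2,\dots,t$ with $z_1\leq z_2\leq\cdots\leq z_t\leq y$; prepending $x\leq z_1$ completes the chain.

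I expect the only genuinely delicate point to be in item (ii): correctly pinning down the link $T_{x_{s^{*}}}$ of the family that ``crosses'' the edge $e$, where the minimality of $s^{*}$ and the boundary case $s^{*}=0$ have to be treated with a little care. All the other steps are routine once the fact that a consecutive union of subtrees is a subtree has been recorded.
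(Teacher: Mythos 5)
Your proof is correct and follows essentially the same approach as the paper: walk a saturated chain from $x$ to $y$, observe that the union of the corresponding subtrees is connected and therefore contains the $u$--$v$ path, and derive (i), then (iii) by induction. The only substantive difference is that the paper dismisses item (ii) with ``the proof is analogous,'' whereas you spell out the transition argument across the edge $e$ (the minimal index $s^{*}$ whose subtree meets the $v$-side, with the $s^{*}=0$ boundary case); that is exactly the argument the paper is tacitly invoking, and it is correct.
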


\begin{proof}
Suppose that $w$ lies on a path from $u$ to $v$ in $T$.
Since $x\leq y$ in $P$ there is a path $x=z_0,z_1,\ldots,z_k=y$ in $G$ such that
$z_{i} < z_{i+1}$ is a cover relation in $P$ for each $i \in \{0, 1, \dots, k-1\}$.
This means that $\bigcup_{0 \leq i \leq k} T_{z_i}$ is a (connected) subtree of $T$ containing $u$ and $v$.
Thus, $\bigcup_{0 \leq i \leq k} T_{z_i}$ contains $w$ and therefore there exists $i$ with $z_i\in B(w)$.
The proof of~\ref{item:edge} is analogous.

We prove~\ref{item:2vertices} by induction on $t$. For $t=1$ this corresponds to~\ref{item:vertex}, so let us
assume $t > 1$ and consider the inductive case. By induction
there exist $z_{i} \in B(w_{i})$ for each $i\in \set{1, \dots, t-1}$
such that $x \leq z_{1} \leq \cdots \leq z_{t-1} \leq y$ in $P$.
Applying~\ref{item:vertex} with relation $z_{t-1} \leq y$ and the $w_{t-1}$--$v$ path,
we obtain that $z_{t-1} \leq z_t \leq y$ in $P$ for some $z_{t} \in B(w_{t})$.
Combining, we obtain $x \leq z_{1} \leq \cdots \leq z_t \leq y$ in $P$, as desired.
\end{proof}

\section{The Proof}\label{sec:proof}
\label{sec:partitioning}

We aim to partition $\MM(P)$ into a constant number of sets, each of which is reversible.
This will be realized with the help of a \emph{signature tree}, which is depicted on Figure~\ref{fig:signature_tree}.
This plane tree $\sigtree$, rooted at node $\nu_{1}$, assigns to each pair $(a,b)\in\MM(P)$ a corresponding leaf of $\sigtree$ according to properties of the pair $(a,b)$.

\begin{figure}
\centering
\includegraphics[width=0.4\textwidth]{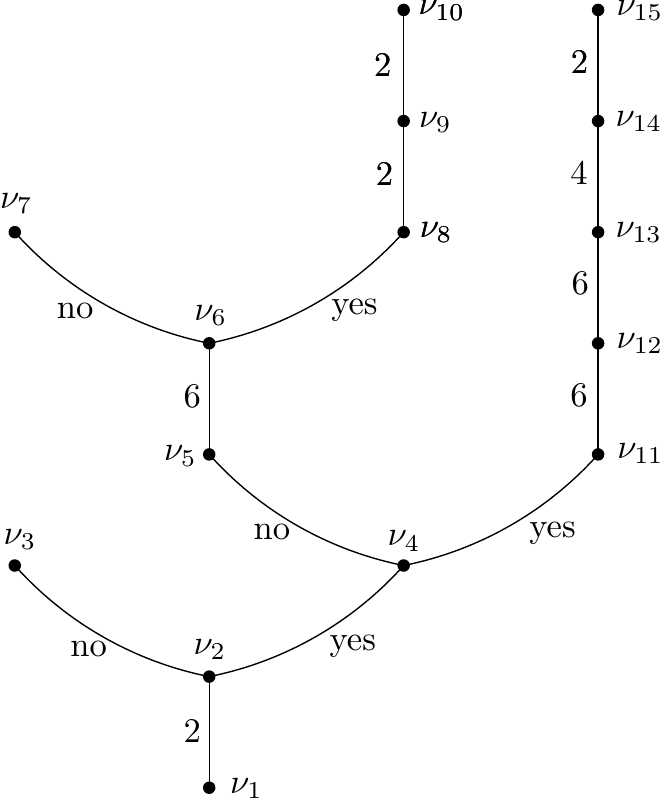}
\caption{\label{fig:signature_tree}The signature tree $\Psi$. Each of the three branching nodes $\nu_2$,
$\nu_4$, and $\nu_6$  corresponds to a yes/no question, and the edges towards their children are labeled
according to the possible answers. Edges starting from a non-branching node $\nu_i$ to a node $\nu_j$
are labeled with the size of $\mathbf{\Sigma}(\nu_i,\nu_j)$.}
\end{figure}

The nodes $\nu_1,\ldots,\nu_{15}$ of $\sigtree$ are enumerated by depth-first and left-to-right search.
Each node $\nu_i$ which is distinct from the root and not a leaf has a corresponding function of the form $\alpha_i:\MM(P,\nu_i)\to  \mathbf{\Sigma}_i$, where $\MM(P,\nu_i)\subset\MM(P)$ and $\mathbf{\Sigma}_i$ is a finite set, whose size does not depend on $P$.
We put $\MM(P,\nu_1)=\MM(P)$ and the other domains will be defined one by one in this section.
To give an example, let us look forward to upcoming subsections where we define $\alpha_1$ and $\alpha_2$ as follows.
\begin{itemize}
\item $\q{sign:left-or-right}(a,b) \in \mathbf{\Sigma}_1=\{\textrm{left}, \textrm{right}\}$ encodes whether $a^T$ is to the left or to the right of $b^T$ in $T$;
\item $\q{sign:a<x}(a,b) \in \mathbf{\Sigma}_2 =\{\textrm{yes}, \textrm{no}\}$ is the answer to the question ``Is there an element $q\in B(a^{T} \land b^{T})$ with $a\leq q$ in $P$?''.
\end{itemize}

Furthermore, for each internal node $\nu_i$ with children $\nu_{i_1},\ldots,\nu_{i_l}$ in $\sigtree$, the edges $\nu_i\nu_{i_1}, \dots,
\nu_i\nu_{i_l}$ of $\sigtree$ are respectively labeled by subsets $\mathbf{\Sigma}(\nu_{i},\nu_{i_1}),\ldots,\mathbf{\Sigma}(\nu_{i},\nu_{i_l})$ of $\mathbf{\Sigma}_i$ such that $\mathbf{\Sigma}_i=\mathbf{\Sigma}(\nu_{i},\nu_{i_1})\sqcup \cdots\sqcup \mathbf{\Sigma}(\nu_{i},\nu_{i_l})$, that is,
so that the sets $\mathbf{\Sigma}(\nu_{i},\nu_{i_j})$ form a partition of $\mathbf{\Sigma}_i$.
For example,
\begin{align*}
&\mathbf{\Sigma}(\nu_1,\nu_2)=\set{\textrm{left}, \textrm{right}}=\mathbf{\Sigma}_1; \\
&\mathbf{\Sigma}(\nu_2,\nu_3)=\set{\textrm{no}};  \\
&\mathbf{\Sigma}(\nu_2,\nu_4)=\set{\textrm{yes}}.
\end{align*}
Observe that each internal node $\nu_i$ of $\Psi$ has either one or two children; in particular, if $\nu_i$ has only one child then the corresponding edge is labeled
with the full set $\mathbf{\Sigma}_i$.

The reader may wonder why we do not refine the tree $\Psi$ and have an edge out of $\nu_i$ for every possible value in $\mathbf{\Sigma}_i$.
This is because sometimes several values in $\mathbf{\Sigma}_i$ will correspond to analogous cases in our proofs
which can be treated all at once.  To give a concrete example,
consider $\mathbf{\Sigma}(\nu_1,\nu_2)=\set{\textrm{left},\textrm{right}}$: When proving that a set $S$ of min-max pairs is reversible, the case that $a^T$ is left of $b^T$ for every $(a,b)\in S$ is analogous to the case that $a^T$ is right of $b^T$ for every $(a,b)\in S$, as one
is obtained from the other by exchanging the notion of left and right in $T$ (that is, by replacing the plane tree $T$ by its mirror image).
Hence it will be enough to only consider, say, the case where $a^T$ is to the left of $b^T$ for every $(a,b)\in S$.

Now for an internal node $\nu_i$ of $\sigtree$ distinct from the root ($i\neq1$), let $\nu_1=\nu_{i_1},\ldots,\nu_{i_l}=\nu_i$ be the path from the root $\nu_1$ to $\nu_i$ in $\sigtree$.
Define the \emph{signature} of $\nu_i$ as the set
\begin{equation}
\mathbf{\Sigma}(\nu_i)=\mathbf{\Sigma}(\nu_{i_1},\nu_{i_2})\times\ldots\times\mathbf{\Sigma}(\nu_{i_{l-1}},\nu_{i_l})\label{eq:signature-def}
\end{equation}
and let
\begin{align*}
\MM(P,\nu_i)&=\set{(a,b)\in\MM(P) \mid (\alpha_{i_1}(a,b),\ldots,\alpha_{i_{l-1}}(a,b))\in\mathbf{\Sigma}(\nu_i)};\\
\MM(P,\nu_i,\Sigma)&=\set{(a,b)\in\MM(P) \mid (\alpha_{i_1}(a,b),\ldots,\alpha_{i_{l-1}}(a,b))=\Sigma}\quad\textrm{for $\Sigma\in\mathbf{\Sigma}(\nu_i)$}.
\end{align*}

Observe that by this definition, for each internal node $\nu_i$ of $\Psi$ with children $\nu_{i_1},\ldots,\nu_{i_l}$ we get the partition

\begin{align*}
 \MM(P,\nu_i)&=\bigcup_{1\leq j\leq l} \MM(P,\nu_{i_j}).
\end{align*}

Therefore, by  construction the sets $\MM(P,\nu_i)$ with $\nu_i$ a leaf of $\sigtree$
(so for $\nu_3,\nu_7,\nu_{10},\nu_{15}$) form a partition of $\MM(P)$.
With a further refinement it follows that
\[
 \MM(P)=\bigcup_{\nu_i\text{ leaf of }\Psi} \quad \bigcup_{\Sigma\in\mathbf{\Sigma}(\nu_i)} \MM(P,\nu_i,\Sigma)
\]
and the proof below boils down to showing that $\MM(P,\nu_i,\Sigma)$ is reversible for each leaf $\nu_i$ of $\sigtree$ and each $\Sigma\in\mathbf{\Sigma}(\nu_i)$.

Once this is established we get an upper bound on $\dim^*(P)$ just by counting the number of sets in our partition of $\MM(P)$, namely
\begin{align*}
\dim^*(P)\leq \displaystyle{\sum_{\textrm{$\nu_i$ leaf of $\sigtree$}}} \norm{\mathbf{\Sigma}(\nu_i)}
&= \norm{\mathbf{\Sigma}(\nu_3)}+\norm{\mathbf{\Sigma}(\nu_7)}+\norm{\mathbf{\Sigma}(\nu_{10})}+\norm{\mathbf{\Sigma}(\nu_{15})}\\
&= 2 + 2\cdot6 + 2\cdot6\cdot2\cdot2 + 2\cdot6\cdot6\cdot4\cdot2 = 638.
\end{align*}

Note that the calculation for the particular summands follows from \eqref{eq:signature-def} and the edge labelings in Figure~\ref{fig:signature_tree}.
Our proof will follow a depth-first, left-to-right search of the signature tree $\sigtree$, defining
the functions $\alpha_i$ one by one in that order, and showing that for each $\Sigma \in \mathbf{\Sigma}(\nu_i)$ the set $\MM(P, \nu_i, \Sigma)$  is reversible when encountering a leaf $\nu_i$.
Hence, the tree $\sigtree$ also serves as a road map of the proof.
Moreover, for the reader's convenience we included a table collecting all functions $\alpha_i$ and their meanings, see Table~\ref{tab:functions}.
It is not necessary to read this table now, but it might be helpful while going through the main proof.

\begin{table}
\begin{center}
\begin{tabular}{p{1.4cm}p{1.5cm}p{9cm}}
\hline
  Function & Section & Meaning\\ \hline\hline
  $\phi$ &
  \ref{sec:preliminaries} &
  Proper $3$-coloring of the intersection graph of the subtrees $T_x (x\in X)$ of $T$.
  That is, $\phi(x)\neq \phi(y)$ whenever $V(T_x)\cap V(T_y)\neq \emptyset$.\\
  \hline
  $\alpha_1$ &
  \ref{sec:node-1-and-2} &
  Assigns `left' or `right' to pairs $(a,b)$ depending on whether $a^T$ lies to the left or to the right of $b^T$ in $T$.\\
  \hline
  $\alpha_2$ &
  \ref{sec:node-1-and-2} &
  Records the answer to the question: ``Is there an element $q\in B(u_{ab})$ such that $a\leq q$ in $P$?"\\
  \hline
  $\alpha_4$ &
  \ref{sec:node-4-and-5} &
  Records the answer to the question: ``Is there an element $q\in B(u_{ab})\cap B(p_{ab})$ such that $a\leq q$ in $P$?"\\
  \hline
  $\alpha_5$ &
  \ref{sec:node-4-and-5} &
$\alpha_5(a,b)=(\phi(x_{ab}), \phi(y_{ab}), \phi(z_{ab}))$, where $B(u_{ab})=\{x_{ab}, y_{ab}, z_{ab}\}$ and $x_{ab}, y_{ab}, z_{ab}$ satisfy:
\begin{itemize}
\item $a\leq x_{ab}\not\leq b$ in $P$;
\item $B(u_{ab})\cap B(p_{ab})=\{y_{ab},z_{ab}\}$;
\item $a\not\leq y_{ab} \leq b \textrm{ in } P$;
\item $a_0 \leq y_{ab} \textrm{ in } P$;
\item $a\not\leq z_{ab} \textrm{ in } P$, and
\item $y_{ab}\in B(u_{ab})\cap B(w_{ab})$.
\end{itemize} \\
  \hline
  $\alpha_6$ &
  \ref{sec:node-6} &
  Records the answer to the question ``Is $B(u_{ab})\cap B(w_{ab})=\{x_{ab},y_{ab}\}$?".\\
  \hline
  $\alpha_8$ &
  \ref{sec:node-8} &
  Given $(a,b)\in \textrm{MM}(P,\nu_8)$, let $\Sigma\in \mathbf{\Sigma}_8$ be such that $(a,b)\in\textrm{MM}(P,\nu_8,\Sigma)$. Then $\alpha_8(a,b)=\psi_{8,\Sigma}(a,b)$, where $\psi_{8,\Sigma}$ is a $2$-coloring of the graph $S_\Sigma$ of special $2$-cycles.\\
  \hline
  $\alpha_9$ &
  \ref{sec:node-9} &
  Given $(a,b)\in \textrm{MM}(P,\nu_9,\Sigma)$ for some $\Sigma\in\mathbf{\Sigma}(\nu_9)$, function $\alpha_9$ records the color $\psi_{9,\Sigma}(a,b)$, where $\psi_{9,\Sigma}$ is a coloring of $K_\Sigma$.\\
  \hline
  $\alpha_{11}$ &
  \ref{sec:node-11} &
  $\alpha_{11}(a,b)=(\phi(x_{ab}), \phi(y_{ab}), \phi(z_{ab}))$, where $B(u_{ab})=\{x_{ab}, y_{ab}, z_{ab}\}$ and $x_{ab}, y_{ab}, z_{ab}$ satisfy:
\begin{itemize}
\item $B(u_{ab})\cap B(\p_{ab})=\{x_{ab},y_{ab}\}$;
\item $a\leq x_{ab}\not\leq y_{ab}$ in $P$, and
\item $a\not\leq y_{ab}\leq b$ in $P$.
\end{itemize} \\
  \hline
  $\alpha_{12}$ &
  \ref{sec:node-12} &
  Given $(a,b)\in\textrm{MM}(P,\nu_{12})$, $\alpha_{12}(a,b)$ records the answers to the questions ``Is $a\leq z_{ab}$ in $P$", ``Is $z_{ab}\leq b$ in $P$?", and ``Is $a_0\leq x_{ab}$ in $P$?".\\
  \hline
  $\alpha_{13}$ &
  \ref{sec:node-13} &
  Given $(a,b)\in \textrm{MM}(P,\nu_{13},\Sigma)$, function $\alpha_{13}$ records the color of the pair $(a,b)$ in the $4$-coloring $\psi_{13,\Sigma}$ of the graph $J_{\Sigma}$.
The purpose of $\alpha_{13}$ is to get rid of $2$-cycles in $\textrm{MM}(P,\nu_{13},\Sigma)$. \\
  \hline
  $\alpha_{14}$ &
  \ref{sec:node-14}   &
  Given $(a,b)\in\textrm{MM}(P,\nu_{14},\Sigma)$, function $\alpha_{14}$ records the color of the pair $(a,b)$ in the $2$-coloring  $\psi_{14,\Sigma}$ of the graph $\hat K_{\Sigma}$.
The purpose of $\alpha_{14}$ is to get rid of strict alternating cycles of length at least $3$ in  $\textrm{MM}(P,\nu_{14},\Sigma)$.
\end{tabular}
\end{center}
\caption{Table of functions and their meanings}
\label{tab:functions}
\end{table}

Now that the necessary definitions are introduced and the preliminary
observations are made, we are about to
consider the nodes of the signature tree one by one, stating and proving many
technical statements along the way. At this point the reader
might legitimately wonder why it all works, that is, what are the basic ideas underlying our
approach. While we are unable to offer a general intuition---indeed, this is why
we believe that better insights into these posets remain
to be obtained---we can at least explain a couple of the strategies
we repeatedly apply in our proofs.

A first strategy builds on the fact
that when choosing three times an element in a $2$-element set, some element
is bound to be chosen at least twice: As a toy example, suppose that
$\set{(a_i,b_i)}_{i=1}^k$ is a strict alternating cycle with $k \geq 3$
in some subset $I\subseteq \MM(P)$ which we are trying to prove is reversible.
Suppose further that we somehow previously established that the $a_{i}^{T}$--$b_{i+1}^{T}$
path in $T$ includes a specific edge $uv$ of $T$ for at least three distinct
indices $i \in \set{1, \dots, k}$; which indices is not important,
so let us say this happens for indices $1,2,3$.
Then by Observation~\ref{obs:hitting-vertex-or-edge} the relation $a_{i} \leq b_{i+1}$ hits $B(u)\cap B(v)$
for each $i=1,2,3$. Given that $|B(u)\cap B(v)|=2$, this implies that
some element $x\in B(u)\cap B(v)$ is hit by two of these relations, that is, we have
$a_{i} \leq x \leq b_{i+1}$ and $a_{j} \leq x \leq b_{j+1}$ in $P$ for some $i, j \in \set{1,2,3}$
with $i < j$. However, this implies $a_{i} \leq x \leq b_{j+1}$ in $P$, contradicting the fact that
the alternating cycle is strict. (Here we use that $k \geq 3$.)
Therefore, the alternating cycle $\set{(a_i,b_i)}_{i=1}^k$ could not have
existed in the first place. More generally, when analyzing certain situations we claim cannot occur,
we will typically easily find two relations $c_1 \leq d_1$ and
$c_2 \leq d_2$ in $P$ both hitting $B(u)\cap B(v)$ for some edge $uv$ of $T$, and
which are incompatible, in the sense that they cannot hit the same element. The work then
goes into pinning down a third relation $c_3 \leq d_3$ in $P$ which is incompatible with the first two, and
yet hits $B(u)\cap B(v)$. (The fact that $a_0 \leq b$ in $P$ for every $b \in \max(P)$ will often be helpful here.)

A second strategy is to see certain strict alternating cycles as inducing a graph on
$\MM(P)$, and then study and exploit properties of said graph. This is natural for
strict alternating cycles of length $2$: Any such cycle $(a_{1}, b_{1}), (a_{2}, b_{2})$
can be seen as inducing an edge between vertex $(a_{1}, b_{1})$ and vertex $(a_{2}, b_{2})$.
If we somehow can show that the resulting graph has bounded chromatic number, then
we can consider a corresponding coloring of the pairs, and we will know that within a color class
there are no strict alternating cycles of length $2$ left. Thus, by doing so we `killed' all such cycles by partitioning the pairs in a constant number of sets.
Such a strategy is used twice in the proof, when considering nodes $\nu_{8}$ and $\nu_{13}$ of the signature
tree $\Psi$. We also use a variant of it tailored to handle certain
strict alternating cycles of length at least $3$ and involving a {\em directed} graph
on $\MM(P)$, when considering nodes $\nu_{9}$ and $\nu_{14}$ of $\Psi$. \\

We now turn to the proof.
From now on we will use the following notations for a given pair $(a,b)\in\MM(P)$:
We let $u_{ab}:=a^T \land b^T$, $p_{ab}:=\p(u_{ab})$, and denote by $v_{ab}$ and $w_{ab}$ the neighbors of $u_{ab}$ in $T$ towards $a^T$ and $b^T$, respectively.
Figure~\ref{fig:a-left-of-b} illustrates the newly defined nodes.

\begin{figure}[t]
 \centering
 \includegraphics{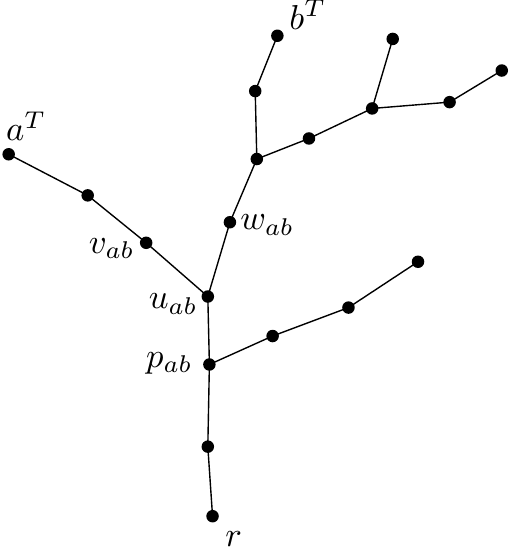}
 \caption{\label{fig:a-left-of-b}Pair $(a,b)\in\MM(P)$ with $\alpha_1(a,b)=\textrm{left}$ and the corresponding nodes $p_{ab}$, $u_{ab}$, $v_{ab}$, and $w_{ab}$ in $T$.}
\end{figure}

\subsection{Nodes \texorpdfstring{$\nu_1$ and $\nu_2$ and their respective functions $\alpha_1$ and $\alpha_2$}.}
\label{sec:node-1-and-2}
We start with the definition of $\q{sign:left-or-right}$, which belongs to the node $\nu_1$ of the signature tree $\sigtree$.

\begin{center}
\medskip
\fbox{\begin{varwidth}{0.9\textwidth}
For each $(a,b)\in \MM(P,\nu_1)=\MM(P)$, $\q{sign:left-or-right}(a,b) \in\mathbf{\Sigma}_1= \{\textrm{left}, \textrm{right}\}$ encodes whether $a^T$ is to the left or to the right of $b^T$ in $T$.
\end{varwidth}}
\medskip
\end{center}
Let us proceed with node $\nu_2$ and its function $\alpha_2$.
\begin{center}
\medskip
\fbox{\begin{varwidth}{0.9\textwidth}
For each $(a,b)\in \MM(P,\nu_2)=\MM(P)$, we let $\q{sign:a<x}(a,b) \in \mathbf{\Sigma}_2=\{\textrm{yes}, \textrm{no}\}$ be the answer to the question
\begin{center}
 ``Is there an element $q\in B(u_{ab})$ with $a\leq q$ in $P$?''.
\end{center}
\end{varwidth}}
\medskip
\end{center}
Note that it might be the case that all elements of $B(u_{ab})$ are incomparable with $a$ (so the answer is ``no"), while there is always an element $q\in B(u_{ab})$ such that $q\leq b$ in $P$ as the comparability $a_0\leq b$ hits the bag $B(u_{ab})$.
In the next section we treat all the pairs $(a,b)\in\MM(P)$ with $\q{sign:a<x}(a,b)=\textrm{no}$ and it turns out that they can easily be reversed.

\subsection{First leaf of \texorpdfstring{$\sigtree$: the node $\nu_3$}.}
Incomparable pairs of $\MM(P,\nu_3)$ received one of the two possible signatures in $\mathbf{\Sigma}(\nu_3)=\set{(\textrm{left},\no),(\textrm{right},\no)}$.
We start by showing that pairs with these signatures are reversible.

\begin{claim}\label{claim:a-inc}
$\MM(P, \nu_3, \Sigma)$ is reversible
for each $\Sigma \in \mathbf{\Sigma}(\nu_3)$.
\end{claim}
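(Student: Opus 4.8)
\textbf{Plan for Claim~\ref{claim:a-inc}.} The pairs in $\MM(P,\nu_3,\Sigma)$ are exactly the min-max pairs $(a,b)$ for which $\alpha_2(a,b)=\no$, meaning that \emph{no} element of $B(u_{ab})$ lies above $a$ in $P$. By the symmetry between left and right (swapping the plane tree $T$ for its mirror image), it suffices to handle the signature $\Sigma=(\textrm{left},\no)$; so assume $a^T$ is to the left of $b^T$ in $T$ for every pair under consideration. Using Observation~\ref{obs:reversible}, the goal reduces to showing that $\MM(P,\nu_3,\Sigma)$ contains no strict alternating cycle. I will argue that in fact it contains no alternating cycle at all.

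\textbf{Key steps.} First I would record the basic structural consequence of the signature: if $(a,b)\in\MM(P,\nu_3,\Sigma)$ then, applying Observation~\ref{obs:hitting-vertex-or-edge}\ref{item:vertex} to the relation $a_0\le b$ along the path through $u_{ab}$, some element $q\in B(u_{ab})$ satisfies $a_0\le q\le b$ in $P$; and since $a\not\le q$ for \emph{every} $q\in B(u_{ab})$ (this is the meaning of $\alpha_2=\no$), we get that $a$ is incomparable in $P$ to every element of $B(u_{ab})$. In particular, $u_{ab}\notin V(T_a)$, so the subtree $T_a$ lies strictly on one side of $u_{ab}$; because $v_{ab}$ is the neighbour of $u_{ab}$ towards $a^T$ and $a\in B(a^T)$, the subtree $T_a$ is contained in the component of $T-u_{ab}$ containing $v_{ab}$, i.e.\ $T_a$ sits in the subtree hanging off $u_{ab}$ through $v_{ab}$. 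Now suppose, for contradiction, that $\{(a_i,b_i)\}_{i=1}^k$ is an alternating cycle inside $\MM(P,\nu_3,\Sigma)$, so $a_i\le b_{i+1}$ in $P$ for all $i$ (indices mod $k$). Fix some index $i$ and look at the $a_i^T$--$b_{i+1}^T$ path in $T$. Since $a_i\le b_{i+1}$, the union $T_{a_i}\cup\cdots\cup T_{b_{i+1}}$ along a saturated chain is a connected subtree containing both $a_i^T$ and $b_{i+1}^T$; by the previous paragraph applied to the pair $(a_i,b_i)$, this subtree does not contain $u_{a_ib_i}$ only if it stays on the $v_{a_ib_i}$-side — but $b_{i+1}^T$ need not be there. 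The point I want to extract is a comparison in the left/right (or ancestor) order on $T$: the relation $a_i\le b_{i+1}$ forces $b_{i+1}^T$ to be reachable from $a_i^T$ through a subtree avoiding $u_{a_ib_i}$, hence $b_{i+1}^T$ lies in the $v_{a_ib_i}$-branch of $u_{a_ib_i}$, which (since $a_i^T$ is left of $b_i^T$, so $v_{a_ib_i}$ is the left branch and $w_{a_ib_i}$ the right one) places $b_{i+1}^T$ to the left of $b_i^T$ in $T$, via Observation~\ref{obs:left-right}. Chaining this around the cycle gives $b_1^T$ strictly left of $b_k^T$ strictly left of $\cdots$ strictly left of $b_1^T$ — or, if some of the relevant $u$'s are nested, a mix of "left of" and "strict ancestor of" relations that Observation~\ref{obs:comp-nodes} shows cannot close up into a cycle — a contradiction. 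Hence no alternating cycle exists and $\MM(P,\nu_3,\Sigma)$ is reversible.

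\textbf{Main obstacle.} The delicate point is the case analysis hidden in "places $b_{i+1}^T$ to the left of $b_i^T$": the nodes $u_{a_ib_i}$ for different $i$ can be in arbitrary mutual position in $T$ (comparable, incomparable), and the cleanest phrasing is probably to fix a single linear order — the left-to-right order on the leaves $\{b_i^T\}$, or equivalently argue about the node $b_i^T\land b_{i+1}^T$ and whether $u_{a_ib_i}$ lies on the $b_i^T$-side or the $b_{i+1}^T$-side of it — and to show each step of the alternating cycle strictly decreases position in that order. I would set this up by first proving a small auxiliary statement of the form: \emph{if $(a,b)\in\MM(P,\nu_3,\Sigma)$ and $a\le b'$ in $P$ for some maximal element $b'$, then $b'^T$ is to the left of $b^T$ in $T$, or $b'=b$} (with the $b'=b$ alternative excluded here because the cycle is among incomparable pairs and, if strict, has distinct $b_i$'s). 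Once that lemma is in hand, applying it to $b'=b_{i+1}$ for each $i$ and composing around the cycle immediately yields $b_1^T$ strictly left of itself, the contradiction. The auxiliary lemma itself is exactly the "$T_a$ avoids $u_{ab}$ hence stays in the $v_{ab}$-branch" argument combined with Observation~\ref{obs:left-right}, so the real work is just being careful about which branch is "left" when $\alpha_1=\textrm{left}$.
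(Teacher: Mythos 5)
Your proof is correct and is essentially the paper's argument, read in the contrapositive direction. The paper fixes the leftmost $b_j^T$ and deduces that the $a_j^T$--$b_{j+1}^T$ path must pass through $u_{a_jb_j}$, which contradicts $\alpha_2(a_j,b_j)=\no$ via Observation~\ref{obs:hitting-vertex-or-edge}; you instead use $\alpha_2(a_i,b_i)=\no$ to force, for every $i$, that this path avoids $u_{a_ib_i}$, so $b_{i+1}^T$ sits in the $v_{a_ib_i}$-branch and is therefore (by Observation~\ref{obs:left-right}) strictly to the left of $b_i^T$, after which closing the cycle gives the contradiction. One small note: the hesitation in your key-steps paragraph (``but $b_{i+1}^T$ need not be there'') is unwarranted and the case analysis you worry about in the main-obstacle paragraph never materialises, since the full connected subtree $\bigcup_j T_{z_j}$ along any saturated $a_i$--$b_{i+1}$ chain, not merely $T_{a_i}$, avoids $u_{a_ib_i}$: every $z_j$ in the chain satisfies $a_i\le z_j$, while $\alpha_2(a_i,b_i)=\no$ rules out $a_i\le z_j$ for any $z_j\in B(u_{a_ib_i})$.
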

\begin{proof}
Let $\Sigma \in \mathbf{\Sigma}(\nu_3)=\set{(\textrm{left},\no),(\textrm{right},\no)}$.
We will assume that $\Sigma=(\textrm{left},\no)$, thus $\q{sign:left-or-right}(a,b)=\textrm{left}$ for pairs $(a, b) \in \MM(P, \nu_3, \Sigma)$.
In the other case it suffices to exchange the notion of left and right in the following argument.
(We note that we will start with that assumption in all subsequent proofs, for the same reason.)

Arguing by contradiction,
suppose that there is a strict alternating cycle $\set{(a_i,b_i)}_{i=1}^k$ in $\MM(P, \nu_3, \Sigma)$.
Thus $a_i \leq b_{i+1}$ in $P$ for all $i$ (cyclically).
Let $b_j^T$ be leftmost in $T$ among all the $b_i^T$'s ($i \in \set{1, \dots, k}$).
The node $a_j^T$ is to the left of $b_j^T$ (as $(a_j,b_j)\in\MM(P,\nu_3,\Sigma)$, so $\q{sign:left-or-right}(a_j,b_j)=\textrm{left}$), and thus to the left of all the $b_i^T$'s.
Hence, the path from $a_j^T$ to $b_{j+1}^T$ in $T$ goes through the node $u_{a_jb_j}$.
By Observation~\ref{obs:hitting-vertex-or-edge}, the relation $a_j\leq b_{j+1}$ in $P$ hits $B(u_{a_jb_j})$, contradicting
$\q{sign:a<x}(a_j,b_j)=\textrm{no}$ (recall that $(a_j,b_j)\in\MM(P,\nu_3,\Sigma)$, and thus $\alpha_2(a_j,b_j)=\no$).
\end{proof}
As a consequence of Claim~\ref{claim:a-inc}, all the pairs $(a,b)\in\MM(P)$ being considered in the following satisfy $\q{sign:a<x}(a,b)=\textrm{yes}$ (see how this fact can be read from the signature tree $\sigtree$).

\subsection{Nodes \texorpdfstring{$\nu_4$ and $\nu_5$ and their respective functions $\alpha_4$ and $\alpha_5$}.}
\label{sec:node-4-and-5}
For all pairs $(a,b)$ in $\MM(P,\nu_4)$ it holds that there is $q\in B(u_{ab})$ such that $a\leq q$ in $P$, but it is not clear whether there is such an element being also contained in $B(p_{ab})$.
It is the purpose of function $\q{sign:a-goes-below-u}$ to distinguish the two possible cases at this point.
\begin{center}
\medskip
\fbox{\begin{varwidth}{0.9\textwidth}
Given $(a,b)\in\MM(P,\nu_4)$, let $\q{sign:a-goes-below-u}(a,b)\in\set{\yes,\no}$ be the answer to the following question about $(a,b)$:
$$\textrm{``Is there an element $q\in B(u_{ab})\cap B(p_{ab})$ with $a\leq q$ in $P$?''.}$$
\end{varwidth}}
\medskip
\end{center}
Before defining the function $\q{sign:H-coloring}$ we first show some useful properties of pairs in $\MM(P, \nu_5, \Sigma)$ for $\Sigma \in {\bf \Sigma}(\nu_5)$.  Note that these pairs $(a, b)$ satisfy $\q{sign:a<x}(a, b)=\yes$
and $\q{sign:a-goes-below-u}(a,b)=\no$.

\begin{claim}\label{claim:u-ordering}
Let  $\Sigma \in {\bf \Sigma}(\nu_5)$ and suppose that $\set{(a_i,b_i)}_{i=1}^k$ is an alternating cycle in $\MM(P, \nu_5, \Sigma)$.
Let $u_i$ denote $u_{a_ib_i}$, for each $i \in \{1, 2, \dots, k\}$.
Then
\begin{enumerate}
\item $u_i$ and $u_{i+1}$ are comparable in $T$ for each $i\in \{1,2, \dots, k\}$, and
\item there is an index $j \in \{1,2, \dots, k\}$ such that $u_j \leq u_i$ in $T$ for each $i \in \{1,2, \dots, k\}$.
\end{enumerate}
\end{claim}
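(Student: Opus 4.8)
The plan is to prove part (i) --- that $u_i$ and $u_{i+1}$ are comparable in $T$ for each $i$ --- and then obtain part (ii) immediately from Observation~\ref{obs:comp-nodes}. The only property of the pairs in $\MM(P,\nu_5,\Sigma)$ that I intend to use is $\q{sign:a-goes-below-u}(\cdot)=\no$: for each pair $(a,b)$ occurring in the cycle, there is no element $q\in B(u_{ab})\cap B(\p(u_{ab}))$ with $a\leq q$ in $P$. The left/right signature $\alpha_1$ and the value $\q{sign:a<x}(\cdot)=\yes$ will not be needed.

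The key step is to show, for each $i$ (indices read cyclically), that the alternating-cycle relation $a_i\leq b_{i+1}$ in $P$ forces $u_i\leq b_{i+1}^T$ in $T$. Suppose not. Since $b_{i+1}^T$ is a leaf of $T$ while $u_i=a_i^T\land b_i^T$ has degree at least $2$ in $T$, the node $b_{i+1}^T$ cannot be a proper ancestor of $u_i$; combined with $u_i\not\leq b_{i+1}^T$, this makes $u_i$ and $b_{i+1}^T$ incomparable in $T$. On the other hand $u_i\leq a_i^T$, so $a_i^T$ is a descendant of $u_i$ while $b_{i+1}^T$ is not, and therefore the path from $a_i^T$ to $b_{i+1}^T$ in $T$ passes through $u_i$ and then its parent $\p(u_i)$ --- which exists because $u_i$ has degree at least $2$ while the root of $T$ is a leaf --- in particular traversing the edge between $u_i$ and $\p(u_i)$. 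Applying Observation~\ref{obs:hitting-vertex-or-edge} to $a_i\leq b_{i+1}$, with $a_i\in B(a_i^T)$ and $b_{i+1}\in B(b_{i+1}^T)$, we get that $a_i\leq b_{i+1}$ hits $B(u_i)\cap B(\p(u_i))$: some $z\in B(u_{a_ib_i})\cap B(\p(u_{a_ib_i}))$ satisfies $a_i\leq z$ in $P$. This contradicts $\q{sign:a-goes-below-u}(a_i,b_i)=\no$, so $u_i\leq b_{i+1}^T$ holds after all.

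Part (i) now follows at once: we also have $u_{i+1}=a_{i+1}^T\land b_{i+1}^T\leq b_{i+1}^T$, so $u_i$ and $u_{i+1}$ are both ancestors of $b_{i+1}^T$ in $T$; since the ancestors of any node of $T$ all lie on the path from the root to that node, they form a chain, and hence $u_i$ and $u_{i+1}$ are comparable in $T$. For part (ii), I apply Observation~\ref{obs:comp-nodes} to the sequence $u_1,u_2,\dots,u_k$, whose consecutive terms are comparable by part (i), and obtain an index $j$ with $u_j\leq u_i$ in $T$ for all $i$.

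I do not anticipate a genuine obstacle. The only point needing a little care is the tree-topology step: when $b_{i+1}^T$ is incomparable to $u_i$, the $a_i^T$--$b_{i+1}^T$ path in $T$ must traverse the parent edge of $u_i$. This rests solely on $u_i$ being internal (so that $\p(u_i)$ exists and $b_{i+1}^T\neq u_i$) together with $u_i\leq a_i^T$, both of which hold here.
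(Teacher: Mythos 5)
Your proof is correct and takes essentially the same route as the paper's: the paper likewise argues that if $u_i\not<b_{i+1}^T$, the $a_i^T$--$b_{i+1}^T$ path would cross the edge $u_i\p(u_i)$, so $a_i\leq b_{i+1}$ would hit $B(u_i)\cap B(\p(u_i))$ and contradict $\alpha_4(a_i,b_i)=\no$, and then derives (ii) from Observation~\ref{obs:comp-nodes}. You merely spell out the tree-topology step (that $b_{i+1}^T$ is a non-root leaf while $u_i$ has degree at least $2$, so incomparability forces the parent edge to be traversed) that the paper asserts more briefly, and you correctly note that the left/right and $\alpha_2$ signatures are not actually used.
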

\begin{proof}
Let $\Sigma\in\mathbf{\Sigma}(\nu_5)=\set{(\textrm{left},\yes,\no),(\textrm{right},\yes,\no)}$.
Again we may assume $\Sigma=(\textrm{left},\yes,\no)$ as the other case is symmetrical.
Thus $\q{sign:left-or-right}(a_i, b_i)=\textrm{left}$ for each $i \in \set{1,\ldots,k}$.

We denote $u_{a_ib_i},w_{a_ib_i},p_{a_ib_i}$ by $u_i, w_i, p_i$ respectively, for each $i\in \{1,2, \dots, k\}$.

To prove the first item observe that since $\q{sign:a-goes-below-u}(a_i, b_i)=\textrm{no}$ for all pairs $(a_i,b_i)$,
and since $a_i\leq b_{i+1}$ in $P$, we have $u_i < b_{i+1}^T$ in $T$.
Indeed, otherwise the path from $a_i^T$ to $b_{i+1}^T$ in $T$ would go through $u_i$ and $p_i$, and hence $a_i\leq b_{i+1}$ would hit $B(u_i)\cap B(p_i)$, contradicting $\q{sign:a-goes-below-u}(a_i,b_i)=\no$.
Clearly $u_{i+1} < b_{i+1}^T$ in $T$.
Therefore, $\set{u_i,u_{i+1}}< b_{i+1}^T$ in $T$, which makes $u_i$ and $u_{i+1}$ comparable in $T$.

The second item follows immediately from the first item and Observation~\ref{obs:comp-nodes}.
\end{proof}

Thanks to Claim~\ref{claim:u-ordering} we know that for every $\Sigma \in {\bf \Sigma}(\nu_5)$,
each alternating cycle in $\MM(P, \nu_5, \Sigma)$ can be written
as $\set{(a_i,b_i)}_{i=1}^k$ in such a way that $u_{a_1b_1} \leq u_{a_ib_i}$ in $T$ for $i \in \{1, \dots, k\}$.
We may further assume that the pair $(a_1,b_1)$ is chosen in such a way that
\begin{enumerate}
 \item if $\q{sign:left-or-right}(a_1,b_1)=\textrm{left}$ then $b_1^T$ is to the right of $b_i^T$ in $T$ for each $i \in \{2, \dots, k\}$ satisfying $u_{a_1b_1} = u_{a_ib_i}$.
 \item if $\q{sign:left-or-right}(a_1,b_1)=\textrm{right}$ then $b_1^T$ is to the left of $b_i^T$ in $T$ for each $i \in \{2, \dots, k\}$ satisfying $u_{a_1b_1} = u_{a_ib_i}$.
\end{enumerate}
Note that the pair $(a_1,b_1)$ is uniquely defined; we call it the \emph{root} of the alternating cycle.

Now for each $\Sigma\in\mathbf{\Sigma}(\nu_5)$ and $(a,b)\in\MM(P,\nu_5,\Sigma)$ we take a closer look at elements in $B(u_{ab})$.
The bag $B(u_{ab})$ consists of three distinct elements; let us denote them  $x_{ab}$, $y_{ab}$, $z_{ab}$.
Given that $\q{sign:a<x}(a,b)=\yes$ and $\q{sign:a-goes-below-u}(a,b)=\no$, we may assume without loss of generality
\begin{align}
 &a \leq x_{ab} \not\leq b \textrm{ in } P;\label{eq:x-element} \\
 &B(u_{ab})\cap B(p_{ab}) = \set{y_{ab},z_{ab}}.
\end{align}
Recall that the $u_{ab}w_{ab}$ edge lies on the path from $r$ to $b^T$ in $T$.
This implies that the relation $a_0 \leq b$ hits $B(u_{ab})\cap B(w_{ab})$.
Clearly, it cannot hit $x_{ab}$, and thus $a_0 \leq b$ hits at least one of $y_{ab}, z_{ab}$.
Let us suppose without loss of generality that this is the case for $y_{ab}$.
It follows that
\begin{align}
 &a\not\leq y_{ab} \leq b \textrm{ in } P; \\
 &a_0 \leq y_{ab} \textrm{ in } P; \\
 &a\not\leq z_{ab} \textrm{ in } P; \\
 &y_{ab}\in B(u_{ab})\cap B(w_{ab}).\label{eq:y_ab-in-bags}
\end{align}
With these notations, we define $\q{sign:H-coloring}$ in the following way:
\begin{center}
\medskip
\fbox{\begin{varwidth}{0.9\textwidth}
For each $\Sigma \in {\bf \Sigma}(\nu_{5})$ and pair $(a,b) \in \MM(P, \nu_{5}, \Sigma)$, we let
\[
\q{sign:H-coloring}(a,b) := (\phi(x_{ab}), \phi(y_{ab}), \phi(z_{ab})).
\]
\end{varwidth}}
\medskip
\end{center}
(Recall that $\phi(w)$ is the color of the element $w\in X$ in the $3$-coloring $\phi$ of the intersection graph defined by the subtrees $T_x$ ($x\in X$), and that
$x_{ab}$, $y_{ab}$, $z_{ab}$ have distinct colors.)
Hence there are $6$ possible answers for $\q{sign:H-coloring}(a,b)$.
In the following when considering nodes $\nu_i$ of $\sigtree$ that are descendants of $\nu_5$,
all we will need is that min-max pairs $(a,b)\in\MM(P,\nu_i,\Sigma)$
have the same value $\q{sign:H-coloring}(a,b)$ but the value itself will not be important.
This is why $\sigtree$ does not branch at $\nu_5$.

\subsection{Node \texorpdfstring{$\nu_6$ and its function $\alpha_6$}.}
\label{sec:node-6}

Before defining the next function $\q{sign:no-xy-edge}$, let us show some useful properties of strict alternating cycles in
$\MM(P, \nu_6, \Sigma)$ for $\Sigma \in {\bf \Sigma}(\nu_6)$.
These properties will be used not only when considering the second leaf $\nu_7$ of $\sigtree$
but also later on when considering the third leaf $\nu_{10}$.

Now, recall that pairs $(a,b)\in\MM(P,\nu_6)$ satisfy
\begin{itemize}
 \item $\q{sign:a<x}(a,b)=\textrm{yes}$, and hence there is $q\in B(u_{ab})$ with $a\leq q$ in $P$,
 \item $\q{sign:a-goes-below-u}(a,b)=\textrm{no}$, and hence there is no $q\in B(u_{ab})\cap B(p_{ab})$ such that $a\leq q$ in $P$,
 \item the elements of $B(u_{ab})$ can be labeled with $x_{ab}$, $y_{ab}$, $z_{ab}$ such that \eqref{eq:x-element}-\eqref{eq:y_ab-in-bags} hold.
\end{itemize}
We need these properties and the mentioned labeling for the following claim.
\begin{claim}\label{claim:uk}
Let  $\Sigma \in {\bf \Sigma}(\nu_6)$ and
suppose that $\set{(a_i,b_i)}_{i=1}^k$ is a strict alternating cycle in $\MM(P, \nu_6, \Sigma)$
with root $(a_1, b_1)$.
Let $u_i, w_i$ denote $u_{a_ib_i},w_{a_ib_i}$ respectively, for each $i \in \{1, 2, \dots, k\}$.
Then  $u_1 < w_1 \leq u_k < b_1^T$ in $T$.
\end{claim}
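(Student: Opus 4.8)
The plan is to establish the chain of inequalities $u_1 < w_1 \leq u_k < b_1^T$ in $T$ one link at a time, relying on the properties~\eqref{eq:x-element}--\eqref{eq:y_ab-in-bags} of the root pair, on Claim~\ref{claim:u-ordering} (whose hypotheses are met since $\mathbf{\Sigma}(\nu_6)\subseteq\mathbf{\Sigma}(\nu_5)$), and on the left/right-extremality built into the definition of the root. Throughout I assume $\alpha_1 \equiv \textrm{left}$ on the cycle, so each $a_i^T$ is to the left of $b_i^T$ in $T$, and $u_1 \leq u_i$ in $T$ for all $i$. The strict inequality $u_1 < w_1$ is immediate: $w_1$ is by definition the neighbor of $u_1$ on the $u_1$--$b_1^T$ path, so $u_1 < w_1 \leq b_1^T$ in $T$, and since $y_{a_1b_1}\in B(u_1)\cap B(w_1)$ by~\eqref{eq:y_ab-in-bags} this neighbor exists. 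The last inequality $u_k < b_1^T$ will follow once we know $w_1 \leq u_k$ together with $w_1 \leq b_1^T$ and some care at the equality case; but in fact it is cleaner to first prove $u_k < b_1^T$ directly and then squeeze $w_1$ in between.

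First I would locate $u_k$ relative to $b_1^T$. We have $a_k \leq b_1$ in $P$ (cyclically, this is the $i=k$ alternating relation $a_k \leq b_{k+1} = b_1$). Since $\alpha_2(a_1,b_1) = \textrm{yes}$ and $\alpha_4(a_1,b_1) = \textrm{no}$, the relation $a_0 \leq b_1$ hits $B(u_1)\cap B(w_1)$ at $y_{a_1b_1}$, which means the $u_1$--$b_1^T$ path passes through $w_1$; in particular $b_1^T \neq u_1$ and $w_1 \leq b_1^T$ in $T$. Now consider the path from $a_k^T$ to $b_1^T$ in $T$. By Claim~\ref{claim:u-ordering}(i) $u_k$ and $u_1$ are comparable, and $u_1 \leq u_k$ by choice of root; the key point is to show $u_k < b_1^T$ strictly and moreover $w_1 \leq u_k$. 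If instead $u_k$ were incomparable to $b_1^T$ or equal to $b_1^T$, I would derive a contradiction with the alternating relation $a_k \leq b_1$: using Observation~\ref{obs:hitting-vertex-or-edge} the relation $a_k\leq b_1$ must hit $B$ of every node on the $a_k^T$--$b_1^T$ path, and by tracing this path through $u_k$ (and, since $u_1\le u_k$, through the portion of $T$ above $u_1$) one forces $a_k$ to be $\leq$ some element of $B(u_1)\cap B(p_1)$ or of $B(u_1)$ that is incompatible with the labeling~\eqref{eq:x-element}--\eqref{eq:y_ab-in-bags} of $(a_1,b_1)$ — in particular it would clash with $\alpha_4(a_1,b_1)=\textrm{no}$ or, via $a_0\le b_1$ hitting $B(u_1)$ at $y_{a_1b_1}$, produce $a_k \leq y_{a_1b_1} \leq b_1$ forcing too many hits on the two-element set $B(u_1)\cap B(w_1)$. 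The extremality clause in the root's definition (that $b_1^T$ is rightmost among the $b_i^T$ with $u_i = u_1$, when $\alpha_1=\textrm{left}$) is what rules out the degenerate subcase $u_k = u_1$: there $b_k^T$ would lie to the right of $b_1^T$, contradicting $a_k^T$ being left of $b_k^T$ combined with Observation~\ref{obs:left-right}.

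Having placed $u_1 < w_1 \leq b_1^T$ and $u_1 \leq u_k$, the remaining task is the middle link $w_1 \leq u_k$ in $T$. Here I would argue: $u_k \leq b_1^T$ (just shown, with equality excluded), and $w_1 \leq b_1^T$; since $w_1$ and $u_k$ both lie on paths emanating upward from $u_1$ toward $b_1^T$ — indeed $w_1$ is the first step of the $u_1$--$b_1^T$ path, and $u_k \geq u_1$ sits somewhere on the way to $b_1^T$ as well — they are comparable in $T$, and I must exclude $u_k < w_1$, i.e. $u_k = u_1$ (the only node strictly between $u_1$ and $w_1$ being none). But $u_k = u_1$ was already eliminated by the root-extremality argument above. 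Hence $w_1 \leq u_k$, and stringing the links together gives $u_1 < w_1 \leq u_k < b_1^T$ in $T$, as claimed. The main obstacle I anticipate is the careful bookkeeping in the middle paragraph: correctly using Observation~\ref{obs:hitting-vertex-or-edge}(iii) to route the relation $a_k \leq b_1$ through the specific nodes $u_k$, $u_1$, $w_1$ and reading off which element of the two-element separator $B(u_1)\cap B(\p(u_1))$ or $B(u_1)\cap B(w_1)$ gets hit, then matching that against the six fixed colors recorded by $\alpha_5$ and the sign $\alpha_4(a_1,b_1)=\textrm{no}$ to reach the contradiction.
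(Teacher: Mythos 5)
Your broad structure matches the paper's: first show $u_k < b_1^T$ in $T$, then rule out $u_1 = u_k$, and conclude $u_1 < w_1 \leq u_k < b_1^T$ since $w_1$ is the first node after $u_1$ on the $u_1$--$b_1^T$ path. But two of the steps, as you sketch them, do not actually go through.

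For the first step, you invoke $\alpha_4(a_1,b_1)=\textrm{no}$ and try to route the relation $a_k\leq b_1$ through $B(u_1)\cap B(p_1)$. That is the wrong node and the wrong pair. If $u_1 \leq u_k$ and $u_k\not< b_1^T$, the $a_k^T$--$b_1^T$ path in $T$ does \emph{not} pass through $p_1$ (since $u_1\leq u_k\wedge b_1^T$ it stays in the subtree above $u_1$); rather, it must leave the subtree above $u_k$, i.e.\ it goes through the edge $u_kp_k$. Then $a_k\leq b_1$ hits $B(u_k)\cap B(p_k)$, contradicting $\alpha_4(a_k,b_k)=\textrm{no}$ --- the ``no'' that matters is for index $k$, not index $1$. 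Also, the point you add about $a_0\leq b_1$ hitting $y_{a_1b_1}$ and ``forcing too many hits'' doesn't yield a contradiction: $a_k\leq y_{a_1b_1}\leq b_1$ is consistent with $a_k\leq b_1$.

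For the second step, ruling out $u_1=u_k$, you appeal only to the root's extremality plus Observation~\ref{obs:left-right}. But when $u_k=u_1$ there are two subcases, depending on whether $a_k^T\wedge b_1^T = u_k$ or $a_k^T\wedge b_1^T > u_k$. Your argument only applies to the second subcase (where $v_k\leq b_1^T$ lets Observation~\ref{obs:left-right}(ii) conclude $b_1^T$ is left of $b_k^T$, contradicting the root choice). In the first subcase Observation~\ref{obs:left-right} gives you nothing, because no neighbor of $u_1$ towards $a_k^T$ or $b_k^T$ is known to sit below $b_1^T$. There the contradiction is of a completely different flavour: the $a_k^T$--$b_1^T$ path passes through $u_k$, so $a_k\leq b_1$ hits $B(u_k)$, forcing $a_k\leq x_k\leq b_1$ (since $a_k\nleq y_k,z_k$); then $\alpha_5(a_1,b_1)=\alpha_5(a_k,b_k)$ together with $B(u_1)=B(u_k)$ pins $x_k=x_1$, giving $a_1\leq x_1=x_k\leq b_1$, a contradiction. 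This coloring argument via $\alpha_5$ is an essential missing ingredient in your sketch, not just ``careful bookkeeping.''
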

\begin{proof}
We denote $p_{a_ib_i}, x_{a_ib_i}, y_{a_ib_i}, z_{a_ib_i}$ by $p_i, x_i, y_i, z_i$ respectively, for each $i\in \{1,2, \dots, k\}$.
We assume that $\alpha_1(a,b)=\textrm{left}$ for each $(a,b)\in\MM(P,\nu_6,\Sigma)$.
In particular, $\q{sign:left-or-right}(a_i, b_i)=\textrm{left}$ for each $i\in \{1,2, \dots, k\}$.

The path from $a_k^T$ to $b_1^T$ in $T$ cannot go through the edge $u_kp_k$, since otherwise by Observation~\ref{obs:hitting-vertex-or-edge}
the relation $a_k\leq b_1$ would hit $B(u_k)\cap B(p_k)$ which contradicts the fact that $\q{sign:a-goes-below-u}(a_k,b_k)=\no$.
This implies $u_k < b_1^T$ in $T$.

Next we prove that $u_1\neq u_k$.
Suppose to the contrary that $u_1=u_k$.
Then $u_k$ lies on the paths from $a_k^T$ to $r$ and from $b_1^T$ to $r$ in $T$, implying that $a_k^T\land b_1^T \geq u_k$ in $T$.

If $a_k^T\land b_1^T = u_k$ then the path from $a_k^T$ to $b_1^T$ in $T$ goes through $u_k$.
Thus, the relation $a_k\leq b_1$ hits $B(u_k)=\{x_k,y_k,z_k\}$ and hence $a_k\leq x_k\leq b_1$ in $P$
(as $a_k\not\leq y_k$ and $a_k\not\leq z_k$ in $P$).
Since $B(u_k)=B(u_1)$ and $\q{sign:H-coloring}(a_1,b_1)=\q{sign:H-coloring}(a_k,b_k)$ implying $\phi(x_k)=\phi(x_1)$, we get $x_k=x_1$.
Now $a_1\leq x_1=x_k\leq b_1$ in $P$ gives a contradiction.

If $a_k^T\land b_1^T > u_k$ in $T$ then it follows that $v_k\leq b_1^T$ in $T$.
By Observation~\ref{obs:left-right}~\ref{item:b-is-right} we conclude that $b_1^T$ is left of $b_k^T$ in $T$.
Since $u_k=u_1$, this contradicts the fact
that $(a_1, b_1)$ is the root of  $\set{(a_i,b_i)}_{i=1}^k$.

Therefore, $u_1\neq u_k$ as claimed, and $u_1 < u_k < b_1^T$ in $T$. Given the definition of $w_1$ and the fact that $u_k < b_1^T$ in $T$,
we deduce $u_1 < w_1 \leq u_k < b_1^T$ in $T$.
\end{proof}

\begin{claim}\label{claim:u2-in-component}
Let  $\Sigma \in {\bf \Sigma}(\nu_6)$ and
suppose that $\set{(a_i,b_i)}_{i=1}^k$ is a strict alternating cycle in $\MM(P, \nu_6, \Sigma)$
with root $(a_1, b_1)$.
Let $u_i, w_i$ denote $u_{a_ib_i},w_{a_ib_i}$ respectively, for each $i \in \{1, 2, \dots, k\}$.
Then $u_1<w_1\leq u_i$ in $T$ for each $i\in \{2, \dots, k\}$.
\end{claim}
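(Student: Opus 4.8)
The plan is to bootstrap from Claim~\ref{claim:uk}, which already gives $u_1 < w_1 \le u_k$ in $T$, to the analogous inequality for every index $i\in\{2,\dots,k\}$. The starting observation is that consecutive nodes of the cycle stay in the same branch at $u_1$: by Claim~\ref{claim:u-ordering} the nodes $u_i$ and $u_{i+1}$ are comparable in $T$, and both lie below $b_{i+1}^T$, so if neither of them equals $u_1$ then they lie below a common child of $u_1$ (a single node such as $b_{i+1}^T$ cannot lie above two distinct children of $u_1$). Hence, if no $u_i$ with $i\ge 2$ equals $u_1$, then $u_2,\dots,u_k$ all lie below one child of $u_1$; since $u_k$ lies below $w_1$ by Claim~\ref{claim:uk} and $w_1$ is itself a child of $u_1$, that child is $w_1$, and the claim follows. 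So it suffices to rule out $u_i=u_1$ for some $i\in\{2,\dots,k\}$.

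I would do this by contradiction. Assuming $w_1\not\le u_i$ for some $i\in\{2,\dots,k\}$, Claim~\ref{claim:uk} forces $i\ne k$, so we may choose $j$ with $i\le j\le k-1$ minimal such that $w_1\le u_{j+1}$ (the set is nonempty since $w_1\le u_k$). Then $w_1\not\le u_j$ while $w_1\le u_{j+1}$. Using that $u_j$ and $u_{j+1}$ are comparable, that $u_1\le u_j$ (the cycle is indexed with root $(a_1,b_1)$, so $u_1$ is the $T$-minimum among the $u_i$'s), and that $w_1$ is a child of $u_1$, a short check forces in turn $u_j<u_{j+1}$ (otherwise $w_1\le u_{j+1}\le u_j$), then $u_j<w_1$ (both are $\le u_{j+1}$ hence comparable, and $w_1\not\le u_j$), and finally $u_j=u_1$ (nothing lies strictly between $u_1$ and its child $w_1$). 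Since $j\ge 2$ and $u_j=u_1$, the definition of the root gives that $b_1^T$ is to the right of $b_j^T$ in $T$ (assuming, as usual, the ``left'' case). Moreover, all pairs of $\MM(P,\nu_6,\Sigma)$ share the same value of $\alpha_5$ and $B(u_j)=B(u_1)$, so matching colours yields $x_j=x_1$, $y_j=y_1$ and $z_j=z_1$; write $x,y,z$ for these elements.

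The crux --- and the step I expect to be the main obstacle --- is to contradict $u_j=u_1$, and I would split into two subcases according to which child of $u_1$ lies below $a_j^T$. If $a_j^T$ does not lie below $w_1$: since $w_1\le u_{j+1}\le b_{j+1}^T$, the nodes $a_j^T$ and $b_{j+1}^T$ lie below distinct children of $u_1$, so the $a_j^T$--$b_{j+1}^T$ path in $T$ runs through $u_1$; by Observation~\ref{obs:hitting-vertex-or-edge} the relation $a_j\le b_{j+1}$ hits $B(u_1)=\{x,y,z\}$, and since $a_j\not\le y$ and $a_j\not\le z$ we get $a_j\le x\le b_{j+1}$, whence $a_1\le x\le b_{j+1}$ and $a_1\le b_{j+1}$ in $P$; as $j+1\ge 3$ this contradicts strictness of the alternating cycle. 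If $a_j^T$ does lie below $w_1$: then $a_j^T$ and $b_j^T$ lie below distinct children of $u_1$ (namely $w_1$ and the child towards $b_j^T$), and so do $b_1^T$ and $b_j^T$; now the cyclic orders of the three edges at $u_1$ dictated by ``$a_j^T$ left of $b_j^T$'' and by ``$b_1^T$ right of $b_j^T$'' are opposite, a contradiction (here one uses that $u_1$ has a parent edge, which holds since $u_1=a_1^T\land b_1^T$ has degree at least $2$ while the root of $T$ is a leaf). In either subcase we reach a contradiction, so $w_1\le u_i$ for all $i\in\{2,\dots,k\}$, and with $u_1<w_1$ from Claim~\ref{claim:uk} this proves the claim.
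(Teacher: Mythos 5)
Your proof is correct and follows essentially the same route as the paper's: both argue by contradiction, isolate an index $j$ with $w_1\not\leq u_j$ but $w_1\leq u_{j+1}$, deduce $u_j=u_1$ from comparability and minimality of $u_1$, and then split on whether $a_j^T$ lies below $w_1$ — using the $\alpha_5$-coloring to force $x_j=x_1$ in one subcase and the left/right root property in the other. Your opening paragraph (reducing to "rule out $u_i=u_1$") is logically dispensable since the contradiction argument already proves the claim directly, and your second subcase phrases Observation~\ref{obs:left-right} in terms of cyclic edge orders at $u_1$, but these are cosmetic differences only.
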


\begin{proof}
We denote $v_{a_ib_i}, p_{a_ib_i}, x_{a_ib_i}, y_{a_ib_i}, z_{a_ib_i}$ by $v_i, p_i, x_i, y_i, z_i$ respectively, for each $i\in \{1,2, \dots, k\}$.
We assume that $\alpha_1(a,b)=\textrm{left}$ for each $(a,b)\in\MM(P,\nu_6,\Sigma)$.
In particular, $\q{sign:left-or-right}(a_i, b_i)=\textrm{left}$ for each $i\in \{1,2, \dots, k\}$.

By Claim~\ref{claim:uk} we have $w_1 \leq u_k$ in $T$. Arguing by contradiction,
suppose that  $w_1 \not\leq u_i$ for some $i\in \{2, \dots, k-1\}$, and let $i$ be the largest such index.
Thus, $w_1 \leq u_{i+1}$ in $T$. Note also that in this case we must have $k \geq 3$.

Since $u_i$ and $u_{i+1}$ are comparable in $T$ (by Claim \ref{claim:u-ordering})
and $u_1$ is minimal in $T$ among all the $u_i$'s, we obtain $u_1=u_i< w_1 \leq u_{i+1}$ in $T$.

Observe that $u_i\leq a_i^T\land b_{i+1}^T$ in $T$, as $u_i<\set{a^T_i,b^T_{i+1}}$ in $T$. If $u_i= a_i^T\land b_{i+1}^T$ then the path from $a_i^T$ to $b_{i+1}^T$ in $T$ goes through $u_i$ .
Thus, the relation $a_i \leq b_{i+1}$ hits $B(u_i)=\{x_i,y_i,z_i\}$, and it follows that $a_i \leq x_i \leq b_{i+1}$ in $P$ (as $a_i\nleq y_i$ and $a_i\nleq z_i$).
Since $B(u_i)=B(u_1)$ and $\phi(x_i)=\phi(x_1)$ (because $\q{sign:H-coloring}(a_1,b_1)=\q{sign:H-coloring}(a_i,b_i)$),
we deduce that $x_i=x_1$.
But then $a_1 \leq x_1 = x_i \leq b_{i+1}$ in $P$, which is a contradiction. (Recall that $i \geq 2$.)

If $u_1=u_i<a_i^T \land b_{i+1}^T$ in $T$ then we must have $w_1\leq a_i^T\land b_{i+1}^T$ in $T$,
since $a_i^T\land b_{i+1}^T$ has to be an internal node of the path from $u_i$ to $b_{i+1}^T$ in $T$
and since $w_1$ is the neighbor of $u_i$ on that path (as $u_i<w_1\leq u_{i+1} < b_{i+1}^T$ in $T$).
In particular, this implies $u_i<w_1 < a_i^T$ in $T$, and it follows that $v_i=w_1$.
As $a_i^T$ is left of $b_i^T$ in $T$, and since $v_i=w_1 < b_1^T$ in $T$, by Observation~\ref{obs:left-right}~\ref{item:b-is-right} we obtain that $b_1^T$ is left of $b_i^T$, which contradicts the fact that $(a_1, b_1)$ is the root of  $\set{(a_i,b_i)}_{i=1}^k$.
This completes the proof.
\end{proof}
Now let us define the function $\q{sign:no-xy-edge}$.
\begin{center}
\medskip
\fbox{\begin{varwidth}{0.9\textwidth}
We set $\q{sign:no-xy-edge}(a,b)$ to be the answer to the following question:
$$\textrm{``Is $B(u_{ab})\cap B(w_{ab})=\set{x_{ab},y_{ab}}$?''.}$$
\end{varwidth}}
\medskip
\end{center}
Note that $y_{ab}$ always belongs to the intersection, and hence $\q{sign:no-xy-edge}$ tells us whether $x_{ab}$ or $z_{ab}$ is the other element in $B(u_{ab})\cap B(w_{ab})$.
If the answer to this question is ``no'', then our signature tree leads us to the second leaf of $\Psi$, leaf $\nu_7$.

\subsection{Second leaf of \texorpdfstring{$\sigtree$: the node $\nu_7$}.}
\label{sec:second_leaf}
In this section we show that incomparable pairs in $\MM(P,\nu_7,\Sigma)$ are reversible for each $\Sigma\in \mathbf{\Sigma}(\nu_7)$.
Recall that $\MM(P,\nu_7)$ is a subset of $\MM(P,\nu_6)$, allowing us to use the observations and claims from the previous section.
Moreover, for every pair $(a,b)\in\MM(P,\nu_7)$ we have that $\q{sign:no-xy-edge}(a,b)=\textrm{no}$, implying that $B(u_{ab})\cap B(w_{ab})=\{y_{ab},z_{ab}\}$.
\begin{claim}\label{claim:no-xy-edge}
$\MM(P, \nu_7, \Sigma)$ is reversible
for each $\Sigma \in \mathbf{\Sigma}(\nu_7)$.
\end{claim}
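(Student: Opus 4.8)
The goal is to show $\MM(P,\nu_7,\Sigma)$ is reversible for each $\Sigma\in\mathbf{\Sigma}(\nu_7)$, i.e.\ to rule out strict alternating cycles inside this set. As in Claim~\ref{claim:a-inc}, I would assume $\Sigma$ has first coordinate $\textrm{left}$, so $a_i^T$ is left of $b_i^T$ in $T$ for each pair in the set. Suppose for contradiction that $\set{(a_i,b_i)}_{i=1}^k$ is a strict alternating cycle in $\MM(P,\nu_7,\Sigma)$; by Claim~\ref{claim:u-ordering} we may take it to be rooted, with $u_1\le u_i$ in $T$ for all $i$, and with $b_1^T$ rightmost among the $b_i^T$ with $u_i=u_1$. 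Write $u_i,v_i,w_i,p_i$ and $x_i,y_i,z_i$ for the usual associated nodes and bag elements. The key structural facts I can use immediately are Claim~\ref{claim:uk} ($u_1<w_1\le u_k<b_1^T$ in $T$) and Claim~\ref{claim:u2-in-component} ($u_1<w_1\le u_i$ in $T$ for all $i\ge 2$). The extra hypothesis at $\nu_7$ is $B(u_{ab})\cap B(w_{ab})=\{y_{ab},z_{ab}\}$, so in particular the edge $u_iw_i$ of $T$ carries the pair $\{y_i,z_i\}$, and $x_i\notin B(w_i)$.

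**The core argument.** I expect the proof to hinge on the edge $e=u_1w_1$ of $T$. Since $w_1\le u_i<b_i^T$ in $T$ for every $i\ge 2$ and since $u_1<u_i$, the node $u_i$ lies strictly above $e$; and since $a_i^T$ is left of $b_i^T$ while $a_{i}^T$ hangs off $u_i$ (or above), one checks via Observation~\ref{obs:left-right} that for each $i\ge 2$ the $a_{i-1}^T$--$b_i^T$ path in $T$ runs through the edge $e$ — more precisely, I would argue that for each $i\in\{1,\dots,k\}$ the relation $a_i\le b_{i+1}$ hits $B(u_1)\cap B(w_1)=\{y_1,z_1\}$. For $i=k$ this is immediate from Claim~\ref{claim:uk} (the $a_k^T$--$b_1^T$ path goes through $e$ since $u_k\ge w_1$ and $u_k<b_1^T$, while $a_k^T$ lies above $u_k\ge w_1>u_1$); for $i<k$ one uses Claim~\ref{claim:u2-in-component} together with a left/right check to see the path crosses $e$ as well. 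Now $|B(u_1)\cap B(w_1)|=2$, so among the $k\ge 3$ relations $a_i\le b_{i+1}$ hitting this two-element set, two of them hit the same element, say $a_i\le c\le b_{i+1}$ and $a_j\le c\le b_{j+1}$ with $i<j$ and $c\in\{y_1,z_1\}$. This gives $a_i\le c\le b_{j+1}$ in $P$, contradicting strictness of the alternating cycle (here $k\ge 3$ is used so that $j+1\ne i+1$, and we need $j+1\ne i$, which also holds since we can choose the two colliding indices to be cyclically non-adjacent — or handle the small-gap cases separately using that $a_i\not\le y_1$ forces $c=z_1$ and then $a_0\le y_1$ supplies a third colliding relation). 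The fact that $a_i\not\le y_i\,(=y_1$ when $u_i=u_1)$ and $a_0\le y_1$ in $P$ is exactly the kind of third incompatible relation mentioned in the strategy discussion, so if the two-collision argument has a degenerate corner (e.g.\ $k=2$, or the colliding indices being adjacent), I would push the third relation $a_0\le b_{i+1}$ — which also hits $\{y_1,z_1\}$ — into the pigeonhole to force a contradiction.

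**Main obstacle.** The delicate part is not the pigeonhole itself but verifying cleanly that \emph{all} $k$ relations $a_i\le b_{i+1}$ genuinely hit the \emph{same} edge $e=u_1w_1$ of $T$. For $i=k$ and for $i=1$ this follows fairly directly from Claims~\ref{claim:uk} and~\ref{claim:u2-in-component}, but for a general intermediate $i$ one has to rule out that the $a_i^T$--$b_{i+1}^T$ path stays above $e$ entirely (which could happen if $a_i^T$ and $b_{i+1}^T$ both lie in the subtree above $w_1$). Resolving this is where I would lean on the root-minimality of $(a_1,b_1)$ and a careful left/right analysis (Observation~\ref{obs:left-right}): essentially, because $b_1^T$ is chosen rightmost among the $b_i^T$ with $u_i=u_1$, and $a_i^T$ is always to the left of $b_i^T$, the pair $(a_i,b_{i+1})$ cannot be "trapped" on one side of $b_1^T$ without contradicting either strictness or the choice of root. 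Once this routing claim is in place, the contradiction via $|B(u_1)\cap B(w_1)|=2$ closes the proof, so $\MM(P,\nu_7,\Sigma)$ contains no strict alternating cycle and is therefore reversible by Observation~\ref{obs:reversible}.
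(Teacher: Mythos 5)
There is a genuine gap in your argument. Your core claim --- that for every $i\in\{1,\dots,k\}$ the $a_i^T$--$b_{i+1}^T$ path in $T$ passes through the edge $u_1w_1$ --- is false for all $i\geq 2$ (cyclically including $i=k$). Indeed, by Claim~\ref{claim:u2-in-component} we have $w_1\leq u_j$ for every $j\geq 2$, and by Claim~\ref{claim:uk} we have $w_1\leq u_k<b_1^T$. Consequently, for $i\geq 2$ both $a_i^T$ (which lies above $u_i\geq w_1$) and $b_{i+1}^T$ (which lies above $u_{i+1}\geq w_1$, or above $w_1$ directly when $i=k$) belong to the subtree of $T$ hanging above $w_1$; the path between them stays inside that subtree and never reaches $u_1$, let alone the edge $u_1w_1$. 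Your justification for $i=k$ (that $a_k^T$ lies above $u_k\geq w_1>u_1$ and $u_k<b_1^T$) in fact proves the opposite of what you want: both endpoints being above $w_1$ is exactly why the path avoids the edge. You flag this routing claim as the ``main obstacle'' and propose to fix it via root-minimality and left/right bookkeeping, but no such fix exists --- the statement is simply wrong. In addition, your assertion that ``for $i=k$ and for $i=1$ this follows fairly directly'' is only half right; only $i=1$ works.

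The irony is that the one relation that does cross the edge already gives the contradiction, with no pigeonhole needed (this is the paper's argument). By Claim~\ref{claim:u2-in-component}, $u_1<w_1\leq u_2$, so $a_1^T\wedge b_2^T=u_1$ and the $a_1^T$--$b_2^T$ path includes $u_1w_1$; hence $a_1\leq b_2$ hits $B(u_1)\cap B(w_1)$. But $a_1\leq x_1$ while $a_1$ is incomparable to $y_1$ and to $z_1$, so the hit element must be $x_1$, forcing $x_1\in B(u_1)\cap B(w_1)$. Combined with $y_1\in B(u_1)\cap B(w_1)$ and $|B(u_1)\cap B(w_1)|=2$, this gives $B(u_1)\cap B(w_1)=\{x_1,y_1\}$, contradicting $\alpha_6(a_1,b_1)=\no$. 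In short: instead of trying to collect all $k$ relations at one edge (which fails), you should have noticed that the very hypothesis $\alpha_6=\no$, i.e.\ $B(u_1)\cap B(w_1)=\{y_1,z_1\}$, is incompatible with $a_1\leq b_2$ hitting that set.
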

\begin{proof}
Let $\Sigma \in \mathbf{\Sigma}(\nu_7)$.
We assume that $\alpha_1(a,b)=\textrm{left}$ for each $(a,b)\in\MM(P,\nu_7,\Sigma)$.
In particular, $\q{sign:left-or-right}(a_i, b_i)=\textrm{left}$ for each $i\in \{1,2, \dots, k\}$.

Arguing by contradiction, suppose that there is a strict alternating cycle $\set{(a_i,b_i)}_{i=1}^k$ in $\MM(P, \nu_7, \Sigma)$ with root $(a_1,b_1)$.
We have $u_1 < w_1 \leq u_2$ in $T$
by Claim~\ref{claim:u2-in-component}, and in particular  $a_1^T \land b_2^T = u_1$.
Thus, the path from $a_1^T$ to $b_2^T$ in $T$ includes the edge $u_1w_1$.
Hence, the relation $a_1 \leq b_2$ hits $B(u_1)\cap B(w_1)\subset \set{x_1,y_1,z_1}$.
Since $a_1 \leq x_1$ and $a_1\inc\{y_1,z_1\}$ in $P$ we obtain $x_1\in B(u_1)\cap B(w_1)$.
Recalling that we also have $y_{1}\in B(u_{1})\cap B(w_{1})$, it follows that
$B(u_1)\cap B(w_1)=\{x_1, y_1\}$,
contradicting $\q{sign:no-xy-edge}(a_1,b_1)=\textrm{no}$.
\end{proof}

\subsection{Node \texorpdfstring{$\nu_8$ and its function $\alpha_8$}.}
\label{sec:node-8}
Before we study strict alternating cycles in $\MM(P,\nu_8)$, let us recall some useful properties of incomparable pairs in $\MM(P,\nu_8)$.
Each pair $(a,b)\in \MM(P,\nu_8)$ satisfies
\begin{itemize}
 \item $\q{sign:a-goes-below-u}(a,b)=\textrm{no}$, and hence there is no $q\in B(u_{ab})\cap B(p_{ab})$ such that $a\leq q$ in $P$,
 \item the elements of $B(u_{ab})$ can be labeled with $x_{ab}$, $y_{ab}$, $z_{ab}$ such that \eqref{eq:x-element}-\eqref{eq:y_ab-in-bags} hold,
 \item $\q{sign:no-xy-edge}(a,b)=\textrm{yes}$, and hence $B(u_{ab})\cap B(w_{ab})=\{x_{ab},y_{ab}\}$.
 \end{itemize}
We proceed with an observation about $\MM(P, \nu_8, \Sigma)$ for fixed $\Sigma \in {\bf \Sigma}(\nu_8)$.

\begin{claim}\label{claim:path-simple}
Let  $\Sigma \in {\bf \Sigma}(\nu_8)$ and
suppose that $\set{(a_i,b_i)}_{i=1}^k$ is a strict alternating cycle in $\MM(P, \nu_8, \Sigma)$
with root $(a_1, b_1)$.
Let $u_i$ denote $u_{a_ib_i}$ for each $i \in \{1, 2, \dots, k\}$.
Then $u_1 < w_1 \leq u_2 < b_1^T$ in $T$.
\end{claim}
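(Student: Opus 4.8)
The plan is to leverage Claims~\ref{claim:uk} and~\ref{claim:u2-in-component}, which apply here since $\MM(P,\nu_8)\subseteq\MM(P,\nu_6)$: writing $w_i:=w_{a_ib_i}$, they already give $u_1<w_1\leq u_k<b_1^T$ and $u_1<w_1\leq u_i$ in $T$ for all $i\in\set{2,\dots,k}$, hence $u_1<w_1\leq u_2$. If $k=2$ then $u_2=u_k$ and the claimed chain $u_1<w_1\leq u_2<b_1^T$ follows immediately, so I would assume $k\geq 3$; the only remaining task is to show $u_2<b_1^T$ in $T$. This is the point where the defining property of $\nu_8$, $B(u_{ab})\cap B(w_{ab})=\set{x_{ab},y_{ab}}$ (i.e.\ $\q{sign:no-xy-edge}(a,b)=\yes$), must be used, since for pairs in $\MM(P,\nu_6)$ alone the conclusion can fail.

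To prove $u_2<b_1^T$ I would argue by contradiction. If $u_2\not<b_1^T$ then, as $b_1^T$ is a leaf of $T$ and $u_2$ is internal, $u_2$ and $b_1^T$ are incomparable in $T$; set $m:=u_2\land b_1^T$. Then $u_1<w_1\leq m<u_2$ and $m<b_1^T$ (using $w_1\leq u_2$ and $w_1\leq u_k<b_1^T$), and since $u_i<b_{i+1}^T$ for every $i$, the node $b_2^T$ is a descendant of $u_2$ and so lies in the subtree hanging off $m$ that contains $u_2$, a different one from that containing $b_1^T$. From here I would run the three-in-a-two-element-set argument sketched at the start of Section~\ref{sec:proof}, aiming to route three comparabilities of $P$ through one $2$-element bag intersection of $T$ in pairwise incompatible ways. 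Two comparabilities come cheaply. First, $a_1^T\land b_2^T=u_1$ (as noted in the proof of Claim~\ref{claim:no-xy-edge}), so the $a_1^T$--$b_2^T$ path uses the edge $u_1w_1$; writing $x_1,y_1,z_1$ for $x_{a_1b_1},y_{a_1b_1},z_{a_1b_1}$ and using $B(u_1)\cap B(w_1)=\set{x_1,y_1}$ together with $a_1\not\leq y_1$, Observation~\ref{obs:hitting-vertex-or-edge} gives $a_1\leq x_1\leq b_2$; as $x_1\in B(w_1)$, the comparability $x_1\leq b_2$ is then carried along the $w_1$--$b_2^T$ path, which passes through $m$. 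Second, because $a_0<b_1$ and $r$, the root of $T$, is a leaf, the $r$--$b_1^T$ path uses the edge $u_1w_1$, so $a_0\leq b_1$ hits $\set{x_1,y_1}$, and since $x_1\not\leq b_1$ it must hit $y_1$; thus $y_1\leq b_1$, and this comparability is carried along the $w_1$--$b_1^T$ path, which also passes through $m$. If these two comparabilities hit a common element of $B(m)$ we would get $x_1\leq b_1$, a contradiction, so they hit $B(m)$ in two incompatible ways.

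The third comparability is where the real work lies. Going once around the strict alternating cycle, the nodes $b_2^T,b_3^T,\dots,b_k^T,b_1^T$ start (at $b_2^T$, and also at $b_3^T$ since $u_2<b_3^T$) in the subtree at $m$ containing $u_2$ and finish (at $b_1^T$) in a different one; picking a suitable crossover index $j\in\set{2,\dots,k-1}$, the $a_j^T$--$b_{j+1}^T$ path should be forced through $m$, and since $a_j\not\leq b_1$ and $a_j\not\leq b_2$ by strictness (this is where $k\geq 3$ is needed), the comparability $a_j\leq b_{j+1}$ would be incompatible with both comparabilities above; identifying the edge of $T$ at $m$ through which all three are routed then closes the argument by pigeonhole on a $2$-element bag intersection. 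I expect this last step to be the main obstacle: naming the crossover index and controlling which child of $m$ each of the three comparabilities leaves through seems to demand a case analysis on where $a_k^T$ and the various $u_i$'s sit relative to $m$ and $u_2$, invoking repeatedly $\q{sign:a-goes-below-u}(a_i,b_i)=\no$ (to keep a path from reaching $B(u_i)\cap B(\p(u_i))$) and $\q{sign:no-xy-edge}(a_i,b_i)=\yes$. I would also expect to dispatch separately the degenerate case $w_1=u_2$, where $\p(u_2)=u_1$ and the bag intersections at $u_2$ collapse onto those at $u_1$, and where the argument becomes shorter.
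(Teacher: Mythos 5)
Your setup through the reduction to $k\geq 3$, the invocation of Claims~\ref{claim:uk} and~\ref{claim:u2-in-component}, and the two relations $a_1\leq x_1\leq b_2$ and $a_0\leq y_1\leq b_1$ routed through $m:=u_2\land b_1^T$ are all sound, and the observation that they would collide in $B(m)$ only if $x_1\leq b_1$ is correct. But you never produce the third relation, and you say yourself this is ``the main obstacle.'' That is not a small finishing touch: it is the heart of the argument, and as written the proof has a genuine gap. Moreover, the three-relation pigeonhole needs a \emph{single} $2$-element bag intersection that all three hit, whereas your two relations share only the edge $\p(m)m$; it is not clear why an $a_j^T$--$b_{j+1}^T$ path should pass through $\p(m)$ rather than entering and leaving $m$ through two of its other incident edges, so you would still have to argue your way down to a common edge before pigeonholing.

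The paper finishes quite differently and more economically. Rather than $m$, it picks the smallest index $i\in\set{3,\dots,k}$ with $u_i\not\geq u_2$ (it exists because $u_k<b_1^T$). One then shows $u_2\leq u_{i-1}<b_i^T$ (using $\q{sign:a-goes-below-u}(a_{i-1},b_{i-1})=\no$), so $u_2$ and $u_i$ are comparable, hence $u_i<u_2$ and $w_i\leq u_2$; combining with Claim~\ref{claim:u2-in-component} gives the chain $u_1<w_1\leq u_i<w_i\leq u_2<b_i^T$. The punchline is that only \emph{one} relation is needed: $a_1\leq b_2$ must pass through the edge $u_iw_i$, so it hits $B(u_i)\cap B(w_i)=\set{x_i,y_i}$ (this is where $\q{sign:no-xy-edge}(a_i,b_i)=\yes$ enters). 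Either $a_1\leq x_i\leq b_2$, forcing $a_i\leq x_i\leq b_2$; or $a_1\leq y_i\leq b_2$, forcing $a_1\leq y_i\leq b_i$. Both contradict strictness since $i\in\set{3,\dots,k}$. Because the pair $(x_i,y_i)$ already carries the incompatibilities $a_i\leq x_i$ and $y_i\leq b_i$, there is no need for the two auxiliary relations you set up, and no pigeonhole. I suggest you pursue the paper's choice of crossover index directly; your node $m$ sits on the same $w_1$--$u_2$ path but lacks the tailored bag structure that $u_iw_i$ provides.
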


\begin{proof}
We denote $w_{a_ib_i},p_{a_ib_i}, x_{a_ib_i}, y_{a_ib_i}, z_{a_ib_i}$ by $w_i, p_i, x_i, y_i, z_i$ respectively, for each $i\in \{1,2, \dots, k\}$.
We assume that $\alpha_1(a,b)=\textrm{left}$ for each $(a,b)\in\MM(P,\nu_8,\Sigma)$.
In particular, $\q{sign:left-or-right}(a_i, b_i)=\textrm{left}$ for each $i\in \{1,2, \dots, k\}$.

If $k=2$ then the claim follows from Claim~\ref{claim:uk}.
So we assume $k\geq 3$ from now on.
By Claim~\ref{claim:u2-in-component} we already know $u_1 < w_1 \leq u_2$ in $T$, and thus
it remains to show $u_2 < b_1^T$ in $T$.

\begin{figure}
 \centering
 \includegraphics{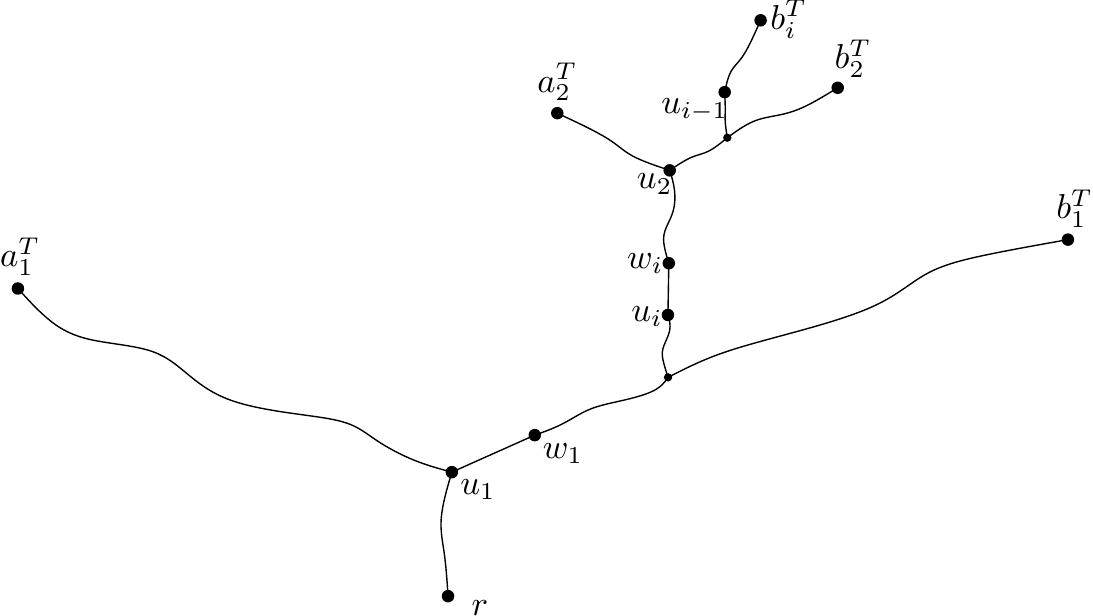}
 \caption{\label{fig:claim17} A possible situation in the proof of Claim~\ref{claim:path-simple}.}
\end{figure}

Arguing by contradiction suppose that $u_2\not<b_1^T$ in $T$.
Let $i \in \{3, \dots, k\}$ be smallest such that $u_i\ngeq u_2$ in $T$.
There is such an index since $u_k<b_1^T$ in $T$ by Claim \ref{claim:uk}, and thus $u_k\ngeq u_2$ in $T$.
See Figure~\ref{fig:claim17} for an illustration of this and upcoming arguments.

By our choice of $i$, we have $u_1<u_2\leq u_{i-1}$ in $T$.
Note that $a_{i-1} \leq b_i$ in $P$,
which combined with $\q{sign:a-goes-below-u}(a_{i-1},b_{i-1})=\textrm{no}$ yields $u_{i-1} < b_i^T$ in $T$.
Hence $u_2\leq u_{i-1}<b_i^T$ in $T$.

Since $u_2<b_i^T$ and $u_i<b_i^T$ in $T$, the two nodes $u_2$ and $u_i$ are comparable in $T$, and thus
$u_i < u_2$ since $u_i\ngeq u_2$ in $T$. Combining this with $u_2<b_i^T$ in $T$ we further deduce that
$w_i\leq u_2$ in $T$.
On the other hand, $w_1 \leq u_i$ in $T$ by Claim~\ref{claim:u2-in-component}.
To summarize, we have $u_1< w_1 \leq u_i<w_i\leq u_2< b_i^T$ in $T$.

Now consider the path from $a_1^T$ to $b_2^T$ in $T$.
Since $w_1 < u_2 \leq b_2^T$ and  $w_1 \nleq a_1^T$ in $T$, this path goes through $u_1$, and thus includes the edge $u_iw_i$.
Hence the relation $a_1\leq b_2$ hits the set $B(u_i)\cap B(w_i)$, the latter being equal to $\set{x_i,y_i}$
since $\q{sign:no-xy-edge}(a_i,b_i)=\textrm{yes}$.
Therefore, $a_1 \leq x_i \leq b_2$ or $a_1\leq y_i \leq b_2$ in $P$.
But this implies $a_i \leq x_i \leq b_2$ or $a_1 \leq y_i \leq b_i$ in $P$, a contradiction in both cases to the properties of a strict alternating cycle (recall that $i\in\set{3,\ldots,k}$).
\end{proof}

Given $\Sigma \in {\bf \Sigma}(\nu_8)$, we
say that pairs $(a,b), (a',b')\in \MM(P, \nu_8, \Sigma)$ form a \emph{special $2$-cycle} if, exchanging
$(a,b)$ and $(a',b')$ if necessary, we have
\begin{enumerate}
 \item $a \leq b'$ and $a' \leq b$ in $P$, and
 \item $u_{ab} < w_{ab} \leq u_{a'b'} < w_{a'b'} < b^T$ in $T$.
\end{enumerate}
The first requirement is simply that $(a,b), (a',b')$ is a (strict) alternating cycle.
(Note that every alternating cycle of length $2$ is strict.)
As a consequence of this and Claim~\ref{claim:path-simple}, we know that the first three inequalities of the second requirement are satisfied.
So the question here is whether also $w_{a'b'}<b^T$ holds in $T$.
An implication of the second requirement is
that the paths from $a^T$ to $b'^T$ and  from $a'^T$ to $b^T$ in $T$ both go through the edge $u_{a'b'}w_{a'b'}$ of $T$.
Note also that the pair $(a,b)$ is the root this special $2$-cycle.

Let $S_{\Sigma}$ be the graph with vertex set $\MM(P, \nu_8, \Sigma)$
where distinct pairs $(a,b),(a',b') \in \MM(P, \nu_8, \Sigma)$ are adjacent if and only if
they form a special $2$-cycle.

\begin{claim}\label{claim:special-2-cycle}
The graph $S_{\Sigma}$ is bipartite for each $\Sigma \in {\bf \Sigma}(\nu_8)$.
\end{claim}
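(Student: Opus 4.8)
The plan is to argue by contradiction: if $S_{\Sigma}$ is not bipartite then it contains an odd cycle, and I want to extract a contradiction from a shortest such cycle. The first step is to orient the edges of $S_{\Sigma}$. Every edge comes from a special $2$-cycle $\{(a,b),(a',b')\}$ which has a distinguished \emph{root} (the pair whose $u$-node is strictly lower in $T$; note that $u_{ab}$ and $u_{a'b'}$ are comparable in $T$ by the definition of a special $2$-cycle together with Claim~\ref{claim:path-simple}, and the two $u$-nodes are distinct, so exactly one endpoint is the root), and I orient the edge from the root towards the other endpoint. If a cycle of $S_{\Sigma}$ were a directed cycle in this orientation then the $u$-nodes would strictly increase all the way around, which is impossible; hence, using Observation~\ref{obs:comp-nodes} to locate a pair whose $u$-node is minimal in $T$ among all pairs of the cycle, every cycle of $S_{\Sigma}$ contains a vertex $(a_1,b_1)$ that is the root of both its incident edges.

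Next I would record, for every special $2$-cycle $\{(a,b),(a',b')\}$ with root $(a,b)$, two derived comparabilities that follow from the properties available at $\nu_8$ — in particular $B(u_{ab})\cap B(w_{ab})=\{x_{ab},y_{ab}\}$, $B(u_{a'b'})\cap B(w_{a'b'})=\{x_{a'b'},y_{a'b'}\}$, and $a\not\le y_{ab}$, $x_{ab}\not\le b$, together with the remark that the paths from $a^T$ to $b'^T$ and from $a'^T$ to $b^T$ both traverse the edge $u_{a'b'}w_{a'b'}$. Applying Observation~\ref{obs:hitting-vertex-or-edge} to the edge $u_{ab}w_{ab}$ for the relation $a\le b'$ gives $a\le x_{ab}\le b'$ in $P$ (it cannot hit $y_{ab}$), and applying it to the edge $u_{a'b'}w_{a'b'}$ for the relation $a'\le b$ gives $a'\le x_{a'b'}\le b$ in $P$ (it cannot hit $y_{a'b'}$). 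These play the role of the ``first two incompatible relations'' in the proof strategy; the special minimal element $a_0$ supplies, through $a_0\le b$ (whose path also crosses $u_{ab}w_{ab}$ and must hit $y_{ab}$ since $x_{ab}\not\le b$), the third relation.

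With this in hand I would first reduce a shortest odd cycle to a triangle. Given the root vertex $(a_1,b_1)$ of the cycle, its two neighbours $(a_2,b_2)$ and $(a_m,b_m)$ have $u$-nodes lying on the path of $T$ from $w_{a_1b_1}$ to $b_1^T$; using Claim~\ref{claim:path-simple}, Observation~\ref{obs:left-right}, and the choice of the root of the cycle, one shows that $\{(a_2,b_2),(a_m,b_m)\}$ is itself a special $2$-cycle, so this chord splits the cycle into a triangle and an even cycle, and by minimality the cycle is a triangle $(a_1,b_1),(a_2,b_2),(a_3,b_3)$ whose $u$-nodes strictly increase in this order. For such a triangle I would work at the edge $u_{a_3b_3}w_{a_3b_3}$ (and also at $p_{a_3b_3}u_{a_3b_3}$): the relations $a_3\le b_1$, $a_3\le b_2$, $a_1\le b_3$, $a_2\le b_3$, $a_1\le b_2$ and $a_0\le b_1$ all have paths crossing $u_{a_3b_3}w_{a_3b_3}$ and therefore each hits $\{x_{a_3b_3},y_{a_3b_3}\}$. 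Combining the hits that are forced — for instance $a_3\le x_{a_3b_3}\le b_1$ (hence $x_{a_3b_3}\le b_1$) together with a relation forcing $a_1\le x_{a_3b_3}$, or dually a relation forcing $a_1\le y_{a_3b_3}$ together with $y_{a_3b_3}\le b_1$ — produces $a_1\le b_1$ in $P$, contradicting the incomparability of $(a_1,b_1)$.

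I expect the main obstacle to be precisely this last step: several of the relations listed can a priori hit either element of the size-$2$ intersection, so the argument has to descend through the two nested edges $p_{a_3b_3}u_{a_3b_3}$ and $u_{a_3b_3}w_{a_3b_3}$, using that $B(u_{a_3b_3})\cap B(p_{a_3b_3})=\{y_{a_3b_3},z_{a_3b_3}\}$ (so that $x_{a_3b_3}\notin B(p_{a_3b_3})$) to eliminate the surviving cases one by one via Observation~\ref{obs:hitting-vertex-or-edge}(\ref{item:2vertices}). The other delicate point is the chord claim, namely verifying both the comparabilities $a_2\le b_m$ and $a_m\le b_2$ and the tree inequality $w_{a_mb_m}<b_2^T$ needed for $\{(a_2,b_2),(a_m,b_m)\}$ to be special.
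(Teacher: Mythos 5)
Your reduction to a triangle rests on a chord claim that I do not think can be established, and you offer no argument for the crucial part of it. You choose the vertex $(a_1,b_1)$ whose $u$-node is minimal on the cycle, so that both its cycle-neighbours $(a_2,b_2)$ and $(a_m,b_m)$ are targets (not roots) of their shared special $2$-cycles with $(a_1,b_1)$, and you then assert that $(a_2,b_2),(a_m,b_m)$ is itself a special $2$-cycle. The tree-topological half of that assertion is plausible ($u_{a_2b_2}$ and $u_{a_mb_m}$ both lie strictly between $w_{a_1b_1}$ and $b_1^T$, hence are comparable), but the order-theoretic half — namely $a_2\le b_m$ and $a_m\le b_2$ in $P$ — has no visible source. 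The data you have about these two pairs consists only of the relations produced by the edges $12$, $23$ and $(m-1)m$, $m1$ of the cycle, and none of that, even combined with Claim~\ref{claim:path-simple}, Observation~\ref{obs:left-right}, or the forced hits you correctly record in your step on derived comparabilities, chains $a_2$ below $b_m$ or $a_m$ below $b_2$. Indeed, for a $5$-cycle there is no reason that, say, $a_2\le b_5$; and the paper's proof never attempts this chord, which is telling.

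What the paper does instead is pick the vertex with \emph{maximal} $u$-node — call it $(a_2,b_2)$ — and, for $k\ge5$, look at the pair $\{i,j\}=\{3,4\}$ with $u_{a_jb_j}<u_{a_ib_i}$. It shows that the four relations $a_1\le b_2$, $a_0\le b_1$, $a_j\le b_i$, $a_i\le b_j$ all hit the two-element set $B(u_{a_ib_i})\cap B(w_{a_ib_i})=\{x_i,y_i\}$, where the last two are pinned to $a_j\le y_i\le b_i$ and $a_i\le x_i\le b_j$. The pigeonhole on $a_1\le b_2$ and $a_0\le b_1$ then forces a $2$-cycle between $(a_1,b_1)$ and one of $(a_i,b_i)$, $(a_j,b_j)$; a short tree argument upgrades it to a \emph{special} $2$-cycle, giving a chord between $(a_1,b_1)$ and a vertex two or three steps away, not between the two neighbours of a single vertex. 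Your own ``third relation'' $a_0\le b$ is exactly the right tool — but it needs to be used at the $u_{a_ib_i}w_{a_ib_i}$ edge against $a_1\le b_2$, not to manufacture a chord $\{(a_2,b_2),(a_m,b_m)\}$. Finally, note that once you are reduced to a triangle, a much shorter argument than the one you sketch is available: with $u_{a_2b_2}$ maximal (the paper's labelling is $u_1<u_3<u_2$), the three relations $a_1\le b_2$, $a_2\le b_3$, $a_3\le b_1$ all cross the edge $u_{a_2b_2}w_{a_2b_2}$, two of them must hit the same element, and since any two indices in $\{1,2,3\}$ are cyclically consecutive this immediately yields $a_\ell\le b_\ell$ for some $\ell$ — no descent through the nested edges $p_{a_3b_3}u_{a_3b_3}$ and $u_{a_3b_3}w_{a_3b_3}$ is needed.
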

\begin{proof}
Arguing by contradiction,
suppose that there is an odd cycle $C=\set{(a_i,b_i)}_{i=1}^k$ in $S_{\Sigma}$
for some $\Sigma \in {\bf \Sigma}(\nu_8)$.
We may assume that $C$ is induced.
Let $u_i :=u_{a_ib_i}$, $w_i :=w_{a_ib_i}$, $x_i :=x_{a_ib_i}$ and $y_i :=y_{a_ib_i}$ for each $i \in \{1, \dots, k\}$.

First we consider the case $k=3$.
Since $u_1,u_2,u_3$ are pairwise comparable in $T$,
we may assume that $u_1<u_3<u_2$ in $T$ (recall that consecutive $u_i$'s are distinct by property (ii) of special $2$-cycles).
By the definition of special $2$-cycles,
we then obtain $u_2<w_2<\set{b_1^T,b_2^T,b_3^T}$ in $T$.
Thus the paths from $a_1^T$ to $b_2^T$, from $a_2^T$ to $b_3^T$, and from $a_3^T$ to $b_1^T$ in $T$ all go through the edge $u_2w_2$.
This implies that two relations must hit the same element and therefore
there is $i\in\set{1,2,3}$ and $q\in B(u_2)\cap B(w_2)$ such that
$a_i \leq q \leq b_{i+1}$ and $a_{i+1} \leq q \leq b_{i+2}$ in $P$ (indices are taken cyclically).
However, this gives $a_{i+1}\leq b_{i+1}$ in $P$, a contradiction.

Next consider the case $k \geq 5$.
We will show that $C$ has a chord, contradicting the fact that $C$ is induced.
(We remark that the parity of $k$ will not be used here, only that $k \geq 5$.)

We may suppose that $u_2$ is maximal in $T$ among all the $u_i$'s.
We may also assume without loss of generality $u_1\leq u_3 < u_2$ in $T$.
(Recall that by property (ii) of special $2$-cycles $u_i$ and $u_{i+1}$ are comparable in $T$ and distinct for each $i\in \set{1, \dots, k}$.)
Let $i,j$ be such that $\{i,j\}=\{3,4\}$ and $u_j < u_i$ in $T$. (Note that
$u_{j} \neq u_{i}$ since $(a_j,b_j)$, $(a_i,b_i)$ form a special $2$-cycle.)
We claim that
\begin{equation}\label{eq:w-inequality}
w_j < w_i \leq w_2 \textrm{ and } w_1 \leq w_i
\end{equation}
in $T$.

The inequality $w_j<w_i$ follows from the fact that $(a_j,b_j)$ and $(a_i,b_i)$ form a special $2$-cycle,
and thus in particular $u_j < w_j \leq u_i < w_i$ in $T$.
For the inequality $w_i\leq w_2$ we do a case distinction.
If $i=3$ then $(a_2,b_2)$ and $(a_i,b_i)$ form a special $2$-cycle with $u_i < w_i \leq u_2 < w_2$ in $T$.
If $i=4$ then $(a_2,b_2)$, $(a_3,b_3)$ as well as $(a_3,b_3)$, $(a_i,b_i)$ form special $2$-cycles.
In particular, $\set{w_2,w_i} < b_3^T$ in $T$.
This makes $w_2$ and $w_i$ comparable in $T$ and by the choice of $u_2$ we have $w_i \leq w_2$, as desired.
Besides this, we also have $w_1 < w_2$ in $T$ (as $(a_1,b_1),(a_2,b_2)$ form a special $2$-cycle),
which makes $w_1$ and $w_i$ comparable in $T$.
Since we have $u_1\leq u_i$ by our choice of $i$ and by the assumption that $u_1\leq u_3$ in $T$, it follows that $w_1\leq w_i$ in $T$ and \eqref{eq:w-inequality} is proven.

Now, we are going to argue that the $a_1^T$--$b_2^T$ path, the $a_0$--$b_1^T$ path, the
$a_j^T$--$b_i^T$ path, and the $a_i^T$--$b_j^T$ path all go through the edge $u_iw_i$ in $T$.

For the $a_1^T$--$b_2^T$ path note that $u_1 < w_1 \leq w_i \leq w_2 < b_2^T$ in $T$.
This implies that this path has to go first
from $a_1^T$ down to $u_1$ and then pursue with the $u_{1}$--$b_{2}^{T}$ path, which includes $w_{i}$
by the previous inequalities, and thus also its parent $u_{i}$ (since $u_{1} < w_{i}$ in $T$).
Hence, it includes the edge $u_iw_i$ of $T$.

For the $a_0$--$b_1^T$ path it suffices to observe that $r < u_i < w_i \leq w_2 < b_1^T$ in $T$.
Similarly, for the $a_j^T$--$b_i^T$ path, notice that $u_j < w_j \leq u_i < w_i < b_i^T$ in $T$.
Finally, for the $a_i^T$--$b_j^T$ path, observe that $u_i < w_i < b_j^T$ in $T$.

Using Observation~\ref{obs:hitting-vertex-or-edge}, it follows that the relations $a_1\leq b_2$, $a_0 \leq b_1$, $a_j\leq b_i$ and $a_i\leq b_j$ in $P$ all hit $B(u_i)\cap B(w_i)=\set{x_i,y_i}$. Clearly,
\[
a_j \leq y_i \leq b_i \quad\quad \textrm{and} \quad\quad a_i \leq x_i \leq b_j
\]
in $P$.

Now, in $P$ we either have $a_0\leq x_i\leq b_1$ and $a_1\leq y_i\leq b_2$, or $a_0\leq y_i\leq b_1$ and $a_1\leq x_i\leq b_2$.
This implies $a_i\leq b_1$ and $a_1\leq b_i$, or $a_j\leq b_1$ and $a_1\leq b_j$.

In the first case, $(a_1, b_1), (a_i, b_i)$ is an alternating cycle of length $2$ (and thus is strict).
Recall that $w_2<b_1^T$ in $T$, which together with \eqref{eq:w-inequality} yields $w_1\leq w_i < b_1^T$ in $T$.
Furthermore, applying Claim~\ref{claim:u2-in-component} on $(a_1, b_1), (a_i, b_i)$ we obtain
$u_1 \neq u_i$, implying $w_1 \neq w_i$, and hence
$w_1<w_i$ in $T$. It follows that $u_1<w_1\leq u_i<w_i<b_1^T$ in $T$, which shows that $(a_1,b_1), (a_i,b_i)$
is a special $2$-cycle. This gives us a chord of the cycle $C$, a contradiction.

In the second case, $(a_1, b_1), (a_j, b_j)$ is a (strict) alternating cycle.
Moreover, $w_1 \leq w_i$ and $w_j \leq w_i$ in $T$ (see \eqref{eq:w-inequality}),
which makes $w_1$ and $w_j$ comparable in $T$.
Again by Claim~\ref{claim:u2-in-component} we get $w_1\neq w_j$.
If $w_1<w_j$ then it also holds that $u_1<w_1\leq u_j<w_j<b_1^T$ in $T$ (as $w_j\leq w_2< b_1^T$).
If $w_j<w_1$ then it follows that $u_j<w_j\leq u_1<w_1<b_j^T$ in $T$ (as $w_1\leq w_i< b_j^T$).
Thus in both cases $(a_1, b_1), (a_j, b_j)$ is a special $2$-cycle and a chord in $C$, a contradiction.
This completes the proof.
\end{proof}

Using Claim~\ref{claim:special-2-cycle}, for each
 $\Sigma \in {\bf \Sigma}(\nu_8)$ let
$\psi_{8, \Sigma}\colon\MM(P, \nu_8, \Sigma) \to \{1,2\}$ be
a (proper) $2$-coloring of $S_{\Sigma}$.
The function $\q{sign:colors_special_graph}$ then simply records the color of a pair in this coloring:
\begin{center}
\medskip
\fbox{\begin{varwidth}{0.9\textwidth}
For each $\Sigma\in\mathbf{\Sigma}(\nu_8)$ and each pair $(a,b)\in\MM(P,\nu_8,\Sigma)$, we let
$$\q{sign:colors_special_graph}(a,b) := \psi_{8, \Sigma}(a,b).$$
\end{varwidth}}
\medskip
\end{center}

\subsection{Node \texorpdfstring{$\nu_9$ and its function $\alpha_9$}.}
\label{sec:node-9}

By the definition of $\q{sign:colors_special_graph}$ there is no special $2$-cycle in
$\MM(P, \nu_9, \Sigma)$, for every $\Sigma \in {\bf \Sigma}(\nu_9)$.
This will be used in the definition of the function $\q{sign:colors_K}$.

In order to define $\q{sign:colors_K}$ we first need to introduce an auxiliary directed graph.
For each $\Sigma \in {\bf \Sigma}(\nu_9)$, let
$K_{\Sigma}$ be the directed graph with vertex set $\MM(P, \nu_9, \Sigma)$
where for any two distinct pairs $(a_1,b_1), (a_2,b_2) \in \MM(P, \nu_9, \Sigma)$
there is an arc from $(a_1,b_1)$ to $(a_2,b_2)$ in $K_{\Sigma}$ if there exists a strict alternating cycle $\set{(a_i',b_i')}_{i=1}^k$
in $\MM(P, \nu_9, \Sigma)$ with root $(a_1', b_1')$ such that $(a_1', b_1')=(a_1,b_1)$ and $(a_2', b_2')=(a_2,b_2)$.
In that case we say that the arc $f$ {\em is induced by} the strict alternating cycle $\{(a_i',b_i')\}_{i=1}^k$.
(Note that there could possibly be different strict alternating cycles inducing the same arc $f$.)

\begin{claim}\label{claim:k-sigma-1}
 Let $\Sigma\in\mathbf{\Sigma}(\nu_9)$. Then for each arc $((a_1,b_1),(a_2,b_2))$ in $K_{\Sigma}$ we have
\begin{enumerate}
 \item\label{item:k-sigma-1} $x_{1}\leq y_{2}$, and
 \item\label{item:k-sigma-2} $y_{1}\leq z_{2}\leq b_1$
\end{enumerate}
in $P$, where $x_i :=x_{a_ib_i}$, $y_i :=y_{a_ib_i}$, and $z_i :=z_{a_ib_i}$
for $i=1,2$.
\end{claim}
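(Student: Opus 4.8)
The plan is to unpack the definition of an arc in $K_\Sigma$ and track where the relevant relations of the strict alternating cycle must pass in the tree $T$. So fix $\Sigma \in \mathbf{\Sigma}(\nu_9)$, an arc $((a_1,b_1),(a_2,b_2))$ of $K_\Sigma$, and a strict alternating cycle $\{(a_i',b_i')\}_{i=1}^k$ in $\MM(P,\nu_9,\Sigma)$ with root $(a_1',b_1') = (a_1,b_1)$ and $(a_2',b_2') = (a_2,b_2)$. As usual I would assume $\alpha_1 = \textrm{left}$ throughout, the right case being symmetric. Write $u_i = u_{a_ib_i}$, $w_i = w_{a_ib_i}$, $x_i, y_i, z_i$ for the elements of $B(u_i)$ labeled as in \eqref{eq:x-element}--\eqref{eq:y_ab-in-bags}, for $i = 1,2$. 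Since $\MM(P,\nu_9,\Sigma) \subseteq \MM(P,\nu_8,\Sigma)$ we may invoke Claim~\ref{claim:path-simple}: the root $(a_1,b_1)$ and the next pair $(a_2,b_2)$ of the cycle satisfy $u_1 < w_1 \leq u_2 < b_1^T$ in $T$, and by the labeling coming from $\q{sign:no-xy-edge}(a,b) = \textrm{yes}$ (node $\nu_8$) we have $B(u_i) \cap B(w_i) = \{x_i, y_i\}$.

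For item~\ref{item:k-sigma-1} ($x_1 \leq y_2$ in $P$), the key observation is that the relation $a_1 \leq b_2$ of the alternating cycle must pass through both the edge $u_1 w_1$ (since $a_1^T \land b_2^T = u_1$, as $w_1 \leq u_2 \leq b_2^T$ while $w_1 \not\leq a_1^T$) and the edge $u_2 w_2$ (since $w_1 \leq u_2 < w_2$ and $w_2 \leq b_2^T$, using $u_2 < w_2$ from Claim~\ref{claim:path-simple}, or rather that $w_2$ lies on the $u_2$--$b_2^T$ path). By Observation~\ref{obs:hitting-vertex-or-edge}\ref{item:2vertices} applied to $a_1 \leq b_2$ along the path through $u_1$ and then $u_2$, there are $z \in B(u_1) \cap B(w_1)$ and $z' \in B(u_2) \cap B(w_2)$ with $a_1 \leq z \leq z' \leq b_2$ in $P$. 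Now $B(u_1) \cap B(w_1) = \{x_1, y_1\}$ and $a_1 \not\leq y_1$, so $z = x_1$; and $B(u_2) \cap B(w_2) = \{x_2, y_2\}$, while $z' \leq b_2$ forces $z' \neq x_2$ (since $a_2 \leq x_2 \not\leq b_2$ would give $\ldots$ — more carefully, $x_2 \not\leq b_2$ by \eqref{eq:x-element}), hence $z' = y_2$. Thus $x_1 \leq y_2$ in $P$.

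For item~\ref{item:k-sigma-2} ($y_1 \leq z_2 \leq b_1$ in $P$), I would run the analogous argument with the relation $a_0 \leq b_1$ in place of $a_1 \leq b_2$, exploiting that $a_0$ sits below all maximal elements. One checks that $a_0 \leq b_1$ passes through the edge $u_1 w_1$ (since $u_1 < w_1 \leq b_1^T$ from Claim~\ref{claim:path-simple}, and the root $r < u_1$) and through the edge $u_2 w_2$ (since $w_1 \leq u_2 < w_2$ and $w_2 < b_1^T$, the last inequality being exactly the extra content of $u_2 < b_1^T$ in Claim~\ref{claim:path-simple} together with $w_2 \leq \ldots$; one needs $w_2 < b_1^T$, which should follow since $u_2 < b_1^T$ and $w_2$ is the child of $u_2$ towards $b_2^T$, while $b_1^T$ and $b_2^T$ — hmm). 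Granting the path claim, Observation~\ref{obs:hitting-vertex-or-edge}\ref{item:2vertices} gives $z \in B(u_1) \cap B(w_1) = \{x_1,y_1\}$ and $z' \in B(u_2) \cap B(w_2) = \{x_2, y_2\}$ with $a_0 \leq z \leq z' \leq b_1$. Then I would combine with item~\ref{item:k-sigma-1}: if $z = x_1$, then $x_1 \leq z' \leq b_1$, and since $x_1 \leq y_2$ we could derive a relation contradicting strictness (this is where the hypothesis that $\MM(P,\nu_9,\Sigma)$ contains \emph{no special $2$-cycle}, inherited from $\q{sign:colors_special_graph}$, should be used to rule out the bad configuration — this is the crux). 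So $z = y_1$, and then $z' \neq x_2$ (else $a_1 \leq x_1 \leq y_2$ combined with... or directly: $x_2 \leq b_1$ together with $a_2 \leq x_2$ gives $a_2 \leq b_1$, and with $a_1 \leq b_2$ this is a length-$2$ alternating cycle, whose associated configuration one shows is a special $2$-cycle via Claim~\ref{claim:u2-in-component}, contradiction), so $z' = z_2$, yielding $y_1 \leq z_2 \leq b_1$ in $P$.

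The main obstacle I anticipate is the bookkeeping in part~\ref{item:k-sigma-2}: pinning down exactly which edge on the $a_0$--$b_1^T$ path one uses, verifying $w_2 < b_1^T$ in $T$ carefully (one likely needs that $b_2^T$ is not "between" $u_2$ and $b_1^T$, which uses that $(a_1,b_1)$ is the root so $b_1^T$ is rightmost among the $b_i^T$ sharing $u_1$ — but here $u_2 \neq u_1$ in general, so a separate comparability argument via Observation~\ref{obs:left-right} is needed), and above all eliminating the alternative $z' = x_2$ (resp. $z = x_1$) cleanly by exhibiting a special $2$-cycle and invoking that none exists in $\MM(P,\nu_9,\Sigma)$. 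The left/right symmetry and the identification of elements via colors ($\q{sign:H-coloring}$ equal on all pairs of the cycle, forcing $x_i, y_i, z_i$ to be "the same kind" of element) are routine once the path-through-edges structure is established.
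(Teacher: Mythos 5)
Your argument for item~\ref{item:k-sigma-1} is correct and is essentially the paper's: push $a_1\leq b_2$ through the edges $u_1w_1$ and $u_2w_2$ (both on the $a_1^T$--$b_2^T$ path since $u_1<w_1\leq u_2<w_2\leq b_2^T$ in $T$), and eliminate $y_1$ and $x_2$ from the two edge bags.

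For item~\ref{item:k-sigma-2} there is a genuine gap, and you have correctly sensed it yourself: you want the $r$--$b_1^T$ path to use the edge $u_2w_2$, but $w_2$ is the neighbor of $u_2$ towards $b_2^T$, and nothing forces $w_2\leq b_1^T$ in $T$. Claim~\ref{claim:path-simple} only gives $u_2<b_1^T$; the subtree containing $b_1^T$ above $u_2$ might be a third subtree distinct from those containing $a_2^T$ and $b_2^T$. Worse, even if you did have $w_2\leq b_1^T$, you would be hitting $B(u_2)\cap B(w_2)=\set{x_2,y_2}$, which does not contain $z_2$, so the conclusion $y_1\leq z_2\leq b_1$ simply cannot come out of that bag --- your step ``so $z'=z_2$'' is a slip, since you had just confined $z'$ to $\set{x_2,y_2}$. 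The correct edge to use at the $u_2$ level is $p_2u_2$, whose bag intersection is $B(p_2)\cap B(u_2)=\set{y_2,z_2}$ by~\eqref{eq:xy-in-edge}; this edge is always on the $r$--$b_1^T$ path because $u_2<b_1^T$ (Claim~\ref{claim:path-simple}) and $p_2=\p(u_2)$, with no left/right considerations needed. Then the elimination is immediate and requires no appeal to the absence of special $2$-cycles: $a_0\leq b_1$ hits $\set{x_1,y_1}$ and $\set{y_2,z_2}$, it cannot hit $x_1$ (else $a_1\leq x_1\leq b_1$), and it cannot hit $y_2$ (else, using item~\ref{item:k-sigma-1}, $a_1\leq x_1\leq y_2\leq b_1$), so by Observation~\ref{obs:hitting-vertex-or-edge} one gets $a_0\leq y_1\leq z_2\leq b_1$ in $P$. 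Your invocation of the ``no special $2$-cycles'' property of $\MM(P,\nu_9,\Sigma)$ is not needed anywhere in this claim; that property is only exploited later (for instance in Claim~\ref{claim:k-sigma-2}).
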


\begin{proof}
Let $u_i :=u_{a_ib_i}$, $p_i :=p_{a_ib_i}$, and $w_i :=w_{a_ib_i}$ for $i=1,2$.
By the definition of an arc in $K_{\Sigma}$ and by Claim~\ref{claim:path-simple} it holds that $u_{1}<w_{1}\leq u_{2}<b_1^T$ in $T$.
Thus, the path from $a_1^T$ to $b_2^T$ in $T$ goes through $u_{1},w_{1},u_{2}$ and $w_{2}$.
Hence, the relation $a_1\leq b_2$ (which exists by the definition of an arc in $K_{\Sigma}$) hits
$B(u_{1})\cap B(w_{1})=\set{x_{1},y_{1}}$ and $B(u_{2})\cap B(w_{2})=\set{x_{2},y_{2}}$.
It cannot hit $y_{1}$ (otherwise $a_1\leq y_{1}\leq b_1$ in $P$) nor $x_{2}$ (otherwise $a_2\leq x_{2}\leq b_2$ in $P$).
It follows that $a_1\leq x_{1}\leq y_{2}\leq b_2$ in $P$ by Observation~\ref{obs:hitting-vertex-or-edge},
and~\ref{item:k-sigma-1} is proven.

For~\ref{item:k-sigma-2} observe that the relation $a_0 \leq b_1$ hits $\set{x_{1},y_{1}}=B(u_{1})\cap B(w_{1})$ and
$\set{y_{2},z_{2}}=B(p_{2})\cap B(u_{2})$. It cannot hit $x_{1}$ nor $y_{2}$ since otherwise $a_1\leq b_1$ in $P$ as we have $a_1\leq x_1\leq y_2$ in $P$.
Hence we obtain $a_0 \leq y_1 \leq z_2 \leq b_1$ in $P$ by Observation~\ref{obs:hitting-vertex-or-edge}.
\end{proof}
Let us remark that it is possible to strengthen the statement of Claim~\ref{claim:k-sigma-1}.
In fact, strict inequalities $x_1<y_2$ and $y_1<z_2$ hold in \ref{item:k-sigma-1} and \ref{item:k-sigma-2}, respectively.
Indeed, since $\phi$ is a proper coloring and $\q{sign:H-coloring}(a_1,b_1)=\q{sign:H-coloring}(a_2,b_2)$, we have $\phi(x_1)=\phi(x_2)\neq \phi(y_2)$ and $\phi(y_1)=\phi(y_2)\neq \phi(z_2)$, implying that $x_1\neq y_2$ and $y_1\neq z_2$.
For our purposes the non-strict inequalities of Claim~\ref{claim:k-sigma-1} (and also in forthcoming claims) suffice.
As we do not want give more arguments than needed, we will not always prove the strongest possible statement in the following.

\begin{claim}\label{claim:k-sigma-2}
Let $\Sigma\in\mathbf{\Sigma}(\nu_9)$. Suppose that $(a_1,b_1),(a_2,b_2),(a_3,b_3)\in\MM(P,\nu_9,\Sigma)$
are three distinct pairs such that $((a_1,b_1),(a_2,b_2))$ is an arc in $K_{\Sigma}$ and $u_{1}<u_{3}<w_{3}\leq u_{2}$ in $T$,
where $u_i :=u_{a_ib_i}$ and $w_i :=w_{a_ib_i}$ for each $i\in \set{1,2,3}$. Then
\begin{enumerate}
 \item $x_{1}\leq z_{3}\leq y_{2}$, and
 \item $y_{1}\leq y_{3}\leq z_{2}\leq b_1$
\end{enumerate}
in $P$, where  $x_i :=x_{a_ib_i}$, $y_i :=y_{a_ib_i}$, and $z_i :=z_{a_ib_i}$  for each $i\in \set{1,2,3}$.
\end{claim}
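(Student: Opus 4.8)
The plan is to follow the template of the proof of Claim~\ref{claim:k-sigma-1}, threading the two relations $a_1\le b_2$ (which exists because $((a_1,b_1),(a_2,b_2))$ is an arc of $K_\Sigma$) and $a_0\le b_1$ along their paths in $T$, but now keeping track of the extra node $u_3$ that the hypothesis places strictly between $u_1$ and $u_2$. Throughout I abbreviate $u_i:=u_{a_ib_i}$, $p_i:=p_{a_ib_i}$, $w_i:=w_{a_ib_i}$ and write $x_i,y_i,z_i$ for the elements of $B(u_i)$ labelled as in~\eqref{eq:x-element}--\eqref{eq:y_ab-in-bags}; recall also that $B(u_i)\cap B(w_i)=\set{x_i,y_i}$ (since $\alpha_6=\textrm{yes}$ on $\MM(P,\nu_9,\Sigma)$) and that $\phi(x_1)=\phi(x_2)=\phi(x_3)$, $\phi(y_1)=\phi(y_2)=\phi(y_3)$, $\phi(z_1)=\phi(z_2)=\phi(z_3)$ are three pairwise distinct colours (since $\alpha_5$ is constant there).

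\emph{Step 1: locating $u_3$.} From the arc and Claim~\ref{claim:path-simple} we get $u_1<w_1\le u_2<b_1^T$ in $T$; combined with the hypothesis $u_1<u_3<w_3\le u_2$, this puts both $w_1$ and $u_3$ on the $u_1$--$u_2$ path of $T$, hence $w_1\le u_3$. Moreover $w_1\ne u_3$: otherwise $p_3=u_1$, so $B(p_3)\cap B(u_3)=B(u_1)\cap B(w_1)$ would force $\set{y_3,z_3}=\set{x_1,y_1}$, impossible since these pairs of elements do not even use the same set of $\phi$-colours. Thus $u_1<w_1<u_3<w_3\le u_2<b_1^T$ in $T$, so $w_1\le p_3<u_3$ and $w_3\le p_2<u_2$, and the edges $u_1w_1,\ p_3u_3,\ u_3w_3,\ p_2u_2,\ u_2w_2$ all lie, in this order, on the $a_1^T$--$b_2^T$ path, while $u_1w_1,\ p_3u_3,\ u_3w_3,\ p_2u_2$ lie in this order on the $r$--$b_1^T$ path.

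\emph{Step 2: forced hits, and Step 3: resolving $u_3$.} Applying Observation~\ref{obs:hitting-vertex-or-edge} to $a_1\le b_2$ along its path, it hits those five edge-intersections in order; on $B(u_1)\cap B(w_1)=\set{x_1,y_1}$ it must hit $x_1$ (as $a_1\not\le y_1$) and on $B(u_2)\cap B(w_2)=\set{x_2,y_2}$ it must hit $y_2$ (as $x_2\not\le b_2$). Likewise $a_0\le b_1$ hits $y_1$ on $B(u_1)\cap B(w_1)$ (as $x_1\not\le b_1$) and must hit $z_2$ on $B(p_2)\cap B(u_2)=\set{y_2,z_2}$ — hitting $y_2$ there would, together with $x_1\le y_2$ from part~\ref{item:k-sigma-1} of Claim~\ref{claim:k-sigma-1} and $a_1\le x_1$, give $a_1\le b_1$. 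Writing $\alpha,\alpha'\in\set{y_3,z_3}$ for the elements hit on the edge $p_3u_3$ by $a_1\le b_2$ and by $a_0\le b_1$, and $\beta,\beta'\in\set{x_3,y_3}$ for those hit on $u_3w_3$, Observation~\ref{obs:hitting-vertex-or-edge} yields
\[x_1\le\alpha\le\beta\le y_2\qquad\text{and}\qquad y_1\le\alpha'\le\beta'\le z_2\le b_1\]
in $P$. It then remains to argue, by a case analysis on $\alpha,\alpha',\beta,\beta'$, that $\alpha=z_3$ (which gives (i), since then $x_1\le z_3\le\beta\le y_2$) and that $y_3$ lies between $y_1$ and $z_2$ in the second chain (which gives (ii)). Here I would use two further ingredients: (a) incomparability of $(a_3,b_3)$ together with $a_3\le x_3$, $a_3\not\le y_3$, $a_3\not\le z_3$, $y_3\le b_3$, $x_3\not\le b_3$, which rules out chains such as $x_3\le y_3$ (forcing $a_3\le b_3$); and (b) the fact that $(a_1,b_1)$ and $(a_3,b_3)$, lying in the same class $\MM(P,\nu_9,\Sigma)$, share the value of $\alpha_8$ and hence are non-adjacent in the graph of special $2$-cycles of Claim~\ref{claim:special-2-cycle}, so — the tree inequalities of Step~1 being exactly those in the definition of a special $2$-cycle rooted at $(a_1,b_1)$ — we cannot have both $a_1\le b_3$ and $a_3\le b_1$ in $P$. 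For instance $\alpha=y_3$ gives $a_1\le x_1\le y_3\le b_3$, while $\beta'=x_3$ gives $a_3\le x_3\le\beta'\le z_2\le b_1$, so these two cannot occur together; running through the remaining combinations (and re-routing sub-relations such as $x_1\le y_3$ through the edge $p_3u_3$ whenever progress stalls, then invoking (a) and the colours) pins down the required comparabilities.

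The step I expect to be the main obstacle is this last case analysis at $u_3$: although each individual deduction is short, one has to keep track of which of $x_3,y_3,z_3$ is hit on each of the two edges at $u_3$ by each of the two relations, and check that every surviving combination either directly exhibits $x_1\le z_3\le y_2$ and $y_1\le y_3\le z_2\le b_1$, or produces the forbidden pair of comparabilities $a_1\le b_3$ and $a_3\le b_1$.
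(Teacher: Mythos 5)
Your strategy is correct and essentially the same as the paper's — both thread the relevant relations through the two edges $p_3u_3$ and $u_3w_3$ at $u_3$ and use the forbidden special $2$-cycle on $(a_1,b_1),(a_3,b_3)$ to kill the bad branch — but the paper structures it so that the $4$-variable case split you flag as the main obstacle never arises. Rather than re-threading $a_1\le b_2$ and $a_0\le b_1$ from scratch, the paper starts from the already-established $x_1\le y_2$ and $y_1\le z_2\le b_1$ (Claim~\ref{claim:k-sigma-1}) and pushes only these two relations through the single edge $u_3w_3$ (which lies on the $u_1$--$u_2$ path since $u_1<u_3<w_3\le u_2$). They must hit distinct elements of $B(u_3)\cap B(w_3)=\set{x_3,y_3}$ (a common element would give $a_1\le x_1\le z_2\le b_1$), so there are just two cases; the case $x_1\le y_3\le y_2$ and $y_1\le x_3\le z_2$ immediately yields $a_1\le y_3\le b_3$ and $a_3\le x_3\le b_1$, a special $2$-cycle (with the tree inequalities $u_1<w_1\le u_3<w_3<b_1^T$ coming from Claim~\ref{claim:path-simple} applied to the arc, exactly as you observe). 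The surviving case is $x_1\le x_3\le y_2$ and $y_1\le y_3\le z_2$, which is~(ii), and~(i) then follows by pushing the \emph{derived} relation $x_1\le x_3$ — not $a_1\le b_2$ — through $B(p_3)\cap B(u_3)=\set{y_3,z_3}$: it cannot hit $y_3$ (else $a_1\le x_1\le y_3\le z_2\le b_1$), so $x_1\le z_3\le x_3\le y_2$. Your version also closes: once you observe that $\alpha\ne\alpha'$ and $\beta\ne\beta'$ (equality in either position forces $x_1\le z_2\le b_1$), only the choice of $\alpha\in\set{y_3,z_3}$ and $\beta\in\set{x_3,y_3}$ remains, and of the four combinations, $\alpha=z_3,\beta=x_3$ gives~(i)--(ii) directly while the other three produce either $a_1\le b_1$ or the special $2$-cycle on $(a_1,b_1),(a_3,b_3)$; but this accounting is noticeably heavier than the paper's. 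One small further point: the strict inequality $w_1<u_3$ you derive in Step~1 is not actually needed for the special-$2$-cycle bound — $w_1\le u_3$, which is automatic because $w_1$ is the first node on the $u_1$--$u_2$ path and $u_3$ lies on that path, already gives $u_1<w_1\le u_3<w_3<b_1^T$.
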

\begin{proof}
Let $p_i :=p_{a_ib_i}$ for each $i\in \set{1,2,3}$.
We have $x_{1}\leq y_{2}$ and $y_{1}\leq z_{2}\leq b_1$ in $P$ by Claim~\ref{claim:k-sigma-1}.
Since $u_{1}<u_{3}<w_{3}\leq u_{2}$ in $T$, the relations $x_{1}\leq y_{2}$ and $y_{1}\leq z_{2}$
hit $B(u_{3})\cap B(w_{3})=\set{x_{3},y_{3}}$. They cannot hit the same element since
otherwise $a_1\leq x_{1}\leq z_{2}\leq b_1$ in $P$.

If $x_{1}\leq y_{3}\leq y_{2}$ in $P$ then $y_{1}\leq x_{3}\leq z_{2}$,
and we conclude $a_1\leq x_{1}\leq y_{3}\leq b_3$ and $a_3\leq x_{3}\leq z_{2}\leq b_1$ in $P$.
Thus, $(a_1,b_1)$ and $(a_3,b_3)$ form an alternating cycle of length $2$.
Applying Claim~\ref{claim:path-simple} on the pairs $(a_1,b_1)$, $(a_2,b_2)$ we obtain in particular $u_{2}<b_1^T$ in $T$.
Together with our assumptions it follows that both $u_{3}$ and $w_{3}$ lie on the path from $u_{1}$ to $b_1^T$ in $T$.
Hence, $u_{1}<w_{1}\leq u_{3}<w_{3}<b_1^T$ in $T$, and therefore $(a_1,b_1), (a_3,b_3)$
is a special $2$-cycle, contradicting the fact that $\psi_{8, \Sigma}(a_1,b_1)=\q{sign:colors_special_graph}(a_1,b_1)=\q{sign:colors_special_graph}(a_3,b_3)=\psi_{8, \Sigma}(a_3,b_3)$.

We conclude that $x_{1}\leq x_{3}\leq y_{2}$ and $y_{1}\leq y_{3}\leq z_{2}$ in $P$.
It remains to show that $x_{1}\leq z_{3}\leq y_{2}$ in $P$.
For this, note that $x_{1}\leq x_{3}$ hits $B(p_{3})\cap B(u_{3})=\set{y_{3},z_{3}}$.
It cannot hit $y_{3}$ as otherwise $a_1\leq x_{1}\leq y_{3}\leq z_{2}\leq b_1$ in $P$.
This implies $x_{1}\leq z_{3}\leq x_{3}\leq y_{2}$ in $P$, as desired.
\end{proof}

We are now ready to prove our main claim about $K_{\Sigma}$ ($\Sigma\in\mathbf{\Sigma}(\nu_9)$), namely
that $K_{\Sigma}$ is bipartite.  (We consider  a directed graph to be bipartite if its underlying undirected graph is.)

\begin{claim}\label{claim:k-sigma-bipartite_nu_9}
 The graph $K_{\Sigma}$ is bipartite for each $\Sigma\in\mathbf{\Sigma}(\nu_9)$.
 \end{claim}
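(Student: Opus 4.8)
The plan is to copy the architecture of the proof of Claim~\ref{claim:special-2-cycle}: assume $K_{\Sigma}$ is not bipartite, fix a shortest odd cycle $C=\{(a_i,b_i)\}_{i=1}^{k}$ in the underlying graph of $K_{\Sigma}$ (so $C$ is chordless and $k=3$ or $k\ge5$), and derive a contradiction. The fact driving everything is that arcs of $K_{\Sigma}$ are ``$u$-increasing'': if $((a,b),(a',b'))$ is an arc, it is induced by a strict alternating cycle rooted at $(a,b)$ with second pair $(a',b')$, all lying in $\MM(P,\nu_{9},\Sigma)\subseteq\MM(P,\nu_{8},\Sigma)$, so Claim~\ref{claim:path-simple} gives $u_{ab}<w_{ab}\le u_{a'b'}<b^{T}$ in $T$; in particular $u_{ab}$ and $u_{a'b'}$ are comparable and distinct in $T$, with $u_{ab}$ below. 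Hence along $C$ consecutive nodes $u_i:=u_{a_ib_i}$ are comparable and distinct, the arc on an edge of $C$ always points from the pair of smaller $u$-value to the one of larger $u$-value, and by Observation~\ref{obs:comp-nodes} some $u_j$ is $\le_T$-minimal among the $u_i$'s. I will also use repeatedly that it suffices, in order to produce a chord of $C$, to exhibit two \emph{non-consecutive} pairs of $C$ that form an alternating cycle of length $2$ (which is automatically strict): the one with the smaller $u$-value is then its root, so there is an arc between them and hence an edge of $K_{\Sigma}$.

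For $k=3$ the three pairs are pairwise adjacent, so $u_1,u_2,u_3$ are pairwise comparable and distinct; after relabelling assume $u_1<u_2<u_3$ in $T$, which forces the three arcs to be $(a_1,b_1)\to(a_2,b_2)$, $(a_2,b_2)\to(a_3,b_3)$ and $(a_1,b_1)\to(a_3,b_3)$. Applying Claim~\ref{claim:path-simple} to the strict alternating cycle inducing the arc $(a_2,b_2)\to(a_3,b_3)$ gives $u_2<w_2\le u_3$ in $T$, so together with $u_1<u_2<w_2\le u_3$ I may apply Claim~\ref{claim:k-sigma-2} to the arc $(a_1,b_1)\to(a_3,b_3)$ with the intermediate pair $(a_2,b_2)$, obtaining $x_1\le z_2$ in $P$ (with $x_i:=x_{a_ib_i}$, $y_i:=y_{a_ib_i}$, $z_i:=z_{a_ib_i}$). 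On the other hand, Claim~\ref{claim:k-sigma-1} applied to the arc $(a_1,b_1)\to(a_2,b_2)$ gives $z_2\le b_1$ in $P$, so $a_1\le x_1\le z_2\le b_1$ in $P$, contradicting $a_1\inc b_1$.

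For $k\ge5$ the goal is to produce a chord, contradicting the fact that $C$ is induced; here I would follow the template of the $k\ge5$ part of Claim~\ref{claim:special-2-cycle}. Pick an extremal vertex, say $(a_2,b_2)$ with $u_2$ $\le_T$-maximal among the $u_i$'s, so that the arcs on the two edges of $C$ at $(a_2,b_2)$ point into it and yield $w_1\le u_2$ and $w_3\le u_2$ in $T$; consequently $u_1$ and $u_3$ are comparable strict ancestors of $u_2$ in $T$. One then runs a case analysis on the $\le_T$-order of $u_1,u_3$ (and of $u_4,u_k$) and on the directions of the arcs on the edges $(a_k,b_k)(a_1,b_1)$ and $(a_3,b_3)(a_4,b_4)$. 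In each case several of the relations $a_0\le b_i$ and $a_i\le b_{i+1}$ (for indices near $1,2,3$) pass through one common tree-edge, whose intersection bag equals $\{x_i,y_i\}$ because $\alpha_6$ answered ``yes'' on $\MM(P,\nu_{9},\Sigma)$; by Observation~\ref{obs:hitting-vertex-or-edge} two of them hit the same element, and, combined with Claims~\ref{claim:k-sigma-1} and~\ref{claim:k-sigma-2}, this yields an alternating cycle of length $2$ between two non-consecutive pairs of $C$, i.e.\ a chord. Degenerate subcases — equal $u$-values forcing equal bag elements (via the constant value of $\alpha_{5}$ on $\MM(P,\nu_{9},\Sigma)$ and properness of $\phi$), or an extracted relation collapsing to some $a_i\le b_i$ — are ruled out immediately or because they would produce a special $2$-cycle in $\MM(P,\nu_{9},\Sigma)$, which is forbidden by the choice of the $2$-colouring recorded by $\alpha_{8}$.

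The main obstacle is precisely this last step: organizing the $k\ge5$ case analysis so that in every configuration one can pin down the third relation hitting $\{x_i,y_i\}$ and then verify that the resulting length-$2$ alternating cycle genuinely joins two non-consecutive vertices of $C$, all while dispatching the degenerate coincidences. As in Claim~\ref{claim:special-2-cycle}, the parity of $k$ is irrelevant beyond $k\ge5$; the entire difficulty lies in the bookkeeping of the nodes $u_i,w_i$ and the bag elements $x_i,y_i,z_i$ in $T$.
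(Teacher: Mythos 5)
Your $k=3$ case is correct and in fact cleaner than the paper's treatment of length-$3$ cycles: establishing $u_1<u_2<w_2\leq u_3$ via Claim~\ref{claim:path-simple}, then applying Claim~\ref{claim:k-sigma-2} to the arc $(a_1,b_1)\to(a_3,b_3)$ with intermediate pair $(a_2,b_2)$ to get $x_1\leq z_2$, and Claim~\ref{claim:k-sigma-1} to the arc $(a_1,b_1)\to(a_2,b_2)$ to get $z_2\leq b_1$, yields $a_1\leq x_1\leq z_2\leq b_1$, a contradiction. That part is fine.

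For $k\geq 5$, however, there is a genuine gap that cannot be patched simply by ``following the template'' of Claim~\ref{claim:special-2-cycle}. In that proof every edge of the odd cycle is a \emph{special $2$-cycle}, so each edge supplies both directions $a_i\leq b_{i+1}$ and $a_{i+1}\leq b_i$; the argument crucially uses all four relations $a_1\leq b_2$, $a_0\leq b_1$, $a_j\leq b_i$, $a_i\leq b_j$ hitting $\{x_i,y_i\}$. In $K_\Sigma$, an edge of the odd cycle certifies only \emph{one} of those two inequalities — the one from the pair with smaller $u$-value to the pair with larger $u$-value — so one of the relations you'd want to hit $\{x_i,y_i\}$ is not available, and your ``two of them hit the same element'' pigeonholing cannot proceed as in Claim~\ref{claim:special-2-cycle}. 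You correctly anticipate that Claims~\ref{claim:k-sigma-1} and~\ref{claim:k-sigma-2} must come in, but that is exactly where the real work is, and you have left it undone. There is also a second obstacle you underestimate: although consecutive $u_i$'s along $C$ are distinct and comparable, non-consecutive ones need not be, and some of the $u_i$'s near a maximal one can be pairwise incomparable in $T$, so picking ``$u_2$ maximal'' and inspecting $u_1,u_3,u_4,u_k$ does not give the clean totally-ordered local picture the Claim~\ref{claim:special-2-cycle} argument relies on. The paper handles both problems with an idea absent from your sketch: it iteratively \emph{contracts repeated $u$-values} in the cyclic sequence, keeping the odd parity and maintaining the invariant that consecutive entries of the reduced cyclic sequence come from consecutive indices of the original cycle; this produces an odd cyclic sequence of pairwise-distinct, consecutively-comparable nodes, inside which it locates a strictly monotone length-$3$ segment, finds the first index where monotonicity breaks, and only then extracts three specific arcs to feed into Claims~\ref{claim:k-sigma-1} and~\ref{claim:k-sigma-2} before deriving a forbidden special $2$-cycle. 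That contraction step, together with the careful choice of arcs it enables, is the missing idea in your proposal.
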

\begin{proof}
 Arguing by contradiction,
suppose that there is an odd undirected cycle $C=\set{(a_i,b_i)}_{i=1}^k$ in $K_{\Sigma}$ for some $\Sigma\in\mathbf{\Sigma}(\nu_9)$.
Let $u_i :=u_{a_ib_i}$, $w_i :=w_{a_ib_i}$, $x_i :=x_{a_ib_i}$, $y_i :=y_{a_ib_i}$,
and $z_i :=z_{a_ib_i}$ for each $i \in \{1, \dots, k\}$.
We may assume that $\alpha_1(a_i,b_i)=\text{left}$ for each $i \in \{1, \dots, k\}$.

Consider the cyclic sequence of nodes $(u_1,u_2,\ldots,u_k)$.
It might be the case that some of the nodes coincide.
In order to avoid this, we modify the sequence as follows:
If $u_i=u_j$ for some  $i, j \in \{1, \dots, k\}$ with $i < j$, then
consider the two cyclic sequences $(u_i,u_{i+1},\ldots,u_{j-1})$ and $(u_j,u_{j+1},\ldots, u_{i-1})$ (thus the second
one contains $u_k$, and also $u_1$ if $i > 1$).
Since $k$ is odd, exactly one of the two cyclic sequences has odd length.
We replace the original sequence by that one, and repeat this process as long as some
node appears at least twice in the current cyclic sequence.

We claim that at every stage of the above modification process the cyclic sequence
$S=(u_{i_1},u_{i_2},\ldots,u_{i_\ell})$ under consideration satisfies the following property:
For every $s\in \set{1, \dots, \ell}$ there is an index $j \in \{1, \dots, k\}$ such that
$u_{i_s}=u_j$ and $u_{i_{s+1}}=u_{j+1}$ (taking indices cyclically in each case, as expected).
This property obviously holds at the start, so let us show that it remains true during the rest of the procedure.
Thus suppose that the current cyclic sequence $S=(u_{i_1},u_{i_2},\ldots,u_{i_\ell})$ satisfies the property,
and that we modify it because of two indices $p, q \in \set{1, \dots, \ell}$ with $p < q$ such that $u_{i_p}=u_{i_q}$.
Without loss of generality we may assume that the resulting odd sequence is $S'=(u_{i_p},\ldots,u_{i_{q-1}})$.
We only need to show that there is an index $j \in \{1, \dots, k\}$ such that
$u_{i_{q-1}}=u_j$ and $u_{i_{p}}=u_{j+1}$, since $u_{i_{q-1}}$ and $u_{i_p}$  are the only two consecutive nodes
in $S'$ that were not consecutive in $S$.
Then it suffices to take $j \in \{1, \dots, k\}$ such that $u_{i_{q-1}}=u_j$ and $u_{i_{q}}=u_{j+1}$,
which exists since $S$ satisfies our property, and observe that $u_{i_{p}}=u_{i_{q}}=u_{j+1}$.
Therefore, the property holds at every step, as claimed.

Recalling that any two consecutive nodes in the original cyclic sequence $(u_1,u_2,\ldots,u_k)$
are distinct and comparable in $T$ (by Claim~\ref{claim:path-simple}),
it follows from the property considered above that this holds
at every step of the modification procedure, and thus in particular for the final sequence
$S=(u_{i_1},u_{i_2},\ldots,u_{i_\ell})$ resulting from the procedure.
In particular, $\ell \geq 3$, because the cycle has odd length.

Since $\ell$ is odd, there exists $m\in\set{1,\ldots,\ell}$
such that $u_{i_{m-1}}<u_{i_m}<u_{i_{m+1}}$ or $u_{i_{m-1}}>u_{i_m}>u_{i_{m+1}}$ in $T$.
Reversing the ordering of $C$ and the cyclic sequence $S$ if necessary,
we may assume without loss of generality $u_{i_{m-1}}<u_{i_m}<u_{i_{m+1}}$ in $T$.
Similarly, shifting the sequence $S$ cyclically if necessary, we may assume $m=1$.
Thus $u_{i_{\ell}}<u_{i_1}<u_{i_{2}}$ in $T$.

Let $w$ be the neighbor of $u_{i_1}$ on the  $u_{i_1}$--$u_{i_{2}}$ path in $T$.
Thus $w\leq u_{i_2}$ in $T$.
Now let $n \in \set{3, \dots, \ell}$ be minimal
such that $w\nleq u_{i_n}$ in $T$.
Since $u_{i_{\ell}}<w$ in $T$, this index exists.
As $u_{i_{n-1}}$ and $u_{i_n}$ are comparable in $T$, it follows that $u_{i_n}<w\leq u_{i_{n-1}}$ in $T$.
(This follows from the definition of $n$ if $n>3$, and from the fact that $w\leq u_{i_2}$ in $T$ if $n=3$.)
Furthermore we have $u_{i_n}\neq u_{i_1}$, because $n\neq 1$ and all nodes in $S$ are distinct. We conclude that
\begin{equation}
u_{i_n}<u_{i_1}<w\leq u_{i_{n-1}}\label{eq:u_i_n}
\end{equation}
in $T$.
Now, by the property that is maintained during the modification process of $S$, there exist indices $r,s,t\in\set{1,\ldots,k}$ such that
\begin{align}
 u_{i_{\ell}}&=u_{r}  &\text{and}& & u_{i_{1}}&=u_{r+1},\label{eq:u_r}\\
 u_{i_{1}}&=u_{s}  &\text{and}& & u_{i_{2}}&=u_{s+1},\label{eq:u_s}\\
 u_{i_{n-1}}&=u_{t}  &\text{and}& & u_{i_{n}}&=u_{t+1}.\label{eq:u_t}
\end{align}

\begin{figure}
 \centering
 \includegraphics[scale=1.0]{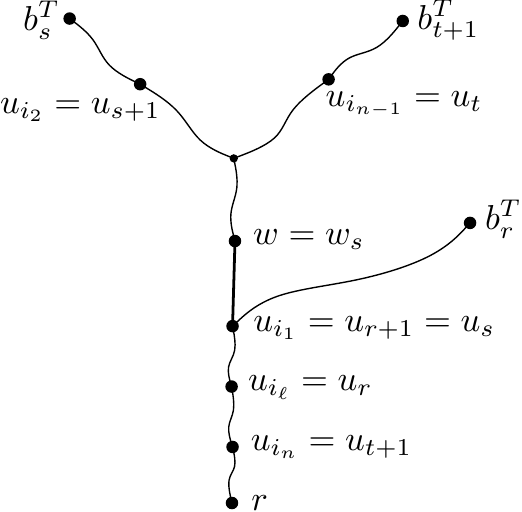}
 \caption{Possible situation in proof of Claim~\ref{claim:k-sigma-bipartite_nu_9}.
 Note that we could also have $u_r<u_{t+1}$ in $T$.}
 \label{fig:k-sigma-bipartite-nu9}
\end{figure}

It follows that $K_{\Sigma}$ contains the following arcs:
$((a_r,b_r),(a_{r+1},b_{r+1})),((a_s,b_s),(a_{s+1},b_{s+1}))$, and
$((a_{t+1},b_{t+1}),(a_t,b_t))$.
Applying Claim~\ref{claim:path-simple} on these arcs we obtain:
\begin{align}
 u_r<w_r\leq u_{r+1}<b_r^T,\label{eq:w_r}\\
 u_s<w_s\leq u_{s+1}<b_s^T,\label{eq:w_s}\\
 u_{t+1}<w_{t+1}\leq u_t<b_{t+1}^T\label{eq:w_t+1}
\end{align}
in $T$.
See Figure~\ref{fig:k-sigma-bipartite-nu9} for a possible configuration in $T$ and for upcoming arguments.
Since $u_{i_1}=u_s$ and $u_{i_{2}}=u_{s+1}$ we have $w=w_s$.
Hence with \eqref{eq:u_i_n} we get $u_{t+1}<u_s<w_s\leq u_t$ in $T$,
and by Claim~\ref{claim:k-sigma-2} it follows that
\[
 x_{t+1}\leq z_s\leq y_t \quad\text{and}\quad y_{t+1}\leq y_s\leq z_t\leq b_{t+1}
\]
in $P$.
Now applying Claim~\ref{claim:k-sigma-1} on the arc $((a_r,b_r),(a_{r+1},b_{r+1}))$ we get
\[
 x_r\leq y_{r+1}\quad\text{and}\quad y_r\leq z_{r+1}\leq b_r
\]
in $P$.
Since $\q{sign:H-coloring}(a_{r+1},b_{r+1})=\q{sign:H-coloring}(a_s,b_s)$ and $B(u_{r+1})=B(u_s)$ (because $u_{r+1}=u_s$),
we conclude that $y_{r+1}=y_s$ and $z_{r+1}=z_s$.
Using this and the derived inequalities we see that
\[
a_{t+1}\leq x_{t+1}\leq z_s=z_{r+1}\leq b_r \quad\text{and}\quad a_r\leq x_r\leq y_{r+1}=y_s\leq b_{t+1}
\]
in $P$.
Thus, $(a_r,b_r)$ and $(a_{t+1},b_{t+1})$ form an alternating cycle of length $2$.
In particular, this shows $r\neq t+1$ (otherwise we would have $a_r\leq b_{t+1}=b_r$ in $P$), and consequently (by Claim~\ref{claim:path-simple})
\begin{equation}
 u_r\neq u_{t+1}.\label{eq:u_r-neg}
\end{equation}
Observe that $u_r<u_{r+1}=u_{i_1}$ (by \eqref{eq:w_r} and \eqref{eq:u_r}) and $u_{t+1}=u_{i_n}<u_{i_1}$ (by \eqref{eq:u_t} and \eqref{eq:u_i_n}) in $T$, which makes $u_r$ and $u_{t+1}$ comparable in $T$.
Furthermore,
\[
 u_{i_1}\overset{\eqref{eq:u_r}}{=}u_{r+1}\overset{\eqref{eq:w_r}}{<}b_r^T \quad\text{ and }\quad u_{i_1}\overset{\eqref{eq:u_i_n}}{<}u_{i_{n-1}}\overset{\eqref{eq:u_t}}{=}u_t\overset{\eqref{eq:w_t+1}}{<}b_{t+1}^T
 \]
 in $T$, so all together we have
\[
 \set{u_r,u_{t+1}}<u_{i_1}<\set{b_r^T,b_{t+1}^T}
\]
in $T$.
But from this and \eqref{eq:u_r-neg} it follows that either $u_r<w_r\leq u_{t+1}<w_{t+1}\leq u_{i_1} <b_r^T$
or $u_{t+1}<w_{t+1}\leq u_r<w_r\leq u_{i_1}<b_{t+1}^T$ holds in $T$.
Both cases imply that $(a_r,b_r)$ and $(a_{t+1},b_{t+1})$ form a special $2$-cycle, which is our final contradiction.
\end{proof}

Using Claim~\ref{claim:k-sigma-bipartite_nu_9} we let
$\psi_{9,\Sigma}\colon \MM(P,\nu_9,\Sigma)\to\set{1,2}$ be a $2$-coloring of $K_{\Sigma}$,
for each $\Sigma\in\mathbf{\Sigma}(\nu_9)$.
The function $\q{sign:colors_K}$ then records the color of a pair in this coloring:
\begin{center}
\medskip
\fbox{\begin{varwidth}{0.9\textwidth}
For each $\Sigma\in\mathbf{\Sigma}(\nu_9)$ and each pair $(a,b)\in\MM(P,\nu_9,\Sigma)$, we let
\[
 \q{sign:colors_K}(a,b):=\psi_{9,\Sigma}(a,b).
\]
\end{varwidth}}
\medskip
\end{center}

\subsection{Third leaf of \texorpdfstring{$\sigtree$: $\nu_{10}$}.}
\label{sec:third_leaf}

Now suppose that there was a strict alternating cycle
$\set{(a_i,b_i)}_{i=1}^k$ with root $(a_1, b_1)$  in $\MM(P,\nu_{10},\Sigma)$, for some
$\Sigma\in\mathbf{\Sigma}(\nu_{10})$. Then, by the definition of $K_{\Sigma}$,
there is an arc from $(a_1, b_1)$ to $(a_2, b_2)$ in $K_{\Sigma}$, and hence
$\q{sign:colors_K}(a_1,b_1) \neq  \q{sign:colors_K}(a_2,b_2)$, a contradiction.
Therefore, there is no such cycle in $\MM(P,\nu_{10},\Sigma)$, and we have established the following claim:

\begin{claim}
 The set $\MM(P,\nu_{10},\Sigma)$ is reversible for each $\Sigma\in\mathbf{\Sigma}(\nu_{10})$.
\end{claim}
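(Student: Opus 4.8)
The plan is a short argument by contradiction that leans entirely on the work already done at node $\nu_9$. Fix $\Sigma^{*}\in\mathbf{\Sigma}(\nu_{10})$ and let $\Sigma\in\mathbf{\Sigma}(\nu_9)$ be the signature obtained from $\Sigma^{*}$ by deleting its last coordinate (the one recorded by $\alpha_9$), so that $\MM(P,\nu_{10},\Sigma^{*})\subseteq\MM(P,\nu_9,\Sigma)$. Suppose, for contradiction, that $\MM(P,\nu_{10},\Sigma^{*})$ contains a strict alternating cycle. Since $\MM(P,\nu_9,\Sigma)$ is contained in $\MM(P,\nu_5,\Sigma')$ for the appropriate $\Sigma'\in\mathbf{\Sigma}(\nu_5)$, the notion of the \emph{root} of an alternating cycle (defined just after Claim~\ref{claim:u-ordering}) applies, and we relabel the pairs of the cycle as $\set{(a_i,b_i)}_{i=1}^{k}$ so that $(a_1,b_1)$ is its root.

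By the very definition of the directed graph $K_{\Sigma}$, this strict alternating cycle, together with its root $(a_1,b_1)$, induces an arc from $(a_1,b_1)$ to $(a_2,b_2)$ in $K_{\Sigma}$. Consequently $(a_1,b_1)$ and $(a_2,b_2)$ are adjacent in the underlying undirected graph of $K_{\Sigma}$, so the proper $2$-coloring $\psi_{9,\Sigma}$ of $K_{\Sigma}$ (which exists by Claim~\ref{claim:k-sigma-bipartite_nu_9}) gives $\alpha_9(a_1,b_1)=\psi_{9,\Sigma}(a_1,b_1)\neq\psi_{9,\Sigma}(a_2,b_2)=\alpha_9(a_2,b_2)$.

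On the other hand, both $(a_1,b_1)$ and $(a_2,b_2)$ lie on the cycle, hence in $\MM(P,\nu_{10},\Sigma^{*})$, so they share the signature $\Sigma^{*}$; since $\alpha_9$ is one of the coordinates of that signature, this forces $\alpha_9(a_1,b_1)=\alpha_9(a_2,b_2)$, a contradiction. Hence $\MM(P,\nu_{10},\Sigma^{*})$ contains no strict alternating cycle, and by Observation~\ref{obs:reversible} it is reversible, which proves the claim.

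I do not expect any genuine obstacle here: this step is pure bookkeeping, all the real content having been packed into Claim~\ref{claim:k-sigma-bipartite_nu_9}. The only points worth a moment's attention are that a strict alternating cycle can always be re-indexed so that its (uniquely determined) root becomes $(a_1,b_1)$; that the arc this cycle induces keeps \emph{both} of its endpoints inside $\MM(P,\nu_{10},\Sigma^{*})$, so that the ``equal $\alpha_9$-value'' constraint truly applies to both endpoints; and that an arc of $K_{\Sigma}$ does force distinct $\psi_{9,\Sigma}$-colors, which is exactly the meaning of $\psi_{9,\Sigma}$ being a proper $2$-coloring of $K_{\Sigma}$.
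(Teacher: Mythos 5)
Your proof is correct and follows essentially the same approach as the paper: take a putative strict alternating cycle with its root, note that it induces an arc in $K_{\Sigma}$, and observe that the constancy of $\alpha_9$ on $\MM(P,\nu_{10},\Sigma^*)$ contradicts $\psi_{9,\Sigma}$ being a proper coloring. Your careful handling of the notational point (that $K_{\Sigma}$ is indexed by $\Sigma\in\mathbf{\Sigma}(\nu_9)$, so one must project $\Sigma^*$ down) is a welcome precision over the paper's slightly more telegraphic statement, but the argument is the same.
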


This concludes our study of the third leaf of $\Psi$.

\subsection{Node \texorpdfstring{$\nu_{11}$ and its functions $\alpha_{11}$}.}
\label{sec:node-11}

In this section we consider pairs $(a, b) \in \MM(P,\nu_{11})$.
Recall that in this case $(a,b)$ satisfies $\q{sign:a<x}(a,b) = \textrm{yes}$ and even stronger $\q{sign:a-goes-below-u}(a,b) = \textrm{yes}$, and hence there is $q\in B(u_{ab})\cap B(p_{ab})$ such that $a\leq q$ in $P$.

For every such pair $(a, b)$, denote the three elements in  $B(u_{ab})$
as $x_{ab}, y_{ab}, z_{ab}$ in such a way that
\begin{equation}
 B(u_{ab})\cap B(p_{ab})=\set{x_{ab},y_{ab}}\label{eq:xy-in-edge}
\end{equation}
and
\begin{align}
&a \leq x_{ab}\not\leq b;\label{eq:x-ineq} \\
&a\not\leq y_{ab} \leq b \label{eq:y-ineq}
\end{align}
in $P$. (Here we use that $\q{sign:a-goes-below-u}(a,b) = \textrm{yes}$ and that $a_0 \leq b$ hits $B(u_{ab})\cap B(p_{ab})$.)

The function $\q{sign:H-coloring-second}$ is defined similarly as $\q{sign:H-coloring}$ in Section~\ref{sec:second_leaf}:
\begin{center}
\medskip
\fbox{\begin{varwidth}{0.9\textwidth}
For each $\Sigma \in {\bf \Sigma}(\nu_{11})$ and pair $(a,b) \in \MM(P, \nu_{11}, \Sigma)$, we let
$$
\q{sign:H-coloring-second}(a,b) := (\phi(x_{ab}), \phi(y_{ab}), \phi(z_{ab})).
$$
\end{varwidth}}
\medskip
\end{center}
(Recall that $\phi$ is the $3$-coloring of $X$ defined earlier on which is such that $x,y\in X$
receive distinct colors whenever $V(T_x)\cap V(T_y)\neq\emptyset$;
in particular, the three colors $\phi(x_{ab}), \phi(y_{ab}), \phi(z_{ab})$ are distinct.)

\subsection{Node \texorpdfstring{$\nu_{12}$ and its functions $\alpha_{12}$}.}
\label{sec:node-12}

We start by defining function $\q{sign:comparability-status}$:
\begin{center}
\medskip
\fbox{\begin{varwidth}{0.9\textwidth}
For each $\Sigma \in {\bf \Sigma}(\nu_{12})$ and pair $(a,b) \in \MM(P, \nu_{12}, \Sigma)$, we let $\q{sign:comparability-status}(a,b)$ record the three yes/no answers to three independent questions about the pair $(a,b)$, namely
\begin{align}
&\textrm{``Is $a\leq z_{ab}$ in $P$?''};\tag{Q1}\label{eq:Q1} \\[0.8ex]
&\textrm{``Is $z_{ab} \leq b$ in $P$?''};\tag{Q2}\label{eq:Q2} \\[0.8ex]
&\textrm{``Is $a_0 \leq x_{ab}$ in $P$?''}.\tag{Q3}\label{eq:Q3}
\end{align}
\end{varwidth}}
\medskip
\end{center}
(Recall that $a_0<b$ in $P$ for all $b\in\max(P)$).
Formally speaking, $\q{sign:comparability-status}(a, b)$ is defined as the vector $(s_1, s_2, s_3) \in \{\yes, \no\}^3$ where
$s_i$ is the answer to the $i$-th question above, for $i=1,2,3$. Note that
we cannot have  $a\leq z_{ab}$ and $z_{ab} \leq b$ at the same time in $P$, and thus there are only
$6$ possible vectors of answers. (This is why the corresponding edge in the signature tree $\Psi$ is labeled
$6$ instead of $8$.)

\subsection{Node \texorpdfstring{$\nu_{13}$ and its functions $\alpha_{13}$: Dealing with strict alternating cycles of length $2$}.}\label{sec:2-cycles}
\label{sec:node-13}

Let us first summarize the properties of pairs in $\MM(P,\nu_{13})$.
For each $\Sigma\in \mathbf{\Sigma}(\nu_{13})$ and each $(a,b)\in\MM(P,\nu_{13},\Sigma)$ we have that
\begin{itemize}
 \item elements of $B(u_{ab})$ can be labeled with $x_{ab},y_{ab},z_{ab}$ such that \eqref{eq:xy-in-edge}-\eqref{eq:y-ineq} hold,
 \item $\phi(x_{ab}),\phi(y_{ab}),\phi(z_{ab})$ are pairwise distinct,
 \item $\phi(x_{ab})=\phi(x_{a'b'})$, $\phi(y_{ab})=\phi(y_{a'b'})$ and $\phi(z_{ab})=\phi(z_{a'b'})$ for every $(a',b')\in \MM(P,\nu_{13},\Sigma)$,
 \item all pairs of $\MM(P,\nu_{13},\Sigma)$ produce the same answers to \eqref{eq:Q1}-\eqref{eq:Q3}.
\end{itemize}

For each $\Sigma \in {\bf \Sigma}(\nu_{13})$ let
$J_{\Sigma}$ be the graph with vertex set $\MM(P, \nu_{13}, \Sigma)$ where
two distinct pairs $(a,b),(a',b') \in \MM(P, \nu_{13}, \Sigma)$ are adjacent if and only if
$(a,b),(a',b')$ is an alternating cycle.
Our goal in this section is to show that $J_{\Sigma}$ is $4$-colorable:

\begin{lemma}\label{claim:color-J}
For each $\Sigma \in {\bf \Sigma}(\nu_{13})$ there is a proper coloring of $J_{\Sigma}$ with $4$ colors.
\end{lemma}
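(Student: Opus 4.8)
The plan is to realize $J_{\Sigma}$ as the union of two bipartite graphs and then color it with the coordinatewise product of a proper $2$-coloring of each; this gives a proper coloring with at most $2\cdot2=4$ colors. Throughout I would assume, as usual, that $\alpha_1\equiv\textrm{left}$ on the pairs involved, and keep the notation $u_{ab},v_{ab},w_{ab},p_{ab},x_{ab},y_{ab},z_{ab}$. The split is governed by the relative position in $T$ of the meet nodes $u_{ab},u_{a'b'}$ of an alternating $2$-cycle, so a preliminary task is to show --- arguing exactly as for Claims~\ref{claim:u-ordering} and~\ref{claim:path-simple} --- that if $(a,b),(a',b')\in\MM(P,\nu_{13},\Sigma)$ form a $2$-cycle then $u_{ab}$ and $u_{a'b'}$ are comparable in $T$; here the recurring trick is that were they incomparable, three of the relations $a\le b'$, $a'\le b$, $a_0\le b$ would all hit the same $2$-element set $B(u^*)\cap B(g)$, where $u^*$ is the meet of $u_{ab},u_{a'b'}$ and $g$ the child of $u^*$ towards $u_{ab}$, so two of them would collide into a comparability $a\le b$ or $a'\le b$ that contradicts $(a,b)\in\MM(P)$ (using also that $a\not\le y_{ab}$, $a'\not\le y_{a'b'}$, $x_{ab}\not\le b$, $x_{a'b'}\not\le b$).

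Let $J^{=}_{\Sigma}$ be the spanning subgraph of $J_{\Sigma}$ consisting of the $2$-cycles $\{(a,b),(a',b')\}$ with $u_{ab}=u_{a'b'}$; this part has a transparent bipartite structure. Fix such an edge and set $u:=u_{ab}=u_{a'b'}$. Since $\Sigma$ fixes the three pairwise-distinct colors $\phi(x_{ab}),\phi(y_{ab}),\phi(z_{ab})$ and $B(u_{ab})=B(u_{a'b'})$, the labels coincide: $x_{ab}=x_{a'b'}$, $y_{ab}=y_{a'b'}$, $z_{ab}=z_{a'b'}$, call them $x,y,z$. If $v_{ab}\neq w_{a'b'}$ then the $a^T$--$b'^T$ path passes through $u$, so $a\le b'$ hits $B(u)=\{x,y,z\}$; it cannot hit $y$ (as $a\not\le y$) nor $x$ (as $x\not\le b'$ by \eqref{eq:x-ineq}), hence $a\le z\le b'$ --- but $a\le z$ is question \eqref{eq:Q1} and $z\le b'$ is question \eqref{eq:Q2}, and for pairs of $\MM(P,\nu_{13},\Sigma)$ these two answers cannot both be $\yes$. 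Hence $v_{ab}=w_{a'b'}$, and symmetrically $v_{a'b'}=w_{ab}$; that is, the ordered pairs $(v_{ab},w_{ab})$ and $(v_{a'b'},w_{a'b'})$ of distinct children of $u$ are reverses of one another. It follows that $J^{=}_{\Sigma}$ is a disjoint union, over unordered pairs $\{s,t\}$ of children of nodes of $T$, of bipartite graphs with parts $\{(a,b):(v_{ab},w_{ab})=(s,t)\}$ and $\{(a,b):(v_{ab},w_{ab})=(t,s)\}$; in particular it is bipartite.

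Let $J^{\neq}_{\Sigma}$ collect the remaining edges; by the preliminary claim each joins two pairs one of whose meet nodes is a strict $T$-ancestor of the other's. I would prove $J^{\neq}_{\Sigma}$ bipartite by the method of Claims~\ref{claim:special-2-cycle} and~\ref{claim:k-sigma-bipartite_nu_9}: take a shortest odd cycle $C=\{(a_i,b_i)\}_{i=1}^{k}$, pick a pair on $C$ whose meet node is deepest in $T$, note that the meet nodes of its two $C$-neighbours are comparable ancestors of it, and then route the various relations $a_i\le b_j$ through the appropriate tree edge and apply Observation~\ref{obs:hitting-vertex-or-edge}: two relations forced to hit the same $2$-element set collide into an impossible $a_i\le b_i$, which yields a chord when $k\ge5$ and an immediate contradiction when $k=3$. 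As in Section~\ref{sec:node-9}, the fact that $\Sigma$ fixes the $\phi$-colors (via $\alpha_{11}$) and the answers to \eqref{eq:Q1}--\eqref{eq:Q3} (via $\alpha_{12}$) is what lets one identify which of $x,y,z$ each relation hits at each bag --- never $y$ (since $a\not\le y$), never $x$ (since $x\not\le b$), and whether it may be $z$ is dictated by \eqref{eq:Q1} and \eqref{eq:Q2}. Finally, fixing proper $2$-colorings $\chi^{=}$ of $J^{=}_{\Sigma}$ and $\chi^{\neq}$ of $J^{\neq}_{\Sigma}$ and setting $\psi_{13,\Sigma}(a,b):=(\chi^{=}(a,b),\chi^{\neq}(a,b))$ gives a proper $4$-coloring of $J_{\Sigma}=J^{=}_{\Sigma}\cup J^{\neq}_{\Sigma}$.

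The main obstacle is the bipartiteness of $J^{\neq}_{\Sigma}$. Unlike $J^{=}_{\Sigma}$ it has no one-line structural description, and the chord-finding for odd cycles of length at least $5$ requires controlling several ancestor relations among the meet nodes at once while tracking, relation by relation, which of $x,y,z$ is hit --- a case analysis branching on the answers to \eqref{eq:Q1}--\eqref{eq:Q3} (for instance on whether $z_{ab}\le b$ can occur), of roughly the same length and flavor as the proof of Claim~\ref{claim:k-sigma-bipartite_nu_9}. By contrast the preliminary comparability claim, the structure of $J^{=}_{\Sigma}$, the $k=3$ case, and the product-coloring step should all be short.
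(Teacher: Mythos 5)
Your decomposition rests on the preliminary claim that for any $2$-cycle $(a,b),(a',b')$ in $\MM(P,\nu_{13},\Sigma)$ the nodes $u_{ab}$ and $u_{a'b'}$ are comparable in $T$. That claim is both unjustified and almost certainly false, and it is exactly the point where the paper has to work hardest. Your sketch of its proof does not close: if $u_{ab}\inc u_{a'b'}$ and you route $a\le b'$, $a'\le b$, $a_0\le b$ through $B(u^*)\cap B(g)$, the only guaranteed collision could be between $a'\le b$ and $a_0\le b$, which yields $a'\le b$ --- a relation you already have, not a contradiction. (The analogous step in Claims~\ref{claim:u-ordering} and~\ref{claim:path-simple} really does give a contradiction, but only because there $\q{sign:a-goes-below-u}=\no$ forces the $a_i^T$--$b_{i+1}^T$ path to stay above $u_i$; at $\nu_{13}$ one has $\q{sign:a-goes-below-u}=\yes$ and this lever is gone.) Indeed, the paper explicitly introduces \emph{type 2} edges for exactly this incomparable case, and the entire machinery of Observations~\ref{obs:unidirect1}--\ref{obs:unidirect2} and Claims~\ref{claim:4-pairs},~\ref{claim:bipartite-J} exists to handle them. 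Your approach has no mechanism for these edges.

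There is a second mismatch: your part $J^{=}_\Sigma$ (edges with $u_{ab}=u_{a'b'}$) is actually \emph{empty}, which is precisely the content of the paper's Claim~\ref{claim:equal-u}. (Your own argument actually yields this: with $\alpha_1\equiv\textrm{left}$, the clockwise order around $u$ forces $v_{ab}$ before $w_{ab}$ and $v_{a'b'}$ before $w_{a'b'}$, which is inconsistent with $(v_{ab},w_{ab})=(w_{a'b'},v_{a'b'})$.) So your product $\chi^{=}\times\chi^{\neq}$ degenerates to a $2$-coloring of $J^{\neq}_\Sigma=J_\Sigma$; you would be claiming $J_\Sigma$ is bipartite, which is stronger than the lemma and presumably false. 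The paper's actual split is by comparability of $u_{ab},u_{a'b'}$ (type~1 vs.\ type~2), and, notably, it does \emph{not} prove $J_{\Sigma,2}$ bipartite on its own: it $2$-colors $J_{\Sigma,1}$ and then shows the two resulting independent sets each induce a bipartite subgraph of $J_{\Sigma,2}$ (Claim~\ref{claim:bipartite-J}). This is a weaker and more delicate statement than your product-coloring plan requires, and recovering even this part involves auxiliary structural claims (Claims~\ref{claim:2-cycle-comp},~\ref{claim:2-cycle-struct},~\ref{claim:implied-edge},~\ref{claim:forbidden-object}) that your sketch does not anticipate.
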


To this aim, we show a number of properties of $2$-cycles in  $\MM(P,\nu_{13},\Sigma)$.

\begin{claim}\label{claim:equal-u}
Let $\Sigma \in {\bf \Sigma}(\nu_{13})$ and suppose that
$(a_1,b_1), (a_2,b_2)$ is a $2$-cycle in  $\MM(P,\nu_{13},\Sigma)$.
Let $u_i := u_{a_ib_i}$ for $i=1, 2$. Then $u_1\neq u_2$.
\end{claim}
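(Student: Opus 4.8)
The plan is to argue by contradiction: assume $u_1=u_2=:u$ and derive a contradiction from the fact that $(a_1,b_1)$ and $(a_2,b_2)$ lie in the same set $\MM(P,\nu_{13},\Sigma)$, hence share all of $\alpha_1,\dots,\alpha_{12}$. As in the rest of the section I assume $\alpha_1(a_i,b_i)=\textrm{left}$ for $i=1,2$. First I would record the structural consequences of $u_1=u_2$. Put $p:=\p(u)$; then $p_{a_1b_1}=p_{a_2b_2}=p$ and $B(u_{a_1b_1})=B(u_{a_2b_2})=B(u)$. Since the two pairs share the value of $\alpha_{11}$, the triples $(\phi(x_{a_1b_1}),\phi(y_{a_1b_1}),\phi(z_{a_1b_1}))$ and $(\phi(x_{a_2b_2}),\phi(y_{a_2b_2}),\phi(z_{a_2b_2}))$ coincide; as $\phi$ colours the three elements of $B(u)$ with pairwise distinct colours, this forces $x_{a_1b_1}=x_{a_2b_2}=:x$, $y_{a_1b_1}=y_{a_2b_2}=:y$, $z_{a_1b_1}=z_{a_2b_2}=:z$, so $B(u)=\{x,y,z\}$, $B(u)\cap B(p)=\{x,y\}$, and by \eqref{eq:x-ineq}--\eqref{eq:y-ineq} we have $a_i\le x\not\le b_i$ and $a_i\not\le y\le b_i$ in $P$ for $i=1,2$. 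The two pairs also share the value of $\alpha_{12}$, so the answers to \eqref{eq:Q1}--\eqref{eq:Q3} are the same for $(a_1,b_1)$ and $(a_2,b_2)$. Finally, since $u=u_{a_ib_i}<a_i^T$ and $u<b_i^T$ in $T$, the nodes $v_i:=v_{a_ib_i}$ and $w_i:=w_{a_ib_i}$ are children of $u$, and $v_i\ne w_i$.

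Next I would split according to whether $v_1=w_2$. If $v_1\ne w_2$, then $a_1^T$ and $b_2^T$ lie in distinct components of $T-u$, so the $a_1^T$--$b_2^T$ path in $T$ passes through $u$, and by Observation~\ref{obs:hitting-vertex-or-edge} the relation $a_1\le b_2$ hits $B(u)=\{x,y,z\}$. It cannot hit $x$ (that would give $x\le b_2$, contradicting $x\not\le b_2$) nor $y$ (that would give $a_1\le y$, contradicting $a_1\not\le y$), hence $a_1\le z\le b_2$ in $P$. Since $z=z_{a_1b_1}$ this means the answer to \eqref{eq:Q1} for $(a_1,b_1)$ is ``yes'', and since $z=z_{a_2b_2}$ the answer to \eqref{eq:Q2} for $(a_2,b_2)$ is ``yes''. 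As the two pairs give identical answers, both \eqref{eq:Q1} and \eqref{eq:Q2} receive the answer ``yes'' for $(a_1,b_1)$, i.e. $a_1\le z\le b_1$, contradicting $a_1\inc b_1$.

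It then remains to treat $v_1=w_2$. Applying the argument of the previous paragraph to the relation $a_2\le b_1$ (with the roles of the two pairs exchanged) shows that $v_2\ne w_1$ leads to a contradiction as well, so in fact $v_2=w_1$. Then $v_1\ne v_2$ (otherwise $v_2=v_1=w_2$, contradicting $v_2\ne w_2$), so $up$, $uv_1$, $uv_2$ are three distinct edges incident to $u$. Now $\alpha_1(a_1,b_1)=\textrm{left}$ says that, reading clockwise around $u$, the cyclic order of $up$, $uv_1$, $uw_1=uv_2$ is $(up,uv_1,uv_2)$, while $\alpha_1(a_2,b_2)=\textrm{left}$ says that the cyclic order of $up$, $uv_2$, $uw_2=uv_1$ is $(up,uv_2,uv_1)$; these are the two distinct cyclic orders of the three-element set $\{up,uv_1,uv_2\}$, which is impossible. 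This contradiction finishes the proof.

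I expect the case $v_1=w_2$ to be the only real obstacle: there no relation of $P$ is forced through the bag $B(u)$, so the contradiction cannot come from the colour/comparability signatures and must instead be squeezed out of the planar (left/right) structure of $T$ at $u$. Everything else is the routine ``let the relation $a_i\le b_{3-i}$ hit $B(u)$, and exploit that the two pairs agree on $\alpha_{11}$ and $\alpha_{12}$'' argument.
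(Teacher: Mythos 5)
Your proof is correct and takes essentially the same approach as the paper: argue by contradiction from $u_1 = u_2$, force one of the relations $a_i \leq b_{3-i}$ to hit $B(u)$, and use the shared $\alpha_{11}$ and $\alpha_{12}$ signatures to conclude $a_j \leq z \leq b_j$ for some $j$. The only organizational difference is that the paper chooses without loss of generality that $b_1^T$ is left of $b_2^T$ in $T$, which immediately forces the $a_1^T$--$b_2^T$ path through $u$, whereas you split explicitly on whether $v_1 = w_2$ and, in the residual case $v_1 = w_2$, $v_2 = w_1$, observe that both pairs having $\alpha_1 = \textrm{left}$ yields incompatible cyclic orders of the three edges at $u$ --- this planarity fact is exactly what the paper's WLOG step encodes implicitly.
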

\begin{proof}
Let $x_i:=x_{a_ib_i}$, $y_i:=y_{a_ib_i}$, and $z_i:=z_{a_ib_i}$  for $i=1,2$.
We may assume $\q{sign:left-or-right}(a_i, b_i)=\textrm{left}$ for $i=1,2$.
Arguing by contradiction suppose that $u_1=u_2$.  Exchanging $(a_1,b_1)$ and $(a_2,b_2)$ if necessary we may assume that
$b_1^T$ is left of $b_2^T$ in $T$.

Since in $T$ the node $a_1^T$ is left of $b_1^T$, which itself is left of $b_2^T$,
the path connecting $a_1^T$ to $b_2^T$ in $T$ goes through  $u_1$.
Thus, the relation $a_1\leq b_2$ hits $B(u_1)=\set{x_1,y_1,z_1}=B(u_2)=\set{x_2,y_2,z_2}$,
and hence $a_1\leq c\leq b_2$ in $P$ for some element $c\in B(u_1)$.
Recall that $a_i \leq x_i$ and   $a_i \nleq y_i$ in $P$ for $i=1,2$.  Thus $c \in \{x_1, z_1\}$.
Moreover, $(\phi(x_1), \phi(y_1), \phi(z_1)) =  (\phi(x_2), \phi(y_2), \phi(z_2))$
since $\q{sign:H-coloring-second}(a_1,b_1)=\q{sign:H-coloring-second}(a_2,b_2)$,
which implies $x_1 = x_2$, $y_1=y_2$, and $z_1 = z_2$.

If $c=x_1$ then $a_2 \leq x_2 = x_1 \leq b_2$ in $P$, a contradiction.
If $c=z_1$ then, using that $a_2 \leq z_2$ in $P$
(since $\q{sign:comparability-status}(a_1,b_1)=\q{sign:comparability-status}(a_2,b_2)$), we obtain
$a_2 \leq z_2 = z_1 \leq b_2$ in $P$, again a contradiction.
\end{proof}

By Claim~\ref{claim:equal-u} if $(a_1,b_1), (a_2,b_2)$ is a $2$-cycle in  $\MM(P,\nu_{13},\Sigma)$ for some $\Sigma \in {\bf \Sigma}(\nu_{13})$
then $u_{a_1b_1} \neq  u_{a_2b_2}$.
Let us say that the $2$-cycle is of {\em type 1} if the latter two nodes are comparable in $T$
(that is, $u_{a_1b_1} < u_{a_2b_2}$ or $u_{a_1b_1} > u_{a_2b_2}$ in $T$),
and of {\em type 2} otherwise.
By extension, each edge of the graph $J_{\Sigma}$ is either of type 1 or of type 2.
Let  $J_{\Sigma, i}$ denote the spanning subgraph of $J_{\Sigma}$ defined by the edges of type $i$, for $i=1,2$.
Thus $J_{\Sigma, 1}$ and $J_{\Sigma, 2}$ are edge disjoint, and $J_{\Sigma} = J_{\Sigma, 1} \cup J_{\Sigma, 2}$.
In what follows we will first show that $J_{\Sigma, 1}$ is bipartite, and then
considering a $2$-coloring of $J_{\Sigma, 1}$, we will prove that the two subgraphs
of $J_{\Sigma, 2}$ induced by the two color classes are bipartite.
This clearly implies  our main lemma, Lemma~\ref{claim:color-J}, that $J_{\Sigma}$ is $4$-colorable.

\begin{claim}\label{claim:w1-below-u_2}
Let $\Sigma \in {\bf \Sigma}(\nu_{13})$ and suppose that
$(a_1,b_1), (a_2,b_2)$ is a $2$-cycle in  $\MM(P,\nu_{13},\Sigma)$ of type 1.
Let $u_i := u_{a_ib_i}$ and $w_i:= w_{a_ib_i}$ for $i=1, 2$, and suppose further
that  $u_1< u_2$ in $T$. Then  $u_1 < w_1 \leq u_2$ in $T$.
\end{claim}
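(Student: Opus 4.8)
The plan is to argue by contradiction, following the same pattern used in the proofs of Claims~\ref{claim:uk} and~\ref{claim:u2-in-component}. We may assume $\alpha_1(a_i,b_i)=\textrm{left}$ for $i=1,2$. Since $(a_1,b_1),(a_2,b_2)$ is an alternating cycle we have $a_1\leq b_2$ and $a_2\leq b_1$ in $P$. The key relation to exploit is $a_2\leq b_1$: since $\q{sign:a-goes-below-u}(a_2,b_2)=\textrm{no}$ (as $(a_2,b_2)\in\MM(P,\nu_{13})\subset\MM(P,\nu_6)$), the $a_2^T$--$b_1^T$ path in $T$ cannot pass through the edge $u_2p_2$; hence, using that $u_2<a_2^T$ in $T$, we get $u_2<b_1^T$ in $T$. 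Also $u_1<b_1^T$ in $T$ by definition of $u_1$, so $u_1$ and $u_2$ lie on the $r$--$b_1^T$ path in $T$; combined with the hypothesis $u_1<u_2$ in $T$, this already forces $u_1$, $u_2$, and $b_1^T$ to be pairwise comparable with $u_1<u_2<b_1^T$ in $T$.

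It then remains to show $w_1\leq u_2$ in $T$, where $w_1$ is the neighbor of $u_1$ towards $b_1^T$. Since $u_1<u_2<b_1^T$ in $T$ and $w_1$ is the unique neighbor of $u_1$ on the $u_1$--$b_1^T$ path, while $u_2$ also lies on that path and $u_2\neq u_1$ (by Claim~\ref{claim:equal-u}), we conclude $w_1\leq u_2$ in $T$ — this is just the observation that on a rooted tree, the child of $u_1$ in the direction of any strictly higher comparable node $u_2$ is $\leq u_2$. Hence $u_1<w_1\leq u_2$ in $T$, as desired.

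The only place where something could go wrong is the step deducing $u_2<b_1^T$ in $T$: one must be careful that the $a_2^T$--$b_1^T$ path genuinely descends to $u_2$ before climbing, i.e.\ that $u_2=a_2^T\land b_1^T$ is impossible under the alternative. In fact the argument is cleaner than in Claim~\ref{claim:uk}: if $u_2\not<b_1^T$ in $T$, then since $u_2<a_2^T$ in $T$, the node $a_2^T\land b_1^T$ is an ancestor of (or equal to) $u_2$; either way the $a_2^T$--$b_1^T$ path passes through $u_2$ and hence through the edge $u_2p_2$, contradicting $\q{sign:a-goes-below-u}(a_2,b_2)=\textrm{no}$ via Observation~\ref{obs:hitting-vertex-or-edge} since $a_2\leq b_1$ would then hit $B(u_2)\cap B(p_2)$. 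So I do not expect any real obstacle here; the claim is a short bookkeeping lemma setting up the structure needed for the later analysis of $J_{\Sigma,1}$. The main subtlety is simply keeping straight which of $a_2\leq b_1$ and $a_1\leq b_2$ to use, and noting that it is the one whose $u$-node is the \emph{smaller} one, namely $u_1$, that needs no hypothesis, while the relation through the \emph{larger} node $u_2$ is what rules out $u_2$ being the meet.
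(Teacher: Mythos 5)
Your argument rests on the premise that $\q{sign:a-goes-below-u}(a_2,b_2)=\textrm{no}$, which you justify by the inclusion $\MM(P,\nu_{13})\subset\MM(P,\nu_6)$. That inclusion is false: in the signature tree $\Psi$, the node $\nu_6$ lies in the subtree hanging off the edge out of $\nu_4$ labeled $\set{\textrm{no}}$ (the $\nu_5$ branch), while $\nu_{13}$ lies in the subtree hanging off the edge labeled $\set{\textrm{yes}}$ (the $\nu_{11}$ branch). Thus $\MM(P,\nu_{13})$ and $\MM(P,\nu_6)$ are \emph{disjoint}, and in fact every $(a,b)\in\MM(P,\nu_{13})$ satisfies $\q{sign:a-goes-below-u}(a,b)=\textrm{yes}$: there \emph{is} an element $q\in B(u_{ab})\cap B(p_{ab})$ with $a\leq q$ in $P$, and the labeling of $B(u_{ab})$ follows \eqref{eq:xy-in-edge}--\eqref{eq:y-ineq}, not \eqref{eq:x-element}--\eqref{eq:y_ab-in-bags}.

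Because of this, the $a_2^T$--$b_1^T$ path may well pass through the edge $u_2p_2$ (the relation $a_2\leq b_1$ can legitimately hit $B(u_2)\cap B(p_2)$), so the deduction $u_2<b_1^T$ in $T$ has no justification and your proof collapses at its first step. That is precisely what makes this claim nontrivial in the $\nu_{11}$ branch: it is not a bookkeeping lemma. The paper's proof supposes $w_1\nleq u_2$ for contradiction, distinguishes the cases $v_1\leq u_2$ and $v_1\nleq u_2$ in $T$, and in each case carries out a chain of hitting arguments via Observation~\ref{obs:hitting-vertex-or-edge}; it crucially uses $\q{sign:H-coloring-second}(a_1,b_1)=\q{sign:H-coloring-second}(a_2,b_2)$ to force equalities between elements in the two bags, and $\q{sign:comparability-status}(a_1,b_1)=\q{sign:comparability-status}(a_2,b_2)$ --- specifically question~\eqref{eq:Q3}, whether $a_0\leq x_{ab}$ --- to propagate the information $a_0\leq x_1$ to $a_0\leq x_2$ and reach a contradiction. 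None of these ingredients appear in your sketch.
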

\begin{proof}
Arguing by contradiction, suppose that $w_1\nleq u_2$ in $T$.
Let $v_i:= v_{a_ib_i}$, $p_i := \p(u_i)$,
$x_i:=x_{a_ib_i}$, $y_i:=y_{a_ib_i}$, and $z_i:=z_{a_ib_i}$  for $i=1,2$.

First suppose that $v_1 \leq u_2$ in $T$.
Then the path from $a_2^T$ to $b_1^T$ in $T$ goes through $u_2,p_2,v_1$ and $u_1$.
Thus the relation $a_2\leq b_1$ hits $B(u_2)\cap B(p_2)=\set{x_2,y_2}$ and $B(v_1)\cap B(u_1)$.
Note that the two edges $u_2p_2$ and $v_1u_1$ may coincide (if $u_2=v_1$).
Since  $a_2\leq b_1$ cannot hit $y_2$ because $y_2\leq b_2$ in $P$, it hits $x_2$, and by Observation~\ref{obs:hitting-vertex-or-edge}
we then have
\begin{equation}
a_2\leq x_2\leq c\leq b_1\label{eq:x2-c}
\end{equation}
in $P$ for some $c \in B(v_1)\cap B(u_1)$.
Let $d$ be the element in $(B(v_1)\cap B(u_1)) \setminus \{c\}$.

The $a_1^T$--$u_1$ path and the $r$--$b_2^T$ path in $T$ both go through the edge $v_1u_1$.
Thus, the relations $a_1\leq x_1$ and $a_0 \leq b_2$ both hit $\set{c,d}$. Neither can hit
$c$ since otherwise $a_1\leq c\leq b_1$ or $a_2\leq c\leq b_2$ in $P$.
Hence, $a_1\leq d\leq x_1$ and $a_0 \leq d\leq b_2$ in $P$, which implies $a_0 \leq x_1$.
We then have $a_0 \leq x_2$ in $P$ as well, since by
$\q{sign:comparability-status}(a_1,b_1)=\q{sign:comparability-status}(a_2,b_2)$ both pairs $(a_1,b_1),(a_2,b_2)$ give the same answer to question~\eqref{eq:Q3}.

Clearly,  $a_0 \leq x_2$ hits  $\set{c,d}$. This relation cannot hit $d$ since otherwise $a_1\leq d\leq x_2$ and $x_2\leq b_1$ (by~\eqref{eq:x2-c}) would imply $a_1\leq b_1$ in $P$.
Thus $a_0 \leq c\leq x_2$ in $P$.
Given that $x_2\leq c$ in $P$ by \eqref{eq:x2-c}, we conclude $x_2=c$.
Using that $x_1=c\in B(u_1)=\{x_1,y_1,z_1\}$ and
$(\phi(x_1), \phi(y_1), \phi(z_1)) =  (\phi(x_2), \phi(y_2), \phi(z_2))$
(since $\q{sign:H-coloring-second}(a_1,b_1)=\q{sign:H-coloring-second}(a_2,b_2)$),
we further deduce that $x_1\neq y_2$ (because $\phi(x_1)=\phi(x_2)\neq\phi(y_2)$) and $x_1\neq z_2$ (because $\phi(x_1)=\phi(x_2)\neq \phi(z_2)$) and therefore $x_2 = c = x_1$.
However,  this implies $a_1\leq x_1=x_2\leq b_1$ in $P$, a contradiction.

Next assume that $v_1\nleq u_2$ in $T$.
Let $v'_1$ be the neighbor of $u_1$ on the $u_1$--$u_2$ path in $T$.
Thus $v_1' \neq w_1$ and $v_1' \neq v_1$.
The path from $a_2^T$ to $b_1^T$ in $T$ goes through $u_2,p_2,v'_1$ and $u_1$.
Thus, the relation $a_2\leq b_1$ hits $B(p_2)\cap B(u_2)=\set{x_2,y_2}$ and
$B(v'_1)\cap B(u_1)$. It cannot hit $y_2$ since otherwise $a_2\leq y_2\leq b_2$ in $P$.
By Observation~\ref{obs:hitting-vertex-or-edge}, we then have
\begin{equation}
a_2\leq x_2\leq c'\leq b_1\label{eq:x2-c'}
\end{equation}
for some $c' \in B(v'_1)\cap B(u_1)$. Let $d'$ be the element in
$(B(v'_1)\cap B(u_1)) \setminus \{c'\}$.

The paths from $r$ to $b_2^T$ and from $a_1^T$ to $b_2^T$ in $T$ both go through $u_1$ and $v'_1$.
Thus the two relations $a_0 \leq b_2$ and $a_1\leq b_2$ hit the set $\set{c',d'}$. They cannot hit $c'$ since otherwise we would get $c'\leq b_2$, implying $a_2\leq c'\leq b_2$ in $P$ by~\eqref{eq:x2-c'}. Hence, $a_0 \leq d'\leq b_2$ and $a_1\leq d'\leq b_2$ in $P$.

Observe that $\set{c',d'}\subseteq \set{x_1,y_1,z_1}=B(u_1)$, and that we have $c'\neq x_1$
(otherwise $a_1\leq x_1=c'\leq b_1$ in $P$ with~\eqref{eq:x2-c'})
and $d'\neq y_1$ (otherwise  $a_1\leq d'=y_1\leq b_1$ in $P$).

We claim that $a_0 \leq x_1$ in $P$.
If $d'=x_1$ this is obvious, so suppose that $d'=z_1$, which implies $c'=y_1$.
The relation $a_0 \leq d'$ clearly hits $B(p_1)\cap B(u_1)=\set{x_1,y_1}$.
If it hits $x_1$, then $a_0 \leq x_1$ in $P$.
If, on the other hand, it hits $y_1$, then together with~\eqref{eq:x2-c'} we obtain $a_2\leq c'=y_1\leq d'\leq b_2$ in $P$, a contradiction.
Hence $a_0 \leq x_1$ in $P$, as claimed.
We then have $a_0 \leq x_2$ in $P$ as well, since by
$\q{sign:comparability-status}(a_1,b_1)=\q{sign:comparability-status}(a_2,b_2)$ both pairs $(a_1,b_1),(a_2,b_2)$ give the same answer to question~\eqref{eq:Q3}.

The relation $a_0 \leq x_2$ also hits $\set{c',d'}$.
It cannot hit $d'$, since otherwise $a_1\leq d'\leq x_2\leq b_1$ in $P$ by~\eqref{eq:x2-c'}.
Thus we have $a_0 \leq c'\leq x_2$ in $P$.
Note that this yields $c'=x_2$, since we had $x_2\leq c'$ in $P$.
Using that $c'\in B(u_1)$ and
$(\phi(x_1), \phi(y_1), \phi(z_1)) =  (\phi(x_2), \phi(y_2), \phi(z_2))$
(since $\q{sign:H-coloring-second}(a_1,b_1)=\q{sign:H-coloring-second}(a_2,b_2)$),
we deduce that $x_2 = c' = x_1$.
However,  this implies $a_1\leq x_1=x_2\leq b_1$ in $P$, a contradiction.
\end{proof}

\begin{claim}\label{claim:2-cycle-comp}
Let $\Sigma \in {\bf \Sigma}(\nu_{13})$ and suppose that
$(a_1,b_1), (a_2,b_2)$ is a $2$-cycle in  $\MM(P,\nu_{13},\Sigma)$ of type 1.
Let $u_i := u_{a_ib_i}$, $x_i:=x_{a_ib_i}$, and $y_i:= y_{a_ib_i}$ for $i=1, 2$, and suppose further
that  $u_1< u_2$ in $T$. Then
\begin{enumerate}
 \item\label{item:c1} $a_1\leq y_2$, and
 \item\label{item:c2} $x_2\leq b_1$
\end{enumerate}
in $P$.
\end{claim}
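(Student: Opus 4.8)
The plan is to derive both inequalities by tracing relations of $P$ along paths of the tree $T$ and invoking Observation~\ref{obs:hitting-vertex-or-edge}, with Claim~\ref{claim:w1-below-u_2} as the key structural input. Write $p_2:=\p(u_2)$, so that $B(p_2)\cap B(u_2)=\{x_2,y_2\}$ by \eqref{eq:xy-in-edge}, and recall from Claim~\ref{claim:w1-below-u_2} that $u_1<w_1\leq u_2$ in $T$, where $w_i:=w_{a_ib_i}$. Recall also that $u_2<b_2^T$ in $T$, since $u_2=a_2^T\land b_2^T$ and $a_2^T,b_2^T$ are incomparable in $T$.

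For \ref{item:c1} I would first check that $u_1=a_1^T\land b_2^T$ in $T$. Indeed $u_1\leq a_1^T$, and $u_1<w_1\leq u_2<b_2^T$ gives $u_1<b_2^T$, so $a_1^T\land b_2^T\geq u_1$; were this strict, both the child of $u_1$ towards $a_1^T$ and $w_1$ would be ancestors of $b_2^T$ in $T$, which is impossible since they are distinct children of $u_1$. Hence the $a_1^T$--$b_2^T$ path runs through $u_1$ and, since $u_1<u_2<b_2^T$ in $T$, through the edge $\{p_2,u_2\}$. By Observation~\ref{obs:hitting-vertex-or-edge} the relation $a_1\leq b_2$ hits $B(p_2)\cap B(u_2)=\{x_2,y_2\}$; it cannot hit $x_2$ because $x_2\not\leq b_2$ by \eqref{eq:x-ineq}, so $a_1\leq y_2\leq b_2$ in $P$, which is \ref{item:c1}.

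For \ref{item:c2} I would distinguish two cases according to whether $u_2\leq b_1^T$ in $T$. If $u_2\not\leq b_1^T$, then since $u_2\leq a_2^T$ the node $a_2^T\land b_1^T$ lies strictly below $u_2$, so the $a_2^T$--$b_1^T$ path passes through the edge $\{p_2,u_2\}$; thus $a_2\leq b_1$ hits $\{x_2,y_2\}$, and it cannot hit $y_2$ since $a_2\not\leq y_2$ by \eqref{eq:y-ineq}, giving $a_2\leq x_2\leq b_1$. If instead $u_2\leq b_1^T$, then $p_2$ and $u_2$ are ancestors of $b_1^T$, so the path from the root $r$ to $b_1^T$ passes through the edge $\{p_2,u_2\}$; since $a_0\in B(r)$ and $a_0\leq b_1$ in $P$ (as $b_1\in\max(P)$), the relation $a_0\leq b_1$ hits $\{x_2,y_2\}$. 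It cannot hit $y_2$: otherwise $y_2\leq b_1$, and together with $a_1\leq y_2$ from \ref{item:c1} this yields $a_1\leq b_1$, contradicting $(a_1,b_1)\in\MM(P)$. Hence $a_0\leq x_2\leq b_1$, in particular $x_2\leq b_1$, which is \ref{item:c2}.

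The delicate point is the second case of \ref{item:c2}, where $b_1^T$ sits below $u_2$ in $T$: there the natural relation $a_2\leq b_1$ need not be routed through the edge $\{p_2,u_2\}$, and one is forced to use the auxiliary relation $a_0\leq b_1$ supplied by the hypothesis of Theorem~\ref{thm:technical}, together with part \ref{item:c1} (so \ref{item:c1} must be proved first) to rule out the vertex $y_2$. The remaining steps are routine path-tracing using Claim~\ref{claim:w1-below-u_2} and Observation~\ref{obs:hitting-vertex-or-edge}.
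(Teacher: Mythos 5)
Your proof is correct and follows essentially the same route as the paper: establish that the $a_1^T$--$b_2^T$ path passes through the edge $p_2u_2$ (via Claim~\ref{claim:w1-below-u_2}) to get \ref{item:c1}, then observe that one of the $r$--$b_1^T$ or $a_2^T$--$b_1^T$ paths also uses that edge and rule out $y_2$ to get \ref{item:c2}. The only cosmetic differences are that you make the case split on $u_2\leq b_1^T$ explicit (the paper compresses it into ``at least one of the two paths uses the edge'') and that in the first case you rule out $y_2$ via $a_2\nleq y_2$ from \eqref{eq:y-ineq} rather than via \ref{item:c1}; both work, and the paper's choice merely lets it treat the two relations $a_0\leq b_1$ and $a_2\leq b_1$ uniformly.
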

\begin{proof}
Let $p_i := \p(u_i)$, $v_i:=v_{a_ib_i}$ and $w_i:=w_{a_ib_i}$ for $i=1,2$.
The path from $a_1^T$ to $b_2^T$ in $T$ has to go through $u_1$ since $u_1=\p(w_1)$, $w_1\not< a_1^T$ and $w_1\leq u_2<b_2^T$ in $T$ (by Claim~\ref{claim:w1-below-u_2}).
As a consequence, this path goes through $p_2$ and $u_2$.
Hence the relation $a_1\leq b_2$ hits $B(p_2)\cap B(u_2)=\set{x_2,y_2}$.
It cannot hit $x_2$, since otherwise $a_2\leq x_2\leq b_2$ in $P$. Therefore, it hits $y_2$, showing~\ref{item:c1}.

To show~\ref{item:c2},
observe that in $T$ at least one of the $r$--$b_1^T$ path and the $a_2^T$--$b_1^T$ path goes through $p_2$ and $u_2$.
Thus, at least one of the two relations $a_0 \leq b_1$ and $a_2\leq b_1$ hits $\set{x_2,y_2}$. Neither can hit $y_2$
since otherwise $a_1\leq y_2\leq b_1$ in $P$. Therefore, $x_2\leq b_1$ in $P$.
\end{proof}

\begin{claim}\label{claim:J1-triangles}
The graph $J_{\Sigma, 1}$ is triangle-free for each  $\Sigma \in {\bf \Sigma}(\nu_{13})$.
\end{claim}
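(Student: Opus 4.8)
The plan is to argue by contradiction. Suppose $(a_1,b_1),(a_2,b_2),(a_3,b_3)$ form a triangle in $J_{\Sigma,1}$, and (as in all the preceding proofs) assume $\q{sign:left-or-right}(a_i,b_i)=\textrm{left}$ for $i=1,2,3$. Write $u_i:=u_{a_ib_i}$, $w_i:=w_{a_ib_i}$, $v_i:=v_{a_ib_i}$, $x_i:=x_{a_ib_i}$, $y_i:=y_{a_ib_i}$, $z_i:=z_{a_ib_i}$. By Claim~\ref{claim:equal-u} the nodes $u_1,u_2,u_3$ are pairwise distinct, and since all three edges of the triangle are of type~1 they are pairwise comparable in $T$; hence they form a chain, and after relabeling we may assume $u_1<u_2<u_3$ in $T$.

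First I would harvest all the structural information from the two preceding claims. Applying Claim~\ref{claim:w1-below-u_2} to the pairs $\{1,2\}$ and $\{2,3\}$ gives $u_1<w_1\leq u_2<w_2\leq u_3$ in $T$, and applying Claim~\ref{claim:2-cycle-comp} to the pairs $\{1,2\}$, $\{1,3\}$, $\{2,3\}$ gives $a_1\leq y_2$, $x_2\leq b_1$, $a_1\leq y_3$, $x_3\leq b_1$, $a_2\leq y_3$, $x_3\leq b_2$ in $P$.

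The core of the proof is to concentrate on a single edge of $T$, namely the edge $u_2w_2$ joining $u_2$ to its child $w_2$ heading towards $b_2^T$, to funnel several of the defining relations of the three $2$-cycles through it, and then to pigeonhole on the two-element set $B(u_2)\cap B(w_2)$ (using that every bag-edge intersection has size exactly $2$). The $a_1^T$--$b_2^T$ path traverses $u_2w_2$ because $w_1\leq u_2$ forces that path down through $u_1$, and hence through $u_2$, into the subtree of $T$ hanging off $w_2$, which contains $b_2^T$; and the $a_2^T$--$b_3^T$ path traverses $u_2w_2$ because $w_2\leq u_3<b_3^T$ places $b_3^T$ in that same subtree while $a_2^T$ lies below the other child $v_2$ of $u_2$, so that $a_2^T\land b_3^T=u_2$. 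For the third relation one distinguishes two cases according to whether $b_1^T$ also lies in the subtree of $T$ below $w_2$. If it does not, then the $a_3^T$--$b_1^T$ path is likewise forced through $u_2w_2$ (since $a_3^T$ lies below $w_2$, as $w_2\leq u_3<a_3^T$), so all three of $a_1\leq b_2$, $a_2\leq b_3$, $a_3\leq b_1$ hit $B(u_2)\cap B(w_2)$; by pigeonhole two consecutive relations of this rotating triple hit a common element $q\in B(u_2)\cap B(w_2)$, which immediately yields $a_{i+1}\leq q\leq b_{i+1}$ for some $i$ --- the desired contradiction, exactly as in the $k=3$ case of Claim~\ref{claim:special-2-cycle}. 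If $b_1^T$ does lie below $w_2$, then instead the relations $a_2\leq b_1$ (part of the $2$-cycle $\{1,2\}$) and $a_0\leq b_1$ (using that $a_0<b$ for every maximal $b$) both traverse $u_2w_2$; feeding these, together with $a_1\leq b_2$, into the same pigeonhole, and using the facts $a_i\leq x_i\not\leq b_i$ and $a_i\not\leq y_i\leq b_i$ from \eqref{eq:x-ineq}--\eqref{eq:y-ineq}, the additional relation $x_2\leq b_1$ from Claim~\ref{claim:2-cycle-comp}, the coincidence $\phi(x_1)=\phi(x_2)=\phi(x_3)$ etc.\ guaranteed by the common value of $\q{sign:H-coloring-second}$, and the fact that all pairs of $\MM(P,\nu_{13},\Sigma)$ give the same answers to \eqref{eq:Q1}--\eqref{eq:Q3} via $\q{sign:comparability-status}$, one again arrives at a forbidden comparability $a_j\leq b_j$ in $P$.

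I expect the delicate step to be precisely this case analysis on the position of $b_1^T$ relative to the subtree hanging off $w_2$ (equivalently, the mutual positions of $b_1^T,b_2^T,b_3^T$ in $T$): this is what governs which of the many relations at hand actually run through the edge $u_2w_2$, and in the case ``$b_1^T$ below $w_2$'' one has to juggle several auxiliary relations --- $a_0\leq b_1$, $a_2\leq b_1$, $x_2\leq b_1$, and the comparability data of the $z_i$'s --- rather than a single clean rotating triple. Everything else is routine bookkeeping with $T$-paths, the two-element bags, and Observation~\ref{obs:hitting-vertex-or-edge}.
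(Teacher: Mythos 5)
Your Case~1 is correct and actually cleaner than the paper's organization: once $w_2 \nleq b_1^T$, the three rotating relations $a_1\leq b_2$, $a_2\leq b_3$, $a_3\leq b_1$ all cross $u_2w_2$, and the pigeonhole on $B(u_2)\cap B(w_2)$ kills them. But your Case~2 ($w_2 \leq b_1^T$) has a genuine gap: the pigeonhole on $u_2w_2$ with $a_1\leq b_2$, $a_2\leq b_1$, $a_0\leq b_1$ is \emph{inconclusive}. The only collision that is forced to occur without giving an immediate contradiction is $a_2\leq b_1$ and $a_0\leq b_1$ landing on the same element of $B(u_2)\cap B(w_2)$ --- both go to $b_1$, so nothing forbidden results. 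You then list the right auxiliary facts ($x_2\leq b_1$, the $\alpha_{12}$ coincidences, the $\phi$-coincidences) but never say how they close the argument, and they cannot close it while you stay pinned to the edge $u_2w_2$.

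What the paper does, and what your Case~2 actually needs, is a pivot to the edge $p_2u_2$: since $b_1^T > w_2 > u_2$, the $r$--$b_1^T$ path crosses $p_2u_2$, so $a_0\leq b_1$ hits $B(p_2)\cap B(u_2)=\{x_2,y_2\}$. If it hits $y_2$ you get $a_1\leq y_2\leq b_1$ (using Claim~\ref{claim:2-cycle-comp}~\ref{item:c1}). If it hits $x_2$ you get $a_0\leq x_2$, which via the common $\q{sign:comparability-status}$ value propagates to $a_0\leq x_3$; then $a_0\leq x_3$ hits $\{x_2,y_2\}$ on the $r$--$u_3$ path, cannot hit $x_2$ (else $a_2\leq x_2\leq x_3\leq b_2$ using Claim~\ref{claim:2-cycle-comp}~\ref{item:c2}), so $y_2\leq x_3$, and $a_1\leq y_2\leq x_3\leq b_1$. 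That second move --- establish $a_0\leq x_i$ for all $i$ and then run the relation $a_0\leq x_3$ through $\{x_2,y_2\}$ --- is the key idea you are missing; your plan as written would get stuck once the $u_2w_2$ pigeonhole fails to deliver.
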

\begin{proof}
Let $\Sigma \in {\bf \Sigma}(\nu_{13})$. Arguing by contradiction, suppose that
there is a triangle $(a_1,b_1), (a_2,b_2), (a_3,b_3)$ in $J_{\Sigma, 1}$.

Let $u_i := u_{a_ib_i}$, $p_i := \p(u_i)$, $w_i:=w_{a_ib_i}$,   $x_i:=x_{a_ib_i}$, and $y_i:= y_{a_ib_i}$ for each $i\in \set{1,2,3}$.
Since the nodes $u_1,u_2,u_3$ are pairwise comparable in $T$
and are all distinct (by Claim~\ref{claim:equal-u}), we may assume without loss of generality $u_1<u_2<u_3$ in $T$.

First we show that $a_0 \leq x_i$ holds in $P$ for some $i \in \set{1,2,3}$. Suppose this is not the case.
Consider the path from $r$ to $b_3^T$ in $T$. This path goes through the nodes $p_1,u_1,p_2,u_2,p_3$ and $u_3$.
Hence, the relation $a_0 \leq b_3$ hits $\set{x_1,y_1}$, $\set{x_2,y_2}$ and $\set{x_3,y_3}$.
By our assumption it hits $y_i$ for each $i \in \set{1,2,3}$, and
we have $a_0 \leq y_1\leq y_2\leq y_3$ in $P$ by Observation~\ref{obs:hitting-vertex-or-edge}.

If $u_2\leq b_1^T$ in $T$ then $a_0 \leq b_1$ hits $\set{x_2,y_2}$, and thus hits $y_2$ by our assumption.
Hence $y_2\leq b_1$ in $P$, which  using Claim~\ref{claim:2-cycle-comp}~\ref{item:c1} implies  $a_1\leq y_2\leq b_1$ in $P$, a contradiction.
Therefore, $u_2 \inc b_1^T$ in $T$.

The fact that  $u_1<u_2<u_3$ in $T$ further implies $$u_1< w_1 \leq u_2< w_2 \leq u_3$$ by Claim~\ref{claim:w1-below-u_2}.
Observe that the $a_1^T$--$b_2^T$ path, the $a_2^T$--$b_3^T$ path, and the $a_3^T$--$b_1^T$ path in $T$ all include the edge
$u_2w_2$. This is clear for the first two paths, and
follows from $u_2 \inc b_1^T$ in $T$ for the third one.
Thus the three relations $a_1 \leq b_2$, $a_2 \leq b_3$, $a_3 \leq b_1$ all hit
$B(u_2) \cap B(w_2)=:\set{c,d}$. Without loss of generality we have $a_1 \leq c \leq b_2$ in $P$.
This implies $a_2 \leq d \leq b_3$ in $P$, which in turn implies $a_3 \leq c\leq b_1$. However, it follows that $a_1 \leq c \leq b_1$ in $P$, a contradiction.

This shows that $a_0 \leq x_i$ holds in $P$ for some $i \in \set{1,2,3}$, as claimed.
Now, since $\q{sign:comparability-status}(a_1,b_1)=\q{sign:comparability-status}(a_2,b_2)=\q{sign:comparability-status}(a_3,b_3)$,
it follows that  $a_0 \leq x_i$ in $P$ for each $i \in \set{1,2,3}$.

Consider the relation $a_0 \leq x_3$ in $P$.
The path from $r$ to $u_3$ in $T$ goes through $p_2$ and $u_2$.
Thus, $a_0 \leq x_3$ hits $\set{x_2,y_2}$.
It cannot hit $x_2$ because otherwise $a_2\leq x_2\leq x_3$ in $P$, which together with $x_3\leq b_2$
(by Claim~\ref{claim:2-cycle-comp}~\ref{item:c2}) implies $a_2\leq x_3\leq b_2$ in $P$.
Hence $a_0 \leq x_3$ hits $y_2$, and we have $y_2\leq x_3$ in $P$.
On the other hand, by Claim~\ref{claim:2-cycle-comp} we have $a_1\leq y_2$ and $x_3\leq b_1$ in $P$.
It follows that $a_1\leq y_2\leq x_3\leq b_1$ in $P$, a contradiction. This concludes the proof.
\end{proof}

\begin{claim}\label{claim:2-cycle-struct}
Let $\Sigma \in {\bf \Sigma}(\nu_{13})$. Suppose that
$(a_1,b_1), (a_2,b_2), (a_3,b_3) \in \MM(P,\nu_{13},\Sigma)$ are three distinct pairs
such that $(a_1,b_1),(a_2,b_2)$ form a $2$-cycle of type $1$ and $(a_1,b_1), (a_3,b_3)$ do not form a $2$-cycle (so neither of type $1$ nor of type $2$).

Let $u_i := u_{a_ib_i}$, $x_i:=x_{a_ib_i}$, and $y_i:= y_{a_ib_i}$ for each $i\in \set{1,2,3}$.
Assume further that  $u_1<u_3\leq u_2$ in $T$. Then
\begin{enumerate}
 \item $a_1\leq x_3$, and
 \item $y_1\leq y_3\leq b_1$
\end{enumerate}
in $P$.
\end{claim}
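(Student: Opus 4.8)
The plan is to follow the same pattern as the proof of Claim~\ref{claim:k-sigma-2}, with Claim~\ref{claim:2-cycle-comp} playing the role of Claim~\ref{claim:k-sigma-1} and with the hypothesis that $(a_1,b_1)$ and $(a_3,b_3)$ do not form a $2$-cycle replacing the special-$2$-cycle coloring of node $\nu_8$. As usual we may assume $\q{sign:left-or-right}(a_i,b_i)=\textrm{left}$ for $i=1,2,3$, and we write $w_i:=w_{a_ib_i}$ and $p_i:=\p(u_i)$. First I would locate the relevant nodes of $T$. Since $u_1<u_3\leq u_2$ in $T$ forces $u_1<u_2$, Claim~\ref{claim:w1-below-u_2} applies to the $2$-cycle of type $1$ $(a_1,b_1),(a_2,b_2)$ and yields $u_1<w_1\leq u_2$ in $T$; since $w_1$ is the child of $u_1$ on the $u_1$--$u_2$ path and $u_3$ also lies on that path strictly above $u_1$, this gives $u_1<w_1\leq u_3\leq u_2$ in $T$, so that $u_1\leq p_3<u_3$ and $B(p_3)\cap B(u_3)=\set{x_3,y_3}$ by~\eqref{eq:xy-in-edge}. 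From Claim~\ref{claim:2-cycle-comp} I record $a_1\leq y_2$ and $x_2\leq b_1$ in $P$ (and hence $a_2\leq x_2\leq b_1$), and I note that $a_0\leq b_1$ hits $B(p_1)\cap B(u_1)=\set{x_1,y_1}$ but cannot hit $x_1$ by~\eqref{eq:x-ineq}, so $a_0\leq y_1\leq b_1$ in $P$.

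For the first conclusion, $a_1\leq x_3$: the path from $a_1^T$ to $b_2^T$ in $T$ descends from $a_1^T$ to $u_1$ (since $w_1\leq u_2<b_2^T$ while $w_1\nleq a_1^T$) and then continues through $w_1,\dots,p_3,u_3,\dots,u_2,\dots,b_2^T$, so it uses the edge $p_3u_3$; hence by Observation~\ref{obs:hitting-vertex-or-edge} the relation $a_1\leq b_2$ hits $\set{x_3,y_3}$. Suppose it hit $y_3$. Then $a_1\leq y_3\leq b_3$ by~\eqref{eq:y-ineq}, so $a_1\leq b_3$, and in particular $y_3\leq b_1$ is now impossible. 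One of the relations $a_0\leq b_1$, $a_2\leq b_1$ routes its path in $T$ through the edge $p_3u_3$ (depending on whether $u_3\leq b_1^T$ in $T$ or not), and being unable to hit $y_3$ it must hit $x_3$, giving $x_3\leq b_1$, hence $a_3\leq x_3\leq b_1$ by~\eqref{eq:x-ineq}; together with $a_1\leq b_3$ this makes $(a_1,b_1),(a_3,b_3)$ a $2$-cycle, contradicting the hypothesis. Therefore $a_1\leq x_3\leq b_2$ in $P$.

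For the second conclusion, $y_1\leq y_3\leq b_1$: once $a_1\leq x_3$ is known, any relation that terminates at $b_1$ and whose $T$-path uses the edge $p_3u_3$ cannot hit $x_3$ (else $a_1\leq x_3\leq b_1$), so it hits $y_3$ and yields $y_3\leq b_1$. The inequality $y_1\leq y_3$ I would extract either by routing $y_1\leq b_1$ through $p_3u_3$ (possible when $u_3\leq b_1^T$ in $T$) or, otherwise, by routing $a_0\leq b_2$ successively through the edges $p_1u_1$ and $p_3u_3$, which then both lie on the $r$--$b_2^T$ path in this order; this produces $s\in\set{x_1,y_1}$ and $t\in\set{x_3,y_3}$ with $a_0\leq s\leq t\leq b_2$, and the unwanted alternatives are eliminated using $a_1\leq x_1\not\leq b_1$, the already-derived $y_3\leq b_1$, and the fact that all pairs of $\MM(P,\nu_{13},\Sigma)$ give the same answer to question~\eqref{eq:Q3} (so that, e.g., $t=x_3$ forces $a_0\leq x_3$, which contradicts a negative answer to~\eqref{eq:Q3}).

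The step I expect to be the main obstacle is precisely this last piece of bookkeeping, of the same flavor as in Claims~\ref{claim:J1-triangles} and~\ref{claim:k-sigma-2}: one must split on the position of $u_3$ relative to $b_1^T$ in $T$ (and on whether $u_3=u_2$) to decide which of the reference relations $a_0\leq b_1$, $a_2\leq b_1$, $x_2\leq b_1$, $a_0\leq b_2$ actually routes its path in $T$ through the edge $p_3u_3$, and then pin down exactly which of the two elements of each relevant size-$2$ bag intersection each relation hits, using that $\phi$ is a proper coloring and that $\q{sign:H-coloring-second}$ and $\q{sign:comparability-status}$ are constant on $\MM(P,\nu_{13},\Sigma)$. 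This part is routine but lengthy; the conceptual content lies entirely in the location of the nodes carried out above and in the idea of deriving a forbidden $2$-cycle between $(a_1,b_1)$ and $(a_3,b_3)$.
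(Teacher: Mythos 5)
Your argument for the first conclusion ($a_1 \leq x_3$) and for $y_3 \leq b_1$ is correct and amounts to a reorganization of the paper's two-case proof. The genuine gap is exactly where you flag it, but it is not merely ``bookkeeping'': the routing of $a_0 \leq b_2$ through $p_1u_1$ and $p_3u_3$ does not determine which element of $\set{x_3,y_3}$ is hit, and when the answer to~\eqref{eq:Q3} is ``yes'' you cannot eliminate $t=x_3$. Concretely, with $a_0\leq s\leq t\leq b_2$, the alternatives $s=x_1, t=y_3$ (gives $a_1\leq x_1\leq y_3\leq b_1$) and $t=x_3$ with a negative answer to~\eqref{eq:Q3} (gives $a_0\leq x_3$, contradiction) are indeed killed by your three tools; but $s\in\set{x_1,y_1},\ t=x_3$ with a positive answer to~\eqref{eq:Q3} is not: $y_1\leq x_3\leq b_2$ or $x_1\leq x_3\leq b_2$ together with $a_1\leq x_3$, $y_3\leq b_1$, $a_0\leq x_i$ produces no contradiction, and if in addition $y_3\nleq b_2$ (nothing you establish forbids this) then in fact $t$ \emph{must} be $x_3$, so the unwanted alternative can genuinely occur.

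The paper avoids the issue by routing a different relation, $a_0\leq y_3$, through $p_1u_1$ alone. This relation is available because $a_0\leq b_3$ hits $B(p_3)\cap B(u_3)=\set{x_3,y_3}$ and cannot hit $x_3$ (as $x_3\nleq b_3$), so $a_0\leq y_3\leq b_3$; then, since $r< u_1<u_3$ in $T$, the relation $a_0\leq y_3$ hits $\set{x_1,y_1}$, cannot hit $x_1$ (else $a_1\leq x_1\leq y_3\leq b_1$ using the already-established $y_3\leq b_1$), and therefore $a_0\leq y_1\leq y_3$. The relation $a_0\leq y_3$ is absent from the list of candidate relations you propose to route ($a_0\leq b_1$, $a_2\leq b_1$, $x_2\leq b_1$, $a_0\leq b_2$), and it is precisely the missing ingredient; with it, the second-conclusion argument needs only the single edge $p_1u_1$ rather than the two-edge chain you use, and the Q3-based casework disappears entirely.
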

\begin{proof}
Let $p_i := \p(u_i)$  and $w_i:=w_{a_ib_i}$ for each $i\in \set{1,2,3}$.
Since $u_1<u_3\leq u_2$ in $T$ and also $u_1<w_1 \leq u_2$ in $T$ by Claim~\ref{claim:w1-below-u_2}, it follows that
$$u_1<w_1 \leq u_3\leq u_2$$ in $T$.

First suppose that $u_3\leq b_1^T$ in $T$.
Then the relation $y_1\leq b_1$ hits $B(p_3)\cap B(u_3)=\set{x_3,y_3}$.
By Claim~\ref{claim:2-cycle-comp}~\ref{item:c1} we have $a_1\leq y_2$ in $P$.
Furthermore, $a_1\leq y_2$ also hits $\set{x_3,y_3}$.
Clearly,  $y_1 \leq b_1$ and $a_1\leq y_2$ cannot hit the same element of $\set{x_3,y_3}$.
If $y_1\leq x_3\leq b_1$ then  $a_1\leq y_3 \leq y_2$ in $P$, which implies
$a_3\leq x_3\leq b_1$ and $a_1\leq y_3 \leq b_3$, that is, that $(a_1,b_1), (a_3,b_3)$ is a $2$-cycle, a contradiction.
Hence we have $y_1\leq y_3\leq b_1$ and $a_1\leq x_3 \leq y_2$ in $P$, as claimed.

Next assume that $u_3\nleq b_1^T$ in $T$.
Then the path from $a_2^T$ to $b_1^T$ in $T$ includes the edge $u_3p_3$,
and hence $a_2\leq b_1$ hits $B(p_3)\cap B(u_3)=\set{x_3,y_3}$.
The relation $a_1\leq b_2$ also hits $\set{x_3,y_3}$.
Clearly, $a_2\leq b_1$ and $a_1\leq b_2$ cannot hit the same element of $\set{x_3,y_3}$.

If $a_2\leq x_3 \leq b_1$ in $P$ then $a_1 \leq y_3 \leq b_2$.
However, it then follows $a_3 \leq x_3\leq b_1$ and $a_1\leq y_3 \leq b_3$ in $P$, implying that $(a_1,b_1), (a_3,b_3)$ is a $2$-cycle,
a contradiction.

Therefore, $a_2\leq y_3 \leq b_1$ and $a_1 \leq x_3 \leq b_2$ in $P$.
In order to conclude the proof, it only remains to show that $y_1\leq y_3$ in $P$.
For this, observe that the path from $r$ to $u_3$ in $T$ includes the edge $p_1u_1$.
Hence, the relation $a_0 \leq y_3$ hits $\set{x_1,y_1}$. It cannot hit $x_1$ since otherwise $a_1\leq x_1\leq y_3\leq b_1$.
Thus, $a_0 \leq y_1\leq y_3$ in $P$, as desired.
\end{proof}

An illustration for the next two claims is given on Figure~\ref{fig:cross-edges}.

\begin{figure}[t]
 \centering
 \includegraphics[scale=0.8]{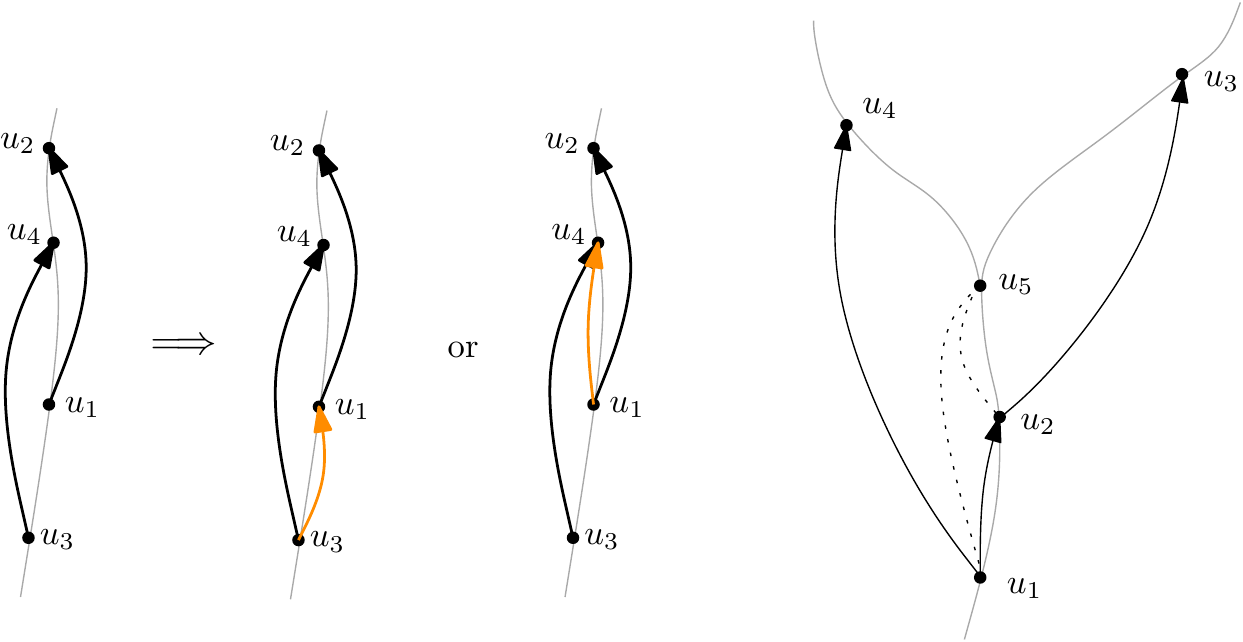}
 \caption{Possible situations in Claim~\ref{claim:implied-edge} and~\ref{claim:forbidden-object}. An edge indicates that its endpoints correspond to the meeting points of a $2$-cycle.}
 \label{fig:cross-edges}
\end{figure}

\begin{claim}\label{claim:implied-edge}
Let $\Sigma \in {\bf \Sigma}(\nu_{13})$. Suppose that
$(a_1,b_1), (a_2,b_2), (a_3,b_3), (a_4,b_4) \in \MM(P,\nu_{13},\Sigma)$ are four distinct pairs
such that $u_1<u_4<u_2$ and $u_3<u_4$ in $T$, where $u_i := u_{a_ib_i}$ for each $i\in \set{1,2,3,4}$.
Assume further that $(a_1,b_1),(a_2,b_2)$ and  $(a_3,b_3),(a_4,b_4)$ are $2$-cycles (which are thus of type 1).
Then at least one of $(a_1,b_1),(a_3,b_3)$  and $(a_1,b_1),(a_4,b_4)$ is a $2$-cycle of type 1.
\end{claim}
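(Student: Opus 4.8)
The plan is a short case analysis on whether $(a_1,b_1),(a_4,b_4)$ already form a $2$-cycle. As usual I would first assume $\alpha_1(a_i,b_i)=\text{left}$ for all $i$, and record one elementary observation: since $u_1<u_4$ and $u_3<u_4$ in $T$, both $u_1$ and $u_3$ lie on the path from the root to $u_4$ in $T$, so $u_1$ and $u_3$ are comparable in $T$. Hence any $2$-cycle formed by $(a_1,b_1)$ together with $(a_3,b_3)$ is automatically of type $1$ (its two $u$-nodes are distinct by Claim~\ref{claim:equal-u} and comparable by the above), and likewise any $2$-cycle formed by $(a_1,b_1),(a_4,b_4)$ is of type $1$ because $u_1<u_4$ in $T$. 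So it is enough to produce one of these two $2$-cycles.

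If $(a_1,b_1),(a_4,b_4)$ form a $2$-cycle we are done, so suppose they do not. I would then apply Claim~\ref{claim:2-cycle-struct} to the three distinct pairs $(a_1,b_1)$, $(a_2,b_2)$, $(a_4,b_4)$, with $(a_4,b_4)$ playing the role of the ``third'' pair: the hypotheses are met since $(a_1,b_1),(a_2,b_2)$ is a type-$1$ $2$-cycle, $(a_1,b_1),(a_4,b_4)$ is not a $2$-cycle by assumption, and $u_1<u_4\le u_2$ in $T$ (indeed $u_1<u_4<u_2$). This yields $a_1\le x_{a_4b_4}$ and $y_{a_1b_1}\le y_{a_4b_4}\le b_1$ in $P$.

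Next I would apply Claim~\ref{claim:2-cycle-comp} to the type-$1$ $2$-cycle $(a_3,b_3),(a_4,b_4)$ with $u_3<u_4$ in $T$, obtaining $a_3\le y_{a_4b_4}$ and $x_{a_4b_4}\le b_3$ in $P$. Chaining the inequalities from the previous two paragraphs gives $a_1\le x_{a_4b_4}\le b_3$ and $a_3\le y_{a_4b_4}\le b_1$ in $P$, so $(a_1,b_1),(a_3,b_3)$ is an alternating cycle of length $2$, that is, a $2$-cycle, and by the first paragraph it is of type $1$. This finishes the proof.

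I do not anticipate a genuine obstacle: the statement is essentially a bookkeeping combination of Claims~\ref{claim:2-cycle-comp} and~\ref{claim:2-cycle-struct}, and the only care required is to instantiate those claims with the correct pairs, to check the (strict versus non-strict) chains of $u$-nodes in $T$ appearing in their hypotheses, and to supply the ``not a $2$-cycle'' side condition needed to invoke Claim~\ref{claim:2-cycle-struct}.
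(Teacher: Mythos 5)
Your proof is correct and takes essentially the same route as the paper: split on whether $(a_1,b_1),(a_4,b_4)$ is a $2$-cycle, then apply Claim~\ref{claim:2-cycle-struct} to $(a_1,b_1),(a_2,b_2),(a_4,b_4)$ and Claim~\ref{claim:2-cycle-comp} to $(a_3,b_3),(a_4,b_4)$, and chain the resulting inequalities through $x_{a_4b_4}$ and $y_{a_4b_4}$ to exhibit the $2$-cycle $(a_1,b_1),(a_3,b_3)$, which is of type~1 since $u_1$ and $u_3$ are both below $u_4$ and hence comparable in $T$.
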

\begin{proof}
Let $x_i:=x_{a_ib_i}$ and $y_i:= y_{a_ib_i}$ for each $i\in \set{1,2,3,4}$.
Suppose that $(a_1,b_1),(a_4,b_4)$ is not a $2$-cycle, since otherwise we are done.
Applying Claim~\ref{claim:2-cycle-struct} on the three pairs $(a_1,b_1),(a_2,b_2),(a_4,b_4)$
we obtain that  $a_1\leq x_4$ and $y_1\leq y_4\leq b_1$ in $P$.

Using Claim~\ref{claim:2-cycle-comp} on the $2$-cycle $(a_3,b_3),(a_4,b_4)$ we also obtain
$a_3\leq y_4$ and $x_4\leq b_3$ in $P$. It follows that $a_3\leq y_4\leq b_1$ and $a_1\leq x_4\leq b_3$ in $P$.
Hence $(a_1,b_1), (a_3,b_3)$ is a $2$-cycle.
Furthermore, it is of type 1 because $u_1 < u_4$ and $u_3 < u_4$ in $T$, implying that
$u_1$ and $u_3$ are comparable in $T$.
\end{proof}

\begin{claim}\label{claim:forbidden-object}
Let $\Sigma \in {\bf \Sigma}(\nu_{13})$. Suppose that
$(a_1,b_1), (a_2,b_2), (a_3,b_3), (a_4,b_4), (a_5,b_5) \in \MM(P,\nu_{13},\Sigma)$ are five distinct pairs
such that  $u_1<u_5<u_4$ and $u_2<u_5<u_3$ in $T$, where $u_i := u_{a_ib_i}$ for each $i\in \set{1,\dots, 5}$.
Assume further that $(a_1,b_1),(a_2,b_2)$ and  $(a_2,b_2),(a_3,b_3)$ and
$(a_1,b_1),(a_4,b_4)$ are $2$-cycles of type 1.
Then at least one of $(a_1,b_1),(a_5,b_5)$  and $(a_2,b_2),(a_5,b_5)$ is a $2$-cycle of type 1.
\end{claim}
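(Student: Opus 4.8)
The plan is to argue by contradiction. Suppose that neither $(a_1,b_1),(a_5,b_5)$ nor $(a_2,b_2),(a_5,b_5)$ forms a $2$-cycle; I will derive $a_1 \leq b_1$ or $a_2 \leq b_2$ in $P$, which is absurd. Throughout, abbreviate $x_i := x_{a_ib_i}$ and $y_i := y_{a_ib_i}$ for $i \in \set{1,\dots,5}$.

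First I would record a preliminary observation. Since $u_1 < u_5$ and $u_2 < u_5$ in $T$, both $u_1$ and $u_2$ lie on the path from the root to $u_5$ in $T$, hence are comparable in $T$; they are distinct by Claim~\ref{claim:equal-u}, so one of them is strictly below the other in $T$. (This is the only point requiring a little care, and it is also the reason the final step splits into two near-identical cases rather than a clean ``without loss of generality'', since the pair $(a_4,b_4)$ is tied specifically to $(a_1,b_1)$ and not to $(a_2,b_2)$.)

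The heart of the proof is two applications of Claim~\ref{claim:2-cycle-struct}. Apply it to the triple $(a_1,b_1),(a_4,b_4),(a_5,b_5)$, with $(a_1,b_1)$ playing the role of the ``lower'' pair and $(a_4,b_4)$ the ``upper'' one: the hypotheses hold because $(a_1,b_1),(a_4,b_4)$ is a $2$-cycle of type $1$, $(a_1,b_1),(a_5,b_5)$ is not a $2$-cycle by assumption, and $u_1 < u_5 < u_4$ in $T$ in particular gives $u_1 < u_5 \leq u_4$. This yields $y_1 \leq y_5 \leq b_1$ in $P$. Apply Claim~\ref{claim:2-cycle-struct} once more, now to $(a_2,b_2),(a_3,b_3),(a_5,b_5)$, using that $(a_2,b_2),(a_3,b_3)$ is a type-$1$ $2$-cycle, $(a_2,b_2),(a_5,b_5)$ is not a $2$-cycle, and $u_2 < u_5 < u_3$ in $T$; this gives $y_2 \leq y_5 \leq b_2$ in $P$. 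Finally, apply Claim~\ref{claim:2-cycle-comp} to the type-$1$ $2$-cycle $(a_1,b_1),(a_2,b_2)$: if $u_1 < u_2$ in $T$ then $a_1 \leq y_2$ in $P$, so $a_1 \leq y_2 \leq y_5 \leq b_1$ in $P$, a contradiction; if instead $u_2 < u_1$ in $T$ then $a_2 \leq y_1$ in $P$, so $a_2 \leq y_1 \leq y_5 \leq b_2$ in $P$, again a contradiction. Hence at least one of $(a_1,b_1),(a_5,b_5)$ and $(a_2,b_2),(a_5,b_5)$ is a $2$-cycle, and it is automatically of type $1$ since $u_1 < u_5$ and $u_2 < u_5$ make the two relevant $u$-nodes comparable and distinct.

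The main obstacle is purely bookkeeping: checking that the node inequalities needed to invoke Claim~\ref{claim:2-cycle-struct} for the two triples genuinely follow from the strict inequalities $u_1 < u_5 < u_4$ and $u_2 < u_5 < u_3$ in the hypothesis, and keeping straight which of the three pairs plays the ``lower'', ``upper'', and ``middle'' role in each application. Beyond that there is no combinatorial difficulty: once the two instances of Claim~\ref{claim:2-cycle-struct} are set up, the contradiction is immediate from Claim~\ref{claim:2-cycle-comp}.
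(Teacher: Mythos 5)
Your proof is correct and follows essentially the same route as the paper's: two applications of Claim~\ref{claim:2-cycle-struct} (to the triples $(a_1,b_1),(a_4,b_4),(a_5,b_5)$ and $(a_2,b_2),(a_3,b_3),(a_5,b_5)$) followed by Claim~\ref{claim:2-cycle-comp} on the $2$-cycle $(a_1,b_1),(a_2,b_2)$. The only cosmetic difference is that the paper collapses the final step into one case via a without-loss-of-generality exchange of $(a_1,b_1),(a_4,b_4)$ with $(a_2,b_2),(a_3,b_3)$, whereas you write out the two symmetric subcases $u_1<u_2$ and $u_2<u_1$ explicitly.
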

\begin{proof}
Assume to the contrary that neither $(a_1,b_1),(a_5,b_5)$  nor $(a_2,b_2),(a_5,b_5)$ is a $2$-cycle.
(Note that if one is a $2$-cycle, then it is automatically of type 1 since $u_1<u_5$ and $u_2<u_5$ in $T$.)
We either have $u_1<u_2$ or $u_2<u_1$ in $T$. Exploiting symmetry
we may assume $u_1<u_2$ in $T$. (Indeed, if not then it suffices to exchange
$(a_1,b_1)$ and $(a_4,b_4)$  with respectively $(a_2,b_2)$ and $(a_3,b_3)$.)

Applying Claim~\ref{claim:2-cycle-struct} on  the three pairs $(a_1,b_1),(a_4,b_4),(a_5,b_5)$ and
on the three pairs $(a_2,b_2),(a_3,b_3),(a_5,b_5)$,  we obtain $y_5\leq b_1$ and $y_2\leq y_5$ in $P$.
Since $(a_1,b_1),(a_2,b_2)$ is a $2$-cycle and $u_1<u_2$ in $T$, we have $a_1\leq y_2$ in $P$ by Claim~\ref{claim:2-cycle-comp}.
But all together this implies $a_1\leq y_2\leq y_5\leq b_1$ in $P$, a contradiction.
\end{proof}

\begin{figure}[t]
 \centering
 \includegraphics[scale=1.0]{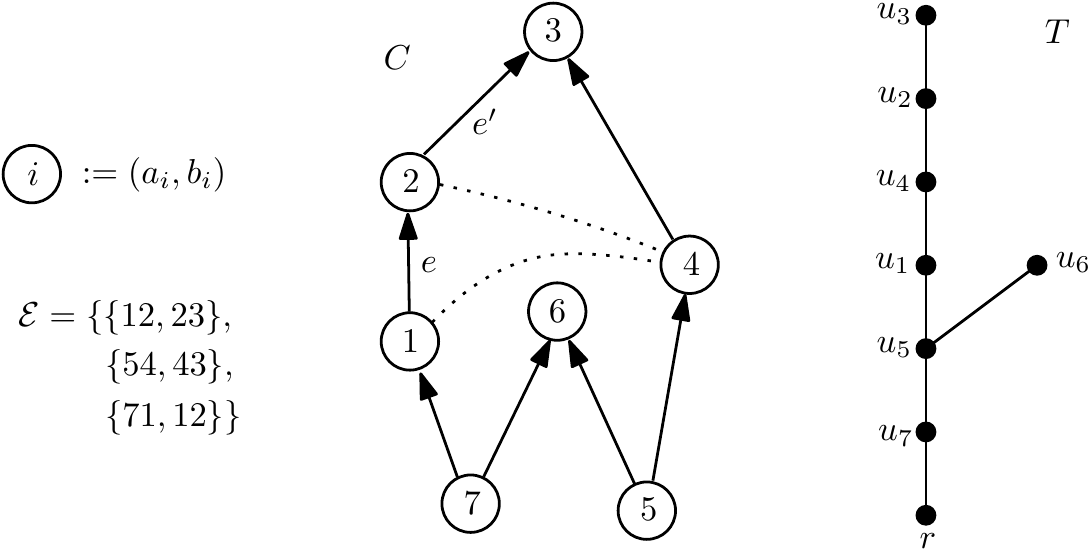}
 \caption{Example for Claim~\ref{claim:J1-is-bipartite}: Cycle $C$ on the pairs $(a_1,b_1),\ldots,(a_7,b_7)$ and the positions of $u_1,\ldots,u_7$ in $T$. Given this, it follows that $u_{\{12,23\}}=u_2$ is maximal in $T$ among $u_{\{12,23\}},u_{\{54,43\}},u_{\{71,12\}}$. Following the proof for Claim~\ref{claim:J1-is-bipartite} we would get that one of the dotted edges in the figure must be a real edge in $J_{\Sigma,1}$, implying that $C$ cannot be induced. }
 \label{fig:J1-cycle}
\end{figure}

\begin{claim}\label{claim:J1-is-bipartite}
The graph $J_{\Sigma, 1}$ is bipartite for every $\Sigma \in {\bf \Sigma}(\nu_{13})$.
\end{claim}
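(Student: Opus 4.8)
The plan is to argue by contradiction. Fix $\Sigma \in \mathbf{\Sigma}(\nu_{13})$, assume $J_{\Sigma,1}$ is not bipartite, and take an \emph{induced} odd cycle $C = \{(a_i,b_i)\}_{i=1}^{k}$ in it. By Claim~\ref{claim:J1-triangles} the graph $J_{\Sigma,1}$ is triangle-free, so $k \geq 5$. Write $u_i := u_{a_ib_i}$ and $w_i := w_{a_ib_i}$. Each edge $(a_i,b_i)(a_{i+1},b_{i+1})$ of $C$ (indices taken cyclically) is a $2$-cycle of type $1$, so by Claim~\ref{claim:equal-u} the nodes $u_i$ and $u_{i+1}$ are distinct, and by definition of type $1$ they are comparable in $T$; let $m_i$ denote the larger of $u_i$ and $u_{i+1}$ in $T$.

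First I would normalise the cycle. Among all edges of $C$, pick one whose node $m_i$ is maximal with respect to the order in $T$ (such an edge exists since there are finitely many), and after relabelling assume it is the edge between $(a_1,b_1)$ and $(a_2,b_2)$ with $u_1 < u_2$ in $T$; then $u_2 = m_1$ and no $m_j$ lies strictly above $u_2$ in $T$. Considering the second edge at vertex $2$: its meeting node is $\geq u_2$ in $T$, so by maximality it equals $u_2$, and hence $u_3 < u_2$ in $T$. In particular vertex $2$ is a ``peak'' of $C$: $u_1 < u_2 > u_3$ in $T$, so $u_1$ and $u_3$ are comparable in $T$. One then walks down the cyclic sequence $u_1, \dots, u_k$ away from $u_2$ on both sides, recording the comparabilities forced by $u_2$ being the topmost meeting node, together with Claim~\ref{claim:w1-below-u_2} (which gives $u_1 < w_1 \leq u_2$ in $T$ and the analogous inequalities along $C$).

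The core of the argument is then to produce a chord of $C$. The strategy is that, examining the relative positions in $T$ of the nodes $u_i$ for the handful of vertices of $C$ lying just below the peak $u_2$, one can always identify four pairs arranged as in the hypothesis of Claim~\ref{claim:implied-edge} (nodes in the pattern $u<u'<u''$ and $u^{*}<u'$, with the two required $2$-cycles present), or five pairs arranged as in the hypothesis of Claim~\ref{claim:forbidden-object}. The conclusion of the relevant claim then gives a type-$1$ $2$-cycle between two pairs of $C$ that are not consecutive on $C$ --- here we use $k \geq 5$ --- that is, an edge of $J_{\Sigma,1}$ joining two non-adjacent vertices of $C$, contradicting that $C$ is induced. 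To feed the hypotheses of these two claims one repeatedly applies Claim~\ref{claim:2-cycle-comp} and Claim~\ref{claim:2-cycle-struct}, converting the presence of $2$-cycles into inequalities of the shape $a_i \leq y_j$, $x_j \leq b_i$, and $y_i \leq y_j \leq b_i$ in $P$, which are exactly what those claims consume.

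The step I expect to be the main obstacle is precisely this case analysis. One must show that, however the nodes $u_i$ of the top few vertices of $C$ sit in the tree $T$ --- in particular whether or not they are comparable to $u_1$, since two nodes both below $u_2$ need not be comparable in $T$ --- one can always descend far enough along $C$ to land in a configuration meeting the hypotheses of Claim~\ref{claim:implied-edge} or of Claim~\ref{claim:forbidden-object}. Keeping this bookkeeping airtight, while tracking which pairs are thereby forced to form $2$-cycles (and using the strictness $u_i \neq u_j$ supplied by Claim~\ref{claim:equal-u} to upgrade ``$2$-cycle'' to ``type-$1$ $2$-cycle''), is the technical crux; Figure~\ref{fig:J1-cycle} depicts a representative situation, with vertex $2$ the peak of maximal meeting node and one of the dotted edges forced to be present.
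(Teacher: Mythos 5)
Your proposal correctly names the auxiliary claims (\ref{claim:J1-triangles}, \ref{claim:equal-u}, \ref{claim:w1-below-u_2}, \ref{claim:2-cycle-comp}, \ref{claim:2-cycle-struct}, \ref{claim:implied-edge}, \ref{claim:forbidden-object}) and the right high-level strategy, but it stops precisely where the real work begins and explicitly defers that work as ``the technical crux.'' That case analysis is not routine bookkeeping --- it is the bulk of the paper's proof --- and your chosen normalization makes it harder, not easier. You pick an edge with maximal meeting node $\max(u_i,u_{i+1})$ and deduce a \emph{peak} $u_1 < u_2 > u_3$. The paper instead orients each edge towards its larger $u$-node, considers only the ``through'' vertices of the cycle (one incident arc in, one out), and selects a through-vertex whose $u$-value is maximal \emph{among through-vertices}, giving $u_1 < u_2 < u_3$. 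That refined choice is what does the work: it forces every subsequent index $j$ with $u_j > u_2$ to be a peak or a valley, never a through-vertex, so the $u$-values alternate up--down along the arc of $C$ above $u_2$ --- exactly the comparability pattern that Claims~\ref{claim:implied-edge} and~\ref{claim:forbidden-object} require. From your peak, both directions descend, and the maximality you impose only guarantees $u_j \not> u_2$, which leaves open the possibility that $u_j$ is \emph{incomparable} to $u_2$ in $T$: already at distance two one can have $u_1 < u_3 < u_2$, $u_3 < u_4$, with $u_4$ lying in a branch above $u_3$ different from the one containing $u_2$. In that configuration no relabelling of $(a_1,b_1),\ldots,(a_4,b_4)$ satisfies the hypotheses of Claim~\ref{claim:implied-edge} (which needs $u' < u'' < u'''$ with a fourth node below $u''$), and your sketch gives no instruction for how to continue.

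Even granting the paper's normalization, your outline omits a non-trivial intermediate step. Once the walk reaches the largest index $i$ with $u_2 < u_j$ for all $j \in \{3,\ldots,i\}$, one has $u_{i+1} \leq u_2$, but to feed either claim one needs the \emph{strict} inequality $u_{i+1} < u_2$. The paper establishes this via a dedicated argument: Claim~\ref{claim:2-cycle-struct} gives $a_1 \leq x_{i+1}$, Claim~\ref{claim:2-cycle-comp} gives $x_2 \leq b_1$, and the equality of $\alpha_{11}$-signatures together with $u_{i+1}=u_2$ forces $x_2 = x_{i+1}$, yielding $a_1 \leq b_1$, a contradiction. Nothing analogous appears in your plan. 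Only with this in hand does the paper split into the cases $i < k$ (apply Claim~\ref{claim:implied-edge} to $(a_{i+1},b_{i+1}),(a_i,b_i),(a_1,b_1),(a_2,b_2)$, using that $i$ and $k$ are both odd to ensure the resulting edge is a chord) and $i = k$ (use Observation~\ref{obs:comp-nodes} to find a minimal even-indexed $u_\ell$ and apply Claim~\ref{claim:forbidden-object}). Your proposal names the two claims but never exhibits, in any configuration, a concrete set of pairs verifying their hypotheses. What you have is a plan with the right ingredients, not a proof.
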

\begin{proof}
Let $\Sigma \in {\bf \Sigma}(\nu_{13})$.
Arguing by contradiction, suppose that $J_{\Sigma, 1}$ is not bipartite.
Let $C$ be a shortest odd cycle in $J_{\Sigma, 1}$. Thus $C$ is induced, that is, $C$ has no chord.
By Claim~\ref{claim:J1-triangles} we know that $C$ has length at least $5$.

We orient the edges of $J_{\Sigma, 1}$ in the following natural way: For each edge $\{(a,b),(a',b')\}$ in $J_{\Sigma, 1}$,
we orient the edge towards $(a',b')$ if $u_{ab}<u_{a'b'}$ in $T$, and towards $(a,b)$ otherwise (that is, if
$u_{ab}>u_{a'b'}$ in $T$).
We encourage the reader to take a look at Figure~\ref{fig:J1-cycle} for upcoming new notations and arguments.

Let $\mathcal{E}$ be the set of all pairs of consecutive edges in $C$ such that the source of one coincides with the target of the other.
Since $C$ has an odd length, we have $|\mathcal{E}|\geq 1$.
For every $\set{e,e'}\in \mathcal{E}$ consider the pair $(a,b) \in \MM(P,\nu_{13},\Sigma)$ which is the common endpoint
of $e$ and $e'$ in $J_{\Sigma, 1}$, and let $u_{\set{e,e'}}:=u_{ab}$.

Choose $\set{e,e'}\in\mathcal{E}$ such that $u_{\set{e,e'}}$ is maximal in $T$, that is,
$u_{\set{e,e'}}\not<u_{\set{f,f'}}$ in $T$ for all $\set{f,f'}\in \mathcal{E}$.
Exchanging $e$ and $e'$ if necessary we may assume that the target of $e$ coincides with the source of $e'$.
Enumerate the  vertices of the odd cycle $C$ as $(a_1,b_1), (a_2,b_2), \dots, (a_k,b_k)$ in such a way that
$e=\set{(a_1,b_1),(a_2,b_2)}$ and $e'=\set{(a_2,b_2),(a_3,b_3)}$.
Let $u_i:=u_{a_ib_i}$, $x_i:= x_{a_ib_i}$, $y_i:= y_{a_ib_i}$, and $z_i:= z_{a_ib_i}$ for each $i\in \set{1,2,\dots, k}$.
Thus $u_1<u_2<u_3$ in $T$ and $u_2=u_{\set{e,e'}}$.

Let $i$ be the largest index in $\set{3,\dots, k}$ such that $u_2<u_j$ for all $j\in\set{3,\ldots,i}$ in $T$.
If there is an index $j\in \set{3,\ldots,i}$ such that $u_{j-1}<u_j<u_{j+1}$ or $u_{j-1}>u_j>u_{j+1}$ in $T$
(taking indices cyclically), then
$u_{\set{e,e'}}=u_2<u_j$ in $T$, which contradicts our choice of $\set{e,e'}$ in $\mathcal{E}$
(since $\set{\set{(a_{j-1},b_{j-1}),(a_{j},b_{j})}, \set{(a_{j},b_{j}),(a_{j+1},b_{j+1})}}$ was a better choice).
Thus no such index $j$ exists. Given that $u_2<u_3$ in $T$, it follows that $$\set{u_{j-1}, u_{j+1}} < u_j$$ in $T$
for each {\em odd} index $j \in \set{3,\ldots,i}$, and that $i$ is odd (because $u_{i+1}<u_i$ in $T$ by the choice of $i$).

Since $u_2 < u_i$ and  $u_{i+1} < u_i$ in $T$, the two nodes $u_2$ and $u_{i+1}$ are comparable in $T$.
By our choice of $i$ we have $u_2\not< u_{i+1}$ in $T$.
(This is clear if $i < k$, and if $i=k$ this follows from the fact that $u_{k+1}=u_1 < u_2$ in $T$.)
It follows that $u_{i+1}\leq u_2$ in $T$.
We claim that $$u_{i+1}< u_2$$ in $T$.
So suppose $u_{i+1}= u_2$.
Observe that $i\neq k$ (as otherwise $u_{i+1}=u_1<u_2$ in $T$) and $i\neq k-1$ (since $i$ and $k$ are odd) in this case.
Thus, $3 \leq i \leq k-2$ and since the odd cycle $C$ is induced, it follows that
the two pairs $(a_1,b_1),(a_{i+1},b_{i+1})$ do not form a $2$-cycle. (For if they did,
it would be a $2$-cycle of type 1 since $u_1 < u_2 = u_{i+1}$ in $T$, which would
give a chord of $C$ in $J_{\Sigma, 1}$.)
Applying Claim~\ref{claim:2-cycle-struct} on the pairs $(a_1,b_1),(a_2,b_2)$ and $(a_{i+1},b_{i+1})$
we obtain $a_1\leq x_{i+1}$ in $P$.
Using Claim~\ref{claim:2-cycle-comp} on $(a_1,b_1)$ and $(a_2,b_2)$ gives us $x_2\leq b_1$ in $P$.
However, since $u_2=u_{i+1}$ and $(\phi(x_2), \phi(y_2), \phi(z_2)) =  (\phi(x_{i+1}), \phi(y_{i+1}), \phi(z_{i+1}))$
(given that $\q{sign:H-coloring-second}(a_2,b_2)=\q{sign:H-coloring-second}(a_{i+1},b_{i+1})$), we deduce $x_2=x_{i+1}$,
which implies $a_1\leq x_{i+1}=x_2\leq b_1$ in $P$, a contradiction.
Therefore, $u_{i+1}< u_2$ in $T$, as claimed.

In order to finish the proof, we consider separately the case $i < k$ and $i = k$.
First suppose that $i < k$, and thus $i \leq k-2$.
Since $u_{i+1}<u_2<u_i$ and $u_1<u_2$ in $T$, using Claim~\ref{claim:implied-edge}
on the four pairs $(a_{i+1},b_{i+1}), (a_{i},b_{i}), (a_{1},b_{1}), (a_{2},b_{2})$
we obtain that $\set{(a_{i+1},b_{i+1}),(a_1,b_1)}$ or $\set{(a_{i+1},b_{i+1}),(a_2,b_2)}$
is an edge in $J_{\Sigma, 1}$, showing that $C$ has a chord, a contradiction.

Next assume that $i=k$.
Recall that $\set{u_{j-1}, u_{j+1}} < u_j$ in $T$ for each odd index $j \in \set{3,  \dots, k}$.
It follows that $u_{j-1}$ and $u_{j+1}$ are comparable in $T$ for each such index $j$.
Using Observation~\ref{obs:comp-nodes} and $k\geq 5$ we deduce in particular that there exists an even index $\ell \in  \set{4,  \dots, k-1}$
such that $u_{\ell} \leq u_{\ell'}$ for every even index $\ell' \in  \set{4,  \dots, k-1}$.

By the choice of $\ell$ we have $u_{\ell}<u_3$ in $T$ (since $u_{\ell} \leq u_4<u_3$)
and $u_{\ell} <u_k$ in $T$ (since $u_{\ell} \leq u_{k-1}<u_k$).
Note also that $u_1 < u_2 < u_{\ell}$ in $T$, since $i=k$.
Applying Claim~\ref{claim:forbidden-object} on the five pairs
$(a_1,b_1),(a_2,b_2),(a_3,b_3),(a_k,b_k)$ and $(a_{\ell},b_{\ell})$,
we then obtain that $\set{(a_1,b_1),(a_{\ell},b_{\ell})}$ or $\set{(a_2,b_2),(a_{\ell},b_{\ell})}$
is an edge in $J_{\Sigma, 1}$, showing that $C$ has a chord, a contradiction.
\end{proof}

Now that the bipartiteness of $J_{\Sigma, 1}$ is established for each $\Sigma \in {\bf \Sigma}(\nu_{13})$,
to finish our proof of Lemma~\ref{claim:color-J}
(asserting that $J_{\Sigma}$ is $4$-colorable), it remains to show that the two subgraphs
of $J_{\Sigma, 2}$ induced by the two color classes in a $2$-coloring of $J_{\Sigma, 1}$ are bipartite.
Clearly, it is enough to show that every subgraph of $J_{\Sigma, 2}$ induced by an
independent set of $J_{\Sigma, 1}$ is bipartite, which is exactly what we will do.
(Recall that an {\em independent set}, also known as {\em stable set}, is a set of pairwise non-adjacent
vertices.)

To this aim we introduce a new definition: Given a pair $(a,b) \in \MM(P)$ and
a set $\set{c,d}$ of two elements of $P$, we say that $(a, b)$ {\em is connected to}
$\set{c,d}$ if $a\leq c$ and $d\leq b$, or $a\leq d$ and $c\leq b$ in $P$.
Note that if $(a, b)$ is connected to $\set{c,d}$, then it is in exactly one of two possible ways (that is,
either  $a\leq c$ and $d\leq b$, or $a\leq d$ and $c\leq b$ in $P$).
Thus  two pairs $(a,b), (a',b') \in \MM(P)$ connected to $\set{c,d}$ are either connected
the same way, or in opposite ways. More generally, we can consider how a collection of pairs
are connected to a certain set $\set{c,d}$, which we will need to do in what follows.

\begin{obs}\label{obs:unidirect1}
Let $\Sigma \in {\bf \Sigma}(\nu_{13})$.
Let $I$ be an independent set in $J_{\Sigma, 1}$ and let $c,d$ be two distinct elements of $P$.
Suppose that $(a,b),(a',b')\in I$ are two pairs that are connected to $\set{c,d}$ in opposite ways.
By definition $a\leq c\leq b'$ and $a'\leq d\leq b$, or $a\leq d\leq b'$ and $a'\leq c\leq b$ in $P$.
Thus in both cases $(a,b),(a',b')$ is a $2$-cycle, which must be of type $2$ as $(a,b)$ and $(a',b')$ are non-adjacent in $J_{\Sigma,1}$.
In particular, $(a,b)$ and $(a',b')$ are adjacent in $J_{\Sigma,2}$.
\end{obs}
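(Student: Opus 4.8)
The plan is to unwind the definition of ``connected to $\set{c,d}$'' to produce an alternating cycle of length $2$, and then to pin down its type using Claim~\ref{claim:equal-u} together with the way $J_\Sigma$ was split into $J_{\Sigma,1}$ and $J_{\Sigma,2}$.

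First I would make the symmetry reduction explicit. Each of $(a,b)$ and $(a',b')$ is connected to $\set{c,d}$ in exactly one of the two possible ways, so ``connected in opposite ways'' means that, after possibly exchanging the names of $(a,b)$ and $(a',b')$ and the names of $c$ and $d$, we have $a\leq c$ and $d\leq b$ in $P$, while $a'\leq d$ and $c\leq b'$ in $P$. Chaining these comparabilities yields $a\leq c\leq b'$ and $a'\leq d\leq b$ in $P$, which is precisely the condition that $\set{(a,b),(a',b')}$ is an alternating cycle of length $2$; since every alternating cycle of length $2$ is strict, this is a $2$-cycle in $\MM(P,\nu_{13},\Sigma)$. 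Hence $(a,b)$ and $(a',b')$ are adjacent in $J_\Sigma$.

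Next I would determine the type of this $2$-cycle. By Claim~\ref{claim:equal-u} we have $u_{ab}\neq u_{a'b'}$, so by definition the $2$-cycle is either of type $1$ or of type $2$, and correspondingly the edge between $(a,b)$ and $(a',b')$ lies in $J_{\Sigma,1}$ or in $J_{\Sigma,2}$. Since $I$ is an independent set in $J_{\Sigma,1}$ and both $(a,b)$ and $(a',b')$ belong to $I$, these two pairs are non-adjacent in $J_{\Sigma,1}$, so the $2$-cycle is not of type $1$; therefore it is of type $2$, i.e.\ $(a,b)$ and $(a',b')$ are adjacent in $J_{\Sigma,2}$, as claimed. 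There is no real obstacle here: the whole argument is a short bookkeeping deduction from the definitions, and the only point requiring a little care is the initial relabelling in order to match $c$ and $d$ with the two comparability chains.
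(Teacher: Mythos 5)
Your proposal is correct and follows essentially the same reasoning as the paper's inline argument for this observation: unwind the definition of ``connected in opposite ways'' to exhibit the two comparability chains, conclude that $(a,b),(a',b')$ is a $2$-cycle, and then use that $I$ is independent in $J_{\Sigma,1}$ to rule out type $1$, forcing type $2$. Your added explicit appeal to Claim~\ref{claim:equal-u} (to see that every $2$-cycle in $\MM(P,\nu_{13},\Sigma)$ is classified as type $1$ or type $2$) is a minor elaboration of a step the paper leaves implicit.
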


\begin{obs}\label{obs:unidirect2}
Let $\Sigma \in {\bf \Sigma}(\nu_{13})$.
Let $I$ be an independent set in $J_{\Sigma, 1}$ and let $c,d$ be two distinct elements of $P$.
Suppose that $(a,b),(a',b') \in I$ are adjacent in $J_{\Sigma, 2}$ and that the two relations
$a\leq b'$ and $a'\leq b$ both hit $\set{c,d}$. Then $(a,b),(a',b')$ are connected to
$\set{c,d}$ in opposite ways.
\end{obs}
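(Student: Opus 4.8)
The plan is to unwind the two definitions involved---``hits'' and ``is connected to''---and to observe that the only real constraint in play is that $(a,b)$ is an incomparable pair, since $(a,b)\in\MM(P)\subset\Inc(P)$. In fact the hypotheses that $I$ is independent in $J_{\Sigma,1}$ and that $(a,b),(a',b')$ are adjacent in $J_{\Sigma,2}$ will play no role in the argument itself; they only serve to describe the context in which this observation is later applied, and in particular to guarantee that the relations $a\leq b'$ and $a'\leq b$ do hold in $P$ (as $(a,b),(a',b')$ then form an alternating cycle of length $2$), so that it is meaningful to ask whether they hit $\set{c,d}$.

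Concretely, I would first use the hypothesis that $a\leq b'$ hits $\set{c,d}$ to fix an element $z\in\set{c,d}$ with $a\leq z\leq b'$ in $P$, and the hypothesis that $a'\leq b$ hits $\set{c,d}$ to fix an element $z'\in\set{c,d}$ with $a'\leq z'\leq b$ in $P$. The crux is then to show that $z\neq z'$: if we had $z=z'$, then $a\leq z=z'\leq b$ would give $a\leq b$ in $P$, contradicting $a\inc b$. Hence $z$ and $z'$ are the two distinct elements of $\set{c,d}$, i.e.\ $\set{z,z'}=\set{c,d}$. If $z=c$ and $z'=d$, then $a\leq c\leq b'$ and $a'\leq d\leq b$ in $P$; if instead $z=d$ and $z'=c$, then $a\leq d\leq b'$ and $a'\leq c\leq b$ in $P$. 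By the definition of ``is connected to'' (compare the unwinding performed in Observation~\ref{obs:unidirect1}), each of these two cases exhibits $(a,b)$ and $(a',b')$ as being connected to $\set{c,d}$ in opposite ways, which is exactly the desired conclusion.

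There is essentially no obstacle to overcome here: the entire content of the observation is the remark that, were the two relations $a\leq b'$ and $a'\leq b$ to hit the same element of $\set{c,d}$, that element would witness $a\leq b$ in $P$, which is forbidden. All the remaining work is bookkeeping about which of $c$, $d$ plays which role, and a reference back to the case analysis already spelled out in Observation~\ref{obs:unidirect1}.
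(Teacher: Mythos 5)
The proposal is correct. The paper states Observation~\ref{obs:unidirect2} without writing out a proof (it is in the remark-style \texttt{obs} environment), and your argument supplies exactly the intended justification: if the two relations $a\leq b'$ and $a'\leq b$ hit the same element $z\in\set{c,d}$, then $a\leq z\leq b$, contradicting $a\inc b$; so they hit different elements, which by the definition of ``connected to'' means the two pairs are connected to $\set{c,d}$ in opposite ways. Your remark about the role of the $J_{\Sigma,1}$/$J_{\Sigma,2}$ hypotheses (they are contextual, serving only to guarantee that the relations $a\leq b'$ and $a'\leq b$ indeed hold, via the length-$2$ alternating cycle) is also accurate.
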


\begin{claim}\label{claim:4-pairs}
Let $\Sigma \in {\bf \Sigma}(\nu_{13})$.
Let $I$ be an independent set in $J_{\Sigma, 1}$ and let $c,d$ be two distinct elements of $P$.
Suppose that $C$ is an induced odd cycle in the subgraph of $J_{\Sigma, 2}$ induced by $I$.
If four of the pairs composing $C$ are connected to $\set{c,d}$ then they all are
connected to $\set{c,d}$ the same way.
\end{claim}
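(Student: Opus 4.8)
The plan is to argue by contradiction. Suppose the four pairs of $C$ that are connected to $\set{c,d}$ are not all connected in the same way. Since each pair connected to $\set{c,d}$ is connected in exactly one of the two possible directions, this partitions the four pairs into two nonempty classes according to the direction of the connection; up to symmetry the class sizes are $1$ and $3$, or $2$ and $2$.

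The one tool I would invoke is Observation~\ref{obs:unidirect1}: two pairs of $I$ connected to $\set{c,d}$ in opposite directions form a $2$-cycle of type $2$, hence are adjacent in $J_{\Sigma,2}$. All vertices of $C$ lie in $I$, and $C$ is an \emph{induced} cycle of the subgraph of $J_{\Sigma,2}$ induced by $I$; therefore two pairs of $C$ that are adjacent in $J_{\Sigma,2}$ are necessarily consecutive on $C$. In particular, any two of our four pairs that are connected to $\set{c,d}$ in opposite directions must be neighbours on the cycle $C$.

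A short structural argument then finishes the proof. In the $1+3$ case, the unique pair in the small class would have to be a neighbour on $C$ of each of the three distinct pairs in the large class, which is impossible since a vertex of a cycle has only two neighbours. In the $2+2$ case, each of the two pairs in one class is a neighbour on $C$ of each of the two pairs in the other class; chasing neighbourhoods along $C$, and using that every vertex of $C$ has exactly two neighbours, forces $C$ to be precisely the $4$-cycle through these four pairs, contradicting the fact that $C$ has odd length. In both cases we reach a contradiction, so the four pairs are all connected to $\set{c,d}$ the same way.

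I do not expect a genuine obstacle here; the only points needing a little care are checking that Observation~\ref{obs:unidirect1} is applicable (the two pairs lie in $I$ and are connected in opposite directions, which is exactly the situation arising in each case) and that ``$C$ induced in $J_{\Sigma,2}[I]$'' really does upgrade ``adjacent in $J_{\Sigma,2}$'' to ``consecutive on $C$''.
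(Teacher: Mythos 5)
Your argument is correct and follows essentially the same route as the paper's: split into the $1{+}3$ and $2{+}2$ cases, apply Observation~\ref{obs:unidirect1} to turn oppositely connected pairs into $J_{\Sigma,2}$-edges, and then use that $C$ is induced (so such edges must lie on $C$) to get a degree contradiction in the first case and a forbidden $4$-cycle in the second. The only difference is presentational — you spell out the ``adjacent in $J_{\Sigma,2}$ implies consecutive on $C$'' step that the paper leaves implicit.
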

\begin{proof}
Suppose that $(a_1,b_1), \dots, (a_4,b_4)$ are four pairs from $C$ that are connected to $\set{c,d}$.
(Note that these pairs are not necessarily consecutive in $C$.)
If three of these pairs are connected to $\set{c,d}$ the same way and the fourth the other way,
then by Observation \ref{obs:unidirect1} the fourth pair is adjacent to the first three in $J_{\Sigma, 2}$,
which is not possible since the odd cycle $C$ is induced.

It follows that if $(a_1,b_1), \dots, (a_4,b_4)$ are not  connected to $\set{c,d}$ the same way, then
without loss of generality  $(a_1,b_1), (a_2,b_2)$ are connected  to $\set{c,d}$ one way
and $(a_3,b_3), (a_4,b_4)$ the other.
We then deduce from Observation~\ref{obs:unidirect1} that $(a_1,b_1), (a_3,b_3),(a_2,b_2), (a_4,b_4)$ is a
cycle of length $4$ in $J_{\Sigma, 2}$, a contradiction to the properties of $C$.
Therefore, all four pairs must be connected to $\set{c,d}$ the same way.
\end{proof}

\begin{figure}
 \centering
 \includegraphics[scale=1.0]{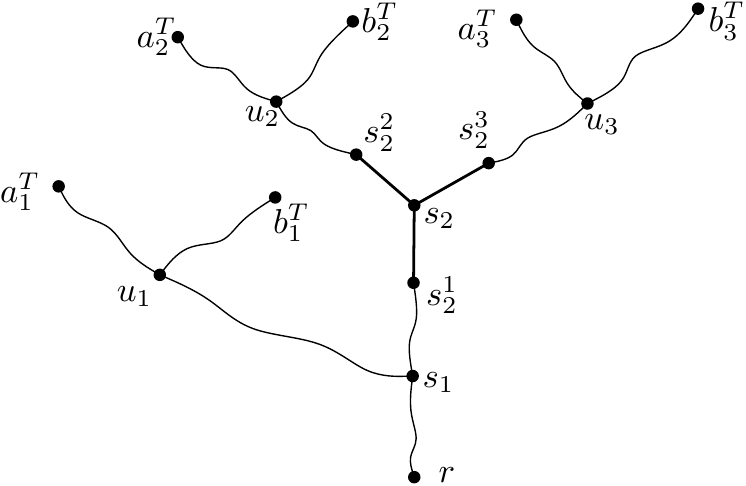}
 \caption{Illustration for Claim~\ref{claim:bipartite-J} with $j=2$}
 \label{fig:2-cycles-inc-u}
\end{figure}

\begin{claim}\label{claim:bipartite-J}
Let $\Sigma \in {\bf \Sigma}(\nu_{13})$ and let $I$ be an independent set in $J_{\Sigma, 1}$.
Then the subgraph of $J_{\Sigma, 2}$ induced by $I$ is bipartite.
\end{claim}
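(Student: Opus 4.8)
\medskip
\noindent\emph{Proof plan.}
The plan is to argue by contradiction, in the style of Claims~\ref{claim:J1-triangles} and~\ref{claim:J1-is-bipartite}. Assume that $J:=J_{\Sigma,2}[I]$, the subgraph of $J_{\Sigma,2}$ induced by $I$, is not bipartite, and let $C=\{(a_i,b_i)\}_{i=1}^{k}$ be a \emph{shortest} odd cycle in $J$, so that $C$ is induced. Since $I$ is independent in $J_{\Sigma,1}$, every $2$-cycle on two vertices of $I$ is automatically of type~$2$ and hence an edge of $J$; so it suffices to produce a $2$-cycle on two \emph{non-consecutive} vertices of $C$, which is then a chord of $C$. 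Write $u_i:=u_{a_ib_i}$, $p_i:=\p(u_i)$, $w_i:=w_{a_ib_i}$, $x_i:=x_{a_ib_i}$, $y_i:=y_{a_ib_i}$, with indices read cyclically modulo $k$. Each edge of $C$ is a type-$2$ $2$-cycle, so consecutive $u_i$'s are incomparable in $T$, and they are distinct by Claim~\ref{claim:equal-u}; set $m_i:=u_i\land u_{i+1}$.

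The first ingredient is the shape of a type-$2$ $2$-cycle. If $(a,b),(a',b')$ is a type-$2$ $2$-cycle with $u:=u_{ab}$, $u':=u_{a'b'}$ and $m:=u\land u'$, then the $a^T$--$b'^T$ path in $T$ descends from $a^T$ to $u$, continues through $p_{ab}$ down to $m$, and climbs through $p_{a'b'}$ and $u'$ up to $b'^T$; in particular it uses the edge $u\,p_{ab}$ and then the edge $p_{a'b'}u'$. By Observation~\ref{obs:hitting-vertex-or-edge}, $a\leq b'$ hits $B(u)\cap B(p_{ab})=\{x_{ab},y_{ab}\}$ followed by $B(p_{a'b'})\cap B(u')=\{x_{a'b'},y_{a'b'}\}$; since $a\not\leq y_{ab}$ and $x_{a'b'}\not\leq b'$ in $P$, this yields
\[
a\leq x_{ab}\leq y_{a'b'}\leq b',\qquad\text{and, by symmetry,}\qquad a'\leq x_{a'b'}\leq y_{ab}\leq b
\]
in $P$. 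I will also use that $a_0\leq y_j\leq b_j$ in $P$ for every $j$: the relation $a_0\leq b_j$ hits $B(u_j)\cap B(p_j)=\{x_j,y_j\}$ and cannot hit $x_j$ as $x_j\not\leq b_j$, exactly the computation from Section~\ref{sec:node-11}.

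The second ingredient is a good ``meeting node''. Since $m_{i-1}$ and $m_i$ are both $\leq u_i$ in $T$, consecutive terms of $m_1,\dots,m_k$ are comparable in $T$, so Observation~\ref{obs:comp-nodes} provides an index $j$ with $m_j\leq m_i$ in $T$ for all $i$; after relabelling we take $j=1$, so $m_1\leq u_i$ in $T$ for every $i$. Let $c_1,c_2$ be the children of $m_1$ on the $m_1$--$u_1$ and $m_1$--$u_2$ paths (so $c_1\neq c_2$, as $m_1=u_1\land u_2$); then each $u_i$ lies in some subtree of $m_1$ hanging off a child, the side changing between consecutive indices precisely at the edges $\{(a_i,b_i),(a_{i+1},b_{i+1})\}$ with $m_i=m_1$ (there is at least one, namely $i=1$). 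The heart of the argument is the familiar ``several relations hitting a size-$2$ bag'' trick applied at $m_1$: the two-element set $B(m_1)\cap B(c_1)$ is hit by $a_1\leq b_2$, by $a_2\leq b_1$, by $a_0\leq b_1$, and more generally by $a_i\leq b_1$ and $a_0\leq b_i$ for every $i$ whose node $u_i$ is \emph{not} on the $c_1$-side of $m_1$ (the relevant paths all run through $m_1$ and the edge $m_1c_1$). From these incidences, together with the displayed inequalities above and the fact that the answers to \eqref{eq:Q1}--\eqref{eq:Q3} are constant on $\Sigma$, the plan is to extract a two-element set $\{c,d\}$ that is comparable in $P$ and to which at least four vertices of $C$ are \emph{connected}, one of them in the direction that closes the loop. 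Claim~\ref{claim:4-pairs} then forces all four to be connected to $\{c,d\}$ the same way, whence $a_i\leq c\leq d\leq b_i$ (after possibly exchanging $c$ and $d$) in $P$ for one of these $i$ — contradicting $a_i\inc b_i$. Observations~\ref{obs:unidirect1} and~\ref{obs:unidirect2} serve as the bookkeeping tools for which way each pair connects. The triangle case $k=3$, where there is no non-consecutive chord, is disposed of directly in the same spirit: the three pairwise type-$2$ $2$-cycles, the relations $a_0\leq y_i\leq b_i$, and the constant answers to \eqref{eq:Q1}--\eqref{eq:Q3} force a comparability $a_i\leq b_i$ at the minimal meet. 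Figure~\ref{fig:2-cycles-inc-u} illustrates the $k=3$ configuration.

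The step I expect to be the main obstacle is the case analysis in the third paragraph. Unlike the type-$1$ setting of Claim~\ref{claim:J1-is-bipartite}, where the nodes $u_i$ of a $2$-cycle are linearly ordered in $T$ and the branching is essentially one-dimensional, here the $u_i$ fan out on several sides of the meeting node $m_1$. One must carefully track, using the planar left/right order on $T$ (Observation~\ref{obs:left-right}), on which side of $m_1$ each $u_i$ sits and how often this side changes around $C$; identify the correct two-element set $\{c,d\}$ and prove it is comparable in $P$; and check that the four pairs it is connected to are not all among the neighbours of a single vertex of $C$, so that Claim~\ref{claim:4-pairs} applies and the conclusion is a genuine chord rather than an edge of $C$.
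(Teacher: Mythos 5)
Your proposal is a plan, not a proof, and you say so yourself: the case analysis in the third paragraph is precisely the content of the claim, and it is left unexecuted. Worse, as stated that paragraph contains concrete errors. You assert that $B(m_1)\cap B(c_1)$ is hit ``by $a_i\leq b_1$ and $a_0\leq b_i$ for every $i$ whose node $u_i$ is not on the $c_1$-side of $m_1$.'' But $a_i\leq b_1$ is not a relation of $P$ for generic $i$: the only comparabilities around the cycle $C$ that you know are $a_i\leq b_{i+1}$ and $a_{i+1}\leq b_i$ for consecutive indices (each edge of $C$ is a $2$-cycle), plus $a_0\leq b_i$. Indeed if $a_i\leq b_1$ and $a_1\leq b_i$ both held, $(a_1,b_1),(a_i,b_i)$ would be a $2$-cycle and hence a chord of $C$; and you cannot rule out that neither holds. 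Similarly, $a_0\leq b_i$ does \emph{not} hit $B(m_1)\cap B(c_1)$ when $u_i$ (hence $b_i^T$, since $u_i\leq b_i^T$) is off the $c_1$-side: the $r$--$b_i^T$ path in $T$ leaves $m_1$ through a different child. Finally, the intended endgame (``extract a two-element set $\{c,d\}$ comparable in $P$ \ldots whence $a_i\leq c\leq d\leq b_i$'') is not the way Claim~\ref{claim:4-pairs} produces a contradiction; there is no reason the extracted bag-intersection should be a chain of $P$.

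There is also a structural divergence from the paper that you should be aware of, because it matters. You pick $m_1$ to be the \emph{minimum} of the meets $m_i=u_i\wedge u_{i+1}$ in $T$, so that $m_1\leq u_i$ for all $i$. The paper instead picks the node $s_j$ \emph{maximal} among these meets. Maximality is what makes the argument go: it forces the $u_{s-1}$--$u_s$ and $u_s$--$u_{s+1}$ paths to pass through the edge $s_js_j^s$ whenever $s_j<s_j^s$ (otherwise $s_{s-1}$ or $s_s$ would lie strictly above $s_j$), and this is how one gets pairs of consecutive $2$-cycles whose two pairs are connected to a \emph{fixed} two-element bag-intersection in opposite ways; the paper then shows those bag-intersections $B(s_j)\cap B(s_j^i)$ are pairwise distinct for the at most three relevant directions, and lets Claim~\ref{claim:4-pairs} furnish the contradiction for $k\geq 5$, handling $k=3$ by a direct three-relations-into-$B(s_1)$ argument. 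With your minimal meet, there is no such control: the $a_i^T$--$b_{i+1}^T$ paths stay weakly above $m_1$ but can avoid a given child edge of $m_1$ entirely, and nothing caps the number of subtrees of $m_1$ that the $u_i$'s fan into. The clean ``type-$2$ $2$-cycle hits $\{x,y\}$ on both sides'' computation you open with is correct and pleasant, but it is not used by the paper and does not by itself give you the $\geq 4$ pairs Claim~\ref{claim:4-pairs} needs.

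In short: the high-level shape (shortest induced odd cycle, connectedness to a two-element set, Claim~\ref{claim:4-pairs}) matches the paper, but the concrete combinatorics are absent, the minimal-meet choice points in a direction that does not obviously close, and the partial justifications given are incorrect as written. To repair this along the paper's lines, replace the minimal meet by a maximal one, identify the three candidate child-edges of that node, show their bag-intersections are pairwise distinct, and then run the four-pairs argument on the two pairs of consecutive $2$-cycles that are forced through a common edge; treat $k=3$ separately.
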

\begin{proof}
Arguing by contradiction, suppose there is an odd cycle in the subgraph of $J_{\Sigma, 2}$ induced by $I$, and
let $C$ be a shortest one.
Enumerate the  vertices of $C$ as $(a_1,b_1), (a_2,b_2), \dots, (a_k,b_k)$ in order.
Let $u_i := u_{a_ib_i}$  and
$s_i :=u_i\wedge u_{i+1}$ for each $i \in \set{1, \dots, k}$ (cyclically).
Recall that by the definition of $J_{\Sigma,2}$ the pairs $(a_i,b_i)$ and $(a_{i+1},b_{i+1})$ form a $2$-cycle of type $2$ for each $i$, and thus $u_i\inc u_{i+1}$ in $T$, implying that $s_i < \set{u_i, u_{i+1}}$ in $T$.

Let us start by pointing out the following consequence of Observation~\ref{obs:unidirect2}:
If $i \in \set{1, \dots, k}$  and $\set{c,d}$ are
such that the $u_i$--$u_{i+1}$ path in $T$ includes an edge $e$ of $T$ for which the
intersection of the two bags of its endpoints is $\set{c, d}$, then
$(a_i, b_i)$ and $(a_{i+1}, b_{i+1})$ are connected to $\set{c,d}$ in opposite ways.
This will be used a number of times in the proof.

Let $j \in \set{1, \dots, k}$ be such that $s_j$ is maximal in $T$ among $s_1, \dots, s_k$, that is,
such that $s_j\not< s_i$ in $T$ for  each $i \in \set{1, \dots, k}$.
Furthermore, if $s_j\neq u_i$ then we let $s_j^i$ be the neighbor of $s_j$
on the $s_j$--$u_i$ path in $T$, for each $i \in \set{1, \dots, k}$.
(Let us remark that we only make use of the notion $s_j^i$ when $s_j\neq u_i$).
Thus in particular $s_j < \set{s_j^{j}, s_j^{j+1}}$ in $T$.
See Figure~\ref{fig:2-cycles-inc-u} for an illustration of this definition.

We claim that
$$B(s_j) \cap B(s_j^s) \neq B(s_j) \cap B(s_j^{t})$$
for any $s, t \in \set{1, \dots, k}$ with $s<t$ such that  $s_j < \set{s_j^{s}, s_j^{t}}$ in $T$.
Suppose to the contrary that $B(s_j) \cap B(s_j^s) = B(s_j) \cap B(s_j^{t})=:\set{c,d}$.
It follows from our choice of index $j$ that both the $u_{s-1}$--$u_s$ path
and the $u_{s}$--$u_{s+1}$ path in $T$ include the edge
$s_js_j^s$ (otherwise $s_{j} < s_{s}$ in $T$),  and similarly that
the $u_{t}$--$u_{t+1}$ path in $T$ includes the edge $s_js_j^{t}$ (otherwise $s_{j} < s_{t}$ in $T$).
As a consequence we have that edge $s_js_j^s$ is included in the $a_{s-1}^T$--$b_s^T$ path (as it passes through $u_{s-1}$ and $u_s$), the $a_s^T$--$b_{s-1}^T$ path, the $a_s^T$--$b_{s+1}^T$ path and the $a_{s+1}^T$--$b_s^T$ path, and similarly that edge $s_js_j^t$ is included in the $a_t^T$--$b_{t+1}^T$ path and the $a_{t+1}^T$--$b_t^T$ path.
Therefore, the pairs $(a_{s-1},b_{s-1})$, $(a_{s},b_{s})$, $(a_{s+1},b_{s+1})$,
$(a_{t},b_{t})$, and $(a_{t+1},b_{t+1})$ are all connected
to $\set{c,d}$.  Furthermore, $(a_{s-1},b_{s-1})$ and  $(a_{s},b_{s})$
are connected in opposite ways, and the same holds for
$(a_{s},b_{s})$ and $(a_{s+1},b_{s+1})$, as well as for $(a_{t},b_{t})$ and $(a_{t+1},b_{t+1})$.
There cannot be four distinct pairs among these five, because otherwise this would contradict Claim~\ref{claim:4-pairs}.
Hence the only possibility is that $k=3$ and $s=t-1$, and thus $s-1$ and $t+1$ are the same indices cyclically.
But then recall that the $u_1$--$u_2$ path, the $u_2$--$u_3$ path and the $u_3$--$u_1$ path  all have to use edge $s_js_j^s$ or $s_js_j^t$ in $T$.
As a consequence, the three relations $a_{3}\leq b_1$, $a_1\leq b_{2}$ and $a_{2}\leq b_{3}$
all hit $\set{c,d}$. Hence two of them hit the same element, which implies $a_i\leq b_i$ for some $i\in \set{1,2,3}$.
With this contradiction, we have proved $B(s_j) \cap B(s_j^s) \neq B(s_j) \cap B(s_j^{t})$.

Since $B(s_j)\cap B(s_j^i)$ is a $2$-element subset of $B(s_j)$ for each $i$ such that $s_j\neq u_i$, it directly follows
that there are at most three indices $i$ such that $s_j < s_j^{i}$ in $T$ (recall that this inequality holds for $i=j$ and $i=j+1$).

This allows us to quickly dispense with the $k \geq 5$ case now:
If $u_{j-1}>s_j$ in $T$ (and hence $s_j^{j-1}>s_j$ in $T$), then let $(s,t):=(j-1,j+1)$.
Else, if $u_{j+2}>s_j$ in $T$, then let $(s,t):=(j,j+2)$.
If neither of the two cases is true, we let $(s,t):=(j,j+1)$.
We claim that in all three cases we obtain that
\begin{itemize}
 \item $s_j < \set{s_j^{s}, s_j^{t}}$ in $T$,
 \item the two indices $s-1$ and $t+1$ are not the same (cyclically), and
 \item $s_j^{s-1} = s_j^{t+1} = \p(s_j)$.
\end{itemize}
The first two items are obvious, and the third one follows from our last observation and because $s_j\neq u_{s-1}$ and $s_j\neq u_{t+1}$ (recall that $u_{s-1}\inc u_s$ and $u_t\inc u_{t+1}$ in $T$).

Then, the $u_{s-1}$--$u_s$ path and the $u_{t}$--$u_{t+1}$ path in $T$ both include the edge $\p(s_j)s_j$.
It follows that  $(a_{s-1},b_{s-1})$ and  $(a_{s},b_{s})$ are connected to $B(s_j) \cap B(\p(s_j))$
in opposite ways, and that the same holds for $(a_{t},b_{t})$ and $(a_{t+1},b_{t+1})$. Since these four pairs
are distinct, this contradicts Claim~\ref{claim:4-pairs} and concludes the case $k\geq 5$.

It remains to consider the $k=3$ case. Reordering the pairs of $C$ if necessary we may assume $j=1$ and
$B(s_1) = \set{c,d,e}$ with $B(s_1) \cap B(s_1^1)=\set{c,d}$ and $B(s_1) \cap B(s_1^2)=\set{d,e}$.
The two relations $a_1\leq b_{2}$ and $a_{2}\leq b_1$ both hit $\set{c,d}$ and $\set{d,e}$, and clearly
they cannot hit the same element. Thus, one of the relations hits $d$, and the other $c$ and $e$.
Exploiting symmetry again, we may assume without loss of generality that $a_{2}\leq b_1$ hits $d$.
(Indeed, if not then this can be achieved by reversing the ordering of the pairs of $C$.)
Thus we have $$a_{2}\leq d\leq b_1$$ in $P$, which then implies $$a_1\leq c\leq e\leq b_{2}$$
in $P$ by Observation~\ref{obs:hitting-vertex-or-edge}.
Now, the two relations
$a_{2}\leq b_{3}$ and $a_{3}\leq b_1$ both hit $B(s_1) = \set{c,d,e}$.
(Here we use that $s_{1} \not< \set{s_2,s_3}$ in $T$.)
Neither hit $c$ or $e$ since this would contradict $a_1\leq c\leq e\leq b_{2}$ in $P$.
Hence both relations hit $d$, which implies $a_{3}\leq d \leq b_{3}$ in $P$,  a contradiction.
\end{proof}

This concludes the proof of Lemma~\ref{claim:color-J}, asserting that $J_{\Sigma}$ is $4$-colorable
for  each $\Sigma \in {\bf \Sigma}(\nu_{13})$. Now, for each $\Sigma \in {\bf \Sigma}(\nu_{13})$ let
$\psi_{13, \Sigma}$ be such a coloring. Then we define $\q{sign:coloring-2-cycles}$ as follows:
\begin{center}
\medskip
\fbox{\begin{varwidth}{0.9\textwidth}
For each $\Sigma\in\mathbf{\Sigma}(\nu_{13})$ and pair $(a,b)\in\MM(P,\nu_{13},\Sigma)$, we let
$$\q{sign:coloring-2-cycles}(a,b) :=  \psi_{13, \Sigma}(a,b).$$
\end{varwidth}}
\medskip
\end{center}

\subsection{Node \texorpdfstring{$\nu_{14}$ and its function $\alpha_{14}$: Dealing with strict alternating cycles of length at least $3$}.}
\label{sec:node-14}

Recall that compared to $\MM(P,\nu_{14},\Sigma)$, pairs in $\MM(P, \nu_{14}, \Sigma)$ have the additional property that they do not form a $2$-cycle with another pair of $\MM(P, \nu_{14}, \Sigma)$, thanks to function $\q{sign:coloring-2-cycles}$.
Therefore, strict alternating cycles in $\MM(P, \nu_{14}, \Sigma)$ ($\Sigma \in {\bf \Sigma}(\nu_{14})$) have length at least $3$.
We will now list (and prove) a number of properties satisfied by these alternating cycles.

First we prove a claim that bears some similarity with Claim~\ref{claim:u-ordering}.

\begin{claim}
\label{claim:u_minimal}
Let  $\Sigma \in {\bf \Sigma}(\nu_{14})$ and
suppose that $\set{(a_i,b_i)}_{i=1}^k$ is a strict alternating cycle in $\MM(P, \nu_{14}, \Sigma)$.
Let $u_i$ denote $u_{a_ib_i}$ for each $i \in \{1, 2, \dots, k\}$.
Then there is an index $j \in \{1,2, \dots, k\}$ such that $u_{j} \leq u_{i}$ in $T$ for each $i \in \{1,2, \dots, k\}$.
\end{claim}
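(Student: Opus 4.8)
The plan is to follow the two-step pattern of the proof of Claim~\ref{claim:u-ordering}, but with a finer auxiliary sequence of tree nodes, since for pairs in $\MM(P,\nu_{14},\Sigma)$ consecutive nodes $u_i$ need no longer be comparable (now $\alpha_4(a_i,b_i)=\yes$, so the relation $a_i\le b_{i+1}$ may climb strictly above $u_i$ in $T$). Write $p_i,x_i,y_i$ for $p_{a_ib_i},x_{a_ib_i},y_{a_ib_i}$. Since $a_i^T$ and $b_{i+1}^T$ are distinct leaves of $T$, they are incomparable in $T$, so $t_i:=a_i^T\wedge b_{i+1}^T$ is well defined with $t_i<a_i^T$ and $t_i<b_{i+1}^T$. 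The first thing I would record is that $u_i$ always lies on the $a_i^T$--$b_{i+1}^T$ path in $T$: indeed $u_i\le a_i^T$, so $a_i^T$ lies in the subtree of $T$ rooted at $u_i$, and the $a_i^T$--$b_{i+1}^T$ path either leaves that subtree (necessarily through $u_i$) or stays inside it (and then passes through $u_i$ anyway, as $u_i\le a_i^T\wedge b_{i+1}^T$ in that case). A consequence is $t_i\le u_i$: if $t_i>u_i$ then $u_i$ lies strictly above the meet $t_i$, hence off the $a_i^T$--$b_{i+1}^T$ path.

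Now consider the cyclic sequence $u_1,t_1,u_2,t_2,\dots,u_k,t_k$. Any two consecutive terms are comparable in $T$: $u_i$ and $t_i$ are both $\le a_i^T$, while $t_i$ and $u_{i+1}$ are both $\le b_{i+1}^T$ (using $t_i\le b_{i+1}^T$ and $u_{i+1}\le b_{i+1}^T$). By Observation~\ref{obs:comp-nodes} this sequence has a term $\mu$ with $\mu\le$ every term of the sequence; in particular $\mu\le u_i$ for all $i$. If $\mu=u_j$ for some $j$ we are done, and if $\mu=t_j$ with $t_j\in\{u_j,u_{j+1}\}$ then that $u$-node is the desired minimum. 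So the case to rule out is $\mu=t_j$ with $t_j<u_j$ and $t_j<u_{j+1}$. In that case $u_j$ lies in the subtree of $t_j$ directed towards $a_j^T$ and $u_{j+1}$ in the subtree directed towards $b_{j+1}^T$; these subtrees are distinct (as $t_j=a_j^T\wedge b_{j+1}^T$), so $u_j\inc u_{j+1}$, and the $a_j^T$--$b_{j+1}^T$ path descends from $a_j^T$ through $u_j$ and $p_j$ down to $t_j$ and then ascends through $p_{j+1}$ and $u_{j+1}$ up to $b_{j+1}^T$. By Observation~\ref{obs:hitting-vertex-or-edge}, the relation $a_j\le b_{j+1}$ hits $B(u_j)\cap B(p_j)=\{x_j,y_j\}$ (by~\eqref{eq:xy-in-edge}), which forces $a_j\le x_j\le b_{j+1}$ because $a_j\not\le y_j$ by~\eqref{eq:y-ineq}; it also hits $B(p_{j+1})\cap B(u_{j+1})=\{x_{j+1},y_{j+1}\}$, which forces $a_j\le y_{j+1}\le b_{j+1}$ because $x_{j+1}\not\le b_{j+1}$ by~\eqref{eq:x-ineq}.

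The remaining, and in my view hardest, step is to turn this configuration into a contradiction. Here I would use that $t_j$ is the minimum of the whole sequence, so $t_j\le b_i^T$ for every $i$, whence $a_0\le b_i$ hits $B(t_j)$ for every $i$ (recall $a_0<b_i$ since $b_i\in\max(P)$). Together with the relations $a_i\le b_{i+1}$ that pass through $t_j$ --- there are at least two such, since the child of $t_j$ containing $u_i$ is not constant along the cycle (it differs between $i=j$ and $i=j+1$) --- a pigeonhole on the three-element bag $B(t_j)$, combined with the strictness of the alternating cycle, the incomparability $a_i\inc b_i$ of min--max pairs, and the $\nu_{11}$-labelling of the bags, should yield one of: a comparability $a_i\le b_{i'}$ with $i'\neq i+1$; a comparability $a_i\le b_i$; or a $2$-cycle within $\MM(P,\nu_{14},\Sigma)$. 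The first two contradict that $\{(a_i,b_i)\}_{i=1}^k$ is a strict alternating cycle of min--max pairs, and the last is impossible by the choice of $\alpha_{13}$. I expect the bookkeeping at $t_j$ and its two relevant children, and in particular a small case split according to how $B(t_j)$ intersects those children's bags, to be where most of the effort lies.
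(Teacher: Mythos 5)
The approach here is genuinely different from the paper's. The paper first proves that under the hypothesis the nodes $u_1,\dots,u_k$ are pairwise incomparable in $T$, then takes $s_i := u_i\wedge u_{i+1}$ maximal and runs a clean two-case analysis in which three ``crossing'' relations automatically pass through the chosen node, so a direct pigeonhole on a $3$-element bag and a $2$-element edge-bag suffices. You instead interleave $u_1,t_1,u_2,t_2,\dots,u_k,t_k$ with $t_i := a_i^T\wedge b_{i+1}^T$, take the minimum $\mu$ of this cyclic sequence, and reduce to $\mu=t_j$ with $t_j<u_j,u_{j+1}$. Up to that point the argument is sound (and nicely compresses the paper's first step). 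Two remarks on the early part: the claim that $u_i$ always lies on the $a_i^T$--$b_{i+1}^T$ path, and hence $t_i\le u_i$ for all $i$, is not justified by the argument you give (if the path stays inside the subtree rooted at $u_i$ you would have $u_i<t_i$, and then $u_i$ is \emph{not} on the path); fortunately this sweeping claim is never used --- all you need is $\mu\le u_i$, which the minimality of $\mu$ supplies.

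The genuine gap is the last step, which you openly leave as a sketch. The difficulty is quantitative: the relations $a_i\le b_{i+1}$ that pass through $t_j$ are exactly those $i$ with $t_i=t_j$, which corresponds to transitions of the map $i\mapsto$ (child of $t_j$ below $u_i$). You correctly observe there are at least two such $i$, but there can be exactly two (a cyclic sequence of children with only two runs), and then the bare pigeonhole ``three relations into a $3$-element bag'' is not available. In that case the element of $B(t_j)$ not witnessing either crossing relation might be the element hit twice by the relations $a_0\le b_i$ in $B(\p(t_j))\cap B(t_j)$, and no immediate contradiction follows. The argument does go through, but only after determining $B(t_j)\cap B(c)$ for the two relevant children $c$ of $t_j$ (using Observation~\ref{obs:hitting-vertex-or-edge} to chain through both edges $c_at_j$ and $t_jc_b$ and to show both intersections equal the two ``used'' elements of $B(t_j)$), and then pushing the relation $q_3\le b_i$ through a child of $t_j$ to land back in one of those intersections and force $a_j\le b_i$ or $a_{j'}\le b_i$ with a forbidden index. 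That case analysis is comparable in length to the paper's two cases for maximal $s_i$, and it is exactly what your proposal postpones; as written, the proof is incomplete at its crucial step.
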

\begin{proof}
We denote $w_{a_ib_i},p_{a_ib_i}, x_{a_ib_i}, y_{a_ib_i}, z_{a_ib_i}$ by $w_i, p_i, x_i, y_i, z_i$ respectively, for each $i\in \{1,2, \dots, k\}$.
We may assume $\q{sign:left-or-right}(a_i, b_i)=\textrm{left}$.

Consider the nodes $u_1,\ldots,u_k$
and let $j \in \{1,2, \dots, k\}$ be such that $u_j$ is minimal in $T$ among these.
We will show that $u_{j} \leq u_{i}$ in $T$ for each $i \in \{1,2, \dots, k\}$.
This can equivalently be rephrased as follows:  Every element $u_i$ which is minimal in $T$ among $u_1,\ldots,u_k$ satisfies
$u_i=u_j$ (note that we could possibly have $u_i = u_j$ for $i \neq j$).
Arguing by contradiction, let us assume that there is an element minimal in $T$ among $u_1,\ldots,u_k$ which is distinct from $u_j$.

We start by showing that under this assumption $u_1,\ldots,u_k$ are all pairwise incomparable in $T$ (and thus are in particular all distinct).
Once this is established, we will then be able to derive the desired contradiction.

Of course, to prove that $u_1,\ldots,u_k$ are pairwise incomparable in $T$ it is enough to show that $u_i \inc u_j$ in $T$ for each $i \in \{1,2, \dots, k\}$ with $i\neq j$, since
$u_j$ was chosen as an arbitrary minimal element in $T$ among $u_1,\ldots,u_k$.
Assume not, that is, that there is an index $i \in \{1,2, \dots, k\}$ with $i\neq j$ such that $u_j\leq u_{i}$ in $T$.
We may choose $i$ in such a way that we additionally have $u_{i-1}\inc u_j$ or $u_{i+1}\inc u_j$ in $T$.
As the arguments for the two cases are analogous we consider only the case $u_{i-1}\inc u_j$ in $T$.

We have $a_{i-1}^T \not \geq u_j$ since $u_{i-1}\inc u_j$ in $T$, and we also
have $u_j\leq u_{i} < b_{i}^T$ in $T$. It follows that the path from ${a_{i-1}^T}$ to $b_{i}^T$ in $T$ goes through the edge $p_ju_j$.
Thus, the relation $a_{i-1}\leq b_{i}$ hits $B(p_j)\cap B(u_j)=\set{x_j,y_j}$.
But then $a_{i-1} \leq x_j \leq b_{i}$ or $a_{i-1} \leq y_j \leq b_{i}$ in $P$, which implies $a_{j} \leq x_j \leq b_{i}$ or $a_{i-1} \leq y_j \leq b_j$.
Since we have $i\neq j+1$ (as $u_{i-1}\inc u_j$ in $T$) and $j\neq i$,
this contradicts the assumption that our alternating cycle $(a_1,b_1), (a_2,b_2), \dots, (a_k,b_k)$ is strict.

We conclude that $u_1,\ldots,u_k$ are all pairwise incomparable in $T$, as claimed.

Let $s_i :=u_i\wedge u_{i+1}$ for each $i \in \{1,2, \dots, k\}$ (indices are taken cyclically, as always).
Note that the path from $a_i^T$ to $b_{i +1}^T$ in $T$ has to go through $s_i$.
Choose $i\in\set{1,\ldots,k}$ such that $s_{i}$ is maximal among $s_1,\ldots,s_k$ in $T$.
The nodes $s_{i-1}$ and $s_{i}$ are comparable in $T$, since $s_{i-1}\leq u_{i}$ and $s_{i}\leq u_{i}$ in $T$.
Thus we have $s_{i-1}\leq s_{i}$ in $T$. Similarly, $s_{i+1}\leq s_{i}$ in $T$.

Let us first look at the case $s_{i-1}=s_{i}$.
This implies $s_{i} \leq \set{u_{i-1},u_{i},u_{i+1}}$ in $T$.
Now the $a_{i-1}^T$--$b_{i}^T$ path, the $a_{i}^T$--$b_{i+1}^T$ path, and the $a_{i+1}^T$--$b_{i+2}^T$ path in $T$
all go through $s_{i}$ in $T$.
This means that the relations $a_{i-1}\leq b_{i}$, $a_{i}\leq b_{i+1}$ and $a_{i+1}\leq b_{i+2}$ all hit $B(s_{i})$.
Clearly, no two of them can hit the same element (recall that $k\geq3$ and that our alternating cycle is strict),
and hence each element of $B(s_{i})$ is hit by exactly one of these three relations.
On the other hand, the three paths from $r$ to $b_{i-1}^T,b_{i}^T$ and $b_{i+1}^T$ in $T$ all go through $\p(s_{i})$ and $s_{i}$,
implying that the relations $a_0 \leq b_{i-1}$, $a_0 \leq b_{i}$, and $a_0 \leq b_{i+1}$ all hit $B(\p(s_{i})) \cap B(s_{i})$.
In particular, some element in $B(s_{i})$ is hit by {\em at least two} of these three relations.
But with the observations made before, it follows that some element in $\set{a_{i-1},a_{i},a_{i+1}}$ is below two elements of $\set{b_{i-1},b_{i},b_{i+1}}$ in $P$, which is not possible in a strict alternating cycle. Therefore, $s_{i-1} \neq s_{i}$.

Thus we have $s_{i-1}<s_{i}$ in $T$, and with a similar argument one also deduces that $s_{i+1}<s_{i}$ in $T$.

To conclude the proof, consider the $a_{i-1}^T$--$b_{i}^T$ path, the $a_{i+1}^T$--$b_{i+2}^T$ path, and the $r$--$b_{i+1}^T$ path in $T$.
They all go through the edge $\p(s_{i})s_{i}$ of $T$, and hence the corresponding relations in $P$ all hit $B(\p(s_{i}))\cap B(s_{i})$.
Therefore, two of these relations hit the same element in that set, which again contradicts the fact that our alternating cycle is strict.
\end{proof}

By Claim~\ref{claim:u_minimal} we are in a situation similar to that
first encountered in Section~\ref{sec:second_leaf}, namely
for each $\Sigma \in {\bf \Sigma}(\nu_{14})$
each alternating cycle in $\MM(P, \nu_{14}, \Sigma)$ can be written
as $\set{(a_i,b_i)}_{i=1}^k$ in such a way that $u_{a_1b_1} \leq u_{a_ib_i}$ in $T$ for each $i \in \{1, \dots, k\}$.
We may further assume that the pair $(a_1,b_1)$ is such that $b_1^T$ is to the right of $b_i^T$ in $T$
if $a_1^T$ is to the left of $b_1^T$ in $T$, and to the left of $b_i^T$ otherwise,
for each $i \in \{2, \dots, k\}$ such that $u_{a_1b_1} = u_{a_ib_i}$.
As before the pair $(a_1,b_1)$ is uniquely defined, and we call it the \emph{root} of the alternating cycle.

Our next claim mirrors Claim~\ref{claim:u2-in-component} from Section~\ref{sec:second_leaf}.

\begin{claim}\label{claim:orange-tree}
Let  $\Sigma \in {\bf \Sigma}(\nu_{14})$ and suppose that $\set{(a_i,b_i)}_{i=1}^k$
is a strict alternating cycle in $\MM(P, \nu_{14}, \Sigma)$ with root $(a_1, b_1)$.
Let $u_i, w_i$ denote $u_{a_ib_i}, w_{a_ib_i}$ respectively, for each $i \in \{1, 2, \dots, k\}$.
Then $u_{1} < w_{1} \leq u_{i}$ in $T$ for each $i \in \{2, \dots, k\}$.
\end{claim}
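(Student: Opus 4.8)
The plan is to follow the template of Claim~\ref{claim:u2-in-component}, adapting it to the fact that pairs in $\MM(P,\nu_{14})$ satisfy $\alpha_4 = \textrm{yes}$ instead of $\textrm{no}$. First I would carry out the routine preparations: reduce to the case $\alpha_1(a_i,b_i) = \textrm{left}$ for all $i$ (the other case being mirror-symmetric), adopt the abbreviations $u_i, v_i, w_i, p_i, x_i, y_i, z_i$ for the objects attached to $(a_i,b_i)$ (so $B(u_i) \cap B(p_i) = \{x_i,y_i\}$, $a_i \leq x_i \not\leq b_i$ and $a_i \not\leq y_i \leq b_i$ by \eqref{eq:xy-in-edge}--\eqref{eq:y-ineq}), and note that the alternating cycle has length $k \geq 3$ since no $2$-cycle survives in $\MM(P,\nu_{14},\Sigma)$. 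By Claim~\ref{claim:u_minimal} together with the choice of the root I may assume $u_1 \leq u_i$ in $T$ for every $i$, with $u_1 = u_i$ forcing $b_1^T$ to be to the right of $b_i^T$ in $T$. The inequality $u_1 < w_1$ holds by the definition of $w_1$, so it only remains to prove $w_1 \leq u_i$ in $T$ for all $i \in \{2,\dots,k\}$.

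For this I argue by contradiction, assuming $w_1 \not\leq u_m$ in $T$ for some $m \geq 2$. Since $w_1$ is a child of $u_1$ in $T$ and $b_1^T$ lies in the branch of $T$ at $w_1$, the useful relations are those ending at $b_1^T$, namely the wrap-around relation $a_k \leq b_1$ of the cycle (after re-indexing so that the problematic configuration sits next to the index $1$) and the relation $a_0 \leq b_1$. Whenever the corresponding path in $T$ is forced to descend to $u_1$ and then climb through $w_1$ --- which happens precisely when the source vertex of the relation does not lie in the branch at $w_1$ --- Observation~\ref{obs:hitting-vertex-or-edge} shows that the relation hits $B(u_1) \cap B(w_1)$, a two-element subset of $\{x_1,y_1,z_1\}$. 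Hitting $x_1$ is impossible by $x_1 \not\leq b_1$ in \eqref{eq:x-ineq}; hitting $z_1$ is controlled using $\alpha_{12}$ (the shared answers to \eqref{eq:Q1} and \eqref{eq:Q2}) together with the colour equality supplied by $\alpha_{11}$, which pins the relevant $z$-element equal to $z_1$ whenever the two lie in a common bag, and thereby yields either $a \leq b$ for some pair of $\MM(P)$ or a violation of strictness of the cycle; the only remaining possibility, hitting $y_1$, is combined with $a_1 \leq x_1$ and the colour equalities to pin the relevant $x$-element equal to $x_1$ (or the relevant $y$-element equal to $y_1$) and again contradict strictness. In the complementary configuration --- when the relevant meet lies strictly above $u_1$ in $T$ --- I would show that $v_1$ lies below the relevant $b^T$, so that Observation~\ref{obs:left-right}~\ref{item:b-is-right} places that $b^T$ to the left of $b_1^T$; pushing this around the cycle produces a pair $(a_j,b_j)$ with $u_j = u_1$ and $b_j^T$ to the left of $b_1^T$, contradicting the defining property of the root.

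The main obstacle is exactly this loss of the inequality $u_i < b_{i+1}^T$ that powered the $\nu_6$ arguments: with $\alpha_4 = \textrm{yes}$ the path from $a_i^T$ to $b_{i+1}^T$ in $T$ may descend all the way down to $p_i = \p(u_i)$, so there is no clean analogue of Claim~\ref{claim:uk} to serve as a base case, and consecutive nodes $u_i$ need not be comparable in $T$. Consequently the downward induction of Claim~\ref{claim:u2-in-component} must be replaced by a direct extremal choice of index, and the case analysis becomes heavier because the element $z_i$ --- here the member of $B(u_i)$ lying \emph{outside} $B(p_i)$, rather than inside it as in the $\nu_6$ setting --- may now be below $a_i$ or below $b_i$; disentangling the resulting sub-configurations of the paths relative to $u_1$, $w_1$, $v_1$ and $p_1$, and closing each of them using $\alpha_{11}$ and $\alpha_{12}$, is where essentially all the work lies.
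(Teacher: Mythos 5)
Your overall skeleton — an extremal choice of the problematic index, a separate $u_i=u_1$ sub-case killed via the root's defining property together with the $\alpha_{11}$- and $\alpha_{12}$-equalities, and a case analysis on which element of a bag intersection is hit — matches the paper's structure, and your diagnosis of why the $\nu_6$ machinery fails here (no $u_i<b_{i+1}^T$, no analogue of Claim~\ref{claim:uk}, no guaranteed comparability of consecutive $u_i$'s) is accurate. However, the concrete engine you propose cannot produce the contradiction, and the proposal explicitly defers "essentially all the work" without doing it.

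The specific gap: you pair the relations $a_k\leq b_1$ and $a_0\leq b_1$, both landing at $b_1^T$, and look at $B(u_1)\cap B(w_1)$. Two relations with the same target are free to hit the same element there — if $y_1\in B(u_1)\cap B(w_1)$, then $a_0\leq y_1\leq b_1$ and $a_k\leq y_1\leq b_1$ are both consistent with all your hypotheses, so the "hitting $y_1$" branch yields no violation of strictness and your case analysis stalls. The paper instead pairs $a_k\leq b_1$ with $a_0\leq b_k$ (and, in the step for $i<k$, $a_i\leq b_{i+1}$ with $a_0\leq b_i$). Because the targets differ, hitting the same element $q$ would give $a_k\leq q\leq b_k$ (resp.\ $a_i\leq q\leq b_i$), an immediate contradiction; this forced distinct-hit dichotomy is precisely what pins down $a_k\leq z_1\leq b_1$ and $a_0\leq y_1\leq b_k$ and lets the rest of the argument run. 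Correspondingly, the correct edge is $u_1w'$ with $w'$ the neighbor of $u_1$ \emph{towards the problematic $u_i$ (or $u_k$)}, not $u_1w_1$, since the $r$--$b_k^T$ path passes through $w'$ but misses $w_1$. Further, "re-indexing so the problematic configuration sits next to index $1$" is unavailable: the root of a strict alternating cycle is uniquely determined, and rotating indices changes it. This is exactly why the paper first establishes $w_1\leq u_k$ as a standalone base step and only then runs the extremal-index argument for $i<k$. (Also a small sign slip: in the $u_i=u_1$ sub-case, Observation~\ref{obs:left-right} places $b_i^T$ to the \emph{right} of $b_1^T$, not the left, and only the right-hand conclusion conflicts with the root's defining property.)
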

\begin{proof}
We denote $p_{a_ib_i}, x_{a_ib_i}, y_{a_ib_i}, z_{a_ib_i}$ by $p_i, x_i, y_i, z_i$ respectively, for each $i\in \{1,2, \dots, k\}$.
We may assume $\q{sign:left-or-right}(a_i, b_i)=\textrm{left}$ for each $i\in \{1,2, \dots, k\}$.

First we will show that $u_1< w_1 \leq u_k$ in $T$.
To do so suppose first that $u_1=u_k$.
Then $w_1\not\leq a_k^T$ in $T$, as otherwise we would have $b_k^T$ to the right of $b_1^T$ in $T$ (Observation~\ref{obs:left-right}),
which contradicts the choice of $(a_1, b_1)$ as the root of the strict alternating cycle.
In particular, the path from $a_k^T$ to $b_1^T$ in $T$ goes through $u_1$.
Hence the relation $a_k\leq b_1$ hits $B(u_1)=\set{x_1,y_1,z_1}$; let $q\in B(u_1)$ be such that $a_k\leq q \leq b_1$ in $P$.
Clearly, $q\in \set{y_1, z_1}$.
Given that $u_1=u_k$ and $(\phi(x_1), \phi(y_1), \phi(z_1))= (\phi(x_k), \phi(y_k), \phi(z_k))$
(since $\q{sign:H-coloring-second}(a_1,b_1)=\q{sign:H-coloring-second}(a_k,b_k)$),
we obviously have $x_1=x_k$, $y_1=y_k$, and $z_1 = z_k$.
If $q = y_1 =y_k$ then we directly obtain $q \leq b_k$ in $P$.
If $q = z_1 =z_k$ then we also deduce $q \leq b_k$ in $P$,
because $\q{sign:comparability-status}(a_1, b_1) = \q{sign:comparability-status}(a_k, b_k)$,
and thus in particular $z_k \leq b_k$ in $P$
since $z_1 \leq b_1$.  Hence in both cases $q \leq b_k$ in $P$.
This implies $a_k\leq q\leq b_k$ in $P$, a contradiction.
Therefore, $u_1 \neq u_k$, and $u_1<u_k$ in $T$.

Let $w'$ be the neighbor of $u_1$ on the $u_1$--$u_k$ path in $T$.
In order to show $u_1< w_1 \leq u_k$ in $T$, it remains to prove $w'=w_1$.
Suppose to the contrary that $w'\neq w_1$.
Then the $a_k^T$--$b_1^T$ path and the $r$--$b_k^T$ path in $T$ both go through $u_1$ and $w'$.
Hence the relations
$a_k\leq b_1$ and $a_0 \leq b_k$ both hit $B(u_1)\cap B(w')\subsetneq \set{x_1,y_1,z_1}$. Clearly,
they cannot hit the same element.
None of the two relations hit $x_1$, as otherwise $a_1\leq x_1\leq b_1$ or $a_1\leq x_1\leq b_k$ in $P$
(which is not possible since $k\geq3$ and the alternating cycle is strict).
We conclude that $B(u_1)\cap B(w')=\set{y_1,z_1}$.
Since the relation $a_0 \leq b_k$ also hits $B(u_1)\cap B(p_1)=\set{x_1,y_1}$, and thus hits $y_1$,
it follows that $a_0 \leq y_1\leq b_k$ and $a_k\leq z_1\leq b_1$ in $P$.
Now let $i \in \{1, \dots, k-1\}$ be maximal such that $w'\not\leq u_{i}$ in $T$.
Note that there is such an index since $w'\not\leq u_1$ in $T$.
If $w'\leq a_{i}^T$ in $T$, then the path from $a_{i}^T$ to $u_{i}$ in $T$
goes through the edge $u_1w'$.
If, on the other hand, $w'\inc a_{i}^T$ in $T$, then the path from $a_{i}^T$ to $b_{i+1}^T$ in $T$ goes through the edge $u_1w'$.
Thus at least one of $a_{i}\leq x_{i}$ and $a_{i}\leq b_{i+1}$ hits $B(u_1)\cap B(w')=\set{y_1,z_1}$.
Hence $a_{i}\leq y_1$ or $a_{i}\leq z_1$ in $P$. However, since  $\set{y_1,z_1}\leq b_1$ in $P$,
this implies $a_{i}\leq b_1$ in both cases. Given that $i < k$, this contradicts the fact that
the alternating cycle is strict. Therefore, we must have $w'=w_{1}$, and
$u_1<w_1\leq u_k$ in $T$, as claimed.

So far we know that $w_1\leq u_i$ in $T$ for $i=k$, and it remains to show
it for each $i\in \set{2, \dots, k-1}$. Arguing by contradiction,
assume that this does not hold, and let $i\in \set{2, \dots, k-1}$ be maximal such that $w_1\not\leq u_{i}$ in $T$. By our choice it holds that $w_1\leq u_{i+1}$ in $T$, even in the case $i=k-1$.

First suppose that $u_i=u_{1}$. Then $w_1\not\leq a_{i}^T$ in $T$,
because otherwise $b_{i}^T$ would be to the right of $b_1^T$ in $T$ (Observation~\ref{obs:left-right}),
contradicting the fact that $(a_{1}, b_{1})$ is the root of the strict alternating cycle.
But then, the path from $a_{i}^T$ to $b_{i+1}^T$ goes through the edge $u_1w_1$ (since $w_1\leq u_{i+1}$ in $T$).
Thus the relation $a_{i}\leq b_{i+1}$ hits in particular $B(u_1)$; let
$q\in B(u_1)$ be such that $a_{i}\leq q \leq b_{i+1}$ in $P$.
Given that $u_{1}=u_{i}$ we deduce
$x_1=x_i$, $y_1=y_i$, and $z_1 = z_i$ (using $\q{sign:H-coloring-second}(a_1,b_1)=\q{sign:H-coloring-second}(a_i,b_i)$),
and hence $a_1\leq  q$ in $P$ (using $\q{sign:comparability-status}(a_1, b_1) = \q{sign:comparability-status}(a_i, b_i)$),
exactly as in the beginning of the proof.
This implies $a_1\leq q\leq b_{i+1}$ in $P$, and as $i+1\geq3$ this contradicts once again the fact
that the alternating cycle is strict.
Therefore, $u_{1}\neq u_i$, and $u_{1} < u_i$ in $T$.

Let $w'$ be the neighbor of $u_{1}$ on the $u_1$--$u_{i}$ path in $T$. Note that $w'\neq w_1$.
The $a_{i}^T$--$b_{i+1}^T$ path and the $r$--$b_{i}^T$ path both
go through the edge $u_1w'$. Thus the relations $a_{i}\leq b_{i+1}$ and $a_0 \leq b_{i}$ both hit
$B(u_1)\cap B(w')\subsetneq \set{x_1,y_1,z_1}$. Clearly, they cannot hit the same element.
Since $a_{i} \leq x_1 \leq b_{i+1}$ in $P$ would imply $a_1 \leq b_{i+1}$ (which is not possible since $i+1\geq3$)
while $a_{i} \leq y_1 \leq b_{i+1}$ would imply $a_{i}\leq b_1$  (which cannot be since $i < k$),
we deduce
$$
a_{i}\leq z_1 \leq b_{i+1}
$$
in $P$, and
$$
a_0 \leq  q \leq b_{i}
$$
in $P$, where $q$ is the element in $\set{x_1,y_1}$ such that $B(u_1)\cap B(w')=\set{q, z_1}$.

We distinguish two cases, depending whether $q=x_{1}$ or $q=y_{1}$.
First suppose that $q=x_{1}$.
Since $a_0 \leq  q=x_{1} \leq b_{i}$ in $P$, this implies $a_{1} \leq x_{1} \leq b_{i}$ in $P$,
and hence $i=2$ (otherwise, the alternating cycle would not be strict).
Furthermore, given that $a_0 \leq  x_{1}$ in $P$ and
$\q{sign:comparability-status}(a_1, b_1) = \q{sign:comparability-status}(a_2, b_2)$,
we have $a_0 \leq  x_{2}$ in $P$ as well.
The $r$--$u_{2}$ path in $T$ includes the edge $u_{1}w'$ since $w' \leq u_{2}$ in $T$.
Using that $x_{2} \in B(u_{2})$, we deduce that the relation $a_0 \leq  x_{2}$
in $P$ hits $B(u_1)\cap B(w')=\set{x_{1}, z_1}$. In particular,
at least one of $x_{1} \leq x_{2}$ and $z_{1} \leq x_{2}$ holds in $P$.
Before considering each of these two possibilities,
let us observe that
the $a_{2}^T$--$u_1$ path in $T$ includes the edge $u_{2}p_{2}$. It follows that
the relation $a_{2}\leq z_1$ hits $B(u_{2})\cap B(p_{2})=\set{x_{2},y_{2}}$.
Clearly, it cannot hit $y_{2}$ (otherwise $a_{2}\leq y_{2}\leq b_{2}$), and
hence $x_{2}\leq z_1$ in $P$.

Now, if $z_{1} \leq x_{2}$ in $P$ then $x_{2}=z_1$.
However, we also know that $\phi(x_{2}) = \phi(x_{1}) \neq \phi(z_{1})$, since
$\q{sign:H-coloring-second}(a_1,b_1)=\q{sign:H-coloring-second}(a_2,b_2)$, which is a contradiction.

On the other hand, if $x_{1} \leq x_{2}$ in $P$ then
$a_1\leq x_1\leq x_{2}\leq z_1\leq b_{i+1} = b_{3}$ in $P$, which contradicts the fact the alternating cycle
is strict.
This concludes the case where $q=x_{1}$.

Next, assume $q=y_{1}$.
Let $j\in \{1, \dots, i-1\}$ be maximal such that $w'\not\leq u_j$ in $T$.
(Note that there is such an index $j$ since $w'\not\leq u_1$ in $T$.)
If $w'\leq a_j^T$ in $T$ then the path from $a_j^T$ to $u_j$ in $T$ goes through the edge $u_1w'$.
If, on the other hand, $w'\inc a_j^T$ in $T$,
then the path from $a_j^T$ to $b_{j+1}^T$ in $T$ goes through the edge $u_1w'$
since $w'\leq u_{j+1}< b_{j+1}^T$ in $T$.
Hence at least one of the two relations $a_j\leq x_j$ and $a_j\leq b_{j+1}$ hits $\set{q,z_1}=\set{y_{1}, z_{1}}$.
It follows that $a_j\leq y_1$ or $a_j\leq z_{1}$ in $P$.
The first inequality implies $a_j\leq y_{1}\leq b_{1}$ in $P$, a contradiction since $j \neq k$.
The second inequality implies $a_j\leq z_1 \leq b_{i+1}$ in $P$, which is not possible since $j \neq i$.
This concludes the proof.
\end{proof}

\begin{claim}
\label{claim:q-point}
Let  $\Sigma \in {\bf \Sigma}(\nu_{14})$ and suppose that $\set{(a_i,b_i)}_{i=1}^k$ is a strict alternating cycle in $\MM(P, \nu_{14}, \Sigma)$ with root $(a_1, b_1)$.  Let $u_{i}$ denote
$u_{a_ib_i}$ for each $i\in \{1,2, \dots, k\}$.
Then the $u_{1}$--$b_1^T$ path in $T$ avoids $u_{2}$.
\end{claim}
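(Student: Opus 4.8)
The plan is to reduce the statement to a single comparability assertion about $u_{2}$ and $b_{1}^{T}$, and then contradict it by overloading one edge of the tree decomposition with incompatible relations.

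First I would invoke Claim~\ref{claim:orange-tree} on the strict alternating cycle with root $(a_{1},b_{1})$ to obtain $u_{1}<w_{1}\leq u_{2}$ in $T$, where throughout I write $w_{i}:=w_{a_{i}b_{i}}$, $v_{i}:=v_{a_{i}b_{i}}$, $p_{i}:=p_{a_{i}b_{i}}$, $x_{i}:=x_{a_{i}b_{i}}$, $y_{i}:=y_{a_{i}b_{i}}$. Since $b_{1}^{T}$ is a leaf of $T$ while $u_{2}=a_{2}^{T}\land b_{2}^{T}$ is a meet of two incomparable nodes and hence has degree at least $2$, we have $u_{2}\neq b_{1}^{T}$ and $u_{2}\not>b_{1}^{T}$ in $T$. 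The $u_{1}$--$b_{1}^{T}$ path is $u_{1}$ followed by the $w_{1}$--$b_{1}^{T}$ path, that is, $\set{u_{1}}\cup\set{t\in V(T)\mid w_{1}\leq t\leq b_{1}^{T}}$; since $w_{1}\leq u_{2}$, the node $u_{2}$ lies on this path if and only if $u_{2}<b_{1}^{T}$ in $T$. So it suffices to rule out $u_{2}<b_{1}^{T}$.

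Suppose for contradiction that $u_{1}<w_{1}\leq u_{2}<b_{1}^{T}$ in $T$. Consider the edge $p_{2}u_{2}$ of $T$; recall that $B(u_{2})\cap B(p_{2})=\set{x_{2},y_{2}}$ by~\eqref{eq:xy-in-edge}, that $a_{2}\leq x_{2}\not\leq b_{2}$ by~\eqref{eq:x-ineq}. I claim this edge lies on two paths. It lies on the $r$--$b_{1}^{T}$ path, because $r<u_{1}\leq p_{2}<u_{2}<b_{1}^{T}$ in $T$ (using $w_{1}\leq u_{2}$, with $p_{2}=u_{1}$ in the degenerate case $w_{1}=u_{2}$). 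It also lies on the $a_{1}^{T}$--$b_{2}^{T}$ path: indeed $a_{1}^{T}$ is a descendant of $v_{1}$ while $b_{2}^{T}$ is a descendant of $w_{1}$ (as $w_{1}\leq u_{2}<b_{2}^{T}$), and $v_{1}\neq w_{1}$ since $u_{1}=a_{1}^{T}\land b_{1}^{T}$, so $a_{1}^{T}\land b_{2}^{T}=u_{1}$ and the path runs $a_{1}^{T},\dots,v_{1},u_{1},w_{1},\dots,u_{2},\dots,b_{2}^{T}$, passing through the parent edge $p_{2}u_{2}$ of $u_{2}$. By Observation~\ref{obs:hitting-vertex-or-edge} the relations $a_{0}\leq b_{1}$ and $a_{1}\leq b_{2}$ therefore both hit $\set{x_{2},y_{2}}$. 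Since $x_{2}\not\leq b_{2}$, the relation $a_{1}\leq b_{2}$ must hit $y_{2}$, i.e. $a_{1}\leq y_{2}\leq b_{2}$ in $P$.

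It remains to see which of $x_{2},y_{2}$ the relation $a_{0}\leq b_{1}$ hits. If it hits $y_{2}$, then $a_{0}\leq y_{2}\leq b_{1}$ combines with $a_{1}\leq y_{2}$ to give $a_{1}\leq b_{1}$, contradicting $(a_{1},b_{1})\in\MM(P)$. If it hits $x_{2}$, then $a_{0}\leq x_{2}\leq b_{1}$ combines with $a_{2}\leq x_{2}$ (from~\eqref{eq:x-ineq}) to give $a_{2}\leq b_{1}$; but $2$-cycles in $\MM(P,\nu_{14},\Sigma)$ were eliminated by $\alpha_{13}$, so $k\geq3$, and then $a_{2}\leq b_{1}$ contradicts the strictness of the alternating cycle (which forces $a_{2}\leq b_{j}$ only for $j=3$). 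Either way we have a contradiction, which finishes the proof. I do not anticipate a real obstacle here; the only points requiring care are the bookkeeping of the degenerate case $w_{1}=u_{2}$ (so that $p_{2}=u_{1}$ and the edge $p_{2}u_{2}$ genuinely appears on both paths above), and recording that $b_{1}^{T}$ is a leaf so that $u_{2}$ cannot sit above it in $T$.
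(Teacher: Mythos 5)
Your proof is correct and follows essentially the same route as the paper's: reduce to the supposition $u_1 < w_1 \leq u_2 < b_1^T$, observe that both the $a_1^T$--$b_2^T$ path and the $r$--$b_1^T$ path traverse the edge $p_2u_2$, and derive a contradiction from how the relations $a_1 \leq b_2$ and $a_0 \leq b_1$ must hit $\{x_2,y_2\}$ (using $k \geq 3$ for strictness). Your extra bookkeeping about $b_1^T$ being a leaf and the degenerate case $w_1=u_2$ is fine but not a genuine departure.
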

\begin{proof}
We denote $w_{a_ib_i},p_{a_ib_i}, x_{a_ib_i}, y_{a_ib_i},$ by $w_i, p_i, x_i, y_i$ respectively, for each $i\in \{1,2, \dots, k\}$.
We may assume $\q{sign:left-or-right}(a_i, b_i)=\textrm{left}$.

Arguing by contradiction, suppose that $u_1\leq u_2 < b_1^T$ in $T$.
By Claim~\ref{claim:orange-tree} we know $u_1 < w_1 \leq u_2 < b_2^T$ in $T$.
The  $a_1^T$--$b_2^T$ path in $T$ goes through the edge $p_2u_2$.
Hence the relation $a_1\leq b_2$ hits $B(p_2)\cap B(u_2)=\{x_2,y_2\}$.
Clearly, it cannot hit $x_2$ because otherwise $a_2\leq x_2\leq b_2$ in $P$.
Therefore, $a_1\leq y_2\leq b_2$ in $P$.

Now consider the path connecting $r$ to $b_1^T$ in $T$. This path also includes the edge $p_2u_2$.
Thus the relation $a_0 \leq b_1$ hits $\set{x_2,y_2}$.
If it hits $x_2$, then we obtain $a_2\leq x_2\leq b_1$ in $P$, which contradicts the fact that the
alternating cycle is strict (recall that $k \geq3$).
If it hits $y_2$, then we deduce $a_1\leq y_2\leq b_1$ in $P$, again a contradiction.
\end{proof}

Let  $\Sigma \in {\bf \Sigma}(\nu_{14})$ and suppose that $\set{(a_i,b_i)}_{i=1}^k$ is a strict alternating cycle
in $\MM(P, \nu_{14}, \Sigma)$ with root $(a_1, b_1)$.
In what follows we will need to consider the nodes $q_i:= u_i\wedge b_1^T$  of $T$ where $i\in \{1,2, \dots, k\}$.
Observe that $$u_1<w_1\leq q_i \leq u_i$$ in $T$ for each $i\in \{2,3, \dots, k\}$ by Claim~\ref{claim:orange-tree}, and
$$q_2<u_2$$ in $T$ by Claim~\ref{claim:q-point}.

\begin{claim}\label{claim:q2-q3}
Let  $\Sigma \in {\bf \Sigma}(\nu_{14})$ and suppose that $\set{(a_i,b_i)}_{i=1}^k$ is a strict alternating cycle
in $\MM(P, \nu_{14}, \Sigma)$ with root $(a_1, b_1)$.
Let $u_i$ denote $u_{a_ib_i}$ and let $q_i:= u_i\wedge b_1^T$, for each $i\in \{1,2, \dots, k\}$.
Then
\begin{enumerate}
 \item\label{item:uj-not-below-v2} $u_i \not\leq q_2$ in $T$ for each $i\in \{3,4, \dots, k\}$, and
 \item\label{item:v2-below-v3} $u_1<q_2\leq q_3<b_1^T$ in $T$.
\end{enumerate}
\end{claim}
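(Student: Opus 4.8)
The plan is to establish both parts by contradiction, each time producing an edge of $T$ that is crossed by several of the relations $a_s\le b_{s+1}$ and $a_0\le b_t$, and then applying Observation~\ref{obs:hitting-vertex-or-edge} together with strictness of the alternating cycle: since $k\ge3$, $a_s\le b_t$ in $P$ forces $t=s+1$ (cyclically). By the usual symmetry I would assume $a_i^T$ is to the left of $b_i^T$ in $T$ for each $i$. Throughout I will use the facts recorded just before the claim — $u_1<w_1\le q_i\le u_i$ in $T$ for $i\ge2$ (Claim~\ref{claim:orange-tree}) and $q_2<u_2$ in $T$ (Claim~\ref{claim:q-point}) — writing $w_i:=w_{a_ib_i}$, $p_i:=\p(u_i)$, $x_i:=x_{a_ib_i}$, $y_i:=y_{a_ib_i}$, and recalling that $B(u_i)\cap B(p_i)=\set{x_i,y_i}$, $a_i\le x_i$ and $y_i\le b_i$ in $P$.

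For Part (i) I would fix $i\in\set{3,\dots,k}$, assume $u_i\le q_2$ in $T$, and reach a contradiction using the single relation $a_1\le b_2$. The geometric step: by Claim~\ref{claim:orange-tree} the first edge of the $u_1$--$b_1^T$ path is $u_1w_1$ and $w_1\le u_2\le b_2^T$ in $T$, so the $u_1$--$b_2^T$ path also starts with $u_1w_1$, whereas the $u_1$--$a_1^T$ path starts with a different edge; hence $a_1^T\wedge b_2^T=u_1$, so the $a_1^T$--$b_2^T$ path descends to $u_1$ and then climbs monotonically up to $b_2^T$. Since $u_1<u_i\le q_2<u_2\le b_2^T$ in $T$, this climb passes through $u_i$ and therefore enters $u_i$ along its parent edge $p_iu_i$; by Observation~\ref{obs:hitting-vertex-or-edge} the relation $a_1\le b_2$ hits $B(p_i)\cap B(u_i)=\set{x_i,y_i}$. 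If it hits $x_i$, then $a_i\le x_i\le b_2$, so $a_i\le b_2$ in $P$ and $i=1$, contradicting $i\ge3$; if it hits $y_i$, then $a_1\le y_i\le b_i$, so $a_1\le b_i$ in $P$ and $i=2$, again a contradiction.

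For Part (ii), I note first that $u_1<w_1\le q_2$ gives $u_1<q_2$, and that $q_2=u_2\wedge b_1^T$ and $q_3=u_3\wedge b_1^T$ are both strictly below $b_1^T$: since $b_1^T$ is a leaf of $T$ distinct from $r$, $b_1^T\le v$ in $T$ forces $v=b_1^T$, and $u_2,u_3$ are internal nodes. Both $q_2$ and $q_3$ are ancestors of $b_1^T$ lying above $u_1$, hence on the $u_1$--$b_1^T$ path and comparable in $T$, so it remains to rule out $q_3<q_2$. Assume $q_3<q_2$. By Part (i) (with $i=3$) we have $u_3\not\le q_2$, while $q_2\le u_3$ would give $q_2\le u_3\wedge b_1^T=q_3$; so $u_3\inc q_2$ in $T$ and $u_3\wedge q_2=q_3$, meaning the paths from $q_3$ to $u_3$ and to $q_2$ leave $q_3$ by distinct edges. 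Let $q_3'$ be the neighbour of $q_3$ on the $q_3$--$q_2$ path (equivalently, towards $b_1^T$, since $q_3<q_2\le b_1^T$). I would then verify that the $a_0$--$b_1^T$ path, the $a_1^T$--$b_2^T$ path (it passes through $q_3$ because $a_1^T\wedge b_2^T=u_1<w_1\le q_3$ and $q_3<q_2<u_2\le b_2^T$, then continues via $q_3'$ towards $b_2^T$), and the $a_2^T$--$b_3^T$ path (here $a_2^T\wedge b_3^T=q_3$, since $a_2^T\ge u_2$ sits in the $q_3'$-branch of $q_3$ while $b_3^T\ge u_3$ sits in another branch) all use the edge $q_3q_3'$. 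Hence, by Observation~\ref{obs:hitting-vertex-or-edge}, the three relations $a_0\le b_1$, $a_1\le b_2$, $a_2\le b_3$ all hit the two-element set $B(q_3)\cap B(q_3')$, so two of them hit a common element $q$; composing those two through $q$ yields $a_1\le b_1$, or $a_2\le b_1$ (impossible as $1\ne3$), or $a_1\le b_3$ (impossible as $3\ne2$) — each contradicting strictness with $k\ge3$. So $q_2\le q_3$, completing the proof.

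The step I expect to be the main obstacle is the tree-geometry bookkeeping: in each invocation of Observation~\ref{obs:hitting-vertex-or-edge} one has to pin down exactly which edge of $T$ a relation $a_s\le b_t$ traverses, which means locating the meet $a_s^T\wedge b_t^T$ and figuring out which neighbour of a branch node heads in which direction. This is where Claims~\ref{claim:orange-tree} and~\ref{claim:q-point} — and the fact that the nodes $a^T,b^T$ are leaves of $T$ whereas the $u$-nodes are internal — do the real work; the rest is the routine pigeonhole of three relations through a size-$2$ bag intersection.
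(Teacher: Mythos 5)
Your proof is correct. Part (i) matches the paper's argument essentially line for line: assuming $u_i\leq q_2$, one places $u_i$ on the climbing segment of the $a_1^T$--$b_2^T$ path and applies the pigeonhole through $B(p_i)\cap B(u_i)=\set{x_i,y_i}$, yielding $a_i\leq b_2$ or $a_1\leq b_i$, both forbidden by strictness with $i\geq 3$.

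For part (ii) your route differs from the paper's and is arguably cleaner. Under the assumption $q_3<q_2$, the paper works at the edge $\p(q_2)q_2$: it first singles out the largest $i\in\set{3,\dots,k}$ with $q_i<q_2$, verifies $q_2\not\leq a_i^T$, $q_2<b_{i+1}^T$, and $q_2\not\leq b_3^T$, and then pushes the three relations $a_i\leq b_{i+1}$, $a_2\leq b_3$, $a_0\leq b_2$ through that edge. You instead work at the edge $q_3q_3'$ (with $q_3'$ the child of $q_3$ towards $q_2$), observing that $u_3\inc q_2$ with $u_3\wedge q_2=q_3$ forces $b_3^T$ into a branch of $q_3$ disjoint from the $q_3'$-branch (which contains $q_3',q_2,u_2,a_2^T$), while $u_1<w_1\leq q_3$ puts the $u_1$--$b_2^T$ path and the $r$--$b_1^T$ path through that same edge. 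The pigeonhole on $a_0\leq b_1$, $a_1\leq b_2$, $a_2\leq b_3$ over $B(q_3)\cap B(q_3')$ then gives $a_1\leq b_1$, $a_2\leq b_1$, or $a_1\leq b_3$, all impossible for a strict cycle with $k\geq 3$. Your version avoids the extremal index selection and the two auxiliary non-ancestry checks ($q_2\not\leq a_i^T$, $q_2\not\leq b_3^T$), replacing them by the single meet computation $u_3\wedge q_2=q_3$, which follows directly from part (i) and the definition of $q_3$. Both are instances of the ``three relations through a size-two bag intersection'' strategy the paper highlights; yours localizes all the work at $q_3$ and the first three pairs of the cycle, which I think is a genuine simplification.
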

\begin{proof}
We denote $w_{a_ib_i},p_{a_ib_i}, x_{a_ib_i}, y_{a_ib_i},$ by $w_i, p_i, x_i, y_i$ respectively, for each $i\in \{1,2, \dots, k\}$.
We may assume $\q{sign:left-or-right}(a_i, b_i)=\textrm{left}$.

To prove~\ref{item:uj-not-below-v2} we argue by contradiction: Suppose  $u_i\leq q_2$ in $T$ for some $i\in \{3,4, \dots, k\}$.
Since $q_2 < b_1^T$ in $T$, and  $u_1<w_1 \leq u_i$ by
Claim \ref{claim:orange-tree}, it follows that $u_1 <w_1 \leq u_i \leq q_2 < b_1^T$ in $T$.
In particular, the path connecting $a_1^T$ to $b_2^T$ in $T$ goes through the edge $p_iu_i$.
Hence the relation $a_1\leq b_2$ hits $B(p_i)\cap B(u_i)=\set{x_i,y_i}$.
If it hits $x_i$ then $a_i\leq x_i\leq b_2$ in $P$, while if it
hits $y_i$ then $a_1\leq y_i\leq b_i$ in $P$. In both cases it
contradicts the fact that the alternating cycle is strict.

Let us now prove~\ref{item:v2-below-v3}.
Using Claim~\ref{claim:orange-tree} we already deduce that
$u_1 < \set{q_2, q_3} < b_1^T$ in $T$.
Thus, it remains to show $q_2 \leq q_3$ in $T$.
Arguing by contradiction, suppose $q_3 < q_2$ in $T$ (note that $q_2$ and $q_3$ are comparable in $T$).
Let $i$ be the largest index in $\{3,4, \dots, k\}$ such that $q_i<q_2$ in $T$.
If $i < k$ then $q_i<q_2\leq q_{i+1}\leq u_{i+1}< b_{i+1}^T$ in $T$.
If $i=k$ then clearly $q_i <q_2 < b_{1}^T$ in $T$.
Thus in both cases
\begin{equation}
 q_i <q_2 < b_{i+1}^T\label{eq:qi-q2}
\end{equation}
 in $T$ (taking indices cyclically).

Observe also that
\begin{equation}
 q_2 \not\leq a_i^T \label{eq:q2-ai}
\end{equation}
in $T$.
Indeed, if $q_2 \leq a_i^T$ in $T$ then
$q_2 \leq b_i^T$ as well, since otherwise $u_i < q_2$ in $T$, contradicting~\ref{item:uj-not-below-v2}.
However, this implies $q_2 \leq u_i$ in $T$, and hence $q_2 \leq q_i$ since
$q_2 < b_1^T$, a contradiction.

Now consider the edge $\p(q_2)q_2$ in $T$ and let $B(\p(q_2))\cap B(q_2)=\set{c,d}$.
In the following, we aim to show that the relevant part of $T$ essentially looks like in Figure~\ref{fig:q2-q3}, and consequently that the relations $a_i\leq b_{i+1}$ and $a_2\leq b_3$ have to hit $\{c,d\}$.
From this observation we will obtain our final contradiction.

Using \eqref{eq:q2-ai} and that $q_2 \leq b_{i+1}^T$ in $T$ (see~\eqref{eq:qi-q2}),
we deduce that the path from $a_i^T$ to $b_{i+1}^T$ in $T$ goes through this edge.
Thus the relation $a_i\leq b_{i+1}$ hits $\set{c,d}$.
Without loss of generality
\begin{equation}
 a_i\leq c\leq b_{i+1}\label{eq:c-ineq}
\end{equation}
in $P$.
To see that $a_2\leq b_3$ also hits $\{c,d\}$ we first show that $$q_2 \not \leq b_3^T$$ in $T$.
For this suppose $q_2 \leq b_3^T$ in $T$. Then $q_2$ and $u_3$ are comparable in $T$,
and thus $q_2 < u_3$ in $T$ by~\ref{item:uj-not-below-v2}. Since $q_2 < b_1^T$ in $T$,
it follows that $q_2 \leq u_3\wedge b_1^T=q_3$ in $T$, contradicting our assumption that $q_3<q_2$ in $T$.

So we have $q_2 \not \leq b_3^T$, and since $q_3<q_2 \leq u_2 < a_2^T$ in $T$, we deduce that the path connecting $a_2^T$ to $b_3^T$ in $T$
also includes the edge $\p(q_2)q_2$. Thus the relation $a_2\leq b_3$  indeed hits $\set{c,d}$.
It cannot hit $c$ because otherwise $a_2\leq c\leq b_{i+1}$ (by~\eqref{eq:c-ineq}), which is not possible since $i \neq 2$.
Hence we have
\begin{equation}
 a_2\leq d\leq b_3\label{eq:a2-d-b3}
\end{equation}
in $P$.
Now, the relation $a_0\leq b_2$ clearly hits $\set{c,d}$ as well,
but this is not possible as this implies $a_i\leq c\leq b_2$ (using~\eqref{eq:c-ineq}) or  $a_2\leq d\leq b_2$ in $P$ (using~\eqref{eq:a2-d-b3}), a contradiction in both cases.
This concludes the proof of \ref{item:v2-below-v3}.
\end{proof}

\begin{figure}[t]
 \centering
 \includegraphics[scale=1.0]{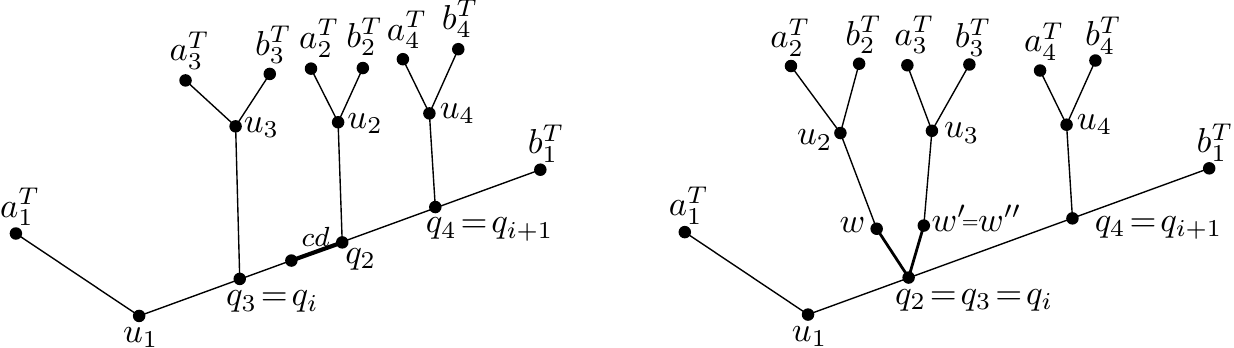}
 \caption{Left: Situation in Claim~\ref{claim:q2-q3} with $i=3$ (under the assumption that $q_3<q_2$ in $T$). Right: Possible situation in Claim~\ref{claim:q2-not-q3} with $i=3$ (under the assumptions that $q_2=q_3$ and $w\neq w'$).}
 \label{fig:q2-q3}
\end{figure}

The following claim is a strengthening of Claim~\ref{claim:q2-q3}~\ref{item:v2-below-v3}.
\begin{claim}\label{claim:q2-not-q3}
 Let  $\Sigma \in {\bf \Sigma}(\nu_{14})$ and suppose that $\set{(a_i,b_i)}_{i=1}^k$ is a strict alternating cycle
 in $\MM(P, \nu_{14}, \Sigma)$ with root $(a_1, b_1)$.
 Let $u_i$ denote $u_{a_ib_i}$ and let $q_i:= u_i\wedge b_1^T$, for each $i\in \{1,2, \dots, k\}$.
 Then
 $$u_1<q_2<q_3<b_1^T \text{ in } T.$$
\end{claim}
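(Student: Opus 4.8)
We argue by contradiction: assume $q_2=q_3$ and write $v$ for this common node. The plan is to reach a contradiction by a pigeonhole argument on a two-element bag intersection, in the spirit of the first strategy discussed at the beginning of Section~\ref{sec:proof}. First I would record the local picture around $v$. By Claim~\ref{claim:q2-q3}~\ref{item:v2-below-v3} we have $u_1<q_2\le q_3<b_1^T$ in $T$, so under our assumption $u_1<v<b_1^T$; by the discussion preceding Claim~\ref{claim:q2-q3} we have $w_1\le q_2=v$, and by Claim~\ref{claim:q-point} we have $v=q_2<u_2$; finally, by Claim~\ref{claim:q2-q3}~\ref{item:uj-not-below-v2} we have $u_3\not\le q_2=v$, which together with $v=q_3\le u_3$ gives $v<u_3$. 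Hence $v$ is not the root, and it has a parent $\p(v)$ lying on the path from $v$ to $u_1$ (with $\p(v)=u_1$ in the degenerate case $w_1=v$), an edge $e_2$ leading towards $u_2$ and hence towards $a_2^T$, an edge $e_3$ leading towards $u_3$, and an edge $e_b$ leading towards $b_1^T$. Since $v=q_2=u_2\wedge b_1^T$ and $v=q_3=u_3\wedge b_1^T$, the edge $e_b$ is distinct from $\p(v)v$, from $e_2$ and from $e_3$; the edges $e_2$ and $e_3$ may or may not coincide, and this gives the two main cases of the argument (compare the right part of Figure~\ref{fig:q2-q3}).

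Next, using Observation~\ref{obs:hitting-vertex-or-edge}, I would catalogue which of the relations known to hold between the $a$'s (including $a_0$) and the $b$'s --- namely $a_i\le b_{i+1}$ cyclically and $a_0\le b_j$ for all $j$ --- cross which of these edges of $T$. The $a_1^T$--$b_2^T$ path crosses $\p(v)v$ and $e_2$; the $a_0$--$b_1^T$ path crosses $\p(v)v$ and $e_b$; the $a_0$--$b_2^T$ path crosses $\p(v)v$ and $e_2$; the $a_0$--$b_3^T$ path crosses $\p(v)v$ and $e_3$; and, when $e_2\neq e_3$, the $a_2^T$--$b_3^T$ path crosses both $e_2$ and $e_3$, since its two endpoints then lie in the distinct subtrees of $T$ hanging from $e_2$ and from $e_3$. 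In each of the two main cases I would pick one fixed edge $g$ incident to $v$ that is crossed by three of these relations, and use that the intersection of the bags at the two endpoints of $g$ has exactly two elements: by pigeonhole two of the three relations hit the same element $c$, and a short case check shows that each such coincidence forces $a_1\le b_1$ in $P$, or $a_2\le b_2$ in $P$, or $a_i\le b_j$ in $P$ for some $i,j$ with $j\neq i+1$ --- all impossible, the first two because $(a_1,b_1)$ and $(a_2,b_2)$ are min-max pairs and the third because the alternating cycle is strict and $k\ge3$. Concretely, when $e_2\neq e_3$ I would take $g=e_2$ together with the relations $a_1\le b_2$, $a_0\le b_2$, $a_2\le b_3$, and when $e_2=e_3$ I would take $g=e_2$ together with the relations $a_1\le b_2$, $a_0\le b_2$, $a_0\le b_3$.

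The main obstacle is that in each case exactly one pigeonhole coincidence is not immediately contradictory, namely the one in which $a_0\le b_2$ hits the same element of $B(g)$ as $a_1\le b_2$ (respectively as $a_0\le b_3$) --- this is harmless in itself, since $a_0$ lies below everything --- and disposing of it requires a finer sub-analysis. I would handle it by bringing in a fourth known relation crossing $g$, for instance $a_0\le b_1$ or $a_k\le b_1$, exploiting that the nodes $q_2,\dots,q_k$ all lie on the $r$--$b_1^T$ path and hence are pairwise comparable, so that $v=q_2$ is in particular comparable with $q_k$; and where even this does not suffice, I would descend one further edge --- the parent edge of the relevant $u_i$ --- and track the labelled elements $x_i,y_i,z_i$, using that $\alpha_{11}$ and $\alpha_{12}$ are constant on $\MM(P,\nu_{14},\Sigma)$ (so $x_i=x_j$, $y_i=y_j$, $z_i=z_j$ whenever $u_i=u_j$, and every pair answers \eqref{eq:Q1}--\eqref{eq:Q3} the same way), exactly as in the proof of Claim~\ref{claim:orange-tree}. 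Making these residual coincidences genuinely contradictory is the most delicate part of the argument, while everything else is routine position-chasing in $T$ combined with Observation~\ref{obs:hitting-vertex-or-edge}.
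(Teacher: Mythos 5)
Your global strategy is right: assume $q_2=q_3=:v$, observe that $e_b$ (towards $b_1^T$) is distinct from $\p(v)v$, $e_2$ (towards $u_2$) and $e_3$ (towards $u_3$), and split on whether $e_2=e_3$. This matches the paper's case split on $w=w'$ versus $w\neq w'$. However, the specific three relations you pick to run the pigeonhole on are not the paper's, and they genuinely do not suffice. In the subcase $e_2=e_3$ you take $a_1\le b_2$, $a_0\le b_2$, $a_0\le b_3$ across $g=e_2$; there are in fact \emph{two} harmless coincidences here, not one ($a_1\le b_2$ with $a_0\le b_2$, and $a_0\le b_2$ with $a_0\le b_3$), so the accounting is already off. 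More importantly, the rescue you propose for the remaining harmless case does not work: neither $a_0\le b_1$ nor $a_k\le b_1$ can be used as a ``fourth relation crossing $g$'', because $b_1^T$ sits in the $e_b$-component of $v$, so the $r$--$b_1^T$ path crosses $\p(v)v$ and $e_b$ but never $e_2$; and $a_k^T$ need not lie in the $e_2$-component either ($u_k\not\leq v$ by Claim~\ref{claim:q2-q3}\ref{item:uj-not-below-v2}, but $u_k$ may well hang off $v$ via $e_3$ or some other edge, or off a node strictly below $v$). So the step ``bring in $a_0\le b_1$ or $a_k\le b_1$'' is simply false for the edge $g$ you chose, and this is precisely the delicate part you acknowledge but leave unresolved.

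The ingredient you are missing is the one the paper uses: take $i$ to be the \emph{largest} index in $\{3,\dots,k\}$ for which (in the case $e_2=e_3$) $w\leq u_i$ in $T$, respectively (in the case $e_2\neq e_3$) $q_i=q_2$. One then shows $w\not\leq b_{i+1}^T$ (resp.\ $w''\not\leq b_{i+1}^T$ for $w''$ the neighbour of $v$ towards $u_i$), which forces the $a_i^T$--$b_{i+1}^T$ path through the relevant edge or node incident to $v$. In the first case this gives the clean triple $a_i\le b_{i+1}$, $a_1\le b_2$, $a_0\le b_i$ hitting $B(v)\cap B(w)$, with no harmless coincidence. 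In the second case the paper does not pigeonhole on a single edge at all: it uses that $a_1\le b_2$, $a_2\le b_3$ and $a_i\le b_{i+1}$ all pass through the \emph{node} $v$, so each element of the $3$-element bag $B(v)$ is above exactly one of $a_1,a_2,a_i$, and then separately pigeonholes $a_0\le b_1$, $a_0\le b_2$, $a_0\le b_3$ on the $2$-element set $B(\p(v))\cap B(v)$. That two-layer argument (three relations on the full bag, three more on the parent edge) is what closes the gap; it is substantially different from ``fix one edge and bring in a fourth relation'', and I do not see how your variant can be completed without essentially rediscovering it.
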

\begin{proof}
Using Claim~\ref{claim:q2-q3}~\ref{item:v2-below-v3} we only need to show that $q_2\neq q_3$.
Suppose to the contrary that we have $q_2=q_3$.
Recall  that $q_2< u_2$ in $T$, by Claim~\ref{claim:q-point}.
By Claim~\ref{claim:q2-q3}~\ref{item:uj-not-below-v2} we cannot have $u_3=q_3$ since $q_2 = q_3$.
Hence we also have $q_3< u_3$ in $T$.

Now, let $w$ be the neighbor of $q_2$ on the path from $q_2$ to $u_2$ in $T$, and let $w'$ be the
neighbor of $q_3$ on the path from $q_3=q_2$ to $u_3$ in $T$.
Using that $q_2<u_2$ and $q_3<u_3$ in $T$ we deduce that
\begin{equation}
 q_2<w\leq u_2 \quad\text{and}\quad q_3<w'\leq u_3\label{eq:w-w'}
\end{equation}
 in $T$.
Let us first suppose that $w=w'$.
Let $i$ be the largest index in $\{3,4, \dots, k\}$ such that $w\leq u_i$ in $T$.
We claim that $$w\not\leq b_{i+1}^T$$ in $T$ (taking indices cyclically, as always).
If $i < k$ this is because $w \leq b_{i+1}^T$ would
imply  $w\not\leq a_{i+1}^T$ (since $w\not \leq u_{i+1}$ in $T$), and thus
$u_{i+1}\leq q_2$ in $T$, contradicting~\ref{item:uj-not-below-v2} of Claim~\ref{claim:q2-q3}.
If $i = k$ this is because $w \leq b_{1}^T$
together with $w\leq u_2$ (by~\eqref{eq:w-w'}) would imply $w\leq u_2\wedge b_1^T=q_2$ in $T$, a contradiction.

Now, since $w\not\leq b_{i+1}^T$ in $T$
we deduce that the path from $a_i^T$ to $b_{i+1}^T$ includes the edge $q_2w$.
Let $B(w)\cap B(q_2)=\set{c,d}$.
Then the relations $a_i\leq b_{i+1}$ and $a_1\leq b_2$ both hit $\set{c,d}$, but not the same element
(as otherwise $a_i\leq b_2$ in $P$, which cannot be since $i\neq 1$).
Say we have
$$a_i\leq c\leq b_{i+1}\quad\text{and}\quad a_1\leq d\leq b_2$$
in $P$.
Observe that the path from $r$ to $b_i^T$ in $T$ goes through the edge $q_2w$ as well, and hence $a_0 \leq  b_i$ also hits $\set{c,d}$.
Thus $c\leq b_i$ or $d\leq b_i$ in $P$.
In the first case we obtain $a_i\leq b_i$ and in the second  $a_1\leq b_{i}$, a contradiction in each case.
This closes the case $w=w'$.

Finally, assume $w\neq w'$. Let $B(q_2)=\set{c, d, e}$.
Let $i$ be the largest index in $\{3,4, \dots, k\}$ such that $q_i=q_2$.
By Claim~\ref{claim:q2-q3}~\ref{item:uj-not-below-v2} we know that $q_i\neq u_i$ (in particular $u_i\not<b_1^T$ in $T$), and thus $q_2 = q_i < u_i$ in $T$.
Let $w''$ be the neighbor of $q_2$ on the path from $q_2$ to $u_i$ in $T$.
Note that we must have
\begin{equation}
 q_2<w''\leq u_i\quad\text{and}\quad w''\inc b_1^T\label{eq:q2-w''}
\end{equation}
in $T$.
Next we show that $$w''\not\leq b_{i+1}^T$$ in $T$.
Suppose that this is not true, and let us consider the case $i < k$ first.
Then $w'' \leq b_{i+1}^T$ would imply $w'' \leq a_{i+1}^T$ as otherwise $u_{i+1}\leq q_2$ in $T$, contradicting~\ref{item:uj-not-below-v2} of Claim~\ref{claim:q2-q3}.
Moreover, this yields  $w'' \leq u_{i+1}$ in $T$.
However, combined with the fact that $q_2 \leq b_{1}^T$ and $w'' \not \leq b_{1}^T$ in $T$ (by~\eqref{eq:q2-w''}) this implies $q_{i+1} = q_2$, contradicting the choice of $i$.
If $i = k$ then $w'' \leq b_{1}^T$ together with $w'' \leq u_i$ (see~\eqref{eq:q2-w''})
would imply $w'' \leq q_i$, again a contradiction.

So we indeed have $w''\not\leq b_{i+1}^T$ in $T$ (for an example illustrating this situation with $i=3$ see Figure~\ref{fig:q2-q3} on the right), and from this it follows that the $a_i^T$--$b_{i+1}^T$ path in $T$ includes the node $q_2$.
Observe that so does the $a_1^T$--$b_2^T$ path (because $u_1 < w_1 \leq q_2 <  b_2^T$ in $T$ by Claim~\ref{claim:orange-tree})
and the $a_2^T$--$b_3^T$ path (because $w \neq w'$).
Hence the three relations $a_1\leq b_2$, $a_2\leq b_3$ and $a_i\leq b_{i+1}$ all hit $B(q_2)=\set{c,d,e}$. Clearly, no element in $B(q_2)$ is
hit by two of these.
In other words, each element of $B(q_2)$ is greater or equal to $a_1$, $a_2$, or $a_i$ in $P$.

Furthermore, the paths from $r$ to $b_1^T$, $b_2^T$ and $b_3^T$ in $T$ all include the edge $\p(q_2)q_2$.
Hence two of the three relations $a_0 \leq b_1$, $a_0 \leq b_2$, $a_0 \leq b_3$ hit the same element in $B(\p(q_2)) \cap B(q_2) \subsetneq \set{c, d, e}$.
It follows that one element of the set $\set{a_1,a_2,a_i}$ is below two different elements of $\set{b_1,b_2,b_3}$ in $P$,
which contradicts the assumption that the alternating cycle is strict. This concludes the proof.
\end{proof}

Let  $\Sigma \in {\bf \Sigma}(\nu_{14})$ and suppose that $\set{(a_i,b_i)}_{i=1}^k$ is a strict alternating cycle
in $\MM(P, \nu_{14}, \Sigma)$ with root $(a_1, b_1)$.
Let $u_i$ denote $u_{a_ib_i}$ for each $i\in \{1,2, \dots, k\}$, and let $q_2:= u_2\wedge b_1^T$.
In the following claims we will need to consider three specific neighbors of the node $q_2$ in $T$,
namely, the neighbors of $q_2$ on the  $q_2$--$u_1$ path,  the $q_2$--$u_2$ path, and the $q_2$--$b_1^T$ path in $T$.
Let us denote these nodes by $\down{q_2}$, $\side{q_2}$ and $\up{q_2}$,  respectively.
By Claims~\ref{claim:orange-tree} and \ref{claim:q-point}, $\down{q_2}$, $\side{q_2}$ and $\up{q_2}$ are well defined and distinct.

The following claim is illustrated in Figure~\ref{fig:q-struct}.

\begin{figure}[t]
 \centering
 \includegraphics[scale=1.2]{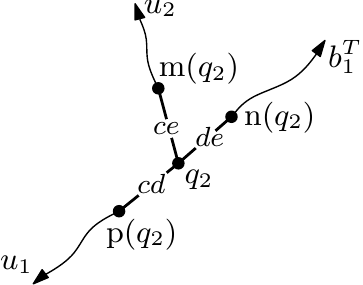}
 \caption{Illustration of Claim~\ref{claim:q2-struct}.}
 \label{fig:q-struct}
\end{figure}

\begin{claim}\label{claim:q2-struct}
Let  $\Sigma \in {\bf \Sigma}(\nu_{14})$ and suppose that $\set{(a_i,b_i)}_{i=1}^k$ is a strict alternating cycle
in $\MM(P, \nu_{14}, \Sigma)$ with root $(a_1, b_1)$.
Let $u_i$ denote $u_{a_ib_i}$ and let $q_i:= u_i\wedge b_1^T$, for each $i\in \{1,2, \dots, k\}$.
Then the elements of $B(q_2)$ can be written as $B(q_2)=\set{c,d,e}$ in such a way that
\begin{enumerate}
\item\label{item:bag-down} $B(q_2)\cap B(\down{q_2})=\set{c,d}$;
\item\label{item:bag-side} $B(q_2)\cap B(\side{q_2})=\set{c,e}$;
\item\label{item:bag-up} $B(q_2)\cap B(\up{q_2})=\set{d,e}$;
\end{enumerate}
and so that in $P$ we have
\begin{enumerate}
\setcounter{enumi}{3}
\begin{minipage}{.2\textwidth}
\item\label{item:paths-around-v2} \enskip
\begin{minipage}{.2\textwidth}
\begin{align*}
&a_1\leq c\leq b_2; \\
&a_0 \leq d\leq b_1; \\
&a_2\leq e\leq b_3;
\end{align*}
\end{minipage}
\end{minipage}
\hspace{.15\textwidth}
\begin{minipage}{.2\textwidth}
\item\label{item:cde-comparabilities} \enskip
\begin{minipage}{.13\textwidth}
\begin{align*}
&c\nleq d; \\
&e\nleq d; \\
&c\inc e;
\end{align*}
\end{minipage}
\end{minipage}
\hspace{.1\textwidth}
\begin{minipage}{.2\textwidth}
\item\label{item:ab-cde-comparabilities} \enskip
\begin{minipage}{.15\textwidth}
\begin{align*}
&a_2 \nleq c; \\[.4ex]
&a_2\nleq d.
\end{align*}
\end{minipage}
\end{minipage}
\end{enumerate}
\end{claim}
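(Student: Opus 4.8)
The plan is to read off all seven parts of the claim from how three distinguished relations of $P$ — namely $a_1\le b_2$, $a_0\le b_1$, and $a_2\le b_3$ — pass through the node $q_2$ in $T$. First I would pin down the local picture at $q_2$. Recall from Claims~\ref{claim:orange-tree}, \ref{claim:q-point}, \ref{claim:q2-not-q3} (and the paragraph introducing $\down{q_2},\side{q_2},\up{q_2}$) that $u_1<w_1\le q_2<u_2$, that $u_1<q_2<q_3<b_1^T$, that $\down{q_2}=\p(q_2)$, and that $\down{q_2},\side{q_2},\up{q_2}$ are pairwise distinct neighbours of $q_2$ in $T$. From these inequalities I would deduce that $q_2=a_1^T\wedge b_2^T=a_2^T\wedge b_3^T$ and that $q_2$ lies on the $r$--$b_1^T$ path; moreover the $a_1^T$--$b_2^T$ path passes through $q_2$ via the edges $q_2\down{q_2}$ and $q_2\side{q_2}$, the $r$--$b_1^T$ path via $q_2\down{q_2}$ and $q_2\up{q_2}$, and the $a_2^T$--$b_3^T$ path via $q_2\side{q_2}$ and $q_2\up{q_2}$ (the last one because $q_3$, which lies on the $q_2$--$b_1^T$ path, is reached from $q_2$ through $\up{q_2}$). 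Writing $D:=B(q_2)\cap B(\down{q_2})$, $S:=B(q_2)\cap B(\side{q_2})$, $U:=B(q_2)\cap B(\up{q_2})$, Observation~\ref{obs:hitting-vertex-or-edge} then gives that $a_1\le b_2$ hits both $D$ and $S$, that $a_0\le b_1$ hits both $D$ and $U$, and that $a_2\le b_3$ hits both $S$ and $U$.

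Next I would show that $D$, $S$, $U$ are three distinct two-element subsets of the three-element set $B(q_2)$, and hence coincide with its three two-subsets. Each has size $2$: if, say, $a_1\le b_2$ and $a_0\le b_1$ hit the same element of $D$, we would get $a_1\le b_1$, contradicting $(a_1,b_1)\in\MM(P)$. And they are pairwise distinct: if $D=S$, the element of $D$ hit by $a_0\le b_1$ would also be hit by $a_1\le b_2$ or by $a_2\le b_3$, yielding $a_1\le b_1$ or $a_2\le b_1$, the latter impossible in a strict alternating cycle of length $k\ge3$ (pairs in $\MM(P,\nu_{14},\Sigma)$ form no $2$-cycle); the cases $D=U$ and $S=U$ are analogous, also using $a_1\not\le b_3$. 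Writing $B(q_2)=\{c,d,e\}$ with $c$ the common element of $D$ and $S$, $d$ that of $D$ and $U$, and $e$ that of $S$ and $U$, we get $D=\{c,d\}$, $S=\{c,e\}$, $U=\{d,e\}$, which is exactly parts \ref{item:bag-down}--\ref{item:bag-up}.

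The heart of the matter is then deciding which element of each two-subset is hit by which relation, i.e.\ part~\ref{item:paths-around-v2}; everything else follows. Here I would run a short case analysis using only $a_1\not\le b_1$, $a_2\not\le b_2$, and the strictness consequences $a_2\not\le b_1$, $a_1\not\le b_3$ (all valid since $k\ge3$). If $a_1\le b_2$ hit $d$ rather than $c$ in $D$, then successively determining which element $a_0\le b_1$, $a_2\le b_3$, and $a_1\le b_2$ must hit in $D$, $U$, and $S$ forces a relation of the form $a_i\le b_i$ or $a_2\le b_1$; hence $a_1\le c\le b_2$. Then $a_0\le b_1$ cannot hit $c$ in $D$ (else $a_1\le c\le b_1$), so $a_0\le d\le b_1$; and $a_2\le b_3$ cannot hit $d$ in $U$ (else $a_2\le d\le b_1$), so $a_2\le e\le b_3$.

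Finally, parts~\ref{item:cde-comparabilities} and~\ref{item:ab-cde-comparabilities} drop out of part~\ref{item:paths-around-v2}: $c\le d$ would give $a_1\le c\le d\le b_1$, $e\le d$ would give $a_2\le e\le d\le b_1$, $c\le e$ would give $a_1\le c\le e\le b_3$, $e\le c$ would give $a_2\le e\le c\le b_2$, $a_2\le c$ would give $a_2\le c\le b_2$, and $a_2\le d$ would give $a_2\le d\le b_1$ — each contradicting one of $a_1\not\le b_1$, $a_2\not\le b_2$, $a_1\not\le b_3$, $a_2\not\le b_1$. The step I expect to demand the most care is the first one: correctly tracing the three tree-paths through $q_2$ and verifying that they use exactly the three claimed incident edges, since this rests on chaining together Claims~\ref{claim:orange-tree}, \ref{claim:q-point}, \ref{claim:q2-q3} and \ref{claim:q2-not-q3}; once the three "two-out-of-three" hitting facts are in hand, the case analysis for part~\ref{item:paths-around-v2} is essentially mechanical.
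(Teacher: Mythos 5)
Your proof follows essentially the same route as the paper's: establish via Claims~\ref{claim:orange-tree}, \ref{claim:q-point} and \ref{claim:q2-not-q3} that the $a_1^T$--$b_2^T$, $r$--$b_1^T$ and $a_2^T$--$b_3^T$ paths each pass through $q_2$ via the appropriate two of the edges $q_2\down{q_2}$, $q_2\side{q_2}$, $q_2\up{q_2}$, observe that no element of $B(q_2)$ can be hit by two of the three corresponding relations, and read off properties \ref{item:bag-down}--\ref{item:ab-cde-comparabilities}; you simply spell out the labeling and the "which element is hit" case analysis that the paper compresses into a single "it follows". One small factual slip: you assert $q_2=a_1^T\wedge b_2^T$, but in fact $a_1^T\wedge b_2^T=u_1$ (since $a_1^T$ lies over $v_{a_1b_1}$ while $q_2\geq w_{a_1b_1}$, so $a_1^T$ and $q_2$ branch at $u_1$); the second equality $q_2=a_2^T\wedge b_3^T$ is correct. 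This does not damage the argument, as the fact you actually use --- that the $a_1^T$--$b_2^T$ path contains the ordered nodes $\down{q_2},q_2,\side{q_2}$ --- follows directly from $u_1<\down{q_2}<q_2<\side{q_2}\leq u_2<b_2^T$ in $T$ together with $a_1^T\wedge b_2^T=u_1$.
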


\begin{proof}
By Claims~\ref{claim:orange-tree}--\ref{claim:q2-not-q3}
we know that $u_1<w_1 \leq q_2<u_2<\set{a_2^T,b_2^T}$ and $q_2<q_3<\set{b_1^T,a_3^T,b_3^T}$ in $T$.
Thus the $a_1^T$--$b_2^T$ path in $T$ goes through the nodes $\down{q_2}$, $q_2$, and $\side{q_2}$;
the $r$--$b_1^T$ path goes through $\down{q_2}$, $q_2$, and $\up{q_2}$, and
the $a_2^T$--$b_3^T$ path goes through $\side{q_2}$, $q_2$, and $\up{q_2}$.
It follows that the corresponding three relations $a_1\leq b_2$, $a_0 \leq b_1$ and $a_2\leq b_3$ in $P$
hit respectively the two sets $B(q_2)\cap B(\down{q_2})$ and $B(q_2)\cap B(\side{q_2})$;
the two sets $B(q_2)\cap B(\down{q_2})$ and $B(q_2)\cap B(\up{q_2})$,  and
the two sets $B(q_2)\cap B(\side{q_2})$ and $B(q_2)\cap B(\up{q_2})$.
Clearly, no element of $B(q_2)$ is hit by two of these three relations.
It follows that the elements of $B(q_2)$ can be written as $B(q_2)=\set{c,d,e}$ in such a way that
properties~\ref{item:bag-down}-\ref{item:paths-around-v2} hold.
The remaining two properties~\ref{item:cde-comparabilities} and~\ref{item:ab-cde-comparabilities} are immediate consequences of these.
\end{proof}

For each  $\Sigma \in {\bf \Sigma}(\nu_{14})$ we define a corresponding directed graph $\hat K_{\Sigma}$ on the set
$\MM(P, \nu_{14}, \Sigma)$ similarly as in Section~\ref{sec:third_leaf}:
Given two distinct pairs $(a_1,b_1), (a_2,b_2) \in \MM(P, \nu_{14}, \Sigma)$, there is an arc from
$(a_1,b_1)$ to $(a_2,b_2)$ in $\hat K_{\Sigma}$ if and only if there is a strict alternating cycle $\{(a_i',b_i')\}_{i=1}^k$ in  $\MM(P, \nu_{14}, \Sigma)$
with root $(a_1',b_1')$ which is such that $(a_1',b_1')=(a_1,b_1)$ and $(a_2',b_2')=(a_2,b_2)$.
In the latter case, we say that the arc $f$ {\em is induced by} the strict alternating cycle $\{(a_i',b_i')\}_{i=1}^k$.

Note that there could possibly be different strict alternating cycles inducing the same arc in $\hat K_{\Sigma}$.
Observe also that if $\{(a_i,b_i)\}_{i=1}^k$ is a strict alternating cycle inducing an arc in $\hat K_{\Sigma}$
then $(a_1, b_1)$ is always the root of the cycle (by the definition of `inducing').

For each arc $f= ((a_1,b_1), (a_2,b_2))$ of $\hat K_{\Sigma}$, define the corresponding three nodes of $T$:
\begin{align*}
&u^-(f):=u_{a_1b_1}; \\[.3ex]
&u^+(f):=u_{a_2b_2}; \\[.3ex]
&q(f) :=u_{a_2b_2}\wedge b_1^T.
\end{align*}
Observe that $$u^-(f)<w_{a_1b_1} \leq q(f)<u^+(f)$$ in $T$ by Claims~\ref{claim:orange-tree} and~\ref{claim:q-point}.
This will be used repeatedly in what follows.

\begin{claim}\label{claim:same-source}
For each  $\Sigma \in {\bf \Sigma}(\nu_{14})$, any two arcs $f, g$ in $\hat K_{\Sigma}$ sharing the same source
satisfy $q(f)=q(g)$.
\end{claim}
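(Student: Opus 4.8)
The plan is to argue by contradiction, exploiting the fine description of the node $q(f)$ supplied by Claim~\ref{claim:q2-struct} together with the fact that every strict alternating cycle in $\MM(P,\nu_{14},\Sigma)$ has length at least $3$. I would fix arcs $f=((a,b),(a',b'))$ and $g=((a,b),(a'',b''))$ of $\hat K_\Sigma$ with common source $(a,b)$, and a strict alternating cycle $\set{(a_i,b_i)}_{i=1}^k$ inducing $f$, so that $(a_1,b_1)=(a,b)$, $(a_2,b_2)=(a',b')$, and $k\geq 3$. Writing $u_0:=u_{ab}$ and $w_0:=w_{ab}$, the inequalities noted right before this claim give $u_0<w_0\leq q(f)<u^+(f)$ and $u_0<w_0\leq q(g)<u^+(g)$ in $T$; moreover $q(f)<b^T$ and $q(g)<b^T$, since $q(f)=u^+(f)\wedge b^T\leq b^T$ and equality would make the leaf $b^T$ a proper ancestor of $u^+(f)$ (similarly for $q(g)$). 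Hence $q(f)$ and $q(g)$ both lie on the $r$--$b^T$ path of $T$ and so are comparable in $T$; and since the statement is symmetric in $f$ and $g$, it is enough to derive a contradiction from the assumption $q(f)<q(g)$.

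Assuming $q(f)<q(g)$, I would apply Claim~\ref{claim:q2-struct} to the cycle $\set{(a_i,b_i)}_{i=1}^k$ inducing $f$. Since the node called $q_2$ there equals $u_{a_2b_2}\wedge b_1^T=q(f)$, this produces a labelling $B(q(f))=\set{c,d,e}$ with $B(q(f))\cap B(\up{q(f)})=\set{d,e}$ — where $\up{q(f)}$ denotes the neighbour of $q(f)$ on the $q(f)$--$b^T$ path — such that $d\leq b$ and $a'\leq e\leq b_3$ in $P$, with $b_3$ the second coordinate of the third pair of that cycle. The key step would then be to verify that the edge $q(f)\up{q(f)}$ of $T$ lies on the path from $a^T$ to ${b''}^T$. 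Since $w_0\leq q(f)$, the node $q(f)$ lies in the subtree of $T$ rooted at $w_0$, which is the child of $u_0=a^T\wedge b^T$ on the $b^T$ side, while $a^T$ lies in a different subtree hanging off $u_0$; hence $q(f)$ and $a^T$ are incomparable in $T$, with meet $u_0$. As $a^T\wedge{b''}^T\leq a^T$, this gives $q(f)\nleq a^T\wedge{b''}^T$, whereas $q(f)<q(g)<u^+(g)<{b''}^T$ while $a^T\wedge{b''}^T\leq{b''}^T$, so the comparable nodes $q(f)$ and $a^T\wedge{b''}^T$ satisfy $a^T\wedge{b''}^T<q(f)$. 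Therefore the $a^T$--${b''}^T$ path runs through $q(f)$, and since $q(g)$ lies on the $q(f)$--$b^T$ path (which leaves $q(f)$ through $\up{q(f)}$) and $q(g)<{b''}^T$, the neighbour of $q(f)$ on the $a^T$--${b''}^T$ path is exactly $\up{q(f)}$.

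To conclude, I would use that $a\leq b''$ in $P$, which holds because $g$ is an arc from $(a,b)$ to $(a'',b'')$, so that $(a,b)$ is followed by $(a'',b'')$ in a strict alternating cycle inducing $g$. By Observation~\ref{obs:hitting-vertex-or-edge} the relation $a\leq b''$ then hits $B(q(f))\cap B(\up{q(f)})=\set{d,e}$. If it hits $d$ then $a\leq d\leq b$, contradicting $(a,b)\in\MM(P)$; if it hits $e$ then $a=a_1\leq e\leq b_3$, which contradicts strictness of $\set{(a_i,b_i)}_{i=1}^k$ (there $a_i\leq b_j$ forces $j\equiv i+1$ cyclically, and $k\geq 3$). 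In either case we reach a contradiction, so $q(f)\not<q(g)$; the symmetric argument rules out $q(g)<q(f)$, and hence $q(f)=q(g)$.

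I expect the main obstacle to be the middle step, namely the verification that the edge $q(f)\up{q(f)}$ lies on the $a^T$--${b''}^T$ path; this is a routine but delicate piece of tree navigation that rests entirely on the chain $u_0<w_0\leq q(f)<q(g)<u^+(g)<{b''}^T$ in $T$. It is worth stressing that the cycle inducing $g$ enters the argument only through the relation $a\leq b''$ and the placement of $u^+(g)$ and $q(g)$ strictly above $q(f)$ in $T$; all of the structural work is done by Claim~\ref{claim:q2-struct}, applied to the cycle inducing $f$, together with the definition of a strict alternating cycle.
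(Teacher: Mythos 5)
Your argument is correct and follows essentially the same route as the paper's: after reducing to $q(f)<q(g)$ by comparability and symmetry, both proofs apply Claim~\ref{claim:q2-struct} to a strict alternating cycle inducing $f$ and navigate $T$ to show that the relation $a_1\leq b_2'$ (your $a\leq b''$) passes through the bags around $q(f)$. The endgame differs slightly: the paper pushes both $a_1\leq b_2'$ and $a_0\leq b_1$ through the edges $\down{q(f)}q(f)$ and $q(f)\up{q(f)}$, deduces that one relation hits only $d$ and the other hits $c$ and $e$, and contradicts $c\inc e$ (property~(v) of Claim~\ref{claim:q2-struct}); you use only the edge $q(f)\up{q(f)}$, where $a\leq b''$ must hit $\{d,e\}$, and conclude directly from $d\leq b_1$ and $a_2\leq e\leq b_3$ (property~(iv)) that $a_1\leq b_1$ or $a_1\leq b_3$, both impossible for a strict cycle of length at least $3$. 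Your variant is marginally leaner, using one bag-intersection and one auxiliary relation rather than two, and is sound.
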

\begin{proof}
Assume to the contrary that $q(f)\neq q(g)$. Let $(a_1, b_1) \in \MM(P, \nu_{14}, \Sigma)$ denote the source of the two arcs $f$ and $g$.
By definition $q(f) < b_1^T$ and $q(g) < b_1^T$ in $T$. Thus in particular $q(f)$ and $q(g)$ are comparable in $T$,
say without loss of generality $q(f)<q(g)$.
Hence we have  $u_{a_1b_1} <w_{a_1b_1} \leq q(f) < q(g) < b_1^T$ in $T$.

Let $(a_2, b_2), (a_2', b_2') \in \MM(P, \nu_{14}, \Sigma)$ denote the targets of arcs $f$ and $g$, respectively.
Let $(a_3, b_3), \dots, (a_k, b_k) \in \MM(P, \nu_{14}, \Sigma)$ be such that $\{(a_i,b_i)\}_{i=1}^k$ is  a strict alternating cycle inducing $f$.
Write the elements of $B(q(f))$ as $B(q(f))=\{c,d,e\}$ as in Claim~\ref{claim:q2-struct} when applied to the latter cycle.
Then the paths from $a_1^T$ to $b_2'^T$ and from $r$ to $b_1^T$ in $T$ both include the three nodes $\down{q(f)}$, $q(f)$, and $\up{q(f)}$.
Hence, the two relations $a_1\leq b_2'$ and $a_0 \leq b_1$ in $P$ both hit the two sets
$B(q(f))\cap B(\down{q(f)})=\set{c,d}$  and $B(q(f))\cap B(\up{q(f)})=\set{d,e}$. On the other hand, each of $c, d, e$ is clearly hit
by at most one of these two relations. It follows that one relation hits $d$ and the other hits both $c$ and $e$, implying
$c\leq e$ in $P$ by Observation~\ref{obs:hitting-vertex-or-edge}.
However, this contradicts $c\inc e$ in $P$ (cf.\  property~\ref{item:cde-comparabilities} of Claim~\ref{claim:q2-struct}).
\end{proof}

The following claim is similar to the previous one.

\begin{claim}\label{claim:same-sink}
For each  $\Sigma \in {\bf \Sigma}(\nu_{14})$, any two arcs $f, g$ in $\hat K_{\Sigma}$ sharing the same target
satisfy $q(f)=q(g)$.
\begin{proof}
Assume to the contrary that $q(f)\neq q(g)$. Let $(a_2, b_2) \in \MM(P, \nu_{14}, \Sigma)$ denote the common target of the two arcs $f$ and $g$.
We have $q(f) \leq u_{a_2b_2}$ and $q(g) \leq u_{a_2b_2}$ in $T$. Thus $q(f)$ and $q(g)$ are comparable in $T$, say  $q(f)<q(g)$
in $T$.

Applying Claim~\ref{claim:q2-struct} on a strict alternating cycle inducing $f$, we see that there exists
an element $e \in B(q(f))$ such that $a_2 \leq e$ in $P$.
Since $q(f)<q(g) \leq u_{a_2b_2} < a_2^T$ in $T$, the path from $q(f)$ to $a_2^T$ in $T$ goes through $\down{q(g)}$ and $q(g)$.
Thus the relation  $a_2 \leq e$ hits $B(q(g))\cap B(\down{q(g)})$, and hence
$a_2 \leq s$ in $P$ for some $s\in B(q(g))\cap B(\down{q(g)})$.
However, applying Claim~\ref{claim:q2-struct} on a strict alternating cycle inducing $g$ this time, we deduce that
$a_2 \nleq t$ in $P$ for each $t\in B(q(g))\cap B(\down{q(g)})$ (cf. property~\ref{item:ab-cde-comparabilities}), and therefore in particular $a_2 \nleq s$ in $P$, a contradiction.
\end{proof}
\end{claim}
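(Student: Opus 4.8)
The plan is to argue by contradiction. Write $(a_2,b_2)$ for the common target of the arcs $f$ and $g$, so that $u^+(f)=u^+(g)=u_{a_2b_2}$. By the standard inequality $u^-(h)<q(h)<u^+(h)$ valid for every arc $h$ of $\hat K_{\Sigma}$ (which follows from Claims~\ref{claim:orange-tree} and~\ref{claim:q-point}), both $q(f)$ and $q(g)$ lie strictly below $u_{a_2b_2}$ in $T$, hence on the $r$--$u_{a_2b_2}$ path, and are therefore comparable in $T$. Assuming $q(f)\neq q(g)$, I may thus suppose without loss of generality that $q(f)<q(g)$ in $T$; combined with $q(g)<u_{a_2b_2}<a_2^T$ this gives $q(f)<q(g)<u_{a_2b_2}<a_2^T$ in $T$.

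The heart of the argument is to play the structural Claim~\ref{claim:q2-struct} off against itself for two suitable cycles. First apply it to a strict alternating cycle inducing $f$: its root is the source of $f$ and its second pair is $(a_2,b_2)$, so Claim~\ref{claim:q2-struct}\ref{item:paths-around-v2} produces an element $e\in B(q(f))$ with $a_2\leq e$ in $P$. Now trace the $a_2^T$--$q(f)$ path in $T$: since $q(f)<q(g)<a_2^T$, this path passes through $q(g)$ together with the neighbour of $q(g)$ towards the root, which is precisely $\down{q(g)}=\p(q(g))$ (the $\down{\cdot}$ direction at $q(g)$ points rootward because the source of $g$ has its $u$-node strictly below $q(g)$). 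Hence, by Observation~\ref{obs:hitting-vertex-or-edge}, the relation $a_2\leq e$ hits the two-element set $B(q(g))\cap B(\down{q(g)})$, so $a_2\leq s$ in $P$ for some $s$ in that set.

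To close, apply Claim~\ref{claim:q2-struct} to a strict alternating cycle inducing $g$; its second pair is again $(a_2,b_2)$, and property~\ref{item:ab-cde-comparabilities} asserts exactly that $a_2\nleq t$ in $P$ for each of the two elements $t$ of $B(q(g))\cap B(\down{q(g)})$. This contradicts $a_2\leq s$, so $q(f)=q(g)$.

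I expect no genuine obstacle here: the statement is a short variant of Claim~\ref{claim:same-source}, with the real content already packaged in the structural Claim~\ref{claim:q2-struct}, whose asymmetric conclusions — one element of $B(q(f))$ lying above $a_2$, yet neither element of $B(q(g))\cap B(\down{q(g)})$ lying above $a_2$ — are exactly what is needed. The only point demanding care is the routine bookkeeping that the $a_2^T$--$q(f)$ path uses the edge $q(g)\down{q(g)}$ rather than some other edge incident to $q(g)$, and this follows immediately from $q(f)<q(g)$ together with $u^-(g)<q(g)$.
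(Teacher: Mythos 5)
Your proposal is correct and follows essentially the same line of reasoning as the paper's own proof: reduce to $q(f)<q(g)$, extract $e\in B(q(f))$ with $a_2\leq e$ from Claim~\ref{claim:q2-struct} applied to a cycle inducing $f$, note that the $a_2^T$--$q(f)$ path uses the edge $q(g)\down{q(g)}$, and then contradict property~\ref{item:ab-cde-comparabilities} of Claim~\ref{claim:q2-struct} applied to a cycle inducing $g$. The only cosmetic difference is that you spell out the identification $\down{q(g)}=\p(q(g))$, which the paper leaves implicit.
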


\begin{claim}\label{claim:u-q-order1}
For each  $\Sigma \in {\bf \Sigma}(\nu_{14})$ and any two arcs $f, g$ in $\hat K_{\Sigma}$, we {\em neither}
have
\[
q(f)<q(g)<u^+(f) \leq u^+(g)
\]
{\em nor}
\[
q(f)<q(g)<u^+(g)\leq u^+(f)
\]
in $T$.
\end{claim}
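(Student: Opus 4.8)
The plan is to argue by contradiction: assume one of the two displayed chains holds. In either case $q(f)<q(g)<u^+(f)$ in $T$, while we always also have $q(g)<u^+(g)$ (the inequality noted just before the claim), and the two assumed cases together say precisely that $u^+(f)$ and $u^+(g)$ are comparable in $T$. As usual I may assume $\alpha_1(a,b)=\textrm{left}$ for all pairs $(a,b)$ involved. Fix strict alternating cycles $\{(a_i,b_i)\}_{i=1}^k$ and $\{(a'_i,b'_i)\}_{i=1}^m$, both of length at least $3$ since we are in $\MM(P,\nu_{14},\Sigma)$, inducing $f$ and $g$ respectively, and apply Claim~\ref{claim:q2-struct} to each: write $B(q(f))=\{c,d,e\}$ and $B(q(g))=\{c',d',e'\}$, with the bag-intersection identities \ref{item:bag-down}--\ref{item:bag-up}, the relations $a_1\le c\le b_2$, $a_0\le d\le b_1$, $a_2\le e\le b_3$ of \ref{item:paths-around-v2}, and the comparabilities $c\not\le d$, $e\not\le d$, $c\inc e$, $a_2\not\le c$, $a_2\not\le d$ of \ref{item:cde-comparabilities}--\ref{item:ab-cde-comparabilities} (and the primed analogues for $g$).

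I would begin by fixing the local geometry around $q(f)$ and $q(g)$. Since $u^-(\cdot)<q(\cdot)$ strictly, $\down{q(f)}=\p(q(f))$ and $\down{q(g)}=\p(q(g))$. Because $q(f)<q(g)<u^+(f)$, the node $q(g)$ lies on the $q(f)$--$u^+(f)$ path, so that path leaves $q(f)$ through $\side{q(f)}$; in particular $q(g)$ is incomparable in $T$ with $b_1^T$ and $b_3^T$, which hang below $\up{q(f)}$. As $u^+(f)$ and $u^+(g)$ are comparable and both strictly below $q(g)$, one lies on the path from $q(g)$ to the other, so the neighbour of $q(g)$ towards $u^+(f)$ is the same as its neighbour towards $u^+(g)$, namely $\side{q(g)}$; hence $a_2^T$, $b_2^T$ and $b_2'^T$ all hang below $\side{q(g)}$. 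Thus, for each comparability $s\le t$ of $P$ used below, its $T$-path crosses a known list of edges incident with $q(f)$ and $q(g)$, and Observation~\ref{obs:hitting-vertex-or-edge} turns each crossing into a monotone chain of chosen bag-elements, where the edge from $q(f)$ towards $\side{q(f)}$ carries $\{c,e\}$, that towards $\p(q(f))$ carries $\{c,d\}$, and symmetrically for $q(g)$ with $\{c',e'\}$ and $\{c',d'\}$.

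Now for the argument proper. Pushing $a_2\le b_3$ along its $T$-path $a_2^T\to\cdots\to\side{q(g)}\to q(g)\to\p(q(g))\to\cdots\to\side{q(f)}\to q(f)\to\up{q(f)}\to\cdots\to b_3^T$, the comparabilities of $g$ ($c'\inc e'$, $e'\not\le d'$, $c'\not\le d'$) force the element hit at $q(g)$ to be $c'$, and those of $f$ together with $a_2\not\le c$ force it to be $e$ at $q(f)$, yielding $a_2\le c'\le e\le b_3$, so $c'\le e$ in $P$. Next, push $a_0\le b_2'$ along $r\to\cdots\to\p(q(f))\to q(f)\to\side{q(f)}\to\cdots\to\p(q(g))\to q(g)\to\side{q(g)}\to\cdots\to b_2'^T$: if it left $q(f)$ at $c$, then at $q(g)$ hitting $c'$ would give $c\le c'\le e$ against $c\inc e$, and hitting $d'$ then $e'$ would give $a_2'\le e'\le b_2'$ against $(a_2',b_2')\in\MM(P)$; so it leaves $q(f)$ at $e$, which (via $c'\le e$) forces $e=c'$ and then $a_0\le e\le b_2'$, i.e.\ $a_2\le b_2'$. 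Likewise push $a_0\le b_2$ along the analogous path: leaving $q(f)$ at $e$ would give $a_2\le e\le b_2$, impossible, so it leaves at $c$, and then (using $e=c'$) every option at $q(g)$ except ``$d'$ then $e'$'' yields a contradiction; the surviving option gives $a_2'\le e'\le b_2$, i.e.\ $a_2'\le b_2$. But now $a_2\le b_2'$ and $a_2'\le b_2$ say that $\{(a_2,b_2),(a_2',b_2')\}$ is a strict alternating cycle of length $2$ in $\MM(P,\nu_{14},\Sigma)$ (and $(a_2,b_2)\ne(a_2',b_2')$, else $a_2\le b_2$), which is impossible since $\MM(P,\nu_{14},\Sigma)$ contains no $2$-cycle. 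This contradiction proves the claim.

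The main obstacle I anticipate is the third paragraph's bookkeeping: for each of $a_2\le b_3$, $a_0\le b_2'$ and $a_0\le b_2$ there are several ``which of the two bag-elements is hit at each edge'' branches, and one must verify that every branch either (i) forces a comparability forbidden by Claim~\ref{claim:q2-struct} (such as $c\le d$, $c\le e$, $c'\le d'$, $c'\le e'$), or (ii) forces $c\le e$ via $c'\le e$, or (iii) makes a min-max pair comparable, and that the surviving branches combine exactly to the forbidden length-$2$ alternating cycle. Getting the geometry right — especially identifying the neighbour of $q(g)$ towards $u^+(f)$ with $\side{q(g)}$, which is where the comparability of $u^+(f)$ and $u^+(g)$ is used — is the other point requiring care.
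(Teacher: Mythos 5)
Your argument is correct and follows essentially the same route as the paper: after applying Claim~\ref{claim:q2-struct} to the inducing cycles, you push a relation starting from $a_2$ across the two edges at $q(g)$ to conclude $a_2\leq c'\leq b_2'$, then push $a_0\leq b_2$ across $q(g)$ to conclude $a_2'\leq e'\leq b_2$, producing the forbidden $2$-cycle $(a_2,b_2),(a_2',b_2')$. The paper is more economical on the same idea: it only needs $a_2\leq s$ for some $s\in B(q(f))$ rather than the full $a_2\leq b_3$, it only tracks bag-crossings at $q(g)$, and your middle push of $a_0\leq b_2'$ is redundant, since $a_2\leq c'$ together with $c'\leq b_2'$ from Claim~\ref{claim:q2-struct} applied to $g$'s cycle already yields $a_2\leq b_2'$.
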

\begin{proof}
Let $(a_2,b_2)$ and $(a_2',b_2')$ denote the targets of $f$ and $g$, respectively. Arguing by contradiction,
assume that at least one of the two inequalities holds.
Then we have
\begin{equation}
q(f)\leq \down{q(g)}<q(g)<\side{q(g)} \leq \set{u_{a_2b_2},u_{a_2'b_2'}}\label{eq:qf-qg}
\end{equation}
in $T$.

Now consider a strict alternating cycle inducing $g$
and write the elements of $B(q(g))$ as $B(q(g))=\{c,d,e\}$ as in Claim~\ref{claim:q2-struct} when applied
to the latter cycle. See Figure~\ref{fig:q-struct2} for an illustration.
\begin{figure}[t]
 \centering
 \includegraphics{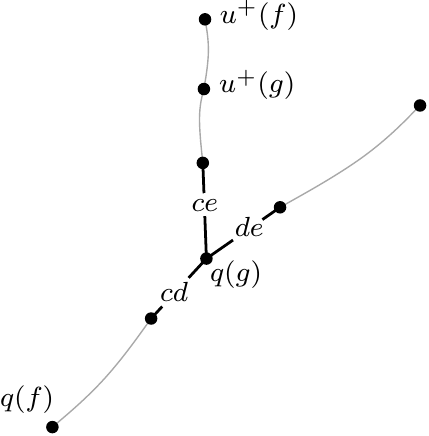}
 \caption{Illustration of the proof of Claim~\ref{claim:u-q-order1}.}
 \label{fig:q-struct2}
\end{figure}
By this claim we have
\begin{equation}
 c\leq b_2'\quad\text{and}\quad a_2'\leq e\quad\text{and}\quad e\nleq d\label{eq:ced}
\end{equation}
in $P$. Applying
Claim~\ref{claim:q2-struct} on a strict alternating cycle inducing $f$, we also deduce that
there exists $s\in B(q(f))$ such that $a_2\leq s$ in $P$.

Given that by~\eqref{eq:qf-qg} the path from $a_2^T$ to $q(f)$ goes first through $u_{a_2b_2}$ and then through the three nodes $\side{q(g)}$, $q(g)$ and $\down{q(g)}$ in $T$,
it follows that the relation $a_2\leq s$ hits both $B(q(g))\cap B(\side{q(g)}) = \set{c,e}$ and $B(q(g))\cap B(\down{q(g)})=\set{c,d}$.
If it {\em did not} hit $c$, then it would hit both $d$ and $e$, and we would have $e \leq d$ in $P$
by Observation~\ref{obs:hitting-vertex-or-edge}, which is not possible by~\eqref{eq:ced}.
Thus  $a_2\leq s$ hits $c$, that is, $a_2\leq c \leq s$ in $P$.
Together with~\eqref{eq:ced} this implies $$a_2\leq c \leq b_2'$$ in $P$, and we also deduce $(a_2,b_2)\neq (a_2',b_2')$.

Now, the path from $r$ to $b_2^T$ in $T$ goes through $q(g)$ and $\side{q(g)}$, and thus $a_0 \leq b_2$ hits $\set{c,e}$.
It cannot hit $c$, as otherwise $a_2\leq c\leq b_2$ in $P$.
Hence  $a_0 \leq b_2$ hits $e$, and we have $a_0 \leq e\leq b_2$ in $P$, which by~\eqref{eq:ced} implies $$a_2'\leq e \leq b_2.$$
It follows that $(a_2,b_2), (a_2',b_2')$ is an alternating cycle of length $2$, which is a contradiction since
there is no such cycle in $\MM(P, \nu_{14}, \Sigma)$.
\end{proof}

For the next claim let us recall that given a pair $(a,b)\in\MM(P,\nu_{14},\Sigma)$, the elements of $B(u_{ab})$ are labeled with $x_{ab},y_{ab},z_{ab}$, and it holds that $B(u_{ab})\cap B(\p(u_{ab}))=\{x_{ab},y_{ab}\}$,
$$a\leq x_{ab}\not\leq b,\quad\text{and}\quad a\not\leq y_{ab}\leq b$$
in $P$.

\begin{claim}\label{claim:u-q-order2}
For each  $\Sigma \in {\bf \Sigma}(\nu_{14})$, {\em no two} arcs $f, g$ in $\hat K_{\Sigma}$ satisfy
\[
u^-(f)\leq q(g)<u^+(g)\leq q(f).
\]
in $T$.
\end{claim}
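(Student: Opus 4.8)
The plan is to argue by contradiction. Suppose $f$ and $g$ are arcs of $\hat K_{\Sigma}$ with $u^-(f)\le q(g)<u^+(g)\le q(f)$ in $T$, and fix strict alternating cycles $\{(a_i,b_i)\}_{i=1}^{k}$ and $\{(a'_i,b'_i)\}_{i=1}^{k'}$ in $\MM(P,\nu_{14},\Sigma)$ inducing $f$ and $g$; thus $(a_1,b_1)$, $(a'_1,b'_1)$ are their roots and $(a_2,b_2)$, $(a'_2,b'_2)$ their targets. As always we may assume $\alpha_1(a_i,b_i)=\alpha_1(a'_i,b'_i)=\textrm{left}$. Write $u_i:=u_{a_ib_i}$, $u'_i:=u_{a'_ib'_i}$, so that $u^-(f)=u_1$, $u^+(f)=u_2$, $q(f)=u_2\wedge b_1^{T}$, $u^-(g)=u'_1$, $u^+(g)=u'_2$ and $q(g)=q'_2:=u'_2\wedge b_1'^{T}$. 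By Claims~\ref{claim:orange-tree} and~\ref{claim:q-point} the chains $u_1<w_{a_1b_1}\le q(f)<u_2$ and $u'_1<w_{a'_1b'_1}\le q'_2<u'_2$ hold in $T$. Note also that if $f$ and $g$ shared a source (resp.\ a target) then Claim~\ref{claim:same-source} (resp.\ Claim~\ref{claim:same-sink}) would give $q(f)=q(g)$, contradicting $q(g)<u^+(g)\le q(f)$; so $(a_1,b_1)\ne(a'_1,b'_1)$ and $(a_2,b_2)\ne(a'_2,b'_2)$.

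The first real step is to pin down the positions of the nodes. In the generic situation $u^-(f)<q(g)$, the nodes $u_1<w_{a_1b_1}\le q'_2$ all lie on the $u_1$--$q(f)$ path in $T$, and since $q(f)\le b_1^{T}$ this path starts with $u_1,w_{a_1b_1},\dots$; hence $w_{a_1b_1}\le q'_2$ and so $u_1<w_{a_1b_1}\le q'_2<u'_2\le q(f)<u_2$ in $T$. From this one reads off that both $a_1^{T}$ and $r$ lie on the $\down{q'_2}$--side of the edge $\down{q'_2}\,q'_2$ (here $\down{q'_2}=\p(q'_2)$ because $u'_1<q'_2$), while $b_2^{T}$ and $b_1^{T}$ lie on the $\side{q'_2}$--side of the edge $q'_2\,\side{q'_2}$ (both are descendants of $u'_2$, which is reached from $q'_2$ through $\side{q'_2}$). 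Therefore the paths in $T$ realizing $a_1\le b_2$ and $a_0\le b_1$ both pass through $q'_2$, each entering it via $\down{q'_2}\,q'_2$ and leaving it via $q'_2\,\side{q'_2}$. The two boundary cases $u^-(f)=q(g)$ and $u^+(g)=q(f)$ require a slightly different but entirely parallel inspection of these paths (in the first case $a_1\le b_2$ enters $q'_2$ through $\up{q'_2}\,q'_2$ instead), and I would treat them separately; they lead to the same type of contradiction below.

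Now apply Claim~\ref{claim:q2-struct} to the cycle inducing $g$: its $q$-node is exactly $q(g)=q'_2$, so $B(q'_2)=\{c,d,e\}$ with $B(q'_2)\cap B(\down{q'_2})=\{c,d\}$, $B(q'_2)\cap B(\side{q'_2})=\{c,e\}$, and in $P$ we have $a'_1\le c\le b'_2$, $a_0\le d\le b'_1$, $a'_2\le e$. By the previous paragraph and Observation~\ref{obs:hitting-vertex-or-edge}, each of $a_1\le b_2$ and $a_0\le b_1$ hits both $\{c,d\}$ and $\{c,e\}$; moreover these two relations can never hit a common element $q\in B(q'_2)$, since that would force $a_1\le q\le b_1$. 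A short case analysis on which of $c,d$ the relation $a_1\le b_2$ hits now finishes the proof. If it hits $d$, then $a_1\le d\le b_2$, so with $a_0\le d\le b'_1$ we get $a_1\le b'_1$; then $a_0\le b_1$ cannot hit $d$ (it would give $a_1\le d\le b_1$), hence hits $c$, giving $a'_1\le c\le b_1$, i.e.\ $a'_1\le b_1$; thus $(a_1,b_1),(a'_1,b'_1)$ is a $2$-cycle, impossible since the two pairs are distinct and, by construction of $\alpha_{13}$, $\MM(P,\nu_{14},\Sigma)$ contains no $2$-cycle. If instead $a_1\le b_2$ hits $c$, then $a_1\le c\le b_2$, so $a_0\le b_1$ can hit neither $c$ nor $c$ again at the $\side{q'_2}$-edge, hence it hits $e$ there: $a_0\le e\le b_1$; with $a'_2\le e$ this gives $a'_2\le b_1$, while $a_1\le c\le b'_2$ gives $a_1\le b'_2$, so $(a_1,b_1),(a'_2,b'_2)$ is a $2$-cycle — impossible, being either a genuine $2$-cycle in $\MM(P,\nu_{14},\Sigma)$ or, if the two pairs coincide, yielding $a_1\le b'_2=b_1$, contradicting $(a_1,b_1)\in\MM(P)$.

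The hardest part, as throughout this section, is the geometric bookkeeping of Step~2: getting every inequality among $u^-(f),q(g),u^+(g),q(f),w_{a_1b_1},u'_1$ exactly right so that the two relations $a_1\le b_2$ and $a_0\le b_1$ are guaranteed to traverse $q(g)$ through the prescribed pair of incident edges, and disposing carefully of the two degenerate cases in which one of the outer inequalities is an equality. Once the geometry is fixed, the argument with Claim~\ref{claim:q2-struct} and the short case split above is routine.
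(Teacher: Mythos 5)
Your generic-case argument (where $u^-(f)<q(g)$) is correct and in fact streamlines the paper's: instead of first establishing $a_1\le x'_2\le b_2$ and $a_0\le y'_2\le b_1$ via the edge $\p(u'_2)u'_2$ and then pushing down to $q(g)$, you go directly to the two edges $\down{q'_2}\,q'_2$ and $q'_2\,\side{q'_2}$, apply Claim~\ref{claim:q2-struct}, and finish with a clean two-way case split. That all checks out, as does the use of Claims~\ref{claim:same-source} and \ref{claim:same-sink} to get $(a_1,b_1)\ne(a'_1,b'_1)$ and $(a_2,b_2)\ne(a'_2,b'_2)$.

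The gap is in the boundary case $u^-(f)=q(g)$, i.e.\ $u_1=q'_2$. You assert that the $a_1^T$--$b_2^T$ path then enters $q'_2$ through the edge $\up{q'_2}\,q'_2$. That is not justified: since $u_1=q'_2$, this path enters $q'_2$ through $v_{a_1b_1}$, the neighbor of $u_1$ towards $a_1^T$, and there is no reason this should equal $\up{q'_2}$ (which points towards $b'^T_1$). In particular $a_1\le b_2$ is only guaranteed to hit $\set{c,e}$, not $\set{c,d}$. Your case split then breaks: in the sub-case where $a_1\le b_2$ hits $e$ but not $c$, the chain $a_0\le c\le b_1$ gives $a'_1\le b_1$, but there is no corresponding $a_1\le b'_1$, and no $2$-cycle appears. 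So "the same type of contradiction below'' does not go through as stated. The paper closes this case by a genuinely different mechanism: it first pins down $a_1\le x'_2\le b_2$ (via the edge $\p(u'_2)u'_2$ and a $2$-cycle argument), deduces $a_1\le e\le x'_2$ so that $x'_2=e=z_1\in B(u_1)$, and then derives a contradiction from the coloring $\phi$ (since $\phi(x'_2)=\phi(x_1)\neq\phi(z_1)$). Your streamlined route skips the $\p(u'_2)u'_2$ step entirely and therefore does not have this tool available. (One can in fact repair your approach without invoking $\phi$, by observing that when $u_1=q'_2$ one has $\set{x_1,y_1}=\set{c,d}$ and $a_1\le x_1$, and splitting on whether $x_1=c$ or $x_1=d$ — but that argument is not in your sketch, and the $\up{q'_2}$ assertion you do make is wrong.) Finally, the other "boundary'' case $u^+(g)=q(f)$ is not actually a boundary case: the geometry of the relevant paths through $q'_2$ is unchanged, so it is subsumed by the generic argument.
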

\begin{proof}
Assume to the contrary that the inequality holds.
Let $\{(a_i,b_i)\}_{i=1}^k$ be a strict alternating cycle inducing $f$ and let $\{(a'_i,b'_i)\}_{i=1}^\ell$ be one inducing $g$.
Let $u_i:= u_{a_ib_i}$, $q_i:= u_{a_ib_i} \wedge b_1^T$,
$x_i:=x_{a_ib_i}$, $y_i:=y_{a_ib_i}$, and $z_i:=z_{a_ib_i}$
for each $i\in \set{1, \dots, k}$, and
let $u_i':= u_{a_i'b_i'}$, $q_i':= u_{a_i'b_i'} \wedge b_1'^T$,
$x_i':=x_{a_i'b_i'}$, $y_i':=y_{a_i'b_i'}$, and $z_i':=z_{a_i'b_i'}$
for each $i\in \set{1, \dots, \ell}$.
Thus $u^-(f)=u_1$, $q(f)=q_2$, $u^+(g)=u_{2}'$, $q(g)=q_2'$, and
\begin{equation}
u_1\leq q'_2<u'_2\leq q_2 < \set{b_1^T, b_2^T}\label{eq:u1-q2'}
\end{equation}
in $T$ by our assumption.
See Figure~\ref{fig:v-u-path} for an illustration of the situation.
\begin{figure}[t]
 \centering
 \includegraphics{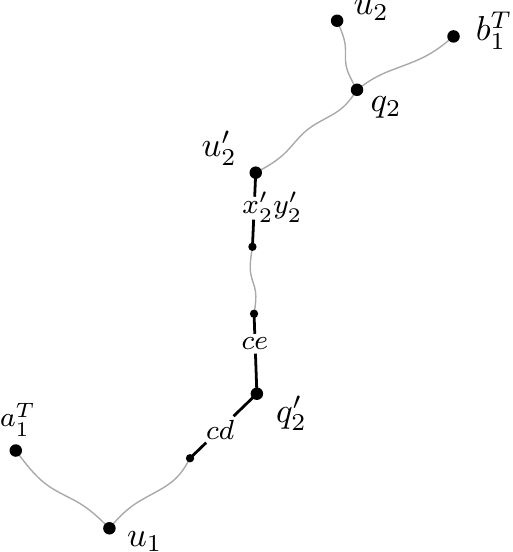}
 \caption{Illustration of the proof of Claim~\ref{claim:u-q-order2}.}
 \label{fig:v-u-path}
\end{figure}

The $a_1^T$--$b_2^T$ path and the $r$--$b_1^T$ path both go through $\p(u'_2)$ and $u'_2$ in $T$.
Thus the two relations $a_1\leq b_2$ and $a_0 \leq b_1$ both hit $B(\p(u'_2)) \cap B(u_2') = \set{x'_2,y'_2}$, and clearly neither
of $x'_2,y'_2$ is hit by both relations.
We cannot have $a_1\leq y'_2\leq b_2$ and $a_0 \leq x'_2\leq b_1$ in $P$,
because otherwise we would have $a_1\leq y'_2\leq b'_2$ and $a'_2\leq x'_2\leq b_1$ in $P$,
implying that $(a_1,b_1), (a_2',b_2')$ is an alternating cycle of length $2$ in $\MM(P, \nu_{14}, \Sigma)$, a contradiction.
Hence we have
\begin{equation}
a_1\leq x'_2\leq b_2 \quad \text{ and }\quad a_0 \leq y'_2\leq b_1\label{eq:x2'-y2'}
\end{equation}
in $P$.

Let us denote the elements in $B(q'_2)$ as $B(q'_2)=\set{c,d,e}$ as in Claim~\ref{claim:q2-struct}
when applied to the strict alternating cycle $\{(a'_i,b'_i)\}_{i=1}^\ell$.
Then we have $a_0 \leq d\leq b'_1$ in $P$, as well as
$a'_2\leq e\leq b'_3$ and  $a'_1\leq c\leq b'_2$.
Given that the relation $a'_2\leq e$ hits $B(\p(u'_2)) \cap B(u_2') = \set{x'_2,y'_2}$, and that it clearly cannot hit $y'_2$ because $y'_2\leq b'_2$ in $P$,
we have $a'_2\leq x'_2\leq e$ in $P$. Similarly, $c\leq b'_2$ hits $\set{x'_2,y'_2}$ as well and cannot hit $x'_2$ because $a'_2\leq x'_2$ in $P$,
hence $c\leq y'_2\leq b'_2$ in $P$.
Summarizing, we have
\begin{align}
 a'_2\leq x'_2\leq e\leq b'_3,\label{eq:a'2}\\
 a'_1\leq c\leq y'_2\leq b'_2,\label{eq:a'1}\\
 a_0 \leq d\leq b'_1\label{eq:a0-b'1}
\end{align}
in $P$.

Recall that $u_1\leq q_{2}'$ in $T$ by~\eqref{eq:u1-q2'}.
We split the rest of the argument into two cases and start with the case that $u_1< q_{2}'$ in $T$.
Then the path from $a_1^T$ to $u'_2$ goes through $\down{q'_2}$ and $q'_2$.
It follows that the relation $a_1\leq x'_2$ hits $B(\down{q'_2})\cap B(q'_2) = \set{c,d}$.
If it hits $c$ then $c \leq x'_2$ in $P$, which implies $a'_1\leq c\leq x'_2\leq e\leq b'_3$ by~\eqref{eq:a'1} and \eqref{eq:a'2}, a contradiction to the assumption that we deal with a strict alternating cycle of length at least $3$.
Hence, we have $a_1\leq d\leq x'_2$.
However, using~\eqref{eq:a0-b'1} we obtain $a_1\leq d\leq b'_1$ in $P$, and since $a'_1\leq c\leq y'_2\leq b_1$ in $P$ (by combining~\eqref{eq:a'1} and \eqref{eq:x2'-y2'}) this implies that $(a_1,b_1)\neq(a_1',b_1')$
and therefore that $(a_1,b_1), (a_1',b_1')$ is an alternating cycle of length $2$ in $\MM(P, \nu_{14}, \Sigma)$, a contradiction.

It remains to consider the case that $u_1=q'_2$ in $T$.
Then the path from $a_1^T$ to $u'_2$ in $T$ goes through $q'_2$ and $\side{q'_2}$.
Thus $a_1\leq x'_2$ hits $B(q'_2) \cap B(\side{q'_2}) = \set{c,e}$.
The relation $a_1\leq x'_2$ cannot hit $c$, for the same reason as in the previous paragraph.
Hence $a_1\leq e\leq x'_2$ in $P$, implying that $x'_2=e$ by~\eqref{eq:a'2}.

Now, observe that $e\in B(u_1)$ since $u_1=q'_2$.
Given that $e\not\in B(q'_2) \cap B(\down{q'_2}) = \set{c,d} = B(u_1)\cap B(\p(u_1))=\set{x_1,y_1}$, we conclude $x'_2=e=z_1$.
However, in the coloring $\phi$ we have $\phi(x'_2) = \phi(x_1) \neq \phi(z_1)$ since
$\q{sign:H-coloring-second}(a_1,b_1)=\q{sign:H-coloring-second}(a_2',b_2')$, contradicting $x'_2=z_1$.
This concludes the proof.
\end{proof}

\begin{claim}\label{claim:equal-q}
Let  $\Sigma \in {\bf \Sigma}(\nu_{14})$ and suppose that $f_1,f_2,g_1,g_2$ are arcs of $\hat K_{\Sigma}$ satisfying
\begin{itemize}
 \item $u^+(f_1)=u^-(f_2)$
 \item $u^+(g_1)=u^-(g_2)$
 \item $q(f_2)=q(g_2)$.
\end{itemize}
Then it also holds that $q(f_1)=q(g_1)$.
\end{claim}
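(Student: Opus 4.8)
The plan is to argue by contradiction, assuming $q(f_1)\neq q(g_1)$. Write $f_1=\bigl((a_1,b_1),(a_2,b_2)\bigr)$ and $f_2=\bigl((a_3,b_3),(a_4,b_4)\bigr)$, so that the hypothesis $u^+(f_1)=u^-(f_2)$ reads $u_{a_2b_2}=u_{a_3b_3}$, and similarly for $g_1,g_2$. The first step is to show that the four nodes $q(f_1),u^+(f_1),q(g_1),u^+(g_1)$ are pairwise comparable in $T$. For this, recall the inequality $u^-(f)<w\le q(f)<u^+(f)$ valid for any arc $f$ (by Claims~\ref{claim:orange-tree} and~\ref{claim:q-point}): applied to $f_2$ it gives $u^+(f_1)=u^-(f_2)<q(f_2)$, and applied to $g_2$ it gives $u^+(g_1)=u^-(g_2)<q(g_2)=q(f_2)$. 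Since also $q(f_1)<u^+(f_1)$ and $q(g_1)<u^+(g_1)$, all four nodes lie strictly below the single node $q(f_2)$ in $T$, hence lie on the path from the root to $q(f_2)$ and are pairwise comparable. In particular, after exchanging the pair $(f_1,f_2)$ with $(g_1,g_2)$ if necessary, we may assume $q(f_1)<q(g_1)$ in $T$.

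The second step is a short case analysis comparing $q(g_1)$ with $u^+(f_1)$. If $u^+(f_1)\le q(g_1)$ in $T$, then using $u^-(f_2)=u^+(f_1)$ and $u^+(g_1)<q(g_2)=q(f_2)$ we get the chain
\[
u^-(f_2)\le q(g_1)<u^+(g_1)\le q(f_2)
\]
in $T$, which is precisely the configuration forbidden by Claim~\ref{claim:u-q-order2} for the arcs $f_2$ and $g_1$; contradiction. If instead $q(g_1)<u^+(f_1)$ in $T$, then $q(f_1)<q(g_1)<u^+(f_1)$, and we further compare $u^+(f_1)$ with $u^+(g_1)$: in the sub-case $u^+(f_1)\le u^+(g_1)$ we obtain $q(f_1)<q(g_1)<u^+(f_1)\le u^+(g_1)$, and in the sub-case $u^+(g_1)<u^+(f_1)$ we obtain $q(f_1)<q(g_1)<u^+(g_1)\le u^+(f_1)$. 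In both sub-cases this is exactly a chain of inequalities outlawed by Claim~\ref{claim:u-q-order1} for the arcs $f_1$ and $g_1$; contradiction. Hence no case is possible, so $q(f_1)=q(g_1)$.

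I expect the main obstacle to be the first step rather than the case analysis: the content of the claim is really that the condition $q(f_2)=q(g_2)$ is strong enough to pin all of $q(f_1),u^+(f_1),q(g_1),u^+(g_1)$ onto one root path, which is what makes the WLOG reduction $q(f_1)<q(g_1)$ legitimate. Once that is in place, the proof is just a matter of reading off which of the three resulting inequality chains is the one forbidden by Claim~\ref{claim:u-q-order1} and which by Claim~\ref{claim:u-q-order2}, being careful with the identifications $u^+(f_1)=u^-(f_2)$ and $u^+(g_1)=u^-(g_2)$. No ingredients beyond these two order claims and the basic inequality $u^-(f)<w\le q(f)<u^+(f)$ for an arc $f$ are needed.
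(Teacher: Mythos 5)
Your proof is correct and takes essentially the same approach as the paper's: after the same WLOG reduction to $q(f_1)<q(g_1)$, you split on the comparison of $q(g_1)$ with $u^+(f_1)$ and invoke Claim~\ref{claim:u-q-order2} in one case and Claim~\ref{claim:u-q-order1} (with the same two sub-cases) in the other. The only cosmetic difference is that you establish the pairwise comparability of $q(f_1),u^+(f_1),q(g_1),u^+(g_1)$ up-front (noting that all four lie strictly below $q(f_2)=q(g_2)$, hence on one root path), whereas the paper introduces the needed comparabilities on the fly.
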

\begin{proof}
Recall that $u^-(f) < q(f) < u^+(f)$ for every arc $f$ of $\hat K_{\Sigma}$ (by Claims~\ref{claim:orange-tree} and~\ref{claim:q-point}).
It follows from the assumptions that
\begin{equation}
q(g_1) < u^+(g_1)=u^-(g_2)<q(g_2)\label{eq:qg1}
\end{equation}
and
\begin{equation}
q(f_1)<u^+(f_1)=u^-(f_2)<q(f_2)\label{eq:qf1}
\end{equation}
in $T$.
Thus $q(g_1)$ and $q(f_1)$ are comparable in $T$.
Arguing by contradiction, suppose that $q(f_1) \neq q(g_1)$.
Using symmetry, we may assume without loss of generality $q(f_1) < q(g_1)$ in $T$.

Since $q(g_1)<q(g_2)=q(f_2)$ by~\eqref{eq:qg1}  and $u^+(f_1)= u^-(f_2) < q(f_2)$ in $T$ by~\eqref{eq:qf1},  the two nodes $q(g_1)$ and  $u^+(f_1)$ are also comparable in $T$.

First suppose that $q(g_1)<u^+(f_1)$ in $T$.
Then observe that the two nodes $u^+(g_1)$ and $u^+(f_1)$ are comparable in $T$ since
$u^+(g_1)= u^-(g_2) < q(g_2) = q(f_2)$ and $u^+(f_1) < q(f_2)$ in $T$ (by~\eqref{eq:qg1} and \eqref{eq:qf1}).
Hence we have $q(f_1) < q(g_1)<u^+(f_1)\leq u^+(g_1)$ or $q(f_1) < q(g_1)<u^+(g_1)\leq u^+(f_1)$ in $T$, neither of which is
possible by Claim~\ref{claim:u-q-order1}, a contradiction.

Next, assume that  $q(g_1)\geq u^+(f_1)$ in $T$.  We immediately obtain
$u^-(f_2)=u^+(f_1)\leq q(g_1)<u^+(g_1) < q(f_2)$ in $T$, which is forbidden by Claim~\ref{claim:u-q-order2},
again a contradiction.
\end{proof}

\begin{claim}\label{claim:k-sigma-bipartite_nu_14}
 The graph $\hat K_{\Sigma}$ is bipartite for each $\Sigma\in \mathbf{\Sigma}(\nu_{14})$.
\end{claim}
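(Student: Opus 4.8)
The plan is to argue exactly as in the proof of Claim~\ref{claim:k-sigma-bipartite_nu_9}, now using the auxiliary results Claims~\ref{claim:same-source}--\ref{claim:equal-q} in place of the ones used there. Suppose for contradiction that $\hat K_\Sigma$ is not bipartite for some $\Sigma \in \mathbf{\Sigma}(\nu_{14})$, and let $C = \set{(a_i,b_i)}_{i=1}^k$ be a shortest odd cycle in $\hat K_\Sigma$, so that $C$ is induced. For each $i$ put $u_i := u_{a_ib_i}$, let $f_i$ be the arc of $C$ joining $(a_i,b_i)$ and $(a_{i+1},b_{i+1})$ (cyclically), and let $q_i := q(f_i)$; recall that $\min\set{u_i,u_{i+1}} < q_i < \max\set{u_i,u_{i+1}}$ in $T$, since every arc $f$ satisfies $u^-(f)<w_{a_1b_1}\le q(f)<u^+(f)$ in $T$ by Claims~\ref{claim:orange-tree} and~\ref{claim:q-point}. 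In particular consecutive nodes $u_i, u_{i+1}$ are distinct and comparable in $T$.

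First I would prune the cyclic sequence $(u_1,\dots,u_k)$ so that all its nodes become distinct, by the same splitting procedure as in the proof of Claim~\ref{claim:k-sigma-bipartite_nu_9}: whenever a node is repeated, split into two cyclic subsequences and keep the one of odd length. As there, the resulting sequence $(v_1,\dots,v_\ell)$ has $\ell$ odd, $\ell \ge 3$, all $v_s$ distinct and consecutive ones comparable in $T$, and each consecutive pair $\set{v_s,v_{s+1}}$ equals some consecutive pair $\set{u_j,u_{j+1}}$ of the original sequence; in particular it is carried by an arc $h_s$ of $\hat K_\Sigma$ with $\set{u^-(h_s),u^+(h_s)} = \set{v_s,v_{s+1}}$ and $q(h_s)$ strictly between $v_s$ and $v_{s+1}$ in $T$. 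The purpose of Claim~\ref{claim:equal-q} (together with Claims~\ref{claim:same-source} and~\ref{claim:same-sink}) is precisely to guarantee that this $q$-datum is well defined throughout the procedure -- two arcs carrying the same ordered pair of nodes must have the same $q$-value -- so that the inequalities relating the $v_s$'s to the $q(h_s)$'s remain consistent.

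Next, since $\ell$ is odd and consecutive $v_s$ are distinct and comparable, the sequence of comparisons cannot alternate around the cycle, so there is an index where two consecutive comparisons agree; reversing and cyclically shifting, I may assume $v_\ell < v_1 < v_2$ in $T$. Letting $w$ be the neighbour of $v_1$ on the $v_1$--$v_2$ path in $T$ and taking $n \in \set{3,\dots,\ell}$ minimal with $w \not\le v_n$ in $T$ (such $n$ exists because $v_\ell < v_1 < w$), the same elementary tree argument as in Claim~\ref{claim:k-sigma-bipartite_nu_9} yields $v_n < v_1 < w \le v_{n-1}$ in $T$. Unwinding the pruning, one recovers original indices $r,s,t$ with $\set{v_\ell,v_1} = \set{u_r,u_{r+1}}$, $\set{v_1,v_2} = \set{u_s,u_{s+1}}$ and $\set{v_{n-1},v_n} = \set{u_t,u_{t+1}}$, hence arcs $f_r$, $f_s$, $f_t$ of $\hat K_\Sigma$ whose sources and targets carry these nodes (with the orientation determined by which node is larger in $T$), and $w = w_{a_sb_s}$.

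The remaining -- and main -- work is then to combine the inequalities $v_n < v_1 < w \le v_{n-1}$, $v_1 < w \le q(f_s) < v_2$ and $v_n < q(f_t) < v_{n-1}$ with Observation~\ref{obs:hitting-vertex-or-edge} and the structural description of the bag of $q(f_s)$ and of $q(f_t)$ supplied by Claim~\ref{claim:q2-struct}, so as to reach a contradiction: either two of the arcs $f_r, f_s, f_t$ fall into one of the configurations forbidden by Claim~\ref{claim:u-q-order1} or Claim~\ref{claim:u-q-order2}, or else two of the underlying pairs are shown to form an alternating cycle of length $2$ in $\MM(P,\nu_{14},\Sigma)$, which is impossible since this set contains no $2$-cycles. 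I expect this final case analysis -- tracking the various possible relative positions of $q(f_t)$, $w$, $v_1$ and $v_{n-1}$ in $T$, and handling the degenerate cases such as $w = q(f_s)$ or $v_{n-1}=q(f_s)$ -- to be the only delicate point, just as the concluding paragraph of the proof of Claim~\ref{claim:k-sigma-bipartite_nu_9} was there. Once the contradiction is obtained, no odd cycle can exist and $\hat K_\Sigma$ is bipartite.
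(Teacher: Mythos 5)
Your proposal departs from the paper's proof of this claim in an essential way, and as it stands it has a real gap. The paper does \emph{not} mimic the proof of Claim~\ref{claim:k-sigma-bipartite_nu_9} here. Instead of pruning the cyclic sequence of $u$-nodes to remove repetitions, the paper forms the cyclic sequence of \emph{arcs} $S=(f_1,\dots,f_k)$, classifies each arc as going ``forward'' or ``backward'' around $C$, and repeatedly deletes a consecutive pair in which one arc goes forward and the other backward, while maintaining three invariants about the $q$-values of consecutive arcs (in particular: opposite-direction consecutive arcs have equal $q$, via Claims~\ref{claim:same-source}/\ref{claim:same-sink}, and this equality propagates under deletion via Claim~\ref{claim:equal-q}). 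The contradiction is then obtained purely from these $q$-value invariants: either all remaining arcs go the same way, giving a strictly monotone cyclic chain of $q$-nodes, or the sequence has length $3$ and the invariants force $q(f^1)\neq q(f^2)=q(f^3)=q(f^1)$. No analogue of the bag-element computation from the end of the $\nu_9$ proof is needed.

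The specific problem with your version is the sentence ``two arcs carrying the same ordered pair of nodes must have the same $q$-value,'' which you attribute to Claims~\ref{claim:same-source}--\ref{claim:equal-q}. Those claims do not say this. Claims~\ref{claim:same-source} and~\ref{claim:same-sink} concern arcs sharing the same \emph{source pair} or \emph{target pair} in $\MM(P,\nu_{14},\Sigma)$, not the same $u$-node of $T$; two distinct pairs $(a,b)\neq(a',b')$ can have $u_{ab}=u_{a'b'}$, and then $q$ depends on $b^T$ vs.\ $b'^T$ and need not agree. Claim~\ref{claim:equal-q} is a backward-propagation statement along chains of arcs meeting at a $T$-node and, crucially, it requires an already-known $q$-equality ($q(f_2)=q(g_2)$) as a hypothesis; it does not produce equalities from node coincidences alone. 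Consequently, after your $\nu_9$-style pruning, consecutive ``arcs'' $h_{s-1},h_s$ merely share a node of $T$, not an element of $\MM(P)$, so nothing controls the relation between $q(h_{s-1})$ and $q(h_s)$, and the bounds $v_s<q(h_s)<v_{s+1}$ by themselves are too weak to place two of your arcs into one of the configurations forbidden by Claims~\ref{claim:u-q-order1} or~\ref{claim:u-q-order2}. This is precisely why the paper prunes by direction rather than by node-coincidence: the direction-based pruning is what keeps the $q$-values coherent. The final case analysis you defer is therefore not a routine elaboration of the $\nu_9$ argument; the needed $q$-comparabilities simply aren't available in your setup.
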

\begin{proof}
Suppose that there is an odd cycle  $C=\set{(a_i,b_i)}_{i=1}^{k}$ in the undirected graph underlying $\hat K_{\Sigma}$.
(Thus $C$ is not necessarily a directed cycle.)
For each $i \in \{1, \dots, k\}$, let $f_i$ be an arc between $(a_i,b_i)$ and $(a_{i+1},b_{i+1})$ in $\hat K_{\Sigma}$,  where indices
are taken cyclically as always.
If $f_i=((a_i,b_i),(a_{i+1},b_{i+1}))$, that is, $(a_i,b_i)$ is the source of $f_i$, we say that $f_i$ \emph{goes forward},
while if $f_i=((a_{i+1},b_{i+1}),(a_{i},b_{i}))$ we say that $f_i$ \emph{goes backward}.

We define a cyclically ordered sequence $S$ of arcs in $\{f_1, f_2, \dots, f_k\}$ as follows.
We start with $S=(f_1,\ldots,f_k)$.
It will be convenient to say that we go along $S$ in \emph{clockwise} order whenver we use the forward direction in $S$ (to not mix it up with forward and backward edges).
Now, we repeat the following modification until it is no longer possible:
If $S$ has size at least $5$ and there are two (cyclically) consecutive arcs $f,f'$ in clockwise order in $S$
with $f$ going forward and $f'$ going backward then remove both $f$ and $f'$ from $S$.

By construction, the resulting sequence $S$ has the following property: Either $S$ contains at least five arcs and
all arcs go in the same direction, or $S$ contains exactly three arcs.

We claim that during the above iterative process the cyclic sequence $S$ fulfills the following invariants at all times:
For any two consecutive arcs $f$ and $f'$ in clockwise order in $S$,
\begin{enumerate}
\item\label{inv:1} if $f$ and $f'$ both go forward then $q(f) < q(f')$ in $T$,
while if $f$ and $f'$ both go backward then $q(f) >q(f')$ in $T$;
\item\label{inv:2} if $f$ and $f'$ go in the same direction then there exist arcs $g,g'$ in $\hat K_{\Sigma}$ such that
\begin{itemize}
 \item $q(g)=q(f)$;
 \item $q(g')=q(f')$, and
 \item  $u^+(g)=u^-(g')$ if $f$ and $f'$ go forward, $u^-(g)=u^+(g')$ otherwise, and
\end{itemize}
\item\label{inv:3} if $f$ and $f'$ go in opposite directions then $q(f)=q(f')$.
\end{enumerate}

Note that~\ref{inv:2} implies~\ref{inv:1}. Indeed, suppose $f$ and $f'$ go forward (for the backward direction the argument is analogous).
Then take arcs $g$ and $g'$ witnessing~\ref{inv:2}. We have $q(f) = q(g) < u^+(g) = u^-(g') < q(g') = q(f')$.
(Recall that $u^-(g) < q(g) < u^+(g)$ for every arc $g$ of $\hat K_{\Sigma}$ by Claims~\ref{claim:orange-tree} and~\ref{claim:q-point}.)

First, we prove that the invariants hold at the beginning of the process, so for the sequence $(f_1,\ldots,f_k)$.
In order to prove~\ref{inv:2}, for each $i\in\{1,\dots, k\}$ take $g:=f_i$ and $g':=f_{i+1}$. Then clearly~\ref{inv:2} holds,
and property~\ref{inv:3} follows from Claims~\ref{claim:same-source} and~\ref{claim:same-sink}.

Next we show that the invariants hold after each modification step.  Consider thus the sequence $S$ just before a modification step,
and suppose that $S$ satisfied the required properties.
Let $f^0,f^1,f^2,f^3$ be the four consecutive arcs in $S$ in clockwise order which are such that
$f^1$ goes forward and $f^2$ goes backward.
After removing $f^1$ and $f^2$, the arcs $f^0$ and $f^3$ will become consecutive in $S$ (in clockwise order).
We only need to establish the invariants for the consecutive pair $f^0,f^3$, since all other consecutive pairs already satisfy them
by assumption.

Let us start with the case that $f^0$ and $f^3$ both go forward.
Since $f^1,f^2$ and $f^3$ alternate in directions, we get $q(f^1)=q(f^2)=q(f^3)$ by~\ref{inv:3}.
By~\ref{inv:2} and the fact that $f^0$ and $f^1$ go forward, there are arcs $g^0,g^1$ in $\hat K_{\Sigma}$ such that $q(g^0)=q(f^0)$, $q(g^1)=q(f^1)$ and $u^+(g^0)=u^-(g^1)$.
Now, since $q(g^1)=q(f^1)=q(f^3)$, the arcs $g^0,g^1$ also fulfill the conditions of~\ref{inv:2} for $f^0$ and $f^3$.

The case that both $f^0$ and $f^3$ go backward is symmetric to the previous one and is thus omitted.

Next, suppose that $f^0$ goes forward and $f^3$ goes backward.
Here we have to show that~\ref{inv:3} holds for $f^0$ and $f^3$.
Since $f^0$ and $f^1$ both go forward and $f^2$ and $f^3$ both go backward, by \ref{inv:2} there are arcs $g^0,g^1$ and $g^2, g^3$ in $\hat K_{\Sigma}$ such that $q(g^j)=q(f^j)$ for each $j \in \{0,1,2,3\}$, $u^+(g^0)=u^-(g^1)$, and $u^-(g^2)=u^+(g^3)$.
Using~\ref{inv:3} we deduce that $q(g^1)=q(f^1)=q(f^2)=q(g^2)$. Applying Claim~\ref{claim:equal-q} on the arcs
$g^0,g^1,g^3,g^2$ (in this order), we conclude $q(f^0)=q(g^0)=q(g^3)=q(f^3)$, as desired.

Finally, assume that $f^0$ goes backward and $f^3$ goes forward.
Again we have show that~\ref{inv:3} holds for $f^0, f^3$.
But in this case the four directions of $f^0,f^1,f^2,f^3$ alternate.
It follows that $q(f^0)=q(f^1)=q(f^2)=q(f^3)$ by~\ref{inv:3}.

Now that the above invariants of $S$ have been established, let us go back to the
final sequence $S$ resulting from the modification process.
We claim that there are always two consecutive arcs going in opposite directions in $S$.
Indeed, if not then they either all go forward or all go backward. In the first case $q(f)< q(f')$ in $T$
for any two consecutive arcs $f, f'$ in clockwise order in $S$ by~\ref{inv:1},
while in the second case $q(f) > q(f')$ in $T$ for any two such arcs $f, f'$. However, neither of these two situations
can occur in a circular sequence.

This shows in particular that the modification process results in a sequence $S$ of size $3$, say $S=(f^1,f^2,f^3)$.
We may suppose without loss of generality that $f^1$ and $f^2$ go in the same direction and $f^3$ in the other
(since the sequence $S$ can always be shifted cyclically to ensure this property).
This implies $q(f^1) \neq q(f^2)$ by (i), $q(f^2)=q(f^3)$ by~\ref{inv:3}, and $q(f^3)=q(f^1)$ by~\ref{inv:3}.
This is a contradiction, which concludes the proof.
\end{proof}

Using Claim~\ref{claim:k-sigma-bipartite_nu_14} we let
$\psi_{14,\Sigma}\colon \MM(P,\nu_{14},\Sigma)\to\set{1,2}$ be a $2$-coloring of $\hat K_{\Sigma}$,
for each $\Sigma\in\mathbf{\Sigma}(\nu_{14})$.
The function $\q{sign:colors_K_second}$ then records the color of a pair in this coloring:
\begin{center}
\medskip
\fbox{\begin{varwidth}{0.9\textwidth}
For each $\Sigma\in\mathbf{\Sigma}(\nu_{14})$ and each pair $(a,b)\in\MM(P,\nu_{14},\Sigma)$, we let
\[
 \q{sign:colors_K_second}(a,b):=\psi_{14,\Sigma}(a,b).
\]
\end{varwidth}}
\medskip
\end{center}

\subsection{Fourth leaf of \texorpdfstring{$\sigtree$: Node $\nu_{15}$}.}
\label{sec:a-goes-down}

It remains to verify that for each $\Sigma\in\mathbf{\Sigma}(\nu_{15})$ the set $\MM(P,\nu_{15},\Sigma)$ is reversible.
Recall that $\q{sign:coloring-2-cycles}$ ensures that there are no $2$-cycles in $\MM(P,\nu_{15},\Sigma)$ and that $\q{sign:colors_K_second}$ ensures that there are no strict alternating cycles of length at least $3$ in $\MM(P,\nu_{15},\Sigma)$.
It follows that $\MM(P,\nu_{15},\Sigma)$ is reversible.

This concludes the proof of Theorem~\ref{thm:technical}.

\section*{Acknowledgments}

This research was initiated during the workshop {\em Order and Geometry}
held at the Technische Universit\"at Berlin in August 2013.
We are grateful to the organizers and the other participants for providing
a very stimulative research environment.
We also thank Grzegorz Gutowski and Tomasz Krawczyk for many fruitful discussions at the early stage of this project.
Finally, we are much grateful to the referees for their many helpful remarks and suggestions,
which greatly improved the readability of the paper.

\bibliographystyle{plain}
\bibliography{treewidth-2}

\begin{thebibliography}{10}

\bibitem{Biro}
Csaba Bir\'o, Mitchel~T. Keller, and Stephen~J. Young.
\newblock Posets with cover graph of pathwidth two have bounded dimension.
\newblock {\em Order}, in press, DOI 10.1007/s11083-015-9359-7.
\newblock \url{http://arxiv.org/abs/1308.4877}.

\bibitem{Epp00}
David Eppstein.
\newblock Diameter and treewidth in minor-closed graph families.
\newblock {\em Algorithmica}, 27(3-4):275--291, 2000.

\bibitem{FTW13}
Stefan Felsner, William~T. Trotter, and Veit Wiechert.
\newblock The dimension of posets with planar cover graphs.
\newblock {\em Graphs Combin.}, 31(4):927--939, 2015.

\bibitem{JMMTWW}
Gwena\"{e}l Joret, Piotr Micek, Kevin~G. Milans, William~T. Trotter, Bartosz
  Walczak, and Ruidong Wang.
\newblock Tree-width and dimension.
\newblock {\em Combinatorica}, in press, DOI 10.1007/s00493-014-3081-8.
\newblock \url{http://arxiv.org/abs/1301.5271}.

\bibitem{sparsity}
Gwena\"el Joret, Piotr Micek, and Veit Wiechert.
\newblock Sparsity and dimension.
\newblock In {\em Proceedings of the Twenty-Seventh Annual ACM-SIAM Symposium
  on Discrete Algorithms}, pages 1804--1813, 2016.
\newblock \url{http://arxiv.org/abs/1507.01120}.

\bibitem{Kel81}
David Kelly.
\newblock On the dimension of partially ordered sets.
\newblock {\em Discrete Math.}, 35:135--156, 1981.

\bibitem{MW}
Piotr Micek and Veit Wiechert.
\newblock Topological minors of cover graphs and dimension.
\newblock Submitted, \url{http://arxiv.org/abs/1504.07388}.

\bibitem{ST14}
Noah Streib and William~T. Trotter.
\newblock Dimension and height for posets with planar cover graphs.
\newblock {\em European J. Combin.}, 35:474--489, 2014.

\bibitem{Tro-book}
William~T. Trotter.
\newblock {\em Combinatorics and partially ordered sets. Dimension theory}.
\newblock Johns Hopkins Series in the Mathematical Sciences. Johns Hopkins
  University Press, Baltimore, MD, 1992.

\bibitem{Tro-handbook}
William~T. Trotter.
\newblock Partially ordered sets.
\newblock In {\em Handbook of combinatorics, {V}ol.\ 1,\ 2}, pages 433--480.
  Elsevier Sci. B. V., Amsterdam, 1995.

\bibitem{TM77}
William~T. Trotter, Jr. and John~I. Moore, Jr.
\newblock The dimension of planar posets.
\newblock {\em J. Combinatorial Theory Ser. B}, 22(1):54--67, 1977.

\bibitem{minors}
Bartosz Walczak.
\newblock Minors and dimension.
\newblock In {\em Proceedings of the Twenty-Sixth Annual ACM-SIAM Symposium on
  Discrete Algorithms}, pages 1698--1707, 2015.
\newblock \url{http://arxiv.org/abs/1407.4066}.

\end{thebibliography}

\end{document}